\numberwithin{equation}{section}
\newcommand{\cc}{\mathbb C}
\newcommand{\zz}{\mathbb Z}
\newcommand{\qq}{\mathbb Q}
\newcommand{\rr}{\mathbb R}
\newcommand{\A}{\mathbb A}
\newcommand{\GG}{\mathbb G}
\newcommand{\la}{\langle}
\newcommand{\ra}{\rangle}
\newcommand{\lra}{\longrightarrow}
\newcommand{\ord}{\mathrm{ord}}
\DeclareMathOperator{\GL}{GL}
\DeclareMathOperator{\Ind}{Ind}
\DeclareMathOperator{\SL}{SL}
\newcommand{\lto}{\longrightarrow}
\newcommand\norm[1]{\left\lVert#1\right\rVert}
\newenvironment{psmatrix}
  {\left(\begin{smallmatrix}}
  {\end{smallmatrix}\right)}
\newcommand{\fg}{\mathfrak g}
\newcommand{\fm}{\mathfrak m}
\newcommand{\fn}{\mathfrak n}
\newcommand{\fsl}{\mathfrak{sl}}
\newcommand{\calf}{\mathcal{F}}
\newcommand{\cals}{\mathcal{S}}
\newcommand{\calo}{\mathcal{O}}
\newcommand{\calr}{\mathcal{R}}
\def\Ddots{\mathinner{\mkern1mu\raise\p@
\vbox{\kern7\p@\hbox{.}}\mkern2mu
\raise4\p@\hbox{.}\mkern2mu\raise7\p@\hbox{.}\mkern1mu}}
\DeclareRobustCommand\bigop[1]{%
  \mathop{\vphantom{\sum}\mathpalette\bigop@{#1}}\slimits@
}
\newcommand{\bigop@}[2]{%
  \vcenter{%
    \sbox\z@{$#1\sum$}%
    \hbox{\resizebox{\ifx#1\displaystyle.9\fi\dimexpr\ht\z@+\dp\z@}{!}{$\m@th#2$}}%
  }%
}
\newtheorem{Thm}{Theorem}[section]
\newtheorem{Prop}[Thm]{Proposition}
\newtheorem{Lem}[Thm]{Lemma}
\newtheorem{Cor}[Thm]{Corollary}
\newtheorem{Conj}[Thm]{Conjecture}
\theoremstyle{definition}
\newtheorem{Def}[Thm]{Definition}
\theoremstyle{remark}
\newtheorem{Rem}[Thm]{Remark}
\newtheorem{Ex}[Thm]{Example}
\newcommand{\quash}[1]{}
\title[Asymptotics of Schwartz functions]{Asymptotics of Schwartz functions}
\author{Chun-Hsien Hsu}
\address{Department of Mathematics\\
University of Chicago\\
Chicago, IL 60637}
\email{chunhsien@uchicago.edu}
\subjclass[2020]{Primary 11F70; Secondary 11F85}
\keywords{Braverman-Kazhdan conjecture, Weyl algebra, Schwartz space, Poisson summation}
\begin{document}

\begin{abstract}
Let $G$ be a split, simply connected, almost simple algebraic group, and let $P$ be a maximal parabolic subgroup of $G$. Braverman and Kazhdan in \cite{BKnormalized} defined a Schwartz space on the affine closure $X_P$ of $P^{\mathrm{der}}\backslash G$.  An alternate, more analytically tractable definition was  given in \cite{Getz:Hsu:Leslie}, following several earlier works.   When $G$ is a classical group or $G_2$, we show the two definitions coincide and prove several previously conjectured properties of the Schwartz space that will be useful in applications. Along the way, we give an alternative construction of the ring of differential operators on $X_P$ using the Fourier theory. We also establish the Poisson summation formulae in these cases.
\end{abstract}

\maketitle
\setcounter{tocdepth}{1}
\tableofcontents

\section{Introduction}

Let $G$ be a split, simply connected, almost simple algebraic group over a local field $F$. Let $\psi:F\to \cc^\times$ be a nontrivial additive character. For a parabolic subgroup $P$ of $G$, let $X_P^{\circ}:=P^{\mathrm{der}}\backslash G$ and $X_P:=\mathrm{Spec}(F[X_P^\circ])$ be its affine closure. Generalizing earlier work in \cite{BK:basicaffine}, Braverman and Kazhdan in \cite{BKnormalized} constructed unitary operators
    \begin{align*}
        \calf_{P|Q}:=\calf_{P|Q,\psi}:L^2(X_P(F))\tilde{\longrightarrow} L^2(X_Q(F))
    \end{align*}
    such that 
    \begin{align*}
        \calf_{Q|R}\circ \calf_{P|Q}=\calf_{P|R} \quad \textrm{ and }  \quad  \calf_{P|P}=\mathrm{Id}
    \end{align*}
    for any parabolic subgroups $P,Q,R$ of $G$ with the same Levi factor. These operators generalize the standard Fourier transform on vector spaces. A nice exposition of their construction is given in \cite[\S 4]{Shahidi:Generalized}. 
    
    Let $M$ be a Levi subgroup of $G$ and $P$ be a parabolic subgroup of $G$ with Levi factor $M$. When $F$ is nonarchimedean, Braverman and Kazhdan defined the Schwartz space to be 
    \begin{align*}
        \mathcal{S}_{\mathrm{BK}}(X_P(F)):=\sum_{Q} \calf_{Q|P}\left(\mathcal{S}(X_Q^\circ(F))\right)<L^2(X_P(F),
    \end{align*}
    where the sum is taken over all parabolic subgroups $Q$ with Levi factor $M$, and $\mathcal{S}(X_Q^\circ(F))$ is the space of smooth functions on $X_Q^\circ(F)$ with compact support. That is, $\mathcal{S}_{\mathrm{BK}}(X_P(F))< C^\infty(X_P^\circ(F))$ is the smallest space of functions containing  $\mathcal{S}(X_P^\circ(F))$ and stable under Fourier transforms. While this definition of Schwartz space is succinct and natural, it is not known in general how to characterize functions in $\mathcal{S}_{\mathrm{BK}}(X_P(F))$. One main difficulty is that the operator $\calf_{Q|P}$ was initially defined only on an inexplicit subspace of $C^\infty_c(X_Q^\circ (F))$ and extended to an $L^2$-isometry by unitarity. In particular, little analytic information about functions in $\mathcal{S}_{\mathrm{BK}}(X_P(F))$ is understood aside from the case where $X_P$ is a vector space. 
    
    Let $B\le P$ be a Borel subgroup of $G$ and $P^{\mathrm{op}}$ be the opposite of $P$. In the case $G$ is symplectic and $P$ is the Siegel parabolic, using the work of Ikeda \cite{Ikeda:poles:triple}, Getz and Liu defined in \cite{Getz:Liu:BK} another Schwartz space $\mathcal{S}(X_P(F))< C^\infty(X_P^\circ (F)).$  Roughly, it is defined to be a space of functions whose Mellin transforms have poles no worse than those of certain local $L$-functions. This definition was later modified in \cite{Getz:Hsu} and extended by Getz, Leslie, and the author in \cite{Getz:Hsu:Leslie} to all split, simply connected, almost simple algebraic groups $G$ and maximal parabolics $P$.  In loc.~cit. it was shown that functions in $\mathcal{S}(X_P(F))$ lie in $(L^1\cap L^2)(X_P(F))$ and have good analytic behavior towards infinity. For instance, they are compactly supported in $X_P(F)$ if $F$ is nonarchimedean. Moreover, the operator $\calf_{P|P^{\mathrm{op}}}$ descends to an isomorphism
    \begin{align*}
        \calf_{P|P^{\mathrm{op}}}: \mathcal{S}(X_P(F)) \tilde{\longrightarrow} \mathcal{S}(X_{P^{\mathrm{op}}}(F)),
    \end{align*}
    for which an explicit formula is available.

     Nevertheless, determining from the definition whether a smooth function on $X_P^\circ(F)$ lies in $\mathcal{S}(X_P(F))$ can be quite difficult. For instance, it is not clear if the containment $\mathcal{S}(X_P^\circ(F))\le \mathcal{S}(X_P(F))$ holds. Furthermore, many expected properties of $\mathcal{S}(X_P(F))$ remain unverified. One of the properties is \textbf{locality} introduced in \cite[\S 5]{BK-lifting}. 
    
    \begin{Conj}\label{Conj:local}
   \sloppy The Schwartz space $\mathcal{S}(X_P(F))$ is local. That is, $\mathcal{S}(X_P(F))$ contains $\mathcal{S}(X_P^\circ(F))$ and it is closed under function multiplication by tempered functions (resp. compactly supported smooth functions) on $X_P(F)$ when $F$ is archimedean (resp. nonarchimedean).
    \end{Conj}

     Locality was introduced as a key property of conjectural Schwartz spaces in the context of the Poisson summation conjecture. The Poisson summation conjecture, initiated in the seminal paper \cite{BK-lifting}, arose from an observation of Braverman and Kazhdan that the Langlands functoriality conjecture implies the existence of a Fourier theory on $L$-monoids. More precisely, they proposed a vast generalization of Tate's thesis, which would realize local $L$-functions as the greatest common divisors of zeta integrals, subject to the existence of Schwartz spaces and Fourier transforms on $L$-monoids over local fields. If, in addition, Poisson summation formulae are established, this would imply the analytic properties of Langlands $L$-functions, and hence much of functoriality, by the converse theory. The conjecture has since been investigated and refined by Lafforgue \cite{LafforgueJJM}, Ng\^{o} \cite{NgoSums, Ngo:Hankel}, and many others, and extended to the setting of spherical varieties by Sakellaridis \cite{SakellaridisSph}. It has only been established in a handful of cases \cite{BK:basicaffine, Getz:Liu:BK, Getz:Liu:Triple, Choie:Getz, Gu:Autotwist, Getz:Quad, GK:auto}. Braverman-Kazhdan spaces $X_P$ are among the few examples about which we have the best current knowledge.

    In the present paper, we prove Conjecture \ref{Conj:local} and give a detailed description of $\mathcal{S}(X_P(F))$ when $G$ is either classical or $G_2$ and $P$ is a maximal parabolic. When $F$ is archimedean, we show that locality and being stable under the Fourier transform uniquely determine $\mathcal{S}(X_P(F))$. When $F$ is nonarchimedean of characteristic zero, $\mathcal{S}(X_P(F))$ is characterized by the process of semisimplifying degenerate principal series. This paper can be viewed as a continuation of the first part of \cite{Getz:Hsu:Leslie}, which aims to achieve a thorough understanding of the Fourier theory on Braverman-Kazhdan spaces with an expectation to find applications in harmonic analysis and beyond. We also expect that these detailed examples could lead to a better understanding of conjectural Schwartz spaces in the Poisson summation conjecture.

\subsection{Main results}\label{ssec:main}
    
Let $F$ be a local field. Suppose $G$ is classical or $G_2$. Let $P$ be a maximal parabolic subgroup of $G$. We clarify the relation between  $\mathcal{S}(X_P(F))$ and $\mathcal{S}_{\mathrm{BK}}(X_P(F))$ and verify Conjecture \ref{Conj:local}.

\begin{Thm}\label{thm:main:equal}
Suppose $G$ is classical or $G_2$. We have
\begin{align*}
    \mathcal{S}(X_P(F))=\mathcal{S}_{\mathrm{BK}}(X_P(F)).
\end{align*} 
In particular, $\mathcal{S}(X_P^\circ(F))<\mathcal{S}(X_P(F))$. Moreover, $\mathcal{S}(X_P(F))$ is local.
\end{Thm}

The boundary $X_P-X_P^{\circ}$ is a single point at the origin. By Theorem \ref{thm:main:equal}, we can consider the smooth $G(F)$-module $$A_{X_P(F)}:=\mathcal{S}(X_P(F))/\mathcal{S}(X_P^\circ(F)),$$ which we refer to as the \textbf{asymptotics} (toward the origin). Let $I(\chi)=I_P(\chi)=I_{P(F)}^{G(F)}(\chi)$ be the normalized parabolic induction of a quasi-character $\chi:F^\times\to \cc^\times$ in the category of smooth representations. Here we have identified $\chi$ with the quasi-character $\chi\circ\omega_P$ of $P(F),$ where $\omega_P$ is the fundamental weight associated to $P$. Let $\mathcal{R}_{P|P^{\mathrm{op}}}:I_P(\chi)\to I_{P^\mathrm{op}}(\chi^{-1})$ be the intertwining operator associated to the long Weyl element. Define $\mathrm{Supp}(G,P,F)$ to consist of those $\chi$ with $\mathrm{Re}(\chi)<0$ such that $\mathrm{Ker}(\mathcal{R}_{P|P^{\mathrm{op}}})$ is nonzero and proper. The set $\mathrm{Supp}(G,P,F)$ is countable, and it is finite when $F$ is nonarchimedean.

 For an admissible $G(F)$-representation $A$ of finite length, let $\mathrm{Soc}(A)$ be the maximal semisimple $G(F)$-submodule of $A$. Let $\mathrm{Soc}^0(A):=0$ and define inductively for $n\in \zz_{>0}$ the submodule $\mathrm{Soc}^n(A)$ of $A$ so that
\begin{align*}
    \mathrm{Soc}^n(A)/\mathrm{Soc}^{n-1}(A)=\mathrm{Soc}(A/\mathrm{Soc}^{n-1}(A)).
\end{align*}
We show that the asymptotics $A_{X_P(F)}$ can be described in terms of the socles of degenerate principal series.

\begin{Thm}\label{thm:main:filt}
Suppose $F$ is a nonarchimedean local field of characteristic zero, and $G$ is either classical or $G_2$. As $G(F)$-modules, we can write
\begin{align*}
    A_{X_P(F)}= \prod_{\chi\in \mathrm{Supp}(G,P,F)} A(\chi),
\end{align*}
where each $A(\chi)$ admits a natural filtration of $G(F)$-modules
\begin{align*}
    0=A_0(\chi) < A_1(\chi)< A_2(\chi) <\cdots < A_{m(\chi)}(\chi)=A(\chi),
\end{align*}
and we have canonical isomorphisms of $G(F)$-modules 
\begin{align*}
        A_{m(\chi)+1-n}(\chi)/A_{m(\chi)-n}(\chi)\cong \mathrm{Soc}^n(I(\chi))
\end{align*}
for $1\le n\le m(\chi).$ Furthermore,
\begin{align*}
    \mathrm{Soc}^{m(\chi)}(I(\chi))=\mathrm{Ker}(\mathcal{R}_{P|P^{\mathrm{op}}})<I(\chi)=\mathrm{Soc}^{m(\chi)+1}(I(\chi)).
\end{align*}
\end{Thm}

The modules $A(\chi),$ defined in \eqref{eq:Achi} below, describe the analytic behavior of functions in $\mathcal{S}(X_P(F))$ toward the origin $X_P-X_P^\circ$. Unwinding the definition of $A(\chi),$ we have a natural filtration
\begin{align*}
    I(\chi)> A_{1}(\chi)\ge A_2(\chi)/A_1(\chi)\ge \cdots\ge A_{m(\chi)}(\chi)/A_{m(\chi)-1}(\chi)>0,
\end{align*}
which is the Jantzen filtration of $I(\chi)$ \cite[(14.3b)]{AvLTV}. Therefore, Theorem \ref{thm:main:filt} provides the following interpretation of the filtration $(\mathrm{Soc}^n(I(\chi)))$ and the integer $m(\chi)$: for a holomorphic section $f(\chi)$ of $I(\chi),$ the section $\mathcal{R}_{P|P^{\mathrm{op}}}(f(\chi))$ is holomorphic on the left half-plane $\mathrm{Re}(\chi)<0$. Then
    \begin{align}\label{eq:socleequal}
        \mathrm{Soc}^n(I(\chi_0))=\left\{f(\chi_0): \textrm{  }\mathcal{R}_{P|P^{\mathrm{op}}}(f(\chi)) \textrm{ vanishes at } \chi=\chi_0 \textrm{ to order at least $m(\chi)+1-n$}  \right\}.
    \end{align}
We verify in Theorem \ref{thm:reducibility} that $\mathrm{Supp}(G,P,F)$ consists of points of reducibility of degenerate principal series in the left half-plane. Hence, $A_{X_P}(F)$ encodes the process of semisimplifying all degenerate principal series.

The existence of the filtration $(A_{n}(\chi))$ in Theorem \ref{thm:main:filt} is proved for general $F$ in Theorems \ref{thm:exact} and \ref{thm:exactarch}. It also holds for $G$ of type $E$ or $F$ under the assumption $\mathcal{S}(X_P^\circ(F))\subseteq \mathcal{S}(X_P(F))$. However, describing $A_{n}(\chi)$ for $n\ge 2$ requires detailed knowledge of the composition series of degenerate principal series. We conjecture that the socle descriptions \eqref{eq:socleequal} hold in general, as stated in Conjecture \ref{conj:multiplicity}.
 
 The situation is more subtle for archimedean $F$ since $\mathrm{Supp}(G,P,F)$ is infinite, and thus $A_{X_P(F)}$ is a Fr\'echet representation of (countably) infinite length. This is not unexpected since the germ of a smooth function at a point in the nonarchimedean case is constant, whereas in the archimedean case it corresponds to a formal power series. Therefore, the infinite length of $A_{X_P(F)}$ should arise solely from the formal coordinate functions $F[[X_P]]\otimes_\rr \cc$. 

A key ingredient in confirming this expectation is the construction of the Weyl algebra on $X_P$. Since $\mathcal{S}(X_P(F))$ is local by Theorem \ref{thm:main:equal}, it is closed under multiplication by coordinate functions on $X_P(F)$. Say $x_1,\ldots,x_n$ are coordinates on $X_P$. Motivated by the classical fact that the Fourier transform on vector spaces transfers between algebraic functions and differential operators, we define abstractly the operators
\begin{align*}
    \partial_i:=\mathcal{F}_{P^{\mathrm{op}}|P}\circ x_i^{\mathrm{op}}\circ \mathcal{F}_{P|P^{\mathrm{op}}}:\mathcal{S}(X_P(F))\to \mathcal{S}(X_P(F)).
\end{align*}
The Weyl algebra $W_{X_P}$ on $X_P$ is then defined as the algebra generated by coordinate functions and these ``differential operators" $\partial_i$.

We verify in Theorem \ref{thm:Weylrealize} that the operators $\partial_i$ can indeed be realized as differential operators on $X_P^\circ(F)$. In \cite{Hsu:Weyl}, the author proves that $W_{X_P}$ share several key properties with the ring of differential operators on smooth affine varieties. This result allows us to refine our understanding of $A_{X_P(F)}.$

\begin{Thm}\label{thm:main:arch}
   When $F$ is archimedean,  $A_{X_P(F)}$ is a Fr\'echet $G(F)\ltimes W_{X_P}(F\otimes_\rr \cc)$-module of finite length.
\end{Thm}

 Theorem \ref{thm:main:arch} is restated and proved as Corollary \ref{cor:arch:finite} below. It plays an important role in the computation of $A_{X_P(F)}$ and, consequently, reducible degenerate principal series. In particular, it allows us to confirm that Theorem \ref{thm:main:filt} holds for the examples in \S\ref{ssec:arch:ex}.

In the archimedean case, instead of using $\mathcal{S}_{\mathrm{BK}}(X_P(F)),$ Braverman and Kazhdan in \cite{BKnormalized} proposed the following definition of Schwartz spaces:
\begin{align*}
    \mathcal{S}_{\mathcal{D}}(X_P(F)):=\bigg\{f\in L^2(X_P(F)): Df\in L^2(X_P(F))  \textrm{ for every } D\in \mathcal{D}(X_P)(F\otimes_\rr\cc)\bigg\},
\end{align*}
where $\mathcal{D}(X_P)$ is the ring of differential operators on $X_P$ in the sense of Grothendieck. The space $ \mathcal{S}_{\mathcal{D}}(X_P(F))$ comes equipped with a natural Fr\'echet topology. When $X_P$ is a vector space, this is the classical definition of the Schwartz space. They conjectured that $\mathcal{S}_{\mathcal{D}}(X_P(F))$ is preserved by the Fourier transform. In \cite{Hsu:Weyl} we verify that $W_{X_P}=\mathcal{D}(X_P)$, so the conjecture follows. This also implies that the largest local subspace of $L^2(X_P(F))$ that is stable under the Fourier transform is $ \mathcal{S}_{\mathcal{D}}(X_P(F))$. We further prove
\begin{Thm}\label{thm:main:diff}
We have $\mathcal{S}(X_P(F))=\mathcal{S}_{\mathcal{D}}(X_P(F))$ as Fr\'echet spaces.
\end{Thm}

This result can be interpreted as follows: Suppose one can exponentiate the action of $W_{X_P}(F\otimes_\rr \cc)$ to some group $G'(F),$ not necessarily algebraic. Then $\mathcal{S}(X_P(F))$ is the set of smooth vectors of an irreducible representation of $G'(F)$. So far the existence of $G'(F)$ is only known when $\mathcal{S}(X_P(F))$ is a minimal representation. 

Finally, we would like to emphasize again that while so far we have assumed $G$ to be either classical or $G_2,$ most of our results apply to $G$ of type $E$ or $F$ subject to the verification of Theorem \ref{thm:poles} over nonarchimedean local fields, which implies Theorem \ref{thm:main:equal} over any local fields.
\subsection{Poisson summation formulae}

We continue to assume $G$ is either classical or $G_2$ and $P$ is a maximal parabolic. Let $E$ be a global field. Let $\mathcal{S}(X_P(\mathbb{A}_E))$ be the adelic Schwartz space (see \S \ref{sec:Poisson}). For $f\in \mathcal{S}(X_P(\A_E))$ and an idelic character $\omega,$ let
\begin{align*}
    f_{\omega_s}(\cdot):=\int_{M^{\mathrm{ab}}(\A_E)} \delta_P^{1/2}(m)\omega_s(\omega_P(m))f(m^{-1}\cdot)\,dm
\end{align*}
be the Mellin transform of $f,$ where $\delta_P$ is the modular character. It is a meromorphic section of $I_P(\omega_s)$. The associated degenerate Eisenstein series is
\begin{align*}
    \mathrm{Eis}(f_{\omega_s}):=\sum_{\gamma\in P(E)\backslash G(E)} f_{\omega_s}(\gamma).
\end{align*}
The sum converges absolutely for $\mathrm{Re}(s)$ large and defines a meromorphic function on $\cc.$

Let $\kappa_E:=\mathrm{Res}_{s=1}\zeta_E(s)$. Consider the following set of idelic quasi-characters
\begin{align*}
    \mathrm{Supp}(G,P,E):=\bigg\{ \omega=\otimes_v \omega_v: \omega_v\in\mathrm{Supp}(G,P,E_v) \textrm{ for all places } v \textrm{ of } E\bigg\}.
\end{align*} We establish the Poisson summation formula on $X_P$.
\begin{Thm}\label{thm:main:Poisson}
Suppose $G$ is either classical or $G_2.$ Let $f\in \mathcal{S}(X_P(\A_E))$. Then
       \begin{align*}
        &\sum_{x\in X_P^\circ(E)} f(x)-\frac{1}{\kappa_E}\sum_{\omega_{s_0}\in  \mathrm{Supp}(G,P,E)} \mathrm{Res}_{s=s_0}\mathrm{Eis}(f_{\omega_s})\\
        &=\sum_{x\in X_{P^{\mathrm{op}}}^\circ(E)} \mathcal{F}_{P|P^{\mathrm{op}}}(f)(x)-\frac{1}{\kappa_E}\sum_{\omega_{s_0}\in  \mathrm{Supp}(G,P,E)}  \mathrm{Res}_{s=s_0}\mathrm{Eis}(\mathcal{F}_{P|P^{\mathrm{op}}}(f)_{\omega_s}).
    \end{align*}
\end{Thm}
A less precise version of the formula under a mild assumption is already stated in \cite{Choie:Getz}. We verify their assumption and remove trivial terms stated in loc. cit. Theorem \ref{thm:main:Poisson} indicates that global boundary terms should be (restricted) tensor products of coherent\footnote{This terminology was introduced in \cite{KR:Siegel-Weil}.} local boundary terms. This phenomenon has already been examined by \cite{Getz:Quad, GK:auto} when $\mathcal{S}(X_P(F))$ is a minimal representation. In the case where $G=\mathrm{Sp}_{2n}$ and $P$ is the Siegel parabolic, this is made explicit by the regularized Siegel-Weil formulae \cite{KR:Siegel-Weil}.

\subsection{Outline of the paper}
We state our conventions and collect some facts on Schwartz spaces in \S \ref{sec:prelim}. In \S\ref{sec:BKspace} we review the Fourier theory on Braverman-Kazhdan spaces. Assuming the inclusion $\mathcal{S}(X_P^\circ(F))\subseteq\mathcal{S}(X_P(F))$ holds, we describe the asymptotics $A_{X_P(F)}$ in  \S \ref{sec:Sch} and \S \ref{sec:Sch:arch} for nonarchimedean and archimedean $F,$  respectively. The key to such a description is Igusa's work on the Mellin inversion \cite{Igusa:forms}, which relates poles of the Mellin transform of a function to its asymptotics. This approach has already been employed in \cite{Jiang:Luo:Zhang}. Our main results are Theorems \ref{thm:exact} and \ref{thm:exactarch}, which give the filtration in Theorem \ref{thm:main:filt}. Relevant computations that help to read the statement are listed in Appendix \ref{appendix}. We state our expectation of describing $A_{X_P(F)}$ in terms of socles of degenerate principal series as Conjecture \ref{conj:multiplicity}, and confirm it in Theorem \ref{thm:confirmconj} when $G$ is either classical or $G_2$ and $F$ is nonarchimedean of characteristic zero. This completes the proof of Theorem \ref{thm:main:filt}. Several examples of $A_{X_P}(F)$ are computed in \S \ref{ssec:nonarch:ex}, \S \ref{ssec:arch:ex} and \S \ref{ssec:ex:C}. Interestingly, in these examples $A_{X_P(F)}$ can be realized as certain function space on geometric objects defined over $F$.

In \S \ref{sec:casebycase} we justify the assumption $\mathcal{S}(X_P^\circ(F))\subseteq \mathcal{S}(X_P(F))$ when $G$ is classical or $G_2$. It is a consequence of Theorem \ref{thm:poles}, which studies more generally poles of intertwining operators. This result allows us to give an alternative definition of $\mathcal{S}(X_P(F))$ in the sense of \cite{Yamana} and to prove $\mathcal{S}(X_P(F))=\mathcal{S}_{\mathrm{BK}}(X_P(F))$ in Corollary \ref{cor:cont}. Due to the fact that $X_P-X_P^\circ$ is a single point, locality of $\mathcal{S}(X_P(F))$ is a straightforward consequence of the inclusion $\mathcal{S}(X_P^\circ(F))<\mathcal{S}(X_P(F))$ in the nonarchimedean case (see Corollary \ref{cor:localnonarch}). Locality in the archimedean case requires one to use the definition of $\mathcal{S}(X_P(F)),$ which is proved in \S \ref{ssec:localarch}. Then we introduce and study the Weyl algebras $W_{X_P}$ in \S \ref{ssec:Weyl}. The main result is Theorem \ref{thm:Weylrealize} that realizes elements in $W_{X_P}$ as differential operators on $X_P^\circ$. Using several properties of $W_{X_P}$ proved in \cite{Hsu:Weyl}, we refine descriptions of asymptotics $A_{X_P(F)}$ and prove Theorem \ref{thm:main:diff} in \S\ref{ssec:application:Weyl}, and study the harmonic analysis on $X_P$ in \S\ref{ssec:harm}. Finally, we prove Theorem \ref{thm:main:Poisson} in \S \ref{sec:Poisson} and Appendix  \ref{appendix:B} case by case, following the approach in \cite{Ikeda:poles:triple}.

\subsection{Acknowledgments}
Part of this paper is the author's thesis. The author thanks his advisor Jayce Getz for constant encouragement and fruitful discussions. 
The questions addressed in this paper were raised by David Kazhdan (and relayed to the author by Getz). He thanks Zhilin Luo for answering a question on the asymptotics in \cite{Jiang:Luo:Zhang}. He thanks Yupeng Li for a discussion on the combinatorics in \S\ref{sec:casebycase}. He thanks Yeansu Kim,  Baiying Liu, and Ivan Mati\'{c} for answering a question on degenerate principal series of spin groups, and Nadya Gourevich for an informative conversation and suggesting references on degenerate principal series of real Lie groups. He thanks Yiannis Sakellaridis for discussions on Weyl algebras and encouragement.

\section{Preliminaries}\label{sec:prelim}

Throughout the paper $F$ is a local field unless otherwise specified. We let $|\cdot|$ be the number-theorist's norm on $F$. Thus $|\cdot|$ is the usual Euclidean norm if $F=\rr,$ and $|z|=z\bar{z}$ if $F=\cc$. When $F$ is nonarchimedean, we denote by $\calo$ the ring of integers of $F$ and fix once and for all a choice of uniformizer $\varpi$. Then $q=|\varpi^{-1}|$ is the cardinality of the residue field $\calo/\varpi$.

An $F$-variety is a separated reduced scheme of finite type over $F$. For an $F$-variety $X$, let $X^{\mathrm{sm}}$ be its smooth locus and assume $X^{\mathrm{sm}}(F)$ is nonempty. We equip $X(F)$ with the analytic topology \cite[\S 2.6.2]{Poonen}. 

Let $G$ be a split, simply connected, almost simple algebraic group over $F$. Fix throughout the paper a maximal split torus $T<G$ and a pinning $(\Delta, (e_\alpha)_{\alpha\in \Delta})$ of $(G,T)$. Let $B$ be the Borel subgroup of $G$ standard with respect to the pinning. For each $\alpha\in\Delta$, let $\omega_\alpha\in X^\ast(T)$ be the fundamental weight such that
    \begin{align*}
        \langle \omega_\alpha, \beta^\lor\rangle =\delta_{\alpha,\beta}  \textrm { for all } \beta\in \Delta,
    \end{align*}
    where $\delta_{\alpha,\beta}$ is the Kronecker $\delta$.

\subsection{Quasi-characters}\label{ssec:char}

For an abelian topological group $A$, we let
\begin{align*}
    \widehat{A}:=\left\{\chi: A\to \cc^\times : \textrm{ a continuous group  homomorphism}\right\}
\end{align*}
be the space of quasi-characters of $A$. 

Let $K_{\GG_m}$ be the maximal compact subgroup of $F^\times$. Suppose $F$ is archimedean. We have a canonical group isomorphism
\begin{align}\label{eq:splitiso:arch}
\begin{split}
    F^\times&\tilde{\longrightarrow}\, \rr_+\times K_{\GG_m}\\
   a& \mapsto(|a|,\mu(a)),
   \end{split}
\end{align}
 where  $$\mu(a):=a\cdot |a|^{-\frac{1}{[F:\rr]}}.$$ 
 The isomorphism \eqref{eq:splitiso:arch} induces a group isomorphism
 \begin{align*}
     \cc \times \widehat{K}_{\GG_m} &\tilde{\longrightarrow} \, \widehat{F}^\times\\
      (s,\chi)& \mapsto \big(a\mapsto |a|^s\chi(\mu(a))\big).
 \end{align*}
Explicitly, we identify $\widehat{K}_{\GG_m}$ as a subgroup of $\widehat{F}^\times$ via
\begin{align*}
    \widehat{K}_{\GG_m}=\left\{ \mu^\ell: \begin{array}{l}
        \ell\in \zz/2\zz=\{0,1\} \textrm{ if } F=\rr,\\
         \ell\in \zz \textrm{ if } F=\cc
    \end{array}\right\}.
\end{align*}

Suppose $F$ is nonarchimedean, so $K_{\GG_m}=\calo^\times$. Let $\mathrm{ord}:F\to  \zz\cup\{\infty\}$ be the discrete valuation such that $\mathrm{ord}(\varpi)=1$. We have a (noncanonical) group isomorphism  
\begin{align}\label{eq:splitiso}
\begin{split}
    F^\times&\tilde{\longrightarrow}\, \zz\times \calo^\times\\
   a& \mapsto(\mathrm{ord}(a),\tilde{a}),
   \end{split}
\end{align}
 where  $$\tilde{a}:=a\cdot \varpi^{-\mathrm{ord(a)}}.$$ 
 The isomorphism \eqref{eq:splitiso} induces a group isomorphism
 \begin{align*}
     \cc/ \tfrac{2\pi\sqrt{-1}}{\log q}\zz \times \widehat{\calo}^\times &\tilde{\longrightarrow} \, \widehat{F}^\times\\
      (s,\chi)& \mapsto \left(a\mapsto |a|^{s}\chi(\tilde{a})\right).
 \end{align*}
We henceforth identify $\widehat{\calo}^\times$ as a subgroup of $\widehat{F}^\times$ under this isomorphism. Therefore for $\chi\in \widehat{\calo}^\times,$  $\chi(a)=\chi(\tilde{a}),$ and we let $\mathrm{ord}(\chi)$ be the order of $\chi$, i.e., the smallest positive integer $d$ such that $\chi^d=1$.  

In any case, for $\chi\in \widehat{F}^\times$ and $s\in\cc$ define $\chi_s:=\chi|\cdot|^s$. Let $\mathrm{Re}(\chi)$ be the unique real number such that $\chi_{-\mathrm{Re}(\chi)}$ is unitary.

\subsection{Measures}\label{ssec:measure}

We fix once and for all a nontrivial additive character $\psi:F\to \cc^\times$. The Haar measure $da$ on $F$ will always be normalized so that the Fourier transform on $\mathcal{S}(F)$ defined by $\psi$ is self-dual. We set $d^\times a:=\frac{\zeta(1)da}{|a|}$, where $\zeta$ is the usual local zeta function. It is a Haar measure on $F^\times$.

Let $\Phi=\Phi_G=\Phi(G,T)$ be the set of roots of $(G,T)$. Choose a Chevalley basis $(e_\alpha)_{\alpha\in\Phi}$ from the pinning $(e_\alpha)_{\alpha\in \Delta}$ (see \cite[\S 23.h]{Milne:AGbook}).  The Chevalley basis determines group isomorphisms
$\mathbb{G}_a \tilde{\longrightarrow} N_\alpha$,
where $N_\alpha$ is the root subgroup of $\alpha\in\Phi$. We endow $N_\alpha(F)$ with the measure transferred from that on $F=\mathbb{G}_a(F)$ under these isomorphisms. This in turn induces measures on unipotent subgroups of $B(F)$ and its opposite $B^{\mathrm{op}}(F)$.

\subsection{Schwartz spaces}

For each real algebraic variety $X$ (over $\rr$), a Schwartz space $\mathcal{S}_{\mathrm{ES}}(X)$ and a space of tempered functions $\mathcal{T}(X)$ are defined in \cite[Definitions 3.7 and 3.11]{Elazar:Shaviv} (generalizing the previous work in \cite{AG:Nash}). In the case $X$ is affine, one can embed $X$ as a closed subset of $\rr^n$ in the category of real algebraic varieties. Then
\begin{align*}
    \mathcal{S}_{\mathrm{ES}}(X):= \mathcal{S}(\rr^n)/I
\end{align*}
where $\mathcal{S}(\rr^n)$ is the usual space of Schwartz functions on $\rr^n$ and $I \le \mathcal{S}(\rr^n)$ is the (closed) ideal of functions that vanish identically on $X$. The space $\mathcal{S}_{\mathrm{ES}}(X)$ with the quotient topology is a nuclear Fr\'echet space. Its definition and topology do not depend on the choice of the embedding \cite[Lemma 3.6(i)]{Elazar:Shaviv}. Similarly, 
\begin{align*}
    \mathcal{T}(X):= \left\{ f|_{X}: f\in \mathcal{T}(\rr^n)\right\}
\end{align*}
where $\mathcal{T}(\rr^n)$ is the usual space of tempered functions on $\rr^n$. The definition of $\mathcal{T}(X)$ is also independent of the choice of the embedding  \cite[Lemma 3.6(ii)]{Elazar:Shaviv}. We collect the following facts from \cite[\S 3]{Elazar:Shaviv}.
\begin{Thm}\label{thm:ESSchwartz}
    Let $X$ be an affine real algebraic variety (over $\rr$).
    \begin{enumerate}
        \item For an open subset  $U$ of $X,$ we have a closed embedding 
        \begin{align*}
            \mathcal{S}_{\mathrm{ES}}(U)\xhookrightarrow{\,\mathrm{Ext}\,} \mathcal{S}_{\mathrm{ES}}(X)
        \end{align*}
        given by extension by zero.
         
        \item For a closed subset $Z$ of $X,$ we have a continuous surjection
        \begin{center}
        \begin{tikzpicture} 
            \node at (-3,0) (a) {\quad $\mathcal{S}_{\mathrm{ES}}(X)$};
            \node at (0.5,0) (b) {$\mathcal{S}_{\mathrm{ES}}(Z)$\quad \quad \quad \quad \quad  };
            \path [draw,->>] (a) -- (b);
        \end{tikzpicture}
        \end{center}
        given by the restriciton $f\mapsto f|_Z$. 

        \item The space $\mathcal{S}_{\mathrm{ES}}(X)$ is a $\mathcal{T}(X)$-module under function multiplication.
    \end{enumerate}\qed
\end{Thm}

Suppose $X$ is a quasi-projective $F$-variety. When $F$ is archimedean, we view $X(F)=\mathrm{Res}_{F/\rr}X(\rr)$ as an affine real algebraic variety so that $\mathcal{S}_{\mathrm{ES}}(X(F))$ and $\mathcal{T}(X(F))$ are defined as above.

\begin{Lem}\label{lem:germpoint}
    Suppose $F$ is archimedean. Let $X$ be an affine $F$-variety and $p\in X(F)$. Then we have an exact sequence of Fr\'echet spaces
    \begin{align*}
        0\longrightarrow \mathcal{S}_{\mathrm{ES}}(X(F)-p)\xrightarrow{\mathrm{Ext}}{}\mathcal{S}_{\mathrm{ES}}(X(F))\longrightarrow \widehat{F[X]}_p\otimes_\rr \cc\longrightarrow 0,
    \end{align*}
    where the surjection map is given by taking germs of functions at $p,$ and $\widehat{F[X]}_p$ is the  $\mathfrak{m}_p$-adic completion of $F[X]$, where $\mathfrak{m}_p<F[X]$ is the maximal ideal corresponding to $p$.
\end{Lem}

Choose a closed $F$-embedding $X\lhook\joinrel\xrightarrow{\,\,} \mathbb{A}^n$ so that we can identify $p$ with its image $(p_1,\ldots,p_n)$ in $F^n$. If we write $F[X]=F[\mathbb{A}^n]/I$ for some ideal $I$, then $\widehat{F[X]}_p$ is homeomorphic to the quotient of $\widehat{F[\mathbb{A}^n]_p}\cong F[[x_1-p_1,\ldots, x_n-p_n]]$ by its closed ideal generated by $I$. 

\begin{proof}
    It suffices to prove the assertion when $F=\rr, X=\A^n$, and $p$ is the origin. In this case 
    \begin{align*}
       \widehat{F[X]}_p\otimes_\rr\cc=\cc[[X]]
    \end{align*}
    is the ring of formal power series. The lemma is a consequence of Borel's Theorem \cite[Theorem I.1.3]{smoothapprox}.
\end{proof}

If $F$ is nonarchimedean, following the notation in the archimedean case, we define for a quasi-projective $F$-variety $X$, 
$$\mathcal{S}_{\mathrm{ES}}(X(F)):=C^\infty_c(X(F))$$ to be the space of locally constant functions on $X(F)$ with compact support. If $X$ is affine, then for any closed $F$-embedding $X\lhook\joinrel\xrightarrow{\,\,} \mathbb{A}^n,$ we have
\begin{align*}
    \mathcal{S}_{\mathrm{ES}}(X(F))=\mathcal{S}(F^n)\big|_{X(F)}.
\end{align*}
Furthermore, for $p\in X(F)$ we have an exact sequence given by the evaluation map at $p$:
\begin{align*}
    0\longrightarrow \mathcal{S}_{\mathrm{ES}}(X(F)-p)\longrightarrow \mathcal{S}_{\mathrm{ES}}(X(F))&\longrightarrow \cc\longrightarrow 0\\
     f&\longmapsto f(p).
\end{align*}

In any case, when $X$ is a smooth variety, we write
\begin{align*}
    \mathcal{S}(X(F)):=\mathcal{S}_{\mathrm{ES}}(X(F)).
\end{align*}
When $X$ is singular with nonempty $X^{\mathrm{sm}}(F),$ in general one does not have a good definition of Schwartz space $\mathcal{S}(X(F))$ that takes into account of the singularity of $X$. In fact, it is an important open problem to obtain a good definition of Schwartz spaces when $X$ is spherical \cite{SakellaridisSph}. In this paper, a Schwartz space of $X(F)$ will always be subspaces of $C^\infty(X^\mathrm{sm}(F)),$ so a Schwartz function on $X(F)$ needs not be defined on all $X(F)$. If $F$ is archimedean, it  will always be a Fr\'echet space.  In the case $\mathcal{S}(X(F))$ is defined, two spaces $\mathcal{S}(X(F))$ and $\mathcal{S}_{\mathrm{ES}}(X(F))$ are usually different. Indeed, a main result of this paper is to justify the relation $\mathcal{S}_{\mathrm{ES}}(X(F)) \le \mathcal{S}(X(F))$ for Braverman-Kazhdan spaces. The containment is strict if $X$ is not smooth.

Finally, we recall the definition of locality of a function space introduced in \cite[\S 5]{BK-lifting}.
\begin{Def}
    Let $F$ be a local field and $X$ be a quasi-projective $F$-variety with nonempty $X^{\mathrm{sm}}(F)$. A function space $\mathcal{S} \subseteq C^\infty(X^{\mathrm{sm}}(F))$ of $X(F)$ is said to be \textbf{local} if $\mathcal{S}(X^{\mathrm{sm}}(F)) \subseteq \mathcal{S},$ and $\mathcal{S}$ is a $\mathcal{T}(X(F))$-module (resp. a $C^\infty(X(F))$-module.) if $F$ is archimedean (resp. nonarchimedean).
\end{Def}

\noindent  For instance, the space $\mathcal{S}(X^{\mathrm{sm}}(F))$ is local. When $X^{\mathrm{sm}}(F)$ is dense in $X(F)$, we can identify $\mathcal{S}_{\textrm{ES}}(X(F))$ as a subspace of $C^\infty(X^\mathrm{sm}(F))$ via restriction. In this case $\mathcal{S}_{\textrm{ES}}(X(F))$ is also local.

\subsection{Analytic notation}\label{ssec:analytic}

For a domain $\Omega\subseteq \cc$, we let $H(\Omega)$ be the space of holomorphic functions on $\Omega$ equipped with the topology of uniform convergence on compact subsets. For a meromorphic function $f$ on $\Omega$ and $s_0\in \Omega,$ let  $\mathrm{ord}_{s=s_0} f(s)$
denote the order of the pole of $f$ at $s=s_0$. Given a discrete subset $S$ and a function $\delta:S\to \zz_{> 0}$, let $M(\Omega,\delta)$ be the (closed) subspace of $H(\Omega-S)$ consisting of meromorphic functions $f$ such that $ \mathrm{ord}_{s=s_0} f(s)\le \delta(s_0)$ for all $s_0\in S\cap \Omega$. Note that for any $n\in \zz_{\ge 0}$ the map
\begin{align*}
    f\mapsto \mathrm{Res}_{s=s_0} (s-s_0)^nf(s)
\end{align*}
is a continuous linear functional on $M(\Omega,\delta)$.

For nonzero real numbers $a,b$ and a (finite) set of elements $?$, we write $a\ll_? b$ if there exists a constant $C_?>0$ depending on $?$ such that $a\le C_? b$. 

We denote the usual norm on $\cc$ by $|\cdot |$. This creates the possibility of confusion when
we have chosen an identification $F= \cc$. When $F$ is denoted by $\cc$, we use the standard norm; otherwise, we use the number-theorist’s norm. Thus if $f:X\to \cc$ is a complex-valued function on a set $X$, then $|f(x)| = (f(x)\overline{f(x)})^{1/2}$ for $x\in X$.

\subsection{Multisets}\label{ssec:multi}
A multiset $L$ consists of a set $U$ and a function $m(L):U\to \zz_{>0}$. Equivalently, we can understand $L$ as the set
\begin{align*}
    \{(r,m_r): r\in U\}
\end{align*}
where $m_r:=m_r(L)\in \zz_{>0}$ is the multiplicity of $r$ in $L$. We write $\mathrm{Supp}(L):=U$ as the underlying set of $L$. Given two multisets $L_1,L_2$,  their sum $L_1+L_2$ is the multiset such that
\begin{align*}
    m_r(L_1+L_2)=m_r(L_1)+m_r(L_2),
\end{align*}
where the domain of $m(L_i)$ is extended to $\mathrm{Supp}(L_1)\cup \mathrm{Supp}(L_2)$ by setting $m_r(L_i)=0$ if $r\not\in \mathrm{Supp}(L_i)$.

\section{Fourier theory on Braverman-Kazhdan spaces}\label{sec:BKspace}

 We briefly recall the Schwartz space and the Fourier transform on a Braverman-Kazhdan space associated to a maximal parabolic. For a detailed discussion, we refer one to \cite[\S 3-\S 6]{Getz:Hsu:Leslie}, and we follow the notation therein below.

Let $P\ge B$ be a maximal parabolic subgroup of $G$ with a Levi decomposition $P=MN_P$ such that $T\le M$. The attached Braverman-Kazhdan space
\[
X_P^{\circ}:=P^{\mathrm{der}}\backslash G 
\]
admits a natural right $M^{\mathrm{ab}}\times G$-action, given on points in an $F$-algebra $R$ by 
\begin{align*} 
X_P^\circ(R) \times M^{\mathrm{ab}}(R) \times G(R) &\lto X_P^\circ(R)\\
    (x,m,g) & \longmapsto x.(m,g):=m^{-1}xg.
\end{align*}
This yields an action of $M^{\mathrm{ab}}(F)\times G(F)$ on $C^\infty(X_P^\circ(F)):$
\begin{align}\label{fun:act}
    L(m)R(g)f(x):=f(m^{-1}xg),
\end{align}
under which it is a smooth representation. When $F$ is archimedean, consider 
\begin{align*}
    \fm^{\mathrm{ab}}\oplus \fg:=\mathrm{Lie}(M^{\mathrm{ab}}(F)\times G(F)),
\end{align*}
viewed as a real Lie algebra. It acts on $C^\infty(X_{P}^\circ(F))$ via the differential of the action \eqref{fun:act}. Hence we obtain a natural action of $U(\fm^{\mathrm{ab}}\oplus \fg),$  the universal enveloping algebra of the complexification $(\fm^{\mathrm{ab}}\oplus \fg)_\cc$.
 
 Let $\omega_P$ be the fundamental weight corresponding to the simple root associated to $P$. Let $V_P$ be the right $G$-representation of (anti-dominant) highest weight $-\omega_P$, and fix a highest weight vector $v_P\in V_P(F)$. By \cite[Lemma 3.4]{Getz:Hsu:Leslie} we have a well-defined $F$-embedding
\begin{align*}
    \mathrm{Pl}:=\mathrm{Pl}_{v_P}:X_P^\circ\lhook\joinrel\xrightarrow{\quad} V_P
\end{align*}
induced by the map (given on points in an $F$-algebra $R$ by)
\begin{align*}
    G(R)&\longrightarrow  V_P(R)\\
    g&\longmapsto v_Pg.
\end{align*}
The map $\omega_P$, originally a character of $T$, extends to a character of $P$ and induces an isomorphism \begin{align*}
    \omega_P: M^{\mathrm{ab}}=P^{\mathrm{ab}} \tilde{\longrightarrow}\, \GG_m.
\end{align*}
For $m\in M^\mathrm{ab}(R)$, one has 
\begin{align*}
        \mathrm{Pl}(m^{-1}g)=\omega_P(m)\mathrm{Pl}(g).
\end{align*}
Let $$X_P:=\mathrm{Spec}(F[X_P^\circ])$$ be the affine closure of $X_P^\circ$.  By \cite[Theorems 1 and 2]{Popov:Vinberg} the embedding $\mathrm{Pl} $ extends to a closed immersion $X_P\hookrightarrow V_P$, and $X_P-X_P^\circ$ is a closed point mapped to the origin under $\mathrm{Pl}$.
    
Let $V_P^\lor$ be the dual representation of $V_P$ and $P^{\mathrm{op}}$ be the parabolic subgroup opposite to $P$. Let $v_{P^{\mathrm{op}}}^*\in V_P^{\lor}(F)$ be the lowest weight vector such that $\langle v_P,v_{P^{\mathrm{op}}}^*\rangle=1 $. Similarly, we have a  $G$-equivariant closed embedding $\mathrm{Pl}_{v^\ast_{P^{\mathrm{op}}}}: X_{P^{\mathrm{op}}}\lhook\joinrel\xrightarrow{\quad}V_P^\lor$. The restriction of the canonical pairing between $V_P$ and $V_P^\lor$ yields a pairing 
\begin{align}\label{eq:pairing}
   \langle \cdot ,\cdot \rangle_{P|\mathrm{P^{op}}} :X_P\times X_{P^{\mathrm{op}}}\longrightarrow \mathbb{G}_a 
\end{align}
via the embeddings $\mathrm{Pl}_{v_P},\mathrm{Pl}_{v_{P^{\mathrm{op}}}^\ast}$. This pairing is independent of the choice of $v_P$. We drop the subscript and simply write $\langle \cdot ,\cdot \rangle$ whenever the context is clear.

\subsection{Induced representations and intertwining operators}

For a quasi-character $\chi\in \widehat{F}^\times,$ let
\begin{align*}
I(\chi):=I_P(\chi):=\Ind_{P(F)}^{G(F)}(\chi \circ \omega_P), \quad \overline{I}(\chi):=\overline{I}_{P^{\mathrm{op}}}(\chi):=\Ind_{P^{\mathrm{op}}(F)}^{G(F)}(\chi \circ \omega_P)
\end{align*}
be the normalized inductions in the category of smooth representations. Let $\delta_P$ be the modular character of $P$.
The Mellin transforms along $\chi$ are defined as
\begin{align*} 
\begin{split}
\mathcal{S}(X_P^{\circ}(F)) &\lra I(\chi) \\
f &\longmapsto f_{\chi}(\cdot):=f_{\chi,P}(\cdot):=\int_{M^{\mathrm{ab}}(F)}\delta_P^{1/2}(m)\chi(\omega_P(m))f(m^{-1}\cdot)dm, \\
\mathcal{S}(X_{P^{\mathrm{op}}}^\circ (F)) &\lra \overline{I}(\chi) \\
f &\longmapsto f_{\chi}^{\mathrm{op}}(\cdot):=f_{\chi,P^{\mathrm{op}}}^{\mathrm{op}}(\cdot):=\int_{M^{\mathrm{ab}}(F)}\delta_{P^{\mathrm{op}}}^{1/2}(m)\chi(\omega_P(m))f(m^{-1} \cdot)dm.
\end{split}
\end{align*}
Here $dm$ is the Haar measure on $M^{\mathrm{ab}}(F)$ transferred from that on $F^\times$ via $\omega_P$. In the notation $\overline{I}_{P^{\mathrm{op}}}(\chi)$ and $f_{\chi,P^{\mathrm{op}}}^{\mathrm{op}},$ the bar and the superscript $\mathrm{op}$ indicate that we are inducing from $\chi\circ\omega_P$ instead of $\chi\circ \omega_{P^{\mathrm{op}}}$. The same notation is used for functions in $C^\infty(X_P^\circ(F))$ and $C^\infty(X_{P^{\mathrm{op}}}^\circ(F))$ whenever the integrals defining $f_{\chi},f_{\chi}^{\mathrm{op}}$ exist for $\mathrm{Re}(\chi)$ in a subset of $\rr,$ and in some cases are extended to larger complex domains by analytic continuation.  

For $\chi\in \widehat{F}^\times$, when $F$ is archimedean (resp. nonarchimedean), a section $f^{(s)} \in I(\chi_s)$ is said to be \textbf{holomorphic} if for all $g \in G(F)$ the complex function
\begin{align*}
    \cc &\lto \cc\\
    s &\longmapsto f^{(s)}(g) 
\end{align*}
is an entire function (resp. lies in $\cc[q^{-s},q^s]$). It is \textbf{meromorphic} if there exists a nonzero entire function $a(s)$ (resp. $a(s)\in \cc[q^{-s},q^{s}]$) such that $a(s)f^{(s)}$ is a holomorphic section.

    For a function $f \in C^\infty(X_P^{\circ}(F))$ and $x^\ast=P^{\mathrm{op},\mathrm{der}}(F)g\in X_{P^{\mathrm{op}}}^{\circ}(F),$ we define the unnormalized intertwining operator
\begin{align*}
\mathcal{R}_{P|P^{\mathrm{op}}}(f)(x^\ast):=\int_{N_{P^{\mathrm{op}}}(F)}f\left(ug\right)du=\int_{N_{P^{\mathrm{op}}}(F)}f(ux^\ast)du
\end{align*}
whenever this integral is absolutely convergent (or obtained via some regularization procedure). Here the measure $du$ on $N_{P^{\mathrm{op}}}(F)$ is defined as in \S \ref{ssec:measure}. For $\chi\in\widehat{F}^\times$ and $\mathrm{Re}(\chi_s)$ sufficiently large, the integral above  converges absolutely for $f\in I(\chi_s)$, and it defines a map
\begin{align*}
    \mathcal{R}_{P|P^{\mathrm{op}}}:=\mathcal{R}_{P|P^{\mathrm{op}}}(\chi_s): I(\chi_s)\lto \overline{I}(\chi_s).
\end{align*}
The map extends meromorphically in $s$ to $\cc$. 

\subsection{The Schwartz space} \label{ssec:the:Schwartz:space}
We have a group homomorphism of the complex dual groups
\begin{align*}
    \GG_m(\cc)\times \SL_2(\cc)\longrightarrow \widehat{M}\lhook\joinrel\xrightarrow{\quad} \widehat{G}
\end{align*}
such that $\GG_m(\cc)\to Z(\widehat{M})=\widehat{M}^{\mathrm{ab}}$ is the isomorphism dual to $\omega_P$, and the derivaton of $\SL_2(\cc)\to \widehat{M}$ is a principal $\fsl_2$-triple of $\mathrm{Lie}(\widehat{M})$. This morphism is a distinguished morphism in the sense of \cite[Theorem 2.2.3]{SakVenkperiods}. It is unique up to conjugation by $\widehat{T}$.

Consider the adjoint representation $\widehat{\fn}_P:=\mathrm{Lie}(\widehat{N}_P)$ of $\widehat{M}$. Restricting to $\SL_2(\cc),$ we can decompose 
\begin{align*}
    \widehat{\fn}_P=\bigoplus_{i=1}^k \mathrm{Sym}^{2s_i}(\cc^2).
\end{align*}
for some $2s_i\in \zz_{\ge 0}.$ Let $\lambda_i\in \zz_{\ge 0}$ be the $\GG_m$-weight of a highest weight vector in $\mathrm{Sym}^{2s_i}(\cc^2)$. Consider the multiset $\Lambda:=\{(s_i,\lambda_i)\}_{1\le i\le k}$. By \cite[Lemma A.1]{Getz:Hsu:Leslie} we have $\lambda_i> 0$ for all $i$. Thus $\Lambda$ admits a \textbf{good ordering}, i.e., it can be reindexed so that
    \begin{align*}
        \frac{s_{i+1}}{\lambda_{i+1}}\ge \frac{s_i}{\lambda_i}
    \end{align*}
    for all $i$.  We henceforth assume $\Lambda$ is in such an ordering. It is shown in \cite[Appendix A]{Getz:Hsu:Leslie} that the highest data $(s_k,\lambda_k)$ is unique in the following sense.

\begin{Prop}\label{prop:lambda=1}
We have $\lambda_k=1$ and $s_k>\tfrac{s_i}{\lambda_i}$ for all $i<k$.\qed
\end{Prop}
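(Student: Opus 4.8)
The plan is to unwind the definition of the data $\{(s_i,\lambda_i)\}$ in terms of the principal $\mathfrak{sl}_2$-triple $\{e,h,f\}\subset\widehat{\mathfrak{m}}$ acting on the highest weight space $\widehat{\mathfrak{n}}_P^e$. Recall that $\widehat{\mathfrak{n}}_P$ is a representation of $\widehat{M}=\widehat{M}^{\mathrm{ab}}\times\widehat{M}^{\mathrm{der}}$, and that $P$ being a \emph{maximal} parabolic means $\widehat{M}^{\mathrm{ab}}$ is one-dimensional, so each $\lambda_i$ is simply an integer recording the weight of the central torus $\widehat{M}^{\mathrm{ab}}$ on the line $L_i$. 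I would first identify this central character with (a multiple of) $\omega_P$ read off on the dual side, so that the total $\widehat{M}^{\mathrm{ab}}$-weight on $\widehat{\mathfrak{n}}_P$, graded by the lower central series, is governed by the coefficients $\langle\omega_P,\beta^\lor\rangle$ for $\beta^\lor\in(\Phi^\lor)^+-(\Phi_M^\lor)^+$; in particular $\max_i\lambda_i=d_0=\max\langle\omega_P,\beta^\lor\rangle$, which by the case-by-case table in the Appendix (or the classification of maximal parabolics in simple groups) is at most $6$.

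Next I would pin down the top layer. The highest root space of $\widehat{\mathfrak{n}}_P$ — equivalently the line spanned by the root vector for the highest root $\tilde\beta$ of $\widehat{\fg}$, which lies in $\widehat{\mathfrak{n}}_P$ since $P$ is standard — is the unique line on which $\widehat{M}^{\mathrm{ab}}$ acts with the largest weight and which is a \emph{highest} weight vector for $\widehat{\mathfrak{m}}$ (it is annihilated by $e$, being the highest root of the whole algebra). So one of the lines $L_i$ is this top line; call it $L_k$. The key numerical input is that for a simply connected simple $G$ with maximal $P$, the highest root $\tilde\beta$ has coefficient exactly $1$ on the coroot/weight attached to $P$, i.e. $\langle\omega_P,\tilde\beta^\lor\rangle$ — more precisely the relevant pairing — equals $1$; this forces $\lambda_k=1$. (This is the fact that the highest root appears with multiplicity-one-type behaviour relative to any single simple root in the sense needed here; it can also be extracted from the tables.)

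For the strict inequality $s_k>s_i/\lambda_i$ for $i<k$, recall $s_i$ is half the $h$-eigenvalue on $L_i$ and $s_k$ is half the $h$-eigenvalue on the top line. Since $L_k$ sits in the deepest step of the lower central series of $\widehat{\mathfrak{n}}_P$ and is a highest weight vector, the principal $\mathfrak{sl}_2$ acts on it with the maximal $h$-weight among highest weight vectors, so $2s_k\ge 2s_i$ for all $i$, giving $s_k\ge s_i\ge s_i/\lambda_i$ (as $\lambda_i\ge 1$). The inequality is strict unless simultaneously $s_i=s_k$ and $\lambda_i=1$; I would rule this out by observing that the $\widehat{M}^{\mathrm{ab}}$-weight and the $h$-weight on a highest weight vector together determine the $\widehat{M}$-isotype, and there is a unique highest weight vector in $\widehat{\mathfrak{n}}_P$ realizing the pair $(2s_k,1)$ — namely the highest root line — so no other $L_i$ can share it. Concretely this reduces to: among $\beta^\lor\in(\Phi^\lor)^+-(\Phi_M^\lor)^+$, the one with $\langle\omega_P,\beta^\lor\rangle=1$ and $\sum_\alpha\langle\omega_\alpha,\beta^\lor\rangle$ maximal (which controls both $s_i$ and the step in the central series) is unique and is the highest root, a statement that is immediate for classical groups and $G_2$ from the explicit root data.

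The main obstacle I anticipate is the last uniqueness point: translating "unique highest weight vector with a given $(h,\widehat{M}^{\mathrm{ab}})$-bidegree" into a clean representation-theoretic statement that avoids a full case-check. I would try to argue it uniformly using that $\widehat{\mathfrak{n}}_P^e$ as an $\widehat{M}^{\mathrm{ab}}\times(\text{principal }\mathfrak{sl}_2)$-module is multiplicity-free in the top bidegree — which follows because the highest root space is one-dimensional and is the unique $\widehat{\fg}$-extremal line inside $\widehat{\mathfrak{n}}_P$ — but if a slick uniform argument proves elusive, I would fall back on the explicit data in \cite[Appendix A]{Getz:Hsu:Leslie}, from which all three claims ($\lambda_k=1$, $s_k>s_i/\lambda_i$, $d_0\le 6$) can be read off directly.
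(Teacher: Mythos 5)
Your route has a genuine gap: the line $L_k$ carrying the highest data is \emph{not} the highest root line of $\widehat{\mathfrak{g}}$, and the two numerical facts you lean on are false. The coefficient of the simple root attached to $P$ in the highest root of $\widehat{\mathfrak{g}}$ equals $1$ only for cominuscule nodes; it is $\ge 2$ for both nodes of $G_2$, every node of $E_8$, and most classical nodes, so ``the relevant pairing equals $1$'' cannot be the source of $\lambda_k=1$. Concretely, take $G=\mathrm{Sp}_{2n}$ with $P$ the Siegel parabolic, so $\widehat{G}=\mathrm{SO}_{2n+1}$, $\widehat{M}\cong\mathrm{GL}_n$, and $\widehat{\mathfrak{n}}_P\cong \cc^n\oplus \wedge^2\cc^n$ with $\widehat{M}^{\mathrm{ab}}$-grading $\lambda=1,2$ respectively. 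The highest root $e_1+e_2$ of $\mathfrak{so}_{2n+1}$ has coefficient $2$ along the simple root attached to $P$; its line is the top principal-$\mathfrak{sl}_2$ constituent of $\wedge^2\cc^n$ and contributes the datum $(s,\lambda)=(n-2,2)$, while the highest datum is $(s_k,\lambda_k)=(\tfrac{n-1}{2},1)$, coming from the $\lambda=1$ layer $\cc^n$. For $n\ge 4$ one has $n-2>\tfrac{n-1}{2}=s_k$, so both of your claims ``$L_k$ is the highest root line'' and ``$2s_k\ge 2s_i$ for all $i$'' fail, and the deduction $s_k\ge s_i\ge s_i/\lambda_i$ collapses at its first step. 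The gap between $s_i$ and $s_i/\lambda_i$ can be large: for $E_8$, node $4$, the appendix entry $L(6)=\{-4\}$ with $s_k=7$ corresponds to a datum $(s_i,\lambda_i)=(23,6)$, so $s_i$ vastly exceeds $s_k$ even though $s_i/\lambda_i<s_k$. In other words, the content of the proposition is precisely the ratio inequality, and it is the \emph{first} graded layer ($\lambda=1$), not the deepest one containing the highest root, that wins; your concluding uniqueness discussion does not repair this, since the problem is not multiplicity in the top bidegree but the fact that the top bidegree is not where $L_k$ lies.

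For comparison, the paper does not reprove Proposition \ref{prop:lambda=1} at all: it is quoted from \cite[Appendix A]{Getz:Hsu:Leslie}, where the multisets $\{(s_i,\lambda_i)\}$ are computed explicitly. So your fallback option --- reading $\lambda_k=1$ and $s_k>s_i/\lambda_i$ off those tables --- is in effect the paper's proof, whereas the representation-theoretic argument you propose as the main route would, as written, be proving a statement (``the largest $h$-weight occurs with $\lambda=1$'') that is simply false. A correct uniform argument would have to compare the top $\mathfrak{sl}_2$-constituent of the $\lambda=1$ layer against the highest weight lines of all deeper layers $\lambda\ge 2$ and establish the strict ratio inequality; nothing in your outline addresses that comparison.
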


\noindent
Moreover, by \cite[Proposition 6.2]{Getz:Hsu:Leslie} we have
\begin{align}\label{eq:sk}
\delta_P=|\omega_P|^{2(s_k+1)}.
\end{align}

Fix a maximal compact subgroup $K$ of $G(F)$ such that the Iwasawa decomposition $G(F)=P(F)K$ holds. 

\begin{Def} \label{defn:na}  Assume $F$ is nonarchimedean. The Schwartz space $\cals(X_P(F))$ consists of smooth functions $f \in C^\infty(X_P^{\circ}(F))$ satisfying the following two conditions:
\begin{enumerate}[label={(\arabic*na)}]
    \item  The function $f$ is right $K$-finite. For each $g\in G(F)$ and  $\chi\in\widehat{\calo}^\times$, the integral defining $f_{\chi_s}(g)$ is absolutely convergent for $\mathrm{Re}(s)$ sufficiently large (independent of $g$), and the section
\begin{align*}
        \frac{f_{\chi_s}}{\prod_{i=1}^k L(s_i+1,\chi_s^{\lambda_i})}  
\end{align*}
    is holomorphic. \label{cond:orig}
    \item  The section
\begin{align*}
 \frac{\mathcal{R}_{P|P^{\mathrm{op}}}(f)_{\chi_s}^{\mathrm{op}}}{\prod_{i=1}^{k}L(-s_i,\chi_s^{\lambda_i})} 
\end{align*}
is holomorphic for each $\chi\in\widehat{\calo}^\times$.\label{cond:origradon}
\end{enumerate} 
\end{Def}

For real numbers $\sigma_{1}<\sigma_2,$ let
\begin{align*}
    V_{\sigma_1,\sigma_2}:=\left\{ s\in \cc: \sigma_1<\mathrm{Re}(s)<\sigma_2\right\}.
\end{align*}
For a meromorphic function $h$ on $V_{\sigma_1,\sigma_2}$ and a polynomial $Q\in \cc[s],$ define
\begin{align*}
    |h|_{\sigma_1,\sigma_2,Q}:=|hQ|_{\sigma_1,\sigma_2}:=\sup_{s\in V_{\sigma_1,\sigma_2}} |h(s)Q(s)|.
\end{align*}

\begin{Def}
    Assume $F$ is archimedean. The Schwartz space $\mathcal{S}(X_P(F))$ consists of smooth functions $f \in C^\infty(X_P^{\circ}(F))$ satisfying the following two conditions:
\begin{enumerate}[label={(\arabic*a)}]
    \item For each $g\in G(F),\chi\in \widehat{K}_{\GG_m},$ and $D\in U(\fm^{\mathrm{ab}}\oplus \fg)$, the integral defining $(D.f)_{\chi_s}(g)$ is absolutely convergent for $\mathrm{Re}(s)$ sufficiently large (independent of $g$), the section
\begin{align*}
        \frac{(D.f)_{\chi_s}}{\prod_{i=1}^k L(s_i+1,\chi_s^{\lambda_i})}  
\end{align*}
    is holomorphic, and for all real numbers $\sigma_1<\sigma_2,$ polynomials $Q(s)=Q_{P|P}(s)\in \cc[s]$ such that $Q(s)\prod_{i=1}^k L(s_i+1,\chi_s^{\lambda_i})$ is holomorphic on $V_{\sigma_1,\sigma_2}$ for all $\chi\in \widehat{K}_{\GG_m},$ and compact subsets $\Omega\subset X_P^\circ(F),$
    \begin{align*}
    |f|_{\sigma_1,\sigma_2,Q_{P|P},\Omega,D}:=\sum_{\chi\in \widehat{K}_{\GG_m}} \sup_{g\in \Omega} |(D.f)_{\chi_s}(g)|_{\sigma_1,\sigma_2,Q}<\infty.
    \end{align*} \label{cond:orig:arch} 
    \item The section
\begin{align*}
 \frac{(\mathcal{R}_{P|P^{\mathrm{op}}}(D.f))_{\chi_s}^{\mathrm{op}}}{\prod_{i=1}^{k}L(-s_i,\chi_s^{\lambda_i})} 
\end{align*}
is holomorphic, and for all real numbers $\sigma_1<\sigma_2,$ polynomials $Q(s)=Q_{P|P^{\mathrm{op}}}(s)\in \cc[s]$ such that $Q(s)\prod_{i=1}^k L(-s_i,\chi_s^{\lambda_i})$ is holomorphic on $V_{\sigma_1,\sigma_2}$ for all $\chi\in \widehat{K}_{\GG_m},$ and compact subsets $\Omega\subset X_P^\circ(F),$
    \begin{align*}
    |f|_{\sigma_1,\sigma_2,Q_{P|P^{\mathrm{op}}},\Omega,D}:=\sum_{\chi\in \widehat{K}_{\GG_m}} \sup_{g\in \Omega} |(\mathcal{R}_{P|P^{\mathrm{op}}}(D.f))_{\chi_s}^{\mathrm{op}}(g)|_{\sigma_1,\sigma_2,Q}<\infty.
    \end{align*}  \label{cond:origradon:arch}
\end{enumerate}
\end{Def}
\noindent These seminorms endow $\mathcal{S}(X(F))$ a structure of Fr\'echet space \cite[Lemma 3.2]{Getz:Hsu}.

For $F$ archimedean or nonarchimedean, by \cite[Lemma 5.9]{Getz:Hsu:Leslie}  for $\mathrm{Re}(\chi_s)$ sufficiently large we have a commutative diagram 
\begin{equation}\label{eq:Rcom}
\begin{tikzcd}
 \mathcal{S}(X_P(F)) \ar[r,"\mathcal{R}_{P|P^{\mathrm{op}}}"]\ar[d,"(\cdot)_{\chi_s}"] & \mathcal{R}_{P|P^{\mathrm{op}}}(\mathcal{S}(X_P(F)))\ar[d,"(\cdot)_{\chi_s}^{\mathrm{op}}"] \le C^\infty(X_{P^{\mathrm{op}}}^\circ(F))\\
 I(\chi_s) \ar[r,"\mathcal{R}_{P|P^{\mathrm{op}}}"] & \overline{I}(\chi_s).
\end{tikzcd}
\end{equation}

\begin{Rem}\label{rem:com}
    The commutative diagram \eqref{eq:Rcom} holds more generally for smooth functions in $L^1(X_P(F)),$ where the measure on $X_P(F)$ is given by \eqref{eq:dxmeasure} below. To see this, according to the proof of \cite[Lemma 5.9]{Getz:Hsu:Leslie}, it suffices to show that if $f\in L^{1}(X_P(F))$ then the integral defining $\mathcal{R}_{P|P^{\mathrm{op}}}(f)$ is absolutely convergent. This follows from the proof of \cite[Theorem 6.5]{Getz:Hsu:Leslie}.
\end{Rem}

\begin{Rem}
    In \cite{Getz:Hsu:Leslie} the conditions \ref{cond:origradon} and \ref{cond:origradon:arch} in the definition of  Schwartz spaces use $\mathcal{R}_{P|P^{\mathrm{op}}}(f_{\chi_s})$ instead of $\mathcal{R}_{P|P^{\mathrm{op}}}(f)_{\chi_s}^{\mathrm{op}}.$ However, they are all equivalent by Remark \ref{rem:com} and Proposition \ref{prop:asympinfty} below.
\end{Rem}

Consider the box norm on $V_P(F)\cong F^{\dim V_P}$ given by
\begin{align*}
    |(y_1,\ldots, y_{\dim V_P})|:=\max_{1\le i\le \dim V_P }|y_i|.
\end{align*}
Note that when $F= \cc,$ this is not a norm in the classical sense. The box norm restricts to a function on $X_P(F)$
\begin{align*}
|\cdot|:X_P(F) &\lto \rr_{\ge 0}\\
x &\longmapsto |\mathrm{Pl}(x)|.
\end{align*}
We normalize it so that
\begin{align*}
    |mk|=|\omega_P(m)|^{-1} \quad \textrm{ for } m\in P(F), k\in K. 
\end{align*}    
For later use, we define
$$X_P^1:=\{x\in X_P(F) : |x|=1\}.$$
It is a compact subset of $X_P^\circ(F)$.

Since $X_P^\circ(F)$ is homogeneous by \cite[Corollary 3.3]{Getz:Hsu:Leslie}, it admits a right $G(F)$-invariant Radon measure, unique up to scalar. We let $dx$ be the measure on $X_P^\circ(F)$ such that 
 \begin{align}\label{eq:dxmeasure}
     d(mu)=\frac{\delta_{P^{\mathrm{op}}}(m)dm du}{\zeta(1)} \quad \textrm{ for } (m,u)\in M^\mathrm{ab}(F)\times N_{P^\mathrm{op}}(F).
 \end{align}
Extend it by zero to a measure on $X_P(F).$

\begin{Prop}\label{prop:asympinfty}
      Let $f\in C^\infty(X_P^\circ(F)).$ 
      \begin{enumerate}
          \item Suppose $F$ is nonarchimedean.  If $f$ satisfies \ref{cond:orig}, then $f$ has compact support in $X_P(F),$ i.e., $f(x)=0$ for $|x|\gg_f 1$ and $f\in  (L^1\cap L^2)(X_P(F)).$
          \item Suppose $F$ is archimedean.  If $f$ satisfies \ref{cond:orig:arch}, then $f$ is rapidly decreasing toward the infinity and $f\in  (L^1\cap L^2)(X_P(F)).$
      \end{enumerate}
\end{Prop}

\begin{proof}
    This follows from the proof of \cite[Lemma 5.7]{Getz:Hsu:Leslie} and the Iwasawa decomposition.
\end{proof}

The following is straightforward from the definition.
\begin{Lem}\label{lem:cc}
    The Schwartz space $\mathcal{S}(X_P(F))$ is stable under taking  the complex conjugate $f\to \bar{f}$. Moreover, this $\rr$-linear map  is continuous when $F$ is archimedean.\qed
\end{Lem}

\subsection{The Fourier transform $\mathcal{F}_{P|P^{\mathrm{op}}}$}

\sloppy In \cite{BKnormalized} Braverman and Kazhdan constructed a Fourier transform
\begin{align*}
    \calf_{P|P^{\mathrm{op}}}:L^2(X_{P}(F))\longrightarrow L^2(X_{P^{\mathrm{op}}}(F))
\end{align*}
such that $\calf_{P^{\mathrm{op}}|P}\circ \calf_{P|P^{\mathrm{op}}}=\mathrm{Id}$. The \textbf{BK-Schwartz space} is
\begin{align*}
   \mathcal{S}_{\mathrm{BK}}(X_P(F)):=\mathcal{S}(X_P^\circ(F))+\calf_{P^{\mathrm{op}}|P}(\mathcal{S}(X_{P^{\mathrm{op}}}^\circ(F))),
\end{align*}
i.e., it is the smallest subspace of $C^\infty(X_P^\circ(F))$ containing $\mathcal{S}(X_P^\circ(F))$ and stable under the Fourier transform. Their construction in this particular setting is refined in \cite{Getz:Hsu:Leslie}.

For each $(s,\lambda)\in \cc\times\zz_{>0}$,  define a linear map
\begin{align*}
  \lambda_!(\mu_s):  \mathcal{S}(X_{P^{\mathrm{op}}}^\circ(F)) &\lto C^\infty(X_{P^{\mathrm{op}}}^\circ(F))
  \end{align*}
  given by 
\begin{align*}
\lambda_!(\mu_s)(f)(x):=\int_{M^{\mathrm{ab}}(F)}\psi(\omega_P(m))|\omega_P(m)|^{s+1}\delta_{P^{\mathrm{op}}}^{\lambda /2}(m)f(m^{-\lambda}x)\frac{dm}{\zeta(1)}.
\end{align*}
The same notation is used if the integral exists by regularization. We define operators
\begin{align*}
    \mu_{P}^{\mathrm{aug}}:=\lambda_{1!}(\mu_{s_1}) \circ \dots \circ \lambda_{(k-1)!}(\mu_{s_{k-1}}) \quad\text{and}\quad\mu_P^{\mathrm{geo}}:=1_{!}(\mu_{s_k}).
\end{align*}
The following is \cite[Theorems 5.12 and 6.5]{Getz:Hsu:Leslie}.
 
\begin{Thm} \label{Thm: Fourier formula}
The Fourier transform $\mathcal{F}_{P|P^{\mathrm{op}}}$ descends to an isomorphism
\begin{align*}
    \calf_{P|P^{\mathrm{op}}}:\mathcal{S}(X_P(F))\longrightarrow \mathcal{S}(X_{P^{\mathrm{op}}}(F))
\end{align*}
that is continuous in the archimedean case. It is given by
\begin{align*}
    \mathcal{F}_{P|P^{\mathrm{op}}}=\mu_{P}^{\mathrm{aug}} \circ \mathcal{F}_{P|P^{\mathrm{op}}}^{\mathrm{geo}},
\end{align*}
where 
\begin{align*}
     \calf_{P|P^{\mathrm{op}}}^{\mathrm{geo}}:=\mu_{P}^{\mathrm{geo}}\circ\calr_{P|P^{\mathrm{op}}}.
 \end{align*}
Moreover, for $f\in  \mathcal{S}(X_P(F))$ and $x^\ast\in X_{P^{\mathrm{op}}}^\circ(F)$ we have
\[
\calf_{P|P^{\mathrm{op}}}^{\mathrm{geo}}(f)(x^\ast)=
\int_{X_{P}^\circ(F)}f(x)\psi\left(\la x,x^\ast\ra_{P|P^{\mathrm{op}}}\right)dx.
\]
Here the pairing $\la \cdot,\cdot\ra_{P|P^{\mathrm{op}}}$ is defined as in \eqref{eq:pairing}.

The following diagram 
\begin{center}
\begin{tikzcd}
    \mathcal{S}(X_P(F))\ar[r,"\mathcal{F}_{P|P^{\mathrm{op}}}"]\ar[d,"(\cdot)_{\chi}"] & \mathcal{S}(X_{P^{\mathrm{op}}}(F))\ar[d,"(\cdot)_\chi^{\mathrm{op}}=(\cdot)_{\chi^{-1}}"]\\
    I(\chi) \ar[r,"\mu_{\Lambda}(\chi)\mathcal{R}_{P|P^{\mathrm{op}}}"] & \overline{I}(\chi)
\end{tikzcd}
\end{center}
commutes, where
\begin{align*}
    \mu_{\Lambda}(\chi):=\prod_{i=1}^k \gamma(-s_i,\chi^{\lambda_i},\psi)
\end{align*}
is a product of Tate $\gamma$-factors.
\qed
\end{Thm}

One can further write $\calf_{P|P^{\mathrm{op}}}(f)$ as an iterated integral for $f\in \mathcal{S}(X_P(F))$. For some detailed examples, we refer one to \cite[\S 6.3]{Getz:Hsu:Leslie}.  However, it is cumbersome to check if a given smooth function on $X_P^\circ(F)$ belongs to $\mathcal{S}(X_P(F))$ from the definition. For instance, it is hard to see if the containment $\mathcal{S}(X_P^\circ(F))\subseteq \mathcal{S}(X_P(F))$ is true from the definition, and thus the relation between $\mathcal{S}(X_P(F))$ and $\mathcal{S}_{\mathrm{BK}}(X_P(F))$ is unclear. See \cite[\S 5.4]{Getz:Hsu:Leslie} for a discussion. In \S \ref{sec:casebycase} we will justify the inclusion $\mathcal{S}(X_P^\circ(F))\subseteq \mathcal{S}(X_P(F))$ and further show that $\mathcal{S}(X_P(F))=\mathcal{S}_{\mathrm{BK}}(X_P(F))$ when $G$ is either classical or $G_2$. 

\subsection{A discussion on Schwartz spaces over archimedean local fields.}

Suppose $F$ is archimedean. Let $\mathcal{S}_\Lambda(X_P^\circ(F))$ be the subset of $C^\infty(X_P^\circ(F))$ consisting of functions satisfying \ref{cond:orig:arch}. By the Mellin inversion, the seminorms $|\cdot|_{\sigma_1,\sigma_2,Q_{P|P},\Omega,D}$ endow $\mathcal{S}_\Lambda(X_P^\circ(F))$ with a structure of a Fr\'echet space as argued in \cite[Lemma 3.2]{Getz:Hsu}.

\begin{Prop}\label{prop:holomorphy}
    Let $f\in \mathcal{S}_\Lambda(X_P^\circ(F))$. Then $f\in \mathcal{S}(X_P(F))$ if and only if the section
\begin{align*}
 \frac{(\mathcal{R}_{P|P^{\mathrm{op}}}(D.f))_{\chi_s}^{\mathrm{op}}}{\prod_{i=1}^{k}L(-s_i,\chi_s^{\lambda_i})} 
\end{align*}
is holomorphic for all $D\in U(\mathfrak{m}^\mathrm{ab}\oplus \mathfrak{g})$ and $\chi\in \widehat{K}_{\GG_m}$. In particular, the Schwartz space $\mathcal{S}(X_P(F))$ is a closed subspace of $\mathcal{S}_\Lambda(X_P^\circ(F))$. 
\end{Prop}

\begin{proof}
    Let 
    \begin{align*}
        \mathcal{S}:=\left\{f\in \mathcal{S}_{\Lambda}(X_P^\circ(F)):\frac{(\mathcal{R}_{P|P^{\mathrm{op}}}(D.f))_{\chi_s}^{\mathrm{op}}}{\prod_{i=1}^{k}L(-s_i,\chi_s^{\lambda_i})} \textrm{ is holomorphic for all } D\in U(\mathfrak{m}^\mathrm{ab}\oplus \mathfrak{g}),\chi\in \widehat{K}_{\GG_m} \right\}
    \end{align*}
    be equipped with the subspace topology. Note that $\mathcal{S}$ is by definition closed under the action of $U(\mathfrak{m}^\mathrm{ab}\oplus \mathfrak{g})$.
    Clearly, we have a continuous inclusion $\mathcal{S}(X_P(F))\hookrightarrow \mathcal{S}$. Let us show for $f\in \mathcal{S}$ the seminorms $|f|_{\sigma_1,\sigma_2,Q_{P|P^{\mathrm{op}}},\mathrm{Id},\Omega}$ are finite.
    
    By \cite[Lemma 10.1.2]{Wallach:RGII} there is an absolute constant $C$ such that for any $f\in \mathcal{S}_{\Lambda}(X_P^\circ(F)),$ $\chi\in \widehat{K}_{\GG_m}$ and $\sigma_2>\sigma_1\ge C$,
    \begin{align*}
        \sup_{g\in \Omega} |(\mathcal{R}_{P|P^{\mathrm{op}}}(f))_{\chi_s}^{\mathrm{op}}(g)|_{\sigma_1,\sigma_2}<\infty.
    \end{align*}
    Since $\mathcal{S}_{\Lambda}(X_P^\circ(F))$ is closed under the action of $U(\mathfrak{g}),$ by \cite[Theorem 10.1.5]{Wallach:RGII} there is a nonzero entire function $a(s)$ such that for any $f\in \mathcal{S}_{\Lambda}(X_P^\circ(F))$, $a(s)(\mathcal{R}_{P|P^{\mathrm{op}}}(f))_{\chi_s}^{\mathrm{op}}$ is holomorphic. Furthermore, if we let $S$ denote the set of zeros of $a(s)$, for any $\sigma_2>\sigma_1$ and $\epsilon>0$
     \begin{align*}
        \sup_{g\in \Omega} \sup_{\substack{\sigma_1<\mathrm{Re}(s)<\sigma_2\\ |s-z|>\epsilon, z\in S}}|(\mathcal{R}_{P|P^{\mathrm{op}}}(f))_{\chi_s}^{\mathrm{op}}(g)|<\infty.
    \end{align*}
    As $\mathcal{S}$ is closed under the action of $U(\mathfrak{m}^{\mathrm{ab}}),$ by the standard trick of integration by parts (see e.g., \cite[Lemmas 5.8 and  5.9]{Getz:Liu:BK}) and the definition of $\mathcal{S}$, one has $|f|_{\sigma_1,\sigma_2,Q_{P|P^{\mathrm{op}}},\mathrm{Id},\Omega}<\infty$.

    For $n\in \zz_{> 0}$ and $s_0\in \cc$, the maps
    \begin{align*}
        T_{s_0,n,D,\chi,g}(f):=\mathrm{Res}_{s=s_0} (s-s_0)^n\frac{(\mathcal{R}_{P|P^{\mathrm{op}}}(D.f))_{\chi_s}^{\mathrm{op}}(g)}{\prod_{i=1}^{k}L(-s_i,\chi_s^{\lambda_i})}
    \end{align*}
    are continuous on $\mathcal{S}_{\Lambda}(X_P^\circ(F)).$ Therefore the space $\mathcal{S},$ being the intersection of  kernels of $T_{s_0,n,D,\chi,g},$ is a closed subspace of $\mathcal{S}_{\Lambda}(X_P^\circ(F)),$ and hence it is a Fr\'echet space. We conclude that the inclusion $\mathcal{S}(X_P(F))\hookrightarrow\mathcal{S}$ is a continuous bijection of Fr\'echet spaces, and hence a homeomorphism by the open mapping theorem.
\end{proof}

For later use, we record the following.
\begin{Lem}\label{lem:archfil}
Let $\chi\in \widehat{K}_{\GG_m}$.  For any $n\ge 0$ and $s_0\in \cc$, the set
\begin{align*}
    \left\{ f\in\mathcal{S}(X_P(F)): \mathrm{ord}_{s=s_0} f_{\chi_s}(g)\le n \textrm{ for all } g\in G(F)\right\}
\end{align*}
is closed in $\mathcal{S}(X_P(F)).$ \qed 
\end{Lem}

\section{Nonarchimedean asymptotics}\label{sec:Sch}

In this and the following section, we give an asymptotic description of $\mathcal{S}(X_P(F))$ assuming the set-theoretic inclusion 
\begin{align*}
    \mathcal{S}(X_P^\circ(F)) \subseteq \mathcal{S}(X_P(F))
\end{align*}
holds. The assumption is verified by Corollary \ref{cor:cont} below if $G$ is either classical or $G_2$.
 
 Fix a choice of section $m:\mathbb{G}_m\to M$ of $\omega_P$. We describe a smooth function on $X_P^\circ(F)$ by analyzing its behavior under the action of $M^{\mathrm{ab}}(F)$ and $G(F)$. To do this we will proceed in two steps. First, observe that by Proposition \ref{prop:asympinfty} the space $A_{X_P(F)}=\mathcal{S}(X_P(F))/\mathcal{S}(X_P^\circ(F))$ consists of germs of Schwartz functions at the origin. Therefore, we will restate \ref{cond:orig} and \ref{cond:orig:arch} in terms of asymptotics toward the origin following the idea of Igusa \cite[\S 1]{Igusa:forms}. This approach is already taken in \cite{Jiang:Luo:Zhang}. Then we reinterpret \ref{cond:origradon} and \ref{cond:origradon:arch} representation-theoretically.

As in \cite{Igusa:forms}, we treat the nonarchimedean case and the archimedean case separately. Throughout this section $F$ is nonarchimedean. Since the Schwartz space is independent of $\psi$, we may assume the conductor of $\psi$ is $\mathcal{O},$ i.e., $\psi$ is trivial on $\mathcal{O}$ but not on $\varpi^{-1}\mathcal{O}$.

\subsection{Work of Igusa: nonarchimedean}

 We first review \cite[\S 1.5]{Igusa:forms}.
Let $L$ be a finite multiset whose underlying set is a subset of $\cc/\tfrac{2\pi \sqrt{-1}}{\log q}\zz$. We define $C^\infty_L(F^\times)$ to be the subspace of $C^\infty(F^\times)$ that consists of functions $f$ satisfying the following assumptions.
\begin{enumerate}
    \item The function $f$ has compact support in $F$, and
    \item for $|a|\ll_f 1$, we have
    \begin{align*}
        f(a)=\sum_{r\in \mathrm{Supp}(L)}\sum_{j=1}^{m_r(L)} c_{r,j}(\tilde{a})|a|^{r}\ord(a)^{j-1}
    \end{align*}
for some $c_{r,j}\in C^\infty( \mathcal{O}^\times)$.
\end{enumerate}
Note that we can further write $$c_{r,j}=\sum_{\chi\in\widehat{\calo}^\times} c_{r,j,\chi} \chi$$ for some constants $c_{r,j,\chi}\in\cc$ that are zero for all but finitely many $\chi$. On the other hand, let $\mathcal{Z}_L(\widehat{F}^\times)$  be the set of complex-valued functions $Z$ on $\widehat{F}^\times$ such that
\begin{enumerate}
    \item for every $\chi\in \widehat{\calo}^\times$, there exist constants $b_{r,j,\chi}\in\cc$ such that
    \begin{align}\label{eq:laurent}
        Z(\chi_s)-\sum_{r\in\mathrm{Supp}(L)}\sum_{j=1}^{m_r(L)} b_{r,j,\chi}\zeta(s+r)^j
    \end{align}
    is a function in $\cc[q^{-s},q^s]$, and
    \item for all but finitely many $\chi\in \widehat{\calo}^\times$, $Z(\chi_s)=0$ for all $s$.
\end{enumerate}

Igusa showed in \cite[Theorem 1.5.3]{Igusa:forms} the following:
\begin{Thm}\label{thm:Igusa}
For $f\in C^\infty_L(F^\times),$ the Mellin transform
    $$M:f\mapsto \left(\chi \mapsto \int_{F^\times} f(a)\chi(a)d^\times a\right),$$
originally defined on $\chi\in \widehat{F}^\times$ with $\mathrm{Re}(\chi)\gg_{L} 1$, extends analytically to whole $\widehat{F}^\times$ and gives rise to an isomorphism between $C^\infty_L(F^\times)$ and $\mathcal{Z}_L(\widehat{F}^\times)$. Moreover, the Mellin inversion $M^{-1}$ is given as follows: Given $Z\in \mathcal{Z}_L(\widehat{F}^\times)$, for each $\chi\in \widehat{\calo}^\times$ let $Z_\chi(z)$ be the complex function obtained from $Z(\chi_s)$ by substituting $z$ for $q^{-s}$. Then for $a\in F^\times$,
\begin{align}\label{eq:invchar}
    M^{-1}(Z)(a)= \sum_{\chi\in \widehat{\calo}^\times}\mathrm{Res}_{z=0} (Z_\chi(z)z^{-\mathrm{ord}(a)-1})\chi^{-1}(a).
\end{align}\qed
\end{Thm}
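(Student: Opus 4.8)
The plan is to reduce the theorem to an elementary statement about $z$-transforms of one-sided sequences, after decomposing functions on $F^{\times}\cong\varpi^{\zz}\times\calo^{\times}$ into $\calo^{\times}$-isotypic components. The first, and only delicate, step is to show that each $f\in C^{\infty}_{L}(F^{\times})$ has \emph{bounded level}, i.e.\ there is an integer $m$ with $f\big(a(1+\varpi^{m}\calo)\big)=f(a)$ for all $a$: compact support in $F$ confines $\supp f$ to $\{\ord(a)\ge -N_{0}\}$; on $\{\ord(a)\ge N_{1}\}$ for $N_{1}$ large, $f$ is given by the fixed finite expansion $\sum_{r,j}c_{r,j}(\tilde a)|a|^{r}\ord(a)^{j-1}$ with coefficient functions $c_{r,j}\in C^{\infty}(\calo^{\times})$ that are jointly locally constant; and on the compact region $\{-N_{0}\le\ord(a)<N_{1}\}$ smoothness plus compactness supply a level. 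Consequently $f=\sum_{\chi\in S}f^{(\chi)}$ for a finite set $S\subset\widehat{\calo^{\times}}$ with $f^{(\chi)}(ua)=\chi(u)f^{(\chi)}(a)$, and both $M$ and the candidate inverse respect this decomposition (pairing $f^{(\chi)}$ against $\chi'_{s}$ vanishes off one coset). So it suffices to treat a single $\chi$, where the whole content becomes the assertion that the sequence $g(n):=\int_{\calo^{\times}}f(\varpi^{n}u)\chi(u)\,d^{\times}u$ vanishes for $n\ll 0$ and equals $\sum_{r\in\mathrm{Supp}(L)}\sum_{j=1}^{m_{r}(L)}b_{r,j}q^{-nr}n^{j-1}$ for $n\gg 0$, the $b_{r,j}$ being $\chi$-Fourier coefficients of the $c_{r,j}$.

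For the forward map I would write, with $z=q^{-s}$ (which also collapses the period lattice $\tfrac{2\pi\sqrt{-1}}{\log q}\zz$), $M(f)(\chi_{s})=d^{\times}x(\calo^{\times})\sum_{n}g(n)z^{n}$ for $\mathrm{Re}(s)\gg_{L}0$, where convergence is geometric as $|a|\to 0$ and forced by compact support as $|a|\to\infty$. Splitting the sum at the index beyond which the asymptotics hold, the low terms form a Laurent polynomial (``holomorphic'' in the paper's sense), and for each $(r,j)$ the tail $\sum_{n\gg 0}q^{-nr}n^{j-1}z^{n}$ differs by a polynomial from $\sum_{n\ge 0}n^{j-1}(q^{-r}z)^{n}$, which lies in $\operatorname{span}_{\cc}\{(1-q^{-r}z)^{-1},\dots,(1-q^{-r}z)^{-m_{r}(L)}\}=\operatorname{span}_{\cc}\{\zeta(s+r),\dots,\zeta(s+r)^{m_{r}(L)}\}$ by a triangular change of basis and $\zeta(s+r)=(1-q^{-(s+r)})^{-1}$; this exhibits $M(f)$ as a well-defined element of $\mathcal{Z}_{L}(\widehat{F^{\times}})$ and supplies its meromorphic continuation, each $Z_{\chi}$ being rational. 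Injectivity and the inverse formula are then formal: $g(n)=\mathrm{Res}_{z=0}(Z_{\chi}(z)z^{-n-1})$ recovers the sequence and Fourier inversion on $\calo^{\times}$ reassembles $f$, which is exactly \eqref{eq:invchar}. For surjectivity, given $Z\in\mathcal{Z}_{L}(\widehat{F^{\times}})$ put $g_{\chi}(n):=[z^{n}]Z_{\chi}(z)$; the description of $\mathcal{Z}_{L}$ writes $Z_{\chi}$ as a Laurent polynomial plus a $\cc$-combination of the $(1-q^{-r}z)^{-j}$, so its negative-degree part is bounded uniformly in $n$ and in $\chi$ (only finitely many $\chi$ have $Z_{\chi}\ne 0$), whence $g_{\chi}(n)=0$ for $n\ll 0$, while for $n\gg 0$ only the poles contribute and $[z^{n}](1-q^{-r}z)^{-j}=\binom{n+j-1}{j-1}q^{-nr}$ has exactly the required shape; reassembling by the same Fourier inversion produces $f\in C^{\infty}_{L}(F^{\times})$ with $M(f)=Z$.

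I expect the only genuine obstacle to be the bounded-level claim of Step 1, namely that membership in $C^{\infty}_{L}(F^{\times})$ forces a \emph{uniform} bound on the $\calo^{\times}$-level, so that the $\calo^{\times}$-Fourier expansion is finite and the problem decouples into independent scalar problems indexed by $\chi\in\widehat{\calo^{\times}}$; this is precisely where ``compact support in $F$'' and ``a single fixed family $\{c_{r,j}\}$ governing all of $|a|\ll 1$'' enter essentially. After that the argument is bookkeeping with rational generating functions assembled from $\zeta(s+r)^{j}=(1-q^{-(s+r)})^{-j}$, and \eqref{eq:invchar} is nothing more than extraction of Taylor coefficients at $z=0$ followed by Fourier inversion on $\calo^{\times}$.
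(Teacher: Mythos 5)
Your argument is correct, but note that the paper does not prove this statement at all: Theorem \ref{thm:Igusa} is quoted verbatim from Igusa (\cite[Theorem 1.5.3]{Igusa:forms}) and ends with \qed, so there is no in-paper proof to compare against. What you have written is essentially a sound reconstruction of Igusa's own elementary argument: the bounded-level claim you single out is indeed the one genuinely analytic input, and your proof of it (finite common level of the finitely many locally constant $c_{r,j}$ on the compact group $\calo^\times$, compactness of the transition region $-N_0\le\ord(a)<N_1$, and boundedness of the support) is complete; after that, the reduction to one-sided sequences $g_\chi(n)$, the identity $\sum_{n\ge 0}n^{j-1}x^n\in\operatorname{span}\{(1-x)^{-1},\dots,(1-x)^{-j}\}$ with $x=q^{-r}z$ (equivalently $\zeta(s+r)^i$), coefficient extraction at $z=0$, and finite Fourier inversion on $\calo^\times$ give injectivity, surjectivity and the formula \eqref{eq:invchar} exactly as you say. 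Two small bookkeeping remarks: with $g(n):=\int_{\calo^\times}f(\varpi^nu)\chi(u)\,d^\times u$ one has $M(f)(\chi_s)=\sum_n g(n)z^n$ with no prefactor, so your display $M(f)(\chi_s)=d^\times x(\calo^\times)\sum_n g(n)z^n$ is consistent only if $g$ denotes the normalized average; relatedly, \eqref{eq:invchar} as stated matches Fourier inversion only under the normalization $d^\times x(\calo^\times)=1$ (which holds for the paper's measure when $\psi$ is unramified), so one should fix conventions once and carry them through. You are also right to read ``holomorphic'' as ``lies in $\cc[q^{-s},q^s]$'' (the paper's convention for sections); with the weaker reading ``entire in $s$'' surjectivity would fail, so this interpretive choice is not optional but it is the intended one.
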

\noindent Note that the sum in \eqref{eq:invchar} is a finite sum.

\subsection{Asymptotics toward the origin}\label{ssec:nec}

Recall that we have fixed a good ordering of $\Lambda=\{(s_i,\lambda_i)\}_{1\le i\le k}$ and thus $\lambda_k=1$. For each $d\in \zz_{>0}$, define (finite) multisets, whose underlying sets are subsets of $\cc/\tfrac{2\pi\sqrt{-1}}{\log q} \zz$:
\begin{align}\label{eq:levelset}
\begin{split}
    L(d):=&\sum_{\substack{i: \lambda_i=d}}\left\{\tfrac{s_i+1}{\lambda_i}-(s_k+1)+\tfrac{\mu}{\lambda_i}\tfrac{2\pi\sqrt{-1}}{\log q}: 0\le \mu<\lambda_i\right\},\\
    L_d:=&\sum_{d|\lambda} L(\lambda),
\end{split}
\end{align}
where sums are taken in the sense of multisets (see \S \ref{ssec:multi} for conventions). Note that for $r\in \cc/\tfrac{2\pi\sqrt{-1}}{\log q} \zz,$ its real part $\mathrm{Re}(r)$ is well defined. From Proposition \ref{prop:lambda=1} we deduce

\begin{Lem}\label{lem:basic}
For $r\in \mathrm{Supp}(L_1)$, $0\ge \mathrm{Re}(r)>-(s_k+1)$. Moreover,  $\mathrm{Re}(r)=0$ if and only if $r=0$, and in this case $m_0(L_1)=1$.\qed 
\end{Lem}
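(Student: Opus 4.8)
\textbf{Proof plan for Lemma \ref{lem:basic}.}

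The plan is to unwind the definition of $L_1=\sum_{d\mid \lambda}L(\lambda)$ and reduce everything to the numerical input $(s_i,\lambda_i)$ and the inequality from Proposition \ref{prop:lambda=1}. First I would note that $\mathrm{Supp}(L_1)$ is exactly the union, over all indices $i$ (there is no divisibility constraint once we take $d=1$), of the sets
\begin{align*}
    \left\{\tfrac{s_i+1}{\lambda_i}-(s_k+1)+\tfrac{\mu}{\lambda_i}\tfrac{2\pi\sqrt{-1}}{\log q}\zz : 0\le \mu<\lambda_i\right\}.
\end{align*}
Since $\tfrac{2\pi\sqrt{-1}}{\log q}\zz$ is purely imaginary, every such $r$ has $\mathrm{Re}(r)=\tfrac{s_i+1}{\lambda_i}-(s_k+1)$ for the corresponding $i$. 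So the real-part claims are purely about the numbers $\tfrac{s_i+1}{\lambda_i}-(s_k+1)$.

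Next I would establish the two inequalities. For the upper bound $\mathrm{Re}(r)\le 0$: by the good ordering and Proposition \ref{prop:lambda=1} we have $\tfrac{s_i}{\lambda_i}\le s_k$ for all $i$, hence $\tfrac{s_i+1}{\lambda_i}\le s_k+\tfrac{1}{\lambda_i}\le s_k+1$ using $\lambda_i\ge 1$, which gives $\tfrac{s_i+1}{\lambda_i}-(s_k+1)\le 0$. For the strict lower bound $\mathrm{Re}(r)>-(s_k+1)$: this is immediate since $s_i+1>0$ and $\lambda_i>0$ force $\tfrac{s_i+1}{\lambda_i}>0$, so $\tfrac{s_i+1}{\lambda_i}-(s_k+1)>-(s_k+1)$.

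For the equality case, $\mathrm{Re}(r)=0$ forces $\tfrac{s_i+1}{\lambda_i}=s_k+1$ for the relevant $i$. Since $\tfrac{s_i}{\lambda_i}\le s_k$ with equality only when $i=k$ (strictness of Proposition \ref{prop:lambda=1} for $i<k$), and since $\lambda_i\ge 1$ with $\tfrac{1}{\lambda_i}\le 1$, the only way to reach $\tfrac{s_i+1}{\lambda_i}=\tfrac{s_i}{\lambda_i}+\tfrac{1}{\lambda_i}=s_k+1$ is $i=k$, $\lambda_k=1$, forcing $r=\tfrac{s_k+1}{1}-(s_k+1)+0=0$ (the index $\mu$ must be $0$ as $\lambda_k=1$). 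This shows $\mathrm{Re}(r)=0\iff r=0$, and since $i=k$ is the unique contributing index and it contributes $0$ with multiplicity one, $m_0(L_1)=1$ — here one should also check no other index $i<k$ contributes a point equal to $0$, which is exactly the strict inequality just used. There is no real obstacle here; the only mild subtlety is bookkeeping the multiplicities correctly in the multiset sum, i.e., confirming that distinct indices $i\ne k$ cannot accidentally produce the point $0$, which again follows from $\tfrac{s_i+1}{\lambda_i}<s_k+1$ strictly for $i<k$.
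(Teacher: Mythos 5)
Your argument is correct and is exactly the intended one: the paper states Lemma \ref{lem:basic} as an immediate consequence of Proposition \ref{prop:lambda=1} (with the proof omitted), and your computation $\mathrm{Re}(r)=\tfrac{s_i+1}{\lambda_i}-(s_k+1)$ together with $s_i\ge 0$, $\lambda_i\ge 1$, and the strict inequality $\tfrac{s_i}{\lambda_i}<s_k$ for $i<k$ (plus $\lambda_k=1$, which forces $\mu=0$ and uniqueness of the highest datum) is precisely that deduction, multiplicity bookkeeping included.
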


\begin{Prop}\label{prop:globalchar}
    Let $f\in C^\infty(X_P^\circ(F))$. Then $f$ satisfies \ref{cond:orig} if and only if 
    \begin{enumerate}
        \item the function $f$ has compact support in $X_P(F)$, and 
        \item for $n$ large enough (depending on $f$) and $x\in X_P^1$, we have
    \begin{align}\label{eq:globalchar}
        f(m(\varpi^{n})^{-1}x)=\sum_{d=1}^\infty \sum_{\substack{\chi\in \widehat{\calo}^\times\\\ord(\chi)=d}}\sum_{r\in \mathrm{Supp}(L_d)}\sum_{j=1}^{m_r(L_d)} c_{r,j,\chi}(x)q^{-nr}n^{j-1},
    \end{align}
    for some $c_{r,j,\chi}\in C^\infty(X_P^1)$ satisfying $c_{r,j,\chi}(m(a)^{-1}x)=\chi(a)c_{r,j,\chi}(x)$ for $a\in \calo^\times$.
    \end{enumerate}
\end{Prop}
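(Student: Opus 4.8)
The plan is to reinterpret, for each $\chi\in\widehat{\calo^\times}$, the Mellin transform $f_{\chi_s}$ as a generating series in the single variable $Z:=q^{-(s+s_k+1)}$ whose coefficients are the $\calo^\times$-isotypic components of the level slices of $f$, and then to read off Condition~\ref{cond:orig} via the elementary dictionary of Igusa \cite[\S 1.5]{Igusa:forms} between the pole structure of a rational generating function and the asymptotics of its coefficients, applied fibrewise over $X_P^1$. First I would use $\delta_P^{1/2}=|\omega_P|^{s_k+1}$ (from \eqref{eq:sk}), decompose $M^{\mathrm{ab}}(F)\cong F^\times=\bigsqcup_{n\in\zz}\varpi^n\calo^\times$ through the section $m$, and normalize so that $\vol(\calo^\times,d^\times u)=1$, to rewrite, for $x\in X_P^1$,
\begin{align*}
  f_{\chi_s}(x)=\sum_{n\in\zz} F_{n,\chi}(x)\,Z^{\,n},\qquad F_{n,\chi}(x):=\int_{\calo^\times}\chi(u)\,f\big(m(\varpi^{n}u)^{-1}x\big)\,d^\times u.
\end{align*}
Each $F_{n,\chi}$ is smooth on $X_P^1$ and satisfies $F_{n,\chi}(m(u_0)^{-1}x)=\chi(u_0)^{-1}F_{n,\chi}(x)$, and Fourier inversion on the compact group $\calo^\times$ gives $f(m(\varpi^{n})^{-1}x)=\sum_{\chi}F_{n,\chi}(x)$. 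Since $G(F)=P(F)K$, a section of $I(\chi_s)$ is determined by its restriction to $K$, and holomorphy of the quotient \eqref{eq:nec1} can be checked there, it suffices to study $f_{\chi_s}$ on $X_P^1$.

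Next I would pin down the pole divisor of the denominator in \eqref{eq:nec1}. Because $\chi(\varpi)=1$ and $\chi^{\lambda_i}|_{\calo^\times}$ is trivial exactly when $\ord(\chi)\mid\lambda_i$, for $\chi$ with $\ord(\chi)=d$ one has
\begin{align*}
  \prod_{i=1}^{k}L(s_i+1,\chi_s^{\lambda_i})=\prod_{i\,:\,d\mid\lambda_i}\big(1-q^{\,\lambda_i(s_k+1)-(s_i+1)}Z^{\lambda_i}\big)^{-1}.
\end{align*}
By Proposition~\ref{prop:lambda=1} the exponents $\lambda_i(s_k+1)-(s_i+1)$ are $\ge 0$, and a direct computation matches the $Z$-zeros of the factors on the right — hence the poles of the product — with the points $Z=q^{\,r}$ for $r\in\mathrm{Supp}(L_d)$, the pole order at $q^{\,r}$ being the multiplicity $m_r(L_d)$; here $L_d$ is the multiset from \eqref{eq:levelset} and $-(s_k+1)<\mathrm{Re}(r)\le 0$ as in the proof of Lemma~\ref{lem:basic}. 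Thus Condition~\ref{cond:orig}(3) says precisely that $f_{\chi_s}(x)$ is a rational function of $Z$ whose poles lie in $\{q^{\,r}:r\in\mathrm{Supp}(L_d)\}$ with order at most $m_r(L_d)$ at $q^{\,r}$.

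Now I would invoke the Igusa dictionary in the form: a vector-valued Laurent series $\sum_{n\ge n_0}a_nZ^n$ represents a rational function of $Z$ whose poles lie in $\{q^{\,r}\}$ with orders $\le m_r$ if and only if there is $n_1$ with $a_n=\sum_r\sum_{j=1}^{m_r}b_{r,j}\,q^{-nr}n^{\,j-1}$ for $n\ge n_1$, each $b_{r,j}$ being one fixed linear combination of $a_{n_1},\dots,a_{n_1+N}$ with $(N+1)$ the number of such pairs $(r,j)$. Combined with the previous paragraph, Condition~\ref{cond:orig}(2)--(3) becomes: $F_{n,\chi}(x)=0$ for $n\ll 0$, and $F_{n,\chi}(x)=\sum_{r\in\mathrm{Supp}(L_d)}\sum_{j=1}^{m_r(L_d)}b_{r,j,\chi}(x)\,q^{-nr}n^{\,j-1}$ for $n\gg 0$. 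Vanishing for $n\ll 0$ is exactly compact support of $f$ in $X_P(F)$: for the forward implication I would cite \cite[Lemma 5.6, Corollary 5.7]{Getz:Hsu:Leslie} to make it uniform in $x$, and conversely $f(m(\varpi^n)^{-1}x)=0$ once $q^{-n}$ exceeds the support radius. Summing over $\chi$ via $f(m(\varpi^n)^{-1}x)=\sum_\chi F_{n,\chi}(x)$ and putting $c_{r,j,\chi}:=b_{r,j,\chi^{-1}}$ yields \eqref{eq:globalchar}; matching coefficients of the linearly independent functions $n\mapsto q^{-nr}n^{\,j-1}$ gives $c_{r,j,\chi}(m(a)^{-1}x)=\chi(a)c_{r,j,\chi}(x)$, and smoothness of $c_{r,j,\chi}$ follows since it is one fixed finite linear combination of the smooth $F_{n,\chi}$. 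The converse runs the same chain backwards, using $-(s_k+1)<\mathrm{Re}(r)$ to see the resulting series converges absolutely for $\mathrm{Re}(s)\gg 0$ so that Condition~\ref{cond:orig} holds; Condition~\ref{cond:orig}(1) corresponds on both sides to finiteness of the sums together with $K$-finiteness of the $c_{r,j,\chi}$, which is transparent from the $K$-equivariance of $f\mapsto f_{\chi_s}$ and of the $\calo^\times$-projections.

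The step I expect to be the main obstacle is \emph{uniformity in $x\in X_P^1$}: Condition~\ref{cond:orig} is pointwise in $g$, whereas \eqref{eq:globalchar} demands a single threshold $n\gg_f 0$, honest coefficient functions $c_{r,j,\chi}\in C^\infty(X_P^1)$, and only finitely many nonzero terms. I would control this by combining compactness of $X_P^1$ with the $K$-finiteness in Condition~\ref{cond:orig}(1): the latter forces $f_{\chi_s}$ into a finite-dimensional space of meromorphic sections of $I(\chi_s)$, on which the pole locations, the pole orders, the degrees of the numerators — hence the onset $n_1$ of the asymptotic regime — and the number of nonzero $\chi$-types are uniformly bounded, so that the $b_{r,j,\chi}(x)$ are obtained from $F_{n_1,\chi}(x),\dots,F_{n_1+N,\chi}(x)$ by one and the same linear recipe for every $x$. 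The remaining points — the bookkeeping $\chi\leftrightarrow\chi^{-1}$, the reduction from $G(F)$ to $X_P^1$, and the matching of multiplicities in $L_d$ with pole orders — should be routine.
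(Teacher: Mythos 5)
Your proposal is correct and follows essentially the same route as the paper: after identifying $\prod_i L(s_i+1,\chi_s^{\lambda_i})$ (for $\ord(\chi)=d$) with $\prod_{r\in\mathrm{Supp}(L_d)}\zeta(s+s_k+1+r)^{m_r(L_d)}$, both arguments reduce via the Iwasawa decomposition and right $K'$-invariance to finitely many fibers and then apply Igusa's Mellin dictionary (Theorem \ref{thm:Igusa}); your generating-function statement in the variable $Z=q^{-(s+s_k+1)}$ is exactly that dictionary, and your uniformity-in-$x$ concern is resolved in the paper the same way, by the finiteness of the coset representatives $(g_i)$ of $K'$ in $K$.
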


\begin{Rem}\label{rem:d}
Since $\Lambda$ is a finite multiset, the sum in \eqref{eq:globalchar} is actually finite. Indeed, $L(d)$ and $L_d$ are empty for $d>6$ (see Appendix \ref{appendix}).
\end{Rem}

\begin{proof}
    By Proposition \ref{prop:asympinfty} both assumptions imply $f$ is right $K$-finite and has compact support in $X_P(F),$ which we assume henceforth. 
    
    Let $K'\le K$ be a compact open subgroup such that $f$ is right $K'$-invariant. Let $\{g_i\}$ be a (finite) set of representatives of left cosets of $K'$ in $K$. By the Iwasawa decomposition, \ref{cond:orig} is satisfied if and only if they are satisfied for all $g_i$. On the other hand, since the set $\{g_i\}$ is finite, asymptotics \eqref{eq:globalchar} can be checked at each $g_i$ separately. 
        
    Therefore, it suffices to fix $g\in G(F)$ and study the smooth function 
\begin{align*}
    f_g: F^\times&\longrightarrow \cc \\
        a&\mapsto f(m(a)^{-1}g),
\end{align*}
Note that $f_g$ has compact support in $F$ and is $\calo^\times$-finite by our assumption. Moreover, for $\chi\in\widehat{\calo}^\times$ by \eqref{eq:sk} we can write
\begin{align*}
        f_{\chi_s}(g)=M(f_g)(\chi_{s+s_k+1}).
\end{align*}
Then \ref{cond:orig} is reduced to
\begin{enumerate}
    \item [(1na')]  The integral defining $M(f_g)(\chi_s)$ is absolutely convergent for $\mathrm{Re}(s)$ sufficiently large, and the complex function 
    \begin{align*}
    \frac{M(f_g)(\chi_{s})}{\prod_{i=1}^k L(s_i+1-\lambda_i(s_k+1),\chi_s^{\lambda_i})} 
\end{align*}
is a function in $\cc[q^{-s},q^s]$.
\end{enumerate}
    To prove the proposition, we need to show $f_g$ satisfies (1na') if and only if there exist constants $c_{r,j,\chi}\in\cc$ such that for $|a|\ll_{f_g} 1$ we have
    \begin{align*}
    f_g(a)=\sum_{d=1}^\infty \sum_{\substack{\chi\in \widehat{\calo}^\times\\\ord(\chi)=d}}\sum_{r\in \mathrm{Supp}(L_d)}\sum_{j=1}^{m_r(L_d)} c_{r,j,\chi}\chi(a)|a|^r\mathrm{ord}(a)^{j-1}.
    \end{align*}

Observe that for $\chi\in\widehat{\calo}^\times$ of order $d$, we have
    \begin{align*}
        \prod_{i=1}^k L(s_i+1-\lambda_i(s_k+1),\chi_s^{\lambda_i})=\prod_{r\in \mathrm{Supp}(L_d)} \zeta(s+r)^{m_r(L_d)}.
    \end{align*}
Therefore, the set
\begin{align*}
    \big\{M(f_g): f_g \textrm{ satisfies (1na')} \big\}
\end{align*}
is the subspace of functions in $\mathcal{Z}_{L_1}(\widehat{F}^\times)$ such that the constants $b_{r,j,\chi}$ in \eqref{eq:laurent} are zero unless $r\in \mathrm{Supp}(L_{\mathrm{ord}(\chi)})$ and $j\le m_r(L_{\mathrm{ord}(\chi)})$.
By Theorem \ref{thm:Igusa} and \eqref{eq:invchar}, this is equivalent to requiring the constants $c_{r,j,\chi^{-1}}$ of $f_g\in C^\infty_{L_1}(F^\times)$ are zero unless $r\in \mathrm{Supp}(L_{\mathrm{ord}(\chi)})$ and $j\le m_r(L_{\mathrm{ord}(\chi)})$. The assertion follows as $\mathrm{ord}(\chi)=\mathrm{ord}(\chi^{-1})$. 
\end{proof}

 To proceed  we first give an alternative definition of $\mathcal{S}(X_P(F))$ in terms of $\mathcal{F}^{\mathrm{geo}}_{P|P^\mathrm{op}}$ instead of $\mathcal{R}_{P|P^{\mathrm{op}}}$.
    
\begin{Lem}\label{lem:newdef}
Suppose $f\in C^\infty(X_P^\circ(F))$ satisfies \ref{cond:orig}. Then \ref{cond:origradon} is equivalent to the following: For each $\chi\in \widehat{\calo}^\times$ and  $g\in G(F)$, the integral defining
        $\calf^{\mathrm{geo}}_{P|P^{\mathrm{op}}}(f)_{\chi_s}^{\mathrm{op}}(g)$
    is absolutely convergent for $\tfrac{s_{k-1}}{\lambda_{k-1}}<\mathrm{Re}(s)<s_k+1$, and the section
    \begin{align*}
          \frac{\calf^{\mathrm{geo}}_{P|P^{\mathrm{op}}}(f)_{\chi_s}^{\mathrm{op}}}{L(s_k+1,\chi_s^{-1})\prod_{i=1}^{k-1}L(-s_i,\chi^{\lambda_i}_s)}
    \end{align*}
    is holomorphic.
\end{Lem}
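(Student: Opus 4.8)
The plan is to track how the Mellin transform $f\mapsto f^{\mathrm{op}}_{\chi_s}$ interacts with the operator $\mu_P^{\mathrm{geo}}=[1]_{!}(\mu_{s_k})$ in the factorization $\calf_{P|P^{\mathrm{op}}}^{\mathrm{geo}}=\mu_P^{\mathrm{geo}}\circ\calr_{P|P^{\mathrm{op}}}$, and to show that on Mellin transforms $\mu_P^{\mathrm{geo}}$ becomes multiplication by a Tate-type $\gamma$-factor. For $h\in\mathcal{S}(X_{P^{\mathrm{op}}}^\circ(F))$ I would unfold the definitions of $\mu_P^{\mathrm{geo}}$ and of the Mellin transform, substitute $n=mm'$ in the resulting double integral over $M^{\mathrm{ab}}(F)\times M^{\mathrm{ab}}(F)$, and use only that $\omega_P$ is a homomorphism and that $\delta_{P^{\mathrm{op}}}^{1/2}$, $\chi_s$ are characters, to get, wherever the double integral converges absolutely,
\begin{equation*}
(\mu_P^{\mathrm{geo}}h)_{\chi_s}^{\mathrm{op}}(g)=\gamma_\chi(s)\,h_{\chi_s}^{\mathrm{op}}(g),\qquad \gamma_\chi(s):=\frac{1}{\zeta(1)}\int_{F^\times}\chi^{-1}(a)|a|^{s_k+1-s}\psi(a)\,d^\times a,
\end{equation*}
where $M^{\mathrm{ab}}(F)$ is identified with $F^\times$ via $\omega_P$; outside this range the identity propagates by meromorphic continuation. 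With $h=\calr_{P|P^{\mathrm{op}}}f$ this reads $(\calf_{P|P^{\mathrm{op}}}^{\mathrm{geo}}f)_{\chi_s}^{\mathrm{op}}(g)=\gamma_\chi(s)\,(\calr_{P|P^{\mathrm{op}}}f)_{\chi_s}^{\mathrm{op}}(g)$.

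Next I would identify $\gamma_\chi(s)$ by Tate's local theory---a geometric-series computation when $\chi$ is unramified, a Gauss-sum computation when $\chi$ is ramified (so that the $L$-factors are then trivial)---obtaining
\begin{equation*}
\gamma_\chi(s)=u(s)\,\frac{L(s_k+1,\chi_s^{-1})}{L(-s_k,\chi_s)}
\end{equation*}
for a unit $u(s)\in\cc[q^{-s},q^s]^\times$ (a monomial in $q^{\pm s}$ times a nonzero constant; in fact $u\equiv 1$ for $\chi$ unramified). Simultaneously one reads off the domains of absolute convergence: the $|a|\le 1$ part of the integral defining $\gamma_\chi(s)$ is a geometric series converging exactly for $\mathrm{Re}(s)<s_k+1$ (the $|a|>1$ part being a finite sum), while the integral defining $(\calr_{P|P^{\mathrm{op}}}f)_{\chi_s}^{\mathrm{op}}$ converges for $\mathrm{Re}(s)>\tfrac{s_{k-1}}{\lambda_{k-1}}$ by the asymptotics of $f$ near $X_P-X_P^\circ$ established in \S\ref{ssec:nec} (and \cite{Getz:Hsu:Leslie}). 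Since $\tfrac{s_{k-1}}{\lambda_{k-1}}<s_k<s_k+1$ by Proposition \ref{prop:lambda=1}, these overlap in the nonempty strip $\tfrac{s_{k-1}}{\lambda_{k-1}}<\mathrm{Re}(s)<s_k+1$, on which the iterated integral defining $(\calf_{P|P^{\mathrm{op}}}^{\mathrm{geo}}f)_{\chi_s}^{\mathrm{op}}(g)$ converges absolutely; this is item (1), and it also justifies the Fubini interchange used above.

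To finish, I would compare holomorphy. Using $\lambda_k=1$ (Proposition \ref{prop:lambda=1}) to split $\prod_{i=1}^k L(-s_i,\chi_s^{\lambda_i})=L(-s_k,\chi_s)\prod_{i=1}^{k-1}L(-s_i,\chi_s^{\lambda_i})$, the two displays above give
\begin{equation*}
\frac{(\calf_{P|P^{\mathrm{op}}}^{\mathrm{geo}}f)_{\chi_s}^{\mathrm{op}}(g)}{L(s_k+1,\chi_s^{-1})\prod_{i=1}^{k-1}L(-s_i,\chi_s^{\lambda_i})}=u(s)\,\frac{(\calr_{P|P^{\mathrm{op}}}f)_{\chi_s}^{\mathrm{op}}(g)}{\prod_{i=1}^{k}L(-s_i,\chi_s^{\lambda_i})},
\end{equation*}
and since $u(s)$ is a unit the left-hand side is holomorphic in $s$ for every $g$ (item (2)) if and only if the right-hand side is, i.e., if and only if Condition \ref{cond:origradon} holds---holomorphy of a section being tested pointwise in $g$. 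This is the desired equivalence.

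The step I expect to be the main obstacle is the convergence bookkeeping behind item (1) and the Fubini interchange: one must pin down precisely where $(\calr_{P|P^{\mathrm{op}}}f)_{\chi_s}^{\mathrm{op}}$ converges absolutely, which requires controlling the growth of $\calr_{P|P^{\mathrm{op}}}f$---equivalently of $f$ near $X_P-X_P^\circ$---in terms of the multiset $\Lambda$, and this is where the asymptotic analysis of \S\ref{ssec:nec} and the estimates of \cite{Getz:Hsu:Leslie} do the real work. By contrast, the identification of $\gamma_\chi$ is a standard Tate computation.
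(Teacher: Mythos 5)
Your skeleton coincides with the paper's: the heart is the identity $(\calf^{\mathrm{geo}}_{P|P^{\mathrm{op}}}f)^{\mathrm{op}}_{\chi_s}=\gamma_\chi(s)\,(\calr_{P|P^{\mathrm{op}}}f)^{\mathrm{op}}_{\chi_s}$ with $\gamma_\chi(s)$ a Tate factor of the shape $L(s_k+1,\chi_s^{-1})/L(-s_k,\chi_s)$ up to a unit, which is exactly \cite[Proposition 4.7]{Getz:Hsu:Leslie} that the paper invokes (compare the proof of Lemma \ref{lem:equivariant}), and your final holomorphy comparison using $\lambda_k=1$ is the same as the paper's. The genuine gap is in your justification of that identity. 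The double integral over $M^{\mathrm{ab}}(F)\times M^{\mathrm{ab}}(F)$ is \emph{never} absolutely convergent: since $|\psi|=1$ and $\delta_{P^{\mathrm{op}}}^{1/2}=|\omega_P|^{-(s_k+1)}$ by \eqref{eq:sk}, after the substitution $n=mm'$ the integral of the absolute value factors as
\begin{align*}
\Big(\int_{F^\times}|a|^{s_k+1-\mathrm{Re}(s)}\,d^\times a\Big)\cdot\Big(\int_{F^\times}|c|^{\mathrm{Re}(s)-s_k-1}\,\big|(\calr_{P|P^{\mathrm{op}}}f)(c^{-1}g)\big|\,d^\times c\Big),
\end{align*}
and the first factor diverges for every $s$. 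So ``wherever the double integral converges absolutely'' is the empty set, and absolute convergence of the \emph{iterated} integral in item (1) does not license the interchange: Fubini--Tonelli requires integrability of the absolute value, which is precisely what the oscillation of $\psi$ is needed to compensate. Making the interchange legitimate is the actual analytic content here (shell decomposition in the $\mu_P^{\mathrm{geo}}$-variable, local constancy of $a\mapsto (\calr_{P|P^{\mathrm{op}}}f)(a^{-1}x)$, vanishing of $\int_{|a|=q^n}\psi(a)\chi^{-1}(a)\,d^\times a$ for $n$ large, then meromorphic continuation), and it is why the paper cites \cite[Proposition 4.7]{Getz:Hsu:Leslie} rather than recomputing a ``standard Tate integral.''

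A second, related mis-step is the sourcing of the convergence in item (1). The lower edge $\tfrac{s_{k-1}}{\lambda_{k-1}}$ and the needed absolute convergence are not consequences of the asymptotics of $f$ near $X_P-X_P^\circ$ from \S\ref{ssec:nec}: those describe $f$ on the $P$-side, whereas what is needed is control of $\calr_{P|P^{\mathrm{op}}}f$ on the $P^{\mathrm{op}}$-side. The paper's route is different and this is where the real work sits: from $f\in L^1(X_P(F))$ (a consequence of Condition \ref{cond:orig}) one reverses the proof of \cite[Theorem 6.5]{Getz:Hsu:Leslie} by Fubini--Tonelli to get absolute convergence of $(\calr_{P|P^{\mathrm{op}}}f)^{\mathrm{op}}_{\chi_s}$ on $\mathrm{Re}(s)\ge s_k+1$, combines this with convergence for $\mathrm{Re}(s)\gg 0$, meromorphy, and vanishing for all but finitely many $\chi\in\widehat{\calo^\times}$ to get holomorphy slightly to the left of $s_k+1$, and only then feeds this into the proof of \cite[Proposition 4.7]{Getz:Hsu:Leslie} to obtain both the strip in item (1) and the identity underlying item (2). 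Your outline omits this $L^1$ input entirely; to repair the proposal you must either supply GHL's regularization argument or cite it, as the paper does.
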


\begin{proof}
 Recall by definition $\mathcal{F}_{P|P^{\mathrm{op}}}^{\mathrm{geo}}=1_!(\mu_{s_k})\circ \mathcal{R}_{P|P^{\mathrm{op}}}$. The sufficiency follows from the work in \cite[\S 4]{Getz:Hsu:Leslie}. For the converse, in view of \cite[Proposition 4.8]{Getz:Hsu:Leslie} and its proof, it suffices to check the integral defining $\mathcal{R}_{P|P^\mathrm{op}}(f)^{\mathrm{op}}_{\chi_s}$ is holomorphic for $\mathrm{Re}(s)>s_k+1-\epsilon$ for some $\epsilon>0$ for all $\chi\in\widehat{\calo}^\times$. Since $f\in L^1(X_P(F))$ by Proposition \ref{prop:asympinfty}, by Fubini-Tonelli theorem one may reverse the proof of \cite[Theorem 6.5]{Getz:Hsu:Leslie} to see the integral defining $\mathcal{R}_{P|P^\mathrm{op}}(f)^{\mathrm{op}}_{\chi_s}$ is absolutely convergent for $\mathrm{Re}(s)=s_k+1$. Since it is also absolutely convergent for $\mathrm{Re}(s)\gg 0$, it is absolutely convergent for $\mathrm{Re}(s)\ge s_k+1$. As $\mathcal{R}_{P|P^\mathrm{op}}(f)^{\mathrm{op}}_{\chi_s}$ is meromorphic and vanishes for all but finitely many $\chi,$ there exists $\epsilon>0$ such that it is holomorphic for $\mathrm{Re}(s)>s_k+1-\epsilon$ for all $\chi$. 
\end{proof}

For $j\in\zz_{>0}$, let $h_j\in\zz[z]$ be the monic polynomial of degree $j-1$ such that 
\begin{align*}
    \frac{h_j(z)}{(1-z)^j}=\sum_{i=0}^\infty i^{j-1}z^i. 
\end{align*}
\begin{Prop}\label{prop:implicit}
 Let $f\in C^\infty(X_P^\circ (F))$ satisfy the equivalent conditions in Proposition \ref{prop:globalchar}. Then $f\in \mathcal{S}(X_P(F))$ if and only if for every $d\in \zz_{>0}$, $r\in \mathrm{Supp}(L_d), \chi\in\widehat{\calo}^\times$ of order $d$, and $x^\ast\in X_{P^{\mathrm{op}}}^\circ(F)$, the meromorphic function
    \begin{align*}
    \sum_{j=1}^{m_r(L_d)} h_j(q^{-s})\zeta(s)^{j}\int_{F^\times} |t|^{s-r-2s_k-2}\int_{X_P^1} c_{r,j,\chi}(x)\psi(\varpi^{-\mathrm{ord}(t)}\langle  x,x^\ast\rangle)dx d^\times t
\end{align*}
is holomorphic at $s=0$.
\end{Prop}

\begin{proof}
By the assumption $\mathcal{S}(X_P^\circ(F))\subseteq  \mathcal{S}(X_P(F))$ and linearity, to prove the assertion we can fix a triple $(d,r,\chi)$ with $r\in \mathrm{Supp}(L_d)$ and $\mathrm{ord}(\chi)=d,$ and assume for all $x\in X_P^1$
\begin{align*}
        f(m(\varpi^{n})^{-1}x)=\sum_{j=1}^{m_r(L_d)} c_{r,j,\chi}(x)q^{-nr}n^{j-1}.
\end{align*}
for $n\ge 0,$ and $f(m(\varpi^{n})^{-1}x)$ vanishes for $n<0$.

 Let $x^\ast=P^{\mathrm{op},\mathrm{der}}(F)g\in X_{P^{\mathrm{op}}}^\circ(F)$. Viewing  $c_{r,j,\chi}$ as a function in $\mathcal{S}(X_P^\circ(F))$ supported on $X_P^1$, we have by Theorem \ref{Thm: Fourier formula} 
 \begin{align*}
    \mathcal{F}_{P|P^{\mathrm{op}}}^{\mathrm{geo}}(c_{r,j,\chi})(x^\ast)=\int_{X_P^1} c_{r,j,\chi}(x)\psi(\langle  x,x^\ast\rangle)dx.
 \end{align*}
 Since $\mathrm{Re}(r+s_k+1)>0$ by Lemma \ref{lem:basic}, changing variables $t\mapsto at$, the integral 
\begin{align*}
    &\int_{F^\times}\int_{|a|\le 1} |t|^{s-s_k-1} |a|^{r+2s_k+2}\mathrm{ord}(a)^{j-1}\left|\mathcal{F}_{P|P^{\mathrm{op}}}^{\mathrm{geo}}(c_{r,j,\chi})(m(a^{-1}t)^{-1}x^\ast)\right| d^\times a d^\times t\\
    &=\int_{|a|\le 1}|a|^{s+r+s_k+1}\mathrm{ord}(a)^{j-1}\int_{F^\times} |t|^{s-s_k-1} \left|\mathcal{F}_{P|P^{\mathrm{op}}}^{\mathrm{geo}}(c_{r,j,\chi})(m(t)^{-1}x^\ast)\right| d^\times td^\times a
\end{align*}
converges for $\frac{s_{k-1}}{\lambda_{k-1}}<\mathrm{Re}(s)<s_k+1$ by Lemma \ref{lem:newdef}.

Thus by Fubini-Tonelli theorem and the Iwasawa decomposition for $\eta\in \widehat{\calo}^\times$
\begin{align*}
     &\calf^{\mathrm{geo}}_{P|P^{\mathrm{op}}}(f)_{\eta_s}^{\mathrm{op}}(g)\\
     &=\sum_{j=1}^{m_r(L_d)}\int_{F^\times} \eta(t)|t|^{s-s_k-1}\int_{|a|\le 1} \chi(a)|a|^{r+2s_k+2}\mathrm{ord}(a)^{j-1}\mathcal{F}_{P|P^{\mathrm{op}}}^{\mathrm{geo}}(c_{r,j,\chi})(m(a^{-1}t)^{-1}x^\ast)d^\times a d^\times t\\
     &=\sum_{j=1}^{m_r(L_d)}\int_{|a|\le 1} (\eta\chi)(a)|a|^{s+r+s_k+1}\mathrm{ord}(a)^{j-1}\int_{F^\times} \eta(t)|t|^{s-s_k-1}\mathcal{F}_{P|P^{\mathrm{op}}}^{\mathrm{geo}}(c_{r,j,\chi})(m(t)^{-1}x^\ast) d^\times td^\times a,
\end{align*}
which is zero unless $\eta\chi=1$; if $\eta=\overline{\chi}$, we have
\begin{align*}
   \calf^{\mathrm{geo}}_{P|P^{\mathrm{op}}}(f)_{\overline{\chi}_s}^{\mathrm{op}}(g)&= \sum_{j=1}^{m_r(L_d)}h_j(q^{-s-r-s_k-1})\zeta(s+r+s_k+1)^j\mathcal{F}_{P|P^{\mathrm{op}}}^{\mathrm{geo}}(c_{r,j,\chi})^{\mathrm{op}}_{\overline{\chi}_{s}}(g).
\end{align*}
Therefore, by Lemmas \ref{lem:basic} and \ref{lem:newdef}
\begin{align*}
    \frac{\calf^{\mathrm{geo}}_{P|P^{\mathrm{op}}}(f)_{\overline{\chi}_s}^{\mathrm{op}}(g)}{L(s_k+1,\overline{\chi}_s^{-1})\prod_{i=1}^{k-1}L(-s_i,\overline{\chi}^{\lambda_i}_s)}
\end{align*}
lies in $\cc[q^{-s},q^s]$ if and only if
\begin{align*}
    &\sum_{j=1}^{m_r(L_d)} h_j(q^{-s-r-s_k-1})\zeta(s+r+s_k+1)^{j}\int_{F^\times} |t|^{s-s_k-1} \int_{X_P^1} c_{r,j,\chi}(x)\psi(\varpi^{-\mathrm{ord}(t)}\langle  x,x^\ast\rangle)dx d^\times t
\end{align*}
is holomorphic at $s=-r-s_k-1$.
\end{proof}

For later use we write
\begin{align}\label{eq:radonnew}
    \Phi(s)(c_{r,j,\chi})(\cdot):= \int_{F^\times} |t|^{s-r-2s_k-2}\int_{X_P^1} c_{r,j,\chi}(x)\psi(\varpi^{-\mathrm{ord}(t)}\langle  x,\cdot \rangle)dx d^\times t.
\end{align}
If follows from the proof above and Lemma \ref{lem:newdef} that $\Phi(s)(c_{r,j,\chi})$ is a meromorphic section holomorphic at $s=0$.

\subsection{Representation-theoretic characterization}\label{ssec:explicit}

Let $(d,r,\chi)$ be a triple, where $d\in \zz_{>0}$ with $L_d$ nonempty, $r\in \mathrm{Supp}(L_d),$ and $\chi\in \widehat{\calo}^\times$ of order $d$.  For $n\in\zz_{>0},$ consider smooth $G(F)$-modules $I_{r,n,\chi}$ defined by
\begin{align*}
        \left\{f\in C^\infty(X_P^\circ(F)) : \begin{array}{c}
    f(x)=0 \textrm{ for } |x|\gg_f 1,\\
    f(x)=\sum_{j=1}^{n}|x|^{r}\log_{1/q}^{j-1}(|x|)c_{r,j,\chi}(m(\varpi^{\log_q |x|})^{-1}x) \\\textrm{ for } |x|\ll_f 1\end{array}
    \right\}\Big/\mathcal{S}(X_P^\circ(F)).
\end{align*}
For ease of notation, we will write interchangeably $f\in I_{r,n,\chi}$ as the vector $(c_{r,j,\chi})_{1\le j\le n}$. For convenience, we set $I_{r,0,\chi}:=0$.

\begin{Lem}\label{lem:easy}
For $n\in\zz_{>0}$, we have a canonical $G(F)$-equivariant isomorphism 
\begin{align*}
        \varphi_{r,n,\chi}: I_{r,n,\chi}/I_{r,n-1,\chi}\tilde{\longrightarrow} I_P(\overline{\chi}_{-r-s_k-1})
    \end{align*}
by sending $f$ to the unique function in $I_P(\overline{\chi}_{-r-s_k-1})$ that equals $c_{r,n,\chi}$ on $X_P^1$. 
\end{Lem}

\begin{proof}
The only unclear statement is the $G(F)$-equivariance.  Given $f=(c_{r,j,\chi})_{1\le j\le n}\in I_{r,n,\chi}$ and $g\in G(F)$, we have 
\begin{align*}
    R(g)f(x)&=f(xg)\\
    &=\sum_{j=1}^{n}|x|^{r}\left(\log_{1/q}(|x|)+\log_{1/q}\left(\frac{|xg|}{|x|}\right)\right)^{j-1}\left(\frac{|xg|}{|x|}\right)^r c_{r,j,\chi}(m(\varpi^{\log_q |xg|})^{-1}xg)\\
    &=\sum_{j=1}^{n}|x|^{r}\log_{1/q}^{j-1}(|x|)\sum_{i=j}^n \binom{i-1}{j-1}\log_{1/q}^{i-j}\left(\frac{|xg|}{|x|}\right)\left(\frac{|xg|}{|x|}\right)^r c_{r,i,\chi}(m(\varpi^{\log_q |xg|})^{-1}xg)
\end{align*}
for $|x|$ sufficiently small. In other words, $R(g)f=(\tilde{c}_{r,j,\chi})_{1\le j\le n}$, where
\begin{align}\label{eq:gtranslate}
    \tilde{c}_{r,j,\chi}(x):=\sum_{i=j}^n \binom{i-1}{j-1}\log_{1/q}^{i-j}\left(\frac{|xg|}{|x|}\right)\left(\frac{|xg|}{|x|}\right)^r c_{r,i,\chi}(m(\varpi^{\log_q |xg|})^{-1}xg).
\end{align}
Therefore for $x\in X_P^1,$ 
$$\varphi_{r,n,\chi}(R(g)f)(x)=|xg|^r c_{r,n,\chi}(m(\varpi^{\log_q |xg|})^{-1}xg)=R(g)\varphi_{r,n,\chi}(f)(x).$$ 
\end{proof}

We can understand \eqref{eq:radonnew} as a map
\begin{align*}
    \Phi(s):I_{r,1,\chi}\cong I_{r,n,\chi}/I_{r,n-1,\chi}&\longrightarrow I_{P^{\mathrm{op}}}(\chi_{-s+r+s_k+1})\\
    c_{r,n,\chi}&\mapsto  \Phi(s)(c_{r,n,\chi})
\end{align*}
 that produces meromorphic sections holomorphic at $s=0$. 
 \begin{Lem} \label{lem:equivariant}
 We have a commutative diagram
\begin{center}
    \begin{tikzcd}
            I_{r,1,\chi}\ar[d,"\varphi_{r,1,\chi}"]\ar[dr, "c\cdot \Phi(0)"]& \\
            I_P(\overline{\chi}_{-r-s_k-1}) \ar[r," \mathcal{R}_{P|P^{\mathrm{op}}}"] & I_{P^{\mathrm{op}}}(\chi_{r+s_k+1}),
    \end{tikzcd}
\end{center}
where $c$ is a nonzero constant. In particular, $\Phi(0)$ is $G(F)$-equivariant.
\end{Lem}

\begin{proof}
By \cite[Proposition 4.8]{Getz:Hsu:Leslie} and \eqref{eq:Rcom}, we have an identity of meromorphic sections
\begin{align*}
     &\int_{F^\times} \overline{\chi}(t)|t|^{s-r-2s_k-2}\int_{X_P^1} c_{r,1,\chi}(x)\psi(t^{-1}\langle  x,\cdot \rangle)dx d^\times t\\
     &=\left(1_!(\mu_{s_k})\circ \mathcal{R}_{P|P^{\mathrm{op}}}\left(c_{r,1,\chi}\mathbf{1}_{X_P^1}\right)\right)_{\overline{\chi}_{s-r-s_k-1}}^{\mathrm{op}}\\
     &=\gamma(-s_k,\overline{\chi}_{s-r-s_k-1},\psi)^{-1}\mathcal{R}_{P|P^{\mathrm{op}}}\left(\left(c_{r,1,\chi}\mathbf{1}_{X_P^1}\right)_{\overline{\chi}_{s-r-s_k-1}}\right).
\end{align*}
 By Lemma \ref{lem:basic} $\gamma(-s_k,\overline{\chi}_{-r-s_k-1},\psi)$ is a nonzero constant, and evaluating at $s=0$ we have
\begin{align*}
    \Phi(0)(c_{r,1,\chi})=\gamma(-s_k,\overline{\chi}_{-r-s_k-1},\psi)^{-1}\mathcal{R}_{P|P^{\mathrm{op}}} \circ \varphi_{r,1,\chi}(c_{r,1,\chi}).
\end{align*} 
\end{proof}

 For integers $m_r(L_d)\ge n\ge 1$, define $G(F)$-submodules 
\begin{align}\label{eq:Achi}
    \nonumber A_{r,n,\chi}&:=\left\{(c_{r,j,\chi})_{1\le j\le n} \in I_{r,n,\chi} : \sum_{j=1}^{n} h_j(q^{-s})\zeta(s)^j\Phi(s)(c_{r,j,\chi}) \textrm{ is holomorphic at } s=0\right\},\\
    A_{r,\chi}&:=A_{r,m_r(L_d),\chi}.
\end{align}
Now we are ready to state our main result.
\begin{Thm}\label{thm:exact}
Suppose $F$ is a nonarchimedean local field with $\mathcal{S}(X_P^\circ(F))\subseteq \mathcal{S}(X_P(F))$. We have an exact sequence of smooth $G(F)$-modules
\begin{align*}
    0\longrightarrow \mathcal{S}(X_P^\circ(F))\longrightarrow \mathcal{S}(X_P(F))\longrightarrow \bigoplus_{d\ge 1, L_d\neq \emptyset } \bigoplus_{\substack{\chi\in \widehat{\calo}^\times\\\ord(\chi)=d}}\bigoplus_{r\in \mathrm{Supp}(L_d)} A_{r,\chi}\longrightarrow 0.
\end{align*}
Each $A_{r,\chi}$ admits a natural filtration of $G(F)$-submodules
\begin{align*}
    0=A_{r,0,\chi}< A_{r,1,\chi}< \cdots < A_{r,m_r(L_d),\chi}=A_{r,\chi}
\end{align*}
together with canonical $G(F)$-equivariant injections
\begin{align}\label{eq:injseq}
 0 \neq A_{r,m_r(L_d),\chi}/A_{r,m_r(L_d)-1,\chi} \hookrightarrow \cdots  \hookrightarrow A_{r,2,\chi}/A_{r,1,\chi} \hookrightarrow A_{r,1,\chi},
\end{align}
and
$A_{r,1,\chi}\cong\mathrm{Ker}(\mathcal{R}_{P|P^{\mathrm{op}}})< I_P(\overline{\chi}_{-r-s_k-1})$ is a proper submodule. 
\end{Thm}

 In comparison with notations in Theorem \ref{thm:main:filt}, we have $A_{r,n,\chi}=A_{n}(\overline{\chi}_{-r-s_k-1})$ and $m(\overline{\chi}_{-r-s_k-1})=m_r(L_{\mathrm{ord}(\chi)}).$

\begin{proof}
The exact sequence follows directly from Proposition \ref{prop:implicit} and the definition of $A_{r,\chi}$.  For integers $j\ge 0,$ let
\begin{align}\label{eq:Mj}
   M_j:=\big\{ c_{r,1,\chi}\in I_{r,1,\chi}: \zeta(s)^j\Phi(s)(c_{r,1,\chi}) \textrm{ is holomorphic at } s=0\big\}
\end{align}
Then we have a descending chain of $G(F)$-modules
\begin{align}\label{eq:alternativedef}
    I_{r,1,\chi}=M_0\ge M_1\ge M_2\ge \cdots.
\end{align}
 By definition $h_j(1)\neq 0$ and since $\Phi(0)$ is $G(F)$-equivariant by Lemma \ref{lem:equivariant}, we have
\begin{align*}
    A_{r,n,\chi}=\big\{(c_{r,j,\chi})_{1\le j\le n} \in I_{r,n,\chi} : c_{r,j,\chi}\in M_j\big\}.
\end{align*}
Consequently, for $1\le n\le m_r(L_d)$ we have a canonical $G(F)$-equivariant isomorphism
\begin{align}\label{eq:successivequotient}
    A_{r,n,\chi}/A_{r,n-1,\chi}\cong M_n.
\end{align}
Then \eqref{eq:injseq} follows from \eqref{eq:alternativedef}. The nontriviality statement 
\begin{align*}
A_{r,m_r(L_d),\chi}/A_{r,m_r(L_d)-1,\chi}\neq 0
\end{align*}
will be proved in Corollary \ref{cor:nontrivial}. 

We are left to inspect the module $A_{r,1,\chi}\cong M_1,$ which is isomorphic to $\mathrm{Ker}(\mathcal{R}_{P|P^{\mathrm{op}}})$ by Lemma \ref{lem:equivariant}. To prove it is proper, we show there exists $c_{r,1,\chi}\in I_{r,1,\chi}$ such that $\Phi(0)(c_{r,1,\chi})(v_{P^\mathrm{op}}^\ast)$ is nonzero. Choose a compact open subgroup $K'\le K$ sufficiently small such that $\omega_P(K'\cap P(F))\le (\calo^\times)^d$ and $\langle v_Pg, v_{P^{\mathrm{op}}}^\ast\rangle \in \mathrm{ker}(\chi)\cap \calo^\times$ for all $g\in K'$. Let $c_{r,1,\chi}$ be the unique function supported on $P^{\mathrm{der}}(F)m(\calo^\times)K'$ such that for $a\in \calo^\times$ and $g\in K'$,
\begin{align*}
    c_{r,1,\chi}(m(a)^{-1}g)=\chi(a).
\end{align*}
Recall that we have assumed the conductor of $\psi$ is $\mathcal{O}.$ Then for $t\in F^\times$
\begin{align*}
    &\int_{X_P^1} c_{r,1,\chi}(x)\psi(\varpi^{-\mathrm{ord}(t)}\langle  x,v_{P^\mathrm{op}}^\ast\rangle)dx
\end{align*}
is up to a nonzero constant
\begin{align*}
 &\int_{\calo^\times}\chi(a)\psi(\varpi^{-\mathrm{ord}(t)} a)d^\times a
\end{align*}
which is the well-known Gauss sum
\begin{align*}
    &\mathfrak{G}(\varpi^{-\mathrm{ord}(t)},\chi) = \left\{\begin{array}{ll}
      1   &  \textrm{ if } \chi=1, |t|\ge 1, \\
      \zeta(-1)   & \textrm{ if } \chi=1, |t|= q^{-1},\\
     \neq 0 & \textrm{ if } c(\chi)=\mathrm{ord}(t)>0,\\
      0 & \textrm{ otherwise},
    \end{array}\right.
\end{align*}
where $c(\chi)$ is the conductor of $\chi$.
Therefore, $\Phi(0)(c_{r,1,\chi})(v_{P^\mathrm{op}}^\ast)$ is up to a nonzero constant
\begin{align*}
    \left\{\begin{array}{ll}
      \zeta(r+2s_k+2)+\zeta(-1)q^{r+2s_k+2}   &  \textrm{ if } \chi=1, \\ q^{-c(\chi)(-r-2s_k-2)}\mathfrak{G}(\varpi^{-c(\chi)},\chi) & \textrm{ if } \chi\neq 1,\\
    \end{array}\right.
\end{align*}
which is nonzero since $\mathrm{Re}(r+2s_k+2)>1$.
\end{proof} 

\begin{Rem}\label{rem:vanish}
    We claim $M_{m_r(L_d)+1}=0$. If not, then there exists $f\in \mathcal{S}(X_P^\circ(F))$ such that $f_{\overline{\chi}_{-r-s_k-1}}$ corresponds to a nonzero vector in $M_{m_r(L_d)+1}.$ Then by the identity \eqref{eq:Fouriercomp} below, we would arrive at $f_{\overline{\chi}_{-r-s_k-1}}=0,$ a contradiction. 
\end{Rem}

While we do not know how to further describe $A_{r,n,\chi}$ for $n\ge 2$, we propose the following conjectural description: Recall that the socle $\mathrm{Soc}(A)$ of an admissible $G(F)$-representation $A$ of finite length is the maximal semisimple $G(F)$-submodule of $A$. Let $\mathrm{Soc}^0(A):=0$ and define inductively for $n\in \zz_{>0}$ the submodule $\mathrm{Soc}^n(A)$ of $A$ so that
\begin{align*}
    \mathrm{Soc}^n(A)/\mathrm{Soc}^{n-1}(A)=\mathrm{Soc}(A/\mathrm{Soc}^{n-1}(A)).
\end{align*}

\begin{Conj}\label{conj:multiplicity}
Let $(d,r,\chi)$ be a triple. Then for $1\le n\le m_r(L_d)$
\begin{align*}
        A_{r,m_r(L_d)+1-n,\chi}/A_{r,m_r(L_d)-n,\chi}\cong \mathrm{Soc}^n(I_P(\overline{\chi}_{-r-s_k-1})), 
\end{align*}
and 
 $m_r(L_d)=\mathrm{ord}_{s=-(r+s_k+1)} \prod_{i=1}^k L(s_i+1,\chi_s^{\lambda_i})$ is the smallest positive integer such that $$\mathrm{Soc}^{m_r(L_d)+1}(I_P(\overline{\chi}_{-r-s_k-1}))=I_P(\overline{\chi}_{-r-s_k-1}).$$
\end{Conj}

By Remark \ref{rem:vanish} we set $A_{r,m_r(L_d)+1,\chi}:=A_{r,m_r(L_d),\chi}.$ The following proposition will be useful in verifying Conjecture \ref{conj:multiplicity}.
\begin{Prop}\label{prop:condconj}
Let $(d,r,\chi)$ be a triple. 
\begin{enumerate}
    \item The module  $I(\overline{\chi}_{-r-s_k-1})/\mathrm{Ker}(\mathcal{R}_{P|P^{\mathrm{op}}})$ is self-dual.
    \item Suppose $I(\overline{\chi}_{-r-s_k-1})$ is multiplicity-free. Then all successive quotients in \eqref{eq:injseq} are self-dual, i.e.,
\begin{align*}
    \big(A_{r,n,\chi}/A_{r,n-1,\chi}\big)/\big(A_{r,n+1,\chi}/A_{r,n,\chi}\big)
\end{align*}
is self-dual for every $1\le n\le m_{r}(L_d).$ 
\end{enumerate}
\end{Prop}

\begin{proof}
 For any $f\in \mathcal{S}(X_{P}^\circ(F)), h\in\mathcal{S}(X_{P^\mathrm{op}}^\circ(F)),$ we have a well-defined meromorphic function
    \begin{align*}
        &\int_{P^{\mathrm{op}}(F)\backslash G(F)} \mathcal{R}_{P|P^{\mathrm{op}}}(f_{\chi_s,P})(g)h_{\chi_s,P^{\mathrm{op}}}(g) \,d\dot{g}=\int_{P^{\mathrm{op}}(F)\backslash G(F)} \mathcal{R}_{P|P^{\mathrm{op}}}(f)_{\chi_s}^{\mathrm{op}}(g)h_{\chi_s}(g) \,d\dot{g}.
    \end{align*}   
    By definition and the Fubini-Tonelli theorem, the meromorphic function equals 
    \begin{align*}
    &\int_{P^{\mathrm{op}}(F)\backslash G(F)\times M^{\mathrm{ab}}(F)\times M^{\mathrm{ab}}(F)} \mathcal{R}_{P|P^{\mathrm{op}}}(f)(m^{-1}g)h(m^{\prime-1}g)\delta^{1/2}_{P^{\mathrm{op}}}(mm')\chi_s(\omega_P(mm'^{-1}))dmdm'd\dot{g}.
    \end{align*}
    Note that this integral converges absolutely for $\mathrm{Re}(s)$ large. Changing variables $m\mapsto mm'$, we arrive at
    \begin{align*}
        &\int_{P^{\mathrm{op}}(F)\backslash G(F)\times M^{\mathrm{ab}}(F)\times M^{\mathrm{ab}}(F)} \mathcal{R}_{P|P^{\mathrm{op}}}(f)(m^{-1}m^{\prime-1}g)h(m^{\prime-1}g)\\
        &\quad\quad\quad\quad\quad\quad\quad\quad\quad\quad\quad\quad\quad\times\delta^{1/2}_{P^{\mathrm{op}}}(m)\chi_s(\omega_P(m))dm\delta_{P^{\mathrm{op}}}(m')dm'd\dot{g}\\
        &=\int_{X_{P^{\mathrm{op}}}^{\circ}(F)} \mathcal{R}_{P|P^{\mathrm{op}}}(f)_{\chi_s}^{\mathrm{op}}(x^\ast)h(x^\ast) dx^\ast.
    \end{align*}
    By Theorem \ref{Thm: Fourier formula} we can write this as
    \begin{align*}
        &\mu_{\Lambda}(\chi_s)^{-1}\int_{X_{P^{\mathrm{op}}}^{\circ}(F)} \mathcal{F}_{P|P^{\mathrm{op}}}(f)_{\chi_s}^{\mathrm{op}}(x^\ast)h(x^\ast) dx^\ast\\
        &=\mu_{\Lambda}(\chi_s)^{-1}\int_{M^{\mathrm{ab}}(F)}\delta^{1/2}_{P^{\mathrm{op}}}(m)\chi_s(\omega_P(m))\int_{X_{P^{\mathrm{op}}}^{\circ}(F)} \mathcal{F}_{P|P^{\mathrm{op}}}(f)(m^{-1}x^\ast)h(x^\ast) dx^\ast dm.
    \end{align*}
    Using the twisted equivariance $L(m)\circ \mathcal{F}_{P|P^{\mathrm{op}}}=\delta_P(m)\cdot \mathcal{F}_{P|P^{\mathrm{op}}}\circ L(m)$ and the fact that $\mathcal{F}_{P|P^{\mathrm{op}}}$ is unitary, the meromorphic function is
    \begin{align*}
        &\mu_{\Lambda}(\chi_s)^{-1}\int_{M^{\mathrm{ab}}(F)}\delta^{1/2}_{P}(m)\chi_s(\omega_P(m))\int_{X_{P}^{\circ}(F)} f(m^{-1}x)\overline{\mathcal{F}_{P^{\mathrm{op}}|P}(\overline{h})}(x) dx dm\\
        &=\mu_{\Lambda}(\chi_s)^{-1}\int_{M^{\mathrm{ab}}(F)}\delta^{1/2}_{P}(m)\chi_s(\omega_{P^{\mathrm{op}}}(m))\int_{X_{P}^{\circ}(F)} f(x)\overline{\mathcal{F}_{P^{\mathrm{op}}|P}(\overline{h})}(m^{-1}x) dx dm
    \end{align*}
    Reversing the procedure, we conclude
    \begin{align}\label{eq:contra:pairing}
        \int_{P^{\mathrm{op}}(F)\backslash G(F)} \mathcal{R}_{P|P^{\mathrm{op}}}(f_{\chi_s,P})(g)h_{\chi_s,P^{\mathrm{op}}}(g) \,d\dot{g}=\int_{P(F)\backslash G(F)} f_{\chi_s,P}(g)\overline{\mathcal{R}_{P^{\mathrm{op}}|P}(\overline{h}_{\chi_s,P^{\mathrm{op}}})}(g)\,d\dot{g}.
    \end{align}
    
    Recall the definition of $M_j$  in \eqref{eq:Mj}. We identify $M_j$ with its image in $I_P(\overline{\chi}_{-r-s_k-1})\cong I_{P^{\mathrm{op}}}(\overline{\chi}_{-r-s_k-1})$. Choose $f$ such that $f_{\overline{\chi}_{-r-s_k-1}}\in M_{n}.$ Note that the pairing 
    \begin{align*}
       \langle f,h\rangle:= \zeta(s+r+s_k+1)^{n}\int_{P^{\mathrm{op}}(F)\backslash G(F)} \mathcal{R}_{P|P^{\mathrm{op}}}(f_{\overline{\chi}_s,P})(g)h_{\overline{\chi}_s,P^{\mathrm{op}}}(g) \,d\dot{g}\bigg|_{s=-r-s_k-1}
    \end{align*}
    is well defined. For a given $f$ such that $f_{\overline{\chi}_{-r-s_k-1}}\not\in M_{n+1},$ there exists $h$ such that the pairing is nonzero. Furthermore, using \eqref{eq:contra:pairing} we see that the pairing is zero if  $f_{\overline{\chi}_{-r-s_k-1}}$ or $h_{\overline{\chi}_{-r-s_k-1}}$ belongs to $M_{n+1}$. We conclude that we have a $G(F)$-equivariant injection
    \begin{align}\label{eq:map}
        M_n/M_{n+1}\hookrightarrow (M_0/M_{n+1})^\lor.
    \end{align}
    In particular, $M_0/M_1$ is self-dual. This proves (1).
    
    Now assume $I(\overline{\chi}_{-r-s_k-1})$ is multiplicity-free. We prove that $M_n/M_{n+1}$ is self-dual by induction on $n$. Suppose $M_i/M_{i+1}$ is self-dual for all $i\le n-1.$ Then \eqref{eq:map} must induce an injection
    \begin{align*}
        M_n/M_{n+1}\to (M_n/M_{n+1})^\lor,
    \end{align*}
    which has to be an isomorphism. Hence (2) follows from \eqref{eq:successivequotient}. 
\end{proof}

Let 
\begin{align*}
    R(G,P,F):=\bigg\{(s,\chi)\in  \cc/ \tfrac{2\pi\sqrt{-1}}{\log q}\zz \times \widehat{\calo}^\times: I(\chi_s) \textrm{ is reducible and }\mathrm{Re}(s)<0\bigg \}.
\end{align*}

\begin{Thm}\label{thm:reducibility}
Suppose $F$ is a nonarchimedean local field of characteristic zero. Let $(s,\chi)\in \cc/ \tfrac{2\pi\sqrt{-1}}{\log q}\zz \times \widehat{\calo}^\times$ with $\mathrm{Re}(s)<0$ and $d=\ord(\chi)$. Then 
$$(s,\chi)\in R(G,P,F) \, \Longleftrightarrow \,  L_d\neq\emptyset \textrm{ and } -(s+s_k+1)\in \mathrm{Supp}(L_{d}).$$ Consequently
\begin{align*}
    R(G,P,F)=\bigg\{ (s,\chi)\in  \cc/ \tfrac{2\pi\sqrt{-1}}{\log q}\zz \times \widehat{\calo}^\times : \mathcal{R}_{P|P^{\mathrm{op}}}(\chi_s) \textrm{ has nontrivial kernel and }\mathrm{Re}(s)<0 \bigg\}.
\end{align*}
\end{Thm}

Recall the set $\mathrm{Supp}(G,P,F)$ defined in \S \ref{ssec:main}. Theorem \ref{thm:reducibility} can be stated as follows:  under the isomorphism $\cc/ \tfrac{2\pi\sqrt{-1}}{\log q}\zz \times \widehat{\calo}^\times \to \widehat{F}^\times,$ the set $R(G,P,F)$ is mapped to $\mathrm{Supp}(G,P,F).$

\begin{proof}
In the following we use the Bourbaki numbering of the Dynkin diagram of $G$ and let $P=P_\ell$ be the maximal parabolic associated to the $\ell$th node.

\textbf{Type $A$:} We can extend the natural $\SL_n$-action on $X_P$ to an action of $\GL_n$ by
\begin{align*}
    x. g:=\begin{psmatrix}
        I_{n-1} & \\
        & \det g^{-1}
    \end{psmatrix}xg.
\end{align*}
Then induced representations in this case are of the form $\chi\times 1$ in \cite{G:A}, and thus the assertion follows from \cite[Theorem 1.1(i)]{G:A}.

\textbf{Type $B_n$:} Let $\varphi:G=\mathrm{Spin}_{2n+1}\to \mathrm{SO}_{2n+1}$ be the degree $2$ isogeny. Note that $\varphi(P_\ell)$ is a maximal parabolic subgroup of $\mathrm{SO}_{2n+1}$ corresponding to the same node. Consider first $\ell<n$. In this case $\omega_{P_\ell}$ lies inside the character group $X^\ast(\varphi(T))$, i.e., the $G$-representation $V_P$ descends to a representation of $\mathrm{SO}_{2n+1}$.  Therefore, we can take $G$ to be $\mathrm{SO}_{2n+1}.$ Points of reducibility are described in \cite[Theorem 4.1]{J:BC}. 

Suppose $\ell=n$. Since $\varphi$ is of degree $2$, the assertion follows from \cite[Theorem 4.1]{J:BC} if $\mathrm{ord}(\chi)$ is odd. Note that the points of reducibility in loc.~cit. are a half of the claimed values; the factor $2$ comes from the fact that $2\omega_{P_n}$ lies inside $X^\ast(\varphi(T))$. Therefore, it remains to show $I(\chi_s)$ is irreducible if $\mathrm{ord}(\chi)$ is even. One can, as in the case of type $A,$ extend the action of $G$ to $\textrm{GSpin}_{2n+1}$ and appeal to \cite{KYM:Gspin}. We give a direct proof instead utilizing the approach in \cite[\S 3]{HS:E6}. 

The case $B_2=C_2$ is proved below, so we assume $n\ge 3$. Let $W=W_G$ be the Weyl group of $(G,T)$. We claim $\mathrm{Stab}_{W}(\chi)$ is trivial for $\chi$ of even order. This would then imply $I_P(\chi_s)$ is regular for any real $s$. By the fourth point in \cite[Remark 3.1]{HS:E6}, it suffices to show for any $w\neq \mathrm{Id}$ such that if we write
\begin{align*}
    \omega_P-w\omega_P=\sum_{\beta\in \Delta} n_\beta\omega_\beta,
\end{align*}
then $n_\beta\in \zz$ is odd for some $\beta$. We only need to check for $w\in W/W_M$, i.e.,  representatives of minimal length in each left coset of $W_M$. We proceed by induction on the length $\ell(w)$ of $w$. Let $\alpha_i$ denote the simple root associated to the $i$th node, and $s_i$ be the corresponding simple reflection. We have
\begin{align*}
    n_{\alpha_i}&=\delta_{in}-\langle \omega_P,w^{-1}\alpha_i^\lor\rangle.
\end{align*}
 If $\ell(w)=1,$ then $w=s_n$ and $n_{\alpha_{n-1}}=-1$. For $\ell(w)\ge 2,$ if $s_1w$ is a reduced expression in $W/W_M$, then $w^{-1}\alpha_1^\lor\in (\Phi_G^\lor)^+-(\Phi_M^\lor)^+$ by Lemma \ref{lem:positive} below, so $n_{\alpha_1}=-1$ by the list of coroots below \eqref{eq:Bnw0n}. Otherwise, we can write $w=s_1w'$ for some $w'\in W/W_M$ and we get 
\begin{align*}
    n_{\alpha_n}&=\delta_{jn}-\langle \omega_P,w'^{-1}\alpha_j^\lor\rangle, \quad 2<j\le n,\\
     n_{\alpha_2}&=-\langle \omega_P,w'^{-1}\alpha_1^\lor+w'^{-1}\alpha_2^\lor\rangle.\\
    n_{\alpha_1}&=-\langle \omega_P,-w'^{-1}\alpha_1^\lor\rangle.
\end{align*}
By the induction hypothesis, the assertion holds for $w'$, so one of the terms above is odd.

Now since $I(\chi_s)$ is regular, to prove irreducibility we only need to show that for every $\beta^\lor\in (\Phi_{G}^\lor)^+-(\Phi_M^\lor)^+$ and $s\in \rr$
\begin{align*}
    \chi_{s+s_k+1}\circ \omega_P\circ \beta^\lor \neq|\cdot|^{\pm 1}\cdot (\delta_B^{1/2}\circ\beta^\lor).
\end{align*}
As $\chi\neq 1$ and $\langle \omega_P,\beta^\lor\rangle=1$, the inequality is clear.

\textbf{Type $C_n$:} The assertion follows from \cite[Theorem 4.3]{J:BC}.

\textbf{Type $D_n$:} For the same reason as in  the case of type $B,$ for $\ell<n-1$ or $\chi$ of odd order, we can reduce it to the case of $\mathrm{SO}_{2n}$, and the assertion in this case follows from \cite[\S 5]{BJ:D}. For the remaining cases, by symmetry we may assume $\ell=n.$ Suppose $\mathrm{ord}(\chi)$ is even. We utilize the same argument as the case where $G=\mathrm{Spin}_{2n+1}$ and $P=P_n$. If $\ell(w)=1,$ then $w=s_n$ and $n_{\alpha_{n-2}}=-1$. For $\ell(w)\ge 2$, if $s_{n-1}w$ is a reduced expression in $W/W_M$, then $w^{-1}\alpha_{n-1}^\lor\in (\Phi_G^\lor)^+-(\Phi_M^\lor)^+$, so $n_{\alpha_{j}}=-1$ for some $j<n$ by the list of coroots at the end of \S \ref{ssec:Dn}. Otherwise, we can write $w=s_{n-1}w'$ for some $w'\in W/W_M$, and we get 
\begin{align*}
    n_{\alpha_j}&=\delta_{jn}-\langle \omega_P,w'^{-1}\alpha_j^\lor\rangle,\quad j\neq n-1,n-2,\\
     n_{\alpha_{n-2}}&=-\langle \omega_P,w'^{-1}\alpha_{n-1}^\lor+w'^{-1}\alpha_{n-2}^\lor\rangle,\\
    n_{\alpha_{n-1}}&=-\langle \omega_P,-w'^{-1}\alpha_{n-1}^\lor\rangle.
\end{align*}
By the induction hypothesis, the assertion holds for $w'$, so one of the terms above is odd. As $\langle \omega_P,\beta^\lor\rangle=1$ for all $\beta^\lor\in (\Phi_G^\lor)^+-(\Phi_M^\lor)^+$, we conclude $I(\chi_s)$ is regular and irreducible.

\textbf{Exceptional types:} Each case is treated separately in \cite{Muic:G2,CJ:F, HS:E6, HS:E7, HS:E8}.

\end{proof}

\begin{Thm}\label{thm:confirmconj}
   Suppose $F$ is a nonarchimedean local field of characteristic zero.  Conjecture \ref{conj:multiplicity} holds if $G$ is either classical or $G_2$.
\end{Thm}

\begin{proof}
\sloppy It is straightforward to check $m_r(L_d)$ is the smallest positive integer such that $\mathrm{Soc}^{m_r(L_d)+1}(I_P(\overline{\chi}_{-r-s_k-1}))=I_P(\overline{\chi}_{-r-s_k-1})$ in each case using \cite[Theorem 1.2, Lemma 4.2]{G:A}, \cite[Theorems 6.1 and 7.1, Corollary 5.7]{J:BC}, \cite[Theorems 6.2 and 7.2, Corollary 5.8]{J:BC},  \cite[\S 5]{BJ:D} and \cite[\S 4]{Muic:G2}. More explanation is needed for the case $G=G_2$, $\chi=1$ and $\mathrm{Re}(r)\neq 0$ since \cite[Proposition 4.3]{Muic:G2} only states composition factors. From the proof in loc.~ cit. together with \cite[Proposition 1.4]{GinzburgJiang}, one can conclude, using the notation in \cite{Muic:G2},  $I_{P_2}(1_{1/2})$ is of length $3$ with a unique irreducible subrepresentation $\pi(1),$ a unique irreducible quotient $J_\beta(1,\pi(1,1))$ and an irreducible subquotient $J_\beta(\frac{1}{2},\delta(1))$. This justifies the case for $P=P_2$ by taking contragredient. For $P=P_1$, one can use Frobenius reciprocity to conclude $J_\beta(\frac{1}{2},\delta(1))$ is the unique irreducible subrepresentation of $I_{P_1}(1_{1/2})$ and hence a unique irreducible quotient of $I_{P_1}(1_{-1/2}).$ Now suppose on the contrary that $\mathrm{Soc}^2(I_{P_1}(1_{-1/2}))$ is proper. Then either $I_{P_1}(1_{-1/2})/\mathrm{Ker}(\mathcal{R}_{P|P^{\mathrm{op}}})$ or $\mathrm{Ker}(\mathcal{R}_{P|P^{\mathrm{op}}})$ is not semisimple and hence not self-dual. This contradicts Proposition \ref{prop:condconj}.

In all considered cases, $I_P(\overline{\chi}_{-r-s_k-1})$ has a unique irreducible quotient and is multiplicity-free. Then Proposition \ref{prop:condconj}(1) implies $\mathrm{Ker}(\mathcal{R}_{P|P^{\mathrm{op}}})=\mathrm{Soc}^{m_r(L_d)}(I_P(\overline{\chi}_{-r-s_k-1})).$ This justifies the assertion for $m_r(L_d)=1$. Now suppose $m_r(L_d)=2$. In this case $I_P(\overline{\chi}_{-r-s_k-1})$ also has a unique irreducible subrepresentation, so $A_{r,2,\chi}/A_{r,1,\chi}$ is the unique irreducible subrepresentation by Proposition \ref{prop:condconj}(2). 

\end{proof}

\subsection{Examples}\label{ssec:nonarch:ex} Assume $F$ has characteristic zero. We illustrate Theorem \ref{thm:exact} using examples considered in \cite[\S 6.3]{Getz:Hsu:Leslie}. We will freely use the computation in Appendix \ref{appendix}.  

\subsubsection{Line bundles over Grassmannians}

For $n\ge 2$ and $1\le \ell<n$, let $G=\mathrm{SL}_n$ and let $P$ be a maximal parabolic stabilizing an $\ell$-plane. Then
\begin{align*}
    L_1=L(1)=\left\{ -i: 0\le i<\min(\ell,n-\ell)\right\}, \quad s_k=\frac{n-2}{2}.
\end{align*}
We have by \cite[Lemma 4.2]{G:A}
\begin{align*}
     A_{X_P(F)}\cong \bigoplus_{i=0}^{\min(\ell,n-\ell)-1} A_{-i,1},
\end{align*}
where $A_{-i,1}$ is the unique irreducible $G(F)$-subrepresentation of $I(1_{i-n/2}).$

More explicitly, for $0\le  i<\min(\ell,n-\ell),$ let $\Omega_i\subset N_{P_\ell}\cong M_{\ell,n-\ell}$ be the (reduced) closed subscheme consisting of matrices of rank $\le i$, and let $\Omega_i^\circ:=\Omega_i-\Omega_{i-1}.$ Then using the notion of $N$-rank in \cite[Definition 6.1.2 and Proposition 6.3.1]{W1}, one can modify the argument in \cite[\S 3]{SW:minimal} to conclude that $A_{-i,1}$ can be realized as certain space of functions $\mathcal{S}(\Omega_i(F))\subseteq C^\infty(\Omega_i^\circ(F)).$ Therefore, 
\begin{align*}
     A_{X_P(F)}\cong \cc\oplus \bigoplus_{i=1}^{\min(\ell,n-\ell)-1} \mathcal{S}(\Omega_i(F)).
\end{align*}

\subsubsection{Odd-dimensional cones} For $n\ge 2$, consider the cone $X=X_n$ in $V_n=\mathbb{A}^{2n}$ defined by the vanishing locus of the polynomial
\begin{align*}
    x_{1}x_{2n}+x_{2}x_{2n-1}+\ldots+ x_{n}x_{n+1}.
\end{align*}
For $n\ge 3$, $X_n^\circ \cong P_n^{\mathrm{der}}\backslash \mathrm{SO}_{2n}$ where $P_n$ is the stabilizer of a line spanned by an isotropic vector. We have
\begin{align*}
    L(1)&=\left\{0,-(n-2)\right\}, \quad s_k=n-2.
\end{align*}
One can compare formulae of Fourier transforms in \cite{Getz:Hsu:Leslie} and \cite{GK:cone} to conclude $\mathcal{S}(X_n(F))$ is the minimal representation of $\mathrm{SO}_{2n+2}(F)$. By \cite{W1,SW:minimal}
\begin{align*}
    \mathcal{S}(X_n(F))/\mathcal{S}(X_n^\circ(F))\cong \cc\oplus \mathcal{S}(X_{n-1}(F)).
\end{align*}
Note that $X_2$ is not a Braverman-Kazhdan space, and here $\mathcal{S}(X_2(F))$ is the unitarizable minimal representation of $\mathrm{SL}_4(F)$ contained in $I_{P}^{\mathrm{SL}_{4}}(1_{-1})$, where $P$ is a maximal parabolic stabilizing a plane (see \cite{SW:minimal}). This is a special case proved in \cite{GK:cone}. 

\subsubsection{The Lagrangian Grassmannian}\label{ssec:LG} Let  $G=\mathrm{Sp}_{2n}$ and $P$ be the Siegel parabolic. Then
\begin{align*}
    L(1)&=\left\{0\right\}, &L(2)&=\left\{-i+j\tfrac{\pi\sqrt{-1}}{\log q}: 1\le i\le \lfloor\tfrac{n}{2}\rfloor, 0\le j\le 1\right\}, &s_k&= \tfrac{n-1}{2}.
\end{align*}
Therefore, $L_1$ and $L_2$ are genuine sets. By the work of Kudla and Rallis \cite{KR:spn} we have
\begin{align*}
    A_{X_P(F)}\cong \bigoplus_{d=1}^2 \bigoplus_{\substack{\chi\in \widehat{\calo}^\times\\\ord(\chi)=d}}\bigoplus_{r\in L_d}\mathrm{Ker}(\mathcal{R}_{P|P^{\mathrm{op}}}(\overline{\chi}_{-r-\tfrac{n+1}{2}}))\cong \bigoplus_{V}\mathcal{S}(V^n(F))_{O(V)(F)},
\end{align*}
where $V$ ranges over all equivalence classes of nondegenerate even-dimensional quadratic spaces over $F$ of dimension not larger than $n$, and $\mathcal{S}(V^n(F))_{O(V)(F)}$ is the space of $O(V)(F)$-coinvariants of the Weil representation of $G=\mathrm{Sp}_{2n}$ realized on $\mathcal{S}(V^n(F))$. 

Let $\mathcal{X}_i$ be the smooth stack $(\mathbb{A}^{2i})^n/O_{2i},$ where $O_{2i}$ is the split orthogonal group acting on each copy of $\mathbb{A}^{2i}$ via the standard action. Using the definition of the Schwartz space on $\mathcal{X}_i(F)$ \cite{SakStack}, we can further write
\begin{align*}
    \bigoplus_{V}\mathcal{S}(V^n(F))_{O(V)(F)}=\bigoplus_{i=0}^{\lfloor n/2\rfloor} \mathcal{S}(\mathcal{X}_i(F))=\cc\oplus \bigoplus_{i=1}^{\lfloor n/2\rfloor} \mathcal{S}(\mathcal{X}_i(F)).
\end{align*}

\section{Archimedean asymptotics}\label{sec:Sch:arch}

Let $F$ be an archimedean local field. We continue to assume the set-theoretic inclusion $\mathcal{S}(X_P^\circ(F))\subseteq \mathcal{S}(X_P(F))$ holds.

\begin{Lem}\label{lem:inclusmooth}
    The inclusion $\mathcal{S}(X_P^\circ(F))\hookrightarrow \mathcal{S}(X_P(F))$
    is a homeomorphism onto its image, where $\mathcal{S}(X_P^\circ(F))$ is equipped with its usual topology of Schwartz space.
\end{Lem}

\begin{proof}
     Let $\tau_1,\tau_2$ be the topology on $\mathcal{S}(X_P^\circ(F))$ given respectively by the topology of $\mathcal{S}_{\mathrm{ES}}(X_P^\circ(F))$ and by the subspace topology of $\mathcal{S}(X_P(F))$. Note that $(\mathcal{S}(X_P^\circ(F)),\tau_2)$ is a closed subspace of $\mathcal{S}(X_P(F))$ by Lemma \ref{lem:archfil} and hence a Fr\'echet space. We claim the identity map $(\mathcal{S}(X_P^\circ(F)),\tau_1)\to (\mathcal{S}(X_P^\circ(F)),\tau_2)$ is continuous, and thus the assertion follows from the open mapping theorem.

     By the closed graph theorem, we only need to show the set
     \begin{align*}
         \bigcap_{x\in X_P^\circ(F)} \{(f_1,f_2)\in (\mathcal{S}(X_P^\circ(F)),\tau_1)\times(\mathcal{S}(X_P^\circ(F)),\tau_2):f_1(x)=f_2(x)\}.
     \end{align*}
     is closed in $(\mathcal{S}(X_P^\circ(F)),\tau_1)\times(\mathcal{S}(X_P^\circ(F)),\tau_2)$. This is clear since evaluation maps are continuous linear functionals on $\mathcal{S}(X_P^\circ(F))$ with respect to both $\tau_1,\tau_2$.
\end{proof}

\subsection{Work of Igusa: Archimedean}

Let $\{\phi_r\}_{r\ge 0}$ be a sequence of functions on the set of positive real numbers $\rr^+$ satisfying the following conditions for every $r\ge 0$:
\begin{enumerate}
    \item Each function $\phi_r$ is nonvanishing in a small neighborhood of $0$.
    \item We have $\phi_{r+1}(t)=o(\phi_{r}(t))$ for $t$ sufficiently close to $0$, i.e., for a given $\epsilon>0$, there is a sufficiently small neighborhood $U_{\epsilon}$ of $0$ such that 
    \begin{align*}
        |\phi_{r+1}(t)|\le \epsilon |\phi_{r}(t)|\quad  \textrm{ for } t\in U_{\epsilon}. 
    \end{align*}
\end{enumerate}
We say a function $f$ on $\rr^+$ has an \textbf{asymptotic expansion as $t$ tends to $0$} with respect to $\{\phi_r\}$ if there is a sequence $\{c_r\}$ of complex numbers such that for each $r$ and all $t$ sufficiently close to $0,$
\begin{align*}
    \left|f(t)-\sum_{j=0}^r c_j\phi_j(t)\right|\ll |\phi_{r+1}(t)|.
\end{align*}
We denote this by
\begin{align*}
    f(t)\approx \sum_{r=0}^\infty c_r\phi_r(t) \quad \textrm{as } t \to 0.
\end{align*}
Note that the coefficients $c_r$ are unique. In the case coefficients $c_r=c_r(x)$ depend on a parameter $x$ and all implicit constants can be made independent of $x$, we say the asymptotic expansion is \textbf{uniform}.

If all $f$ and $\phi_r$ are smooth, and for every $n\ge 0$
\begin{align*}
    f^{(n)}(t)\approx \sum_{r=0}^{\infty} c_r \phi_r^{(n)}(t)  \quad \textrm{as } t \to 0,
\end{align*}
where we assume that for each $n$, $\{\phi_r^{(n)}\}$ satisfies conditions (1)(2)  above after removing a finite set from the sequence, then we say that the asymptotic expansion of $f$ as $t$ tends to $0$ is \textbf{termwise differentiable} or \textbf{commutes with} $U(\mathfrak{gl_1})$. The notion of \textbf{uniform termwise differentiability} is similarly defined.

For a multiset $L$ whose underlying set $\mathrm{Supp}(L)$ is a discrete subset of $\rr$ that is bounded below, $\mathrm{Supp}(L)$ is countable and naturally ordered by $<.$ Consider smooth functions on $\rr^+$:
\begin{align*}
    t^{r}(\log t)^{j-1},    \quad r\in \mathrm{Supp}(L), 1\le j\le  m_r(L).
\end{align*}
These functions and their derivatives satisfy conditions (1)(2) above.

Let $F$ be an archimedean local field. If $F=\rr,$ let $\psi(a)=e^{\sqrt{-1}a}$ and $H:=\zz/2\zz=\{0,1\};$ if $F=\cc,$ let $\psi(a)=e^{2\sqrt{-1}\mathrm{Re}(a)}$ and $H:=\zz$. For $f\in C^\infty(F^\times)$ by the Fourier expansion we can write
\begin{align}\label{eq:Fourier}
    f(a)=\sum_{\ell\in H} \mu^\ell(a)f_\ell(|a|),
\end{align}
where $f_\ell\in C^\infty(\rr_+)$. We henceforth identify $f$ with $(f_\ell)_{\ell\in H}.$ Moreover for any $D\in U(\mathfrak{gl}_1)$ and real numbers $\sigma_1<\sigma_2,$
\begin{align*}
     \sup_{\ell\in H}\sup_{\sigma_1<|a|<\sigma_2} |(D.f)_\ell(a)|<\infty,
\end{align*}
and the identity \eqref{eq:Fourier} is termwise differentiable.

 Let $L=\{L_\ell\}_{\ell\in H}$ be a collection of multisets such that $\cup_{\ell\in H}\mathrm{Supp}(L_\ell)\subset \rr$ is discrete and bounded below, $L_\ell=L_{-\ell}$ and $\sup_{\ell\in H} m_r(L_\ell)<\infty$ for all $r\in \rr$. We define $C^\infty_{L}(F^\times)$ to be the subspace of $C^\infty(F^\times)$ that consists of functions $f=(f_\ell)_{\ell\in H}$ satisfying the following: 
\begin{enumerate}
    \item For $\sigma\in \rr$ and $m,n\in \zz_{\ge0}$, one has
    \begin{align*}
        \sup_{\ell\in H, \,t\ge 1} t^\sigma|\ell^m f_\ell^{(n)}(t)|<\infty.
    \end{align*}
    Equivalently, for $D\in U(\mathfrak{gl}_1)$
    \begin{align*}
        \sup_{\ell\in H, \,t\ge 1} t^\sigma |(D.f)_\ell(t)|<\infty.
    \end{align*}
    
    \item We have
    \begin{align*}
        f_\ell(t)\approx  \sum_{r\in \mathrm{Supp}(L_\ell)}\sum_{j=1}^{m_r(L_\ell)} c_{r,j,\ell}t^{r}(\log t)^{j-1} \quad \textrm{ as } t\to 0
    \end{align*}
for some (unique) $c_{r,j,\ell}\in \cc$, and this asymptotic expansion is termwise differentiable. Moreover, for every $s\in \rr,m,n\in \zz_{\ge 0}$ and $\sigma>-s$, one has
\begin{align*}
    \sup_{\ell\in H,\, 0<t\le 1} t^{\sigma+n}|\ell^m (R_{s})_{\ell}^{(n)}(t)|<\infty,
\end{align*}
where
\begin{align*}
    R_{s}(a):=f(a)-\sum_{\substack{r\in \cup_{\ell\in H}\mathrm{Supp}(L_\ell)\\ r<s}} \sum_{\ell:r\in \mathrm{Supp}(L_\ell)}\sum_{j=1}^{m_r(L_\ell)}\mu^\ell(a)c_{r,j,\ell}|a|^r(\log |a|)^{j-1}.
\end{align*}
Equivalently, for $D\in U(\mathfrak{gl}_1)$
\begin{align*}
    \sup_{\ell\in H,\, 0<t\le 1} t^{\sigma}|(D.R_{s})_\ell(t)|<\infty.
\end{align*}
\end{enumerate}

 On the other hand, let $\mathcal{Z}_L(\widehat{F}^\times)$  be the set of complex-valued functions $Z$ on $\widehat{F}^\times$ such that for each $\ell\in H$,
\begin{enumerate}
    \item the function $Z(\mu^\ell |\cdot|^s)$ is holomorphic on $\cc-\mathrm{Supp}(L_\ell)$, and
    
    \item there are constants $b_{r,j,\ell}\in \cc$ such that for each $r\in \mathrm{Supp}(L_\ell)$
    \begin{align*}
        Z(\mu^\ell |\cdot|^s)-\sum_{j=1}^{m_r(L_\ell)} \frac{b_{r,j,\ell}}{(s+r)^{j}}
    \end{align*}
    is holomorphic in a neighborhood of $s=-r$. Moreover, for any real numbers $\sigma_1<\sigma_2, m\in\zz_{\ge 0}$  and polynomials $Q\in \cc[s]$ such that $(s+r)^{\max_{\ell\in H}m_r(L_\ell)}|Q(s)$ for all $r\in \cup_{\ell\in H} \mathrm{Supp}(L_\ell)\cap V_{\sigma_1,\sigma_2}$
    \begin{align*}
       \sup_{\ell\in H}|\ell^m Z(\mu^\ell |\cdot|^s)|_{\sigma_1,\sigma_2,Q} <\infty.
    \end{align*}
\end{enumerate}

The following is a consequence of \cite[Theorems 1.4.2 and 1.4.3]{Igusa:forms}:
\begin{Thm}\label{thm:Igusa:real}
 For $f\in C^\infty_L(F^\times),$ the Mellin transform
    $$M:f\mapsto \left(\chi \mapsto \frac{1}{\mathrm{vol}(K_{\mathbb{G}_m})}\int_{F^\times} f(a)\chi(a)d^\times a \right),$$
originally defined on $\chi\in \widehat{F}^\times$ with $\mathrm{Re}(\chi)\gg_{L} 1$, extends analytically to whole $\widehat{F}^\times$ and gives rise to an isomorphism between $C^\infty_L(F^\times)$ and $\mathcal{Z}_L(\widehat{F}^\times)$. Moreover, the Mellin inversion $M^{-1}$ is given as follows: Given $Z\in \mathcal{Z}_L(\widehat{F}^\times)$, for $a\in F^\times$
\begin{align*}
    M^{-1}(Z)(a)=\sum_{\ell\in H}\mu^{-\ell}(a)\frac{1}{2\pi \sqrt{-1}} \int_{\sigma-\sqrt{-1}\infty}^{\sigma+\sqrt{-1}\infty}Z(\mu^\ell |\cdot|^s)|a|^{-s} ds
\end{align*}
defines a function $f\in C^\infty_L(F^\times)$ independent of $\sigma\gg_L 1.$ Furthermore, 
\begin{align*}
    b_{r,j,\ell}=(-1)^{j-1}(j-1)!c_{r,j,-\ell}.
\end{align*}
for every $r,j,\ell$.\qed
\end{Thm}

\subsection{Asymptotics toward the origin}
If $F=\rr,$ define multisets
\begin{align*}
    L_{0}&:=\sum_{i=1}^k \left(\frac{s_i+1}{\lambda_i}-(s_k+1)+\frac{2}{\lambda_i}\zz_{\ge 0} \right),\\
      L_{1}&:=\sum_{i:\lambda_i\textrm{odd}} \left(\frac{s_i+2}{\lambda_i}-(s_k+1)+\frac{2}{\lambda_i}\zz_{\ge 0} \right)+\sum_{i:\lambda_i\textrm{even}} \left(\frac{s_i+1}{\lambda_i}-(s_k+1)+\frac{2}{\lambda_i}\zz_{\ge 0} \right).
\end{align*}
If $F=\cc,$ for each $\ell\in H$ define multisets 
\begin{align*}
    L_\ell:=\sum_{i=1}^k \left(\frac{s_i+1}{\lambda_i}-(s_k+1)+\frac{|\ell|}{2}+\frac{1}{\lambda_i}\zz_{\ge 0}\right).
\end{align*}

By the Fourier expansion, for $f\in C^\infty(X_P^\circ(F))$ we can write 
\begin{align*}
    f=\sum_{\ell\in H} f_\ell,
\end{align*}
where $f_\ell\in C^\infty(X_P^\circ(F))$ satisfies $f_\ell(m(a)^{-1}x)=\mu^\ell(a)f_\ell(x)$ for all $a\in K_{\GG_m}$ and $x\in X_P^\circ(F)$. The identity commutes with the action of $U(\mathfrak{m}^{\mathrm{ab}}\oplus \mathfrak{g}).$

\begin{Prop}\label{prop:globalchar:real}
    Let $f\in C^\infty(X_P^\circ(F))$. Then $f$ satisfies \ref{cond:orig:arch} if and only if the following holds.
    \begin{enumerate}
        \item For $\sigma\in \rr$ and $D\in U(\mathfrak{m}^{\mathrm{ab}}\oplus \mathfrak{g})$, one has
    \begin{align*}
        \sup_{\substack{\ell\in H\\|a|\ge 1,\,x\in X_P^1}} \left|a|^\sigma |D.f_\ell(m(a)^{-1}x)\right|<\infty.
    \end{align*}
        \item For $x\in X_P^1,$ $f_\ell$ has a uniform asymptotic expansion
    \begin{align*}
        f_\ell(m(a)^{-1}x)\approx \mu^{\ell}(a)\sum_{r\in \mathrm{Supp}(L_{\ell})}\sum_{j=1}^{m_r(L_{\ell})} c_{r,j,\ell}(x) |a|^r(\log |a|)^{j-1}\quad \textrm{ as } |a|\to 0,
    \end{align*}
    for some (unique) $c_{r,j,\ell}\in C^\infty(X_P^1)$ satisfying $c_{r,j,\ell}(m(a)^{-1}x)=\mu^{\ell}(a)c_{r,j,\ell}(x)$ for $a\in K_{\GG_m}$. Moreover, the asymptotic expansion commutes with the action of $U(\fm^{\mathrm{ab}}\oplus\fg)$ uniformly, and for every $D\in U(\fm^{\mathrm{ab}}\oplus\fg), s\in \rr$ and $\sigma>-s$ we have
    \begin{align*}
        \sup_{\substack{ \ell\in H\\0<|a|\le 1,\, x\in X_P^1}} |a|^{\sigma}|D.R_{s,\ell}(m(a)^{-1}x)|<\infty,
    \end{align*}
    where
    \begin{align*}
        R_{s,\ell}(m(a)^{-1}x):=f_\ell(m(a)^{-1}x)-\mu^\ell(a)\sum_{\substack{r\in \mathrm{Supp}(L_\ell)\\r< s}}\sum_{j=1}^{m_r(L_\ell)} c_{r,j,\ell}(x)|a|^{r}(\log |a|)^{j-1}.
    \end{align*}
    \end{enumerate}
\end{Prop}
 Here the action of $\mathfrak{m}^{\mathrm{ab}}\oplus\fg$ on the asymptotic expansion is the differential of the action induced from the action \eqref{fun:act} (see also \eqref{eq:gtranslate}): for $x\in X_P^1$
\begin{align*}
     R(g)(c_{r,j,\ell})(x)&=\left(\frac{|xg|}{|x|}\right)^r\sum_{i=j}^{m_r(L_\ell)} \binom{i-1}{j-1}\log^{i-j}\left(\frac{|xg|}{|x|}\right) c_{r,i,\ell}(m(|xg|)xg),\\
     L(m)(c_{r,j,\ell})(x)&=\mu^\ell(\omega_P(m))|\omega_P(m)|^r\sum_{i=j}^{m_r(L_\ell)} \binom{i-1}{j-1} \log^{i-j}|\omega_P(m)|c_{r,i,\ell}(x).
\end{align*}

\begin{proof}
     Let $g\in G(F)$ and $\chi\in \widehat{K}_{\GG_m}$. By Theorem \ref{thm:Igusa:real} and the fact that $\cup_{\ell\in H}\mathrm{Supp}(L_\ell)$ is bounded below, the integral defining $f_{\chi_s}(g)$ is absolutely convergent for $\mathrm{Re}(s)\gg 1$ if and only if the integral defining $(f_\ell)_{\chi_s}(g)$ is absolutely convergent for $\mathrm{Re}(s)\gg 1$ assuming that $f_\ell$ satisfies condition (2) for all $\ell\in H$. Moreover, we have
    \begin{align*}
        f_{\chi_s}(g)&=\int_{M^{\mathrm{ab}}(F)} \delta_P^{1/2}(m)\chi(\omega_P(m))|\omega_P(m)|^s \sum_{\ell\in H}f_\ell(m^{-1}g)dm\\
        &=\sum_{\ell\in H}\int_{F^\times} \chi_{s+s_k+1}(a)f_\ell(m(a)^{-1}g)d^\times a.
    \end{align*}
    If $\chi=\mu^{-i},$ then the integrals indexed by $\ell$ above vanish unless $\ell=i,$ in which case it is up to a nonzero constant equal to $M(R(g)f_{i})(\chi_{s+s_k+1}).$ Therefore, to prove the proposition we fix an $\ell$ such that $\chi=\mu^{-\ell}$ and assume $f=f_\ell.$ By the Iwasawa decomposition \ref{cond:orig:arch} is reduced to

    \begin{enumerate}
    \item [(1a')] For $g\in K$ and $D\in U(\mathfrak{gl}_1)$, the integral defining $M(D.R(g)f)(\chi_s)$ is absolutely convergent for $\mathrm{Re}(s)$ sufficiently large, and the complex function
    \begin{align*}
    \frac{M(D.R(g)f)(\chi_{s})}{\prod_{i=1}^k L(s_i+1-\lambda_i(s_k+1),\chi_s^{\lambda_i})} 
    \end{align*}
    is holomorphic, and for all real numbers $\sigma_1<\sigma_2$ and polynomials $Q(s)\in \cc[s]$ such that $Q(s)\prod_{i=1}^k L(s_i+1-\lambda_i(s_k+1),\chi_s^{\lambda_i})$ is holomorphic on $V_{\sigma_1,\sigma_2},$
    \begin{align*}
 \sup_{g\in K} |M(D.R(g)f)(\chi_{s})|_{\sigma_1,\sigma_2,Q}<\infty.
    \end{align*} 
    \end{enumerate}

    By the definition of Tate's $L$-function, $L(s_i+1-\lambda_i(s_k+1),\chi_s^{\lambda_i})$ is up to a nowhere vanishing function equal to
    \begin{align*}
      \begin{cases}
             \Gamma\left(\frac{\lambda_is+s_i+1-\lambda_i(s_k+1)}{2}\right)& \textrm{ if } F \textrm{ is real, } \lambda_i\ell \textrm{ is even}, \\
              \Gamma\left(\frac{\lambda_is+s_i+1-\lambda_i(s_k+1)+1}{2}\right)& \textrm{ if } F \textrm{ is real, } \lambda_i\ell \textrm{ is odd}, \\
             \Gamma\left(\lambda_is+s_i+1-\lambda_i(s_k+1)+|\ell|\lambda_i/2\right)& \textrm{ if } F \textrm{ is complex.}
        \end{cases}
    \end{align*}
Thus poles of $\prod_{i=1}^k L(s_i+1-\lambda_i(s_k+1),\chi_s^{\lambda_i})$ are at $s=-r$ where $r\in L_\ell$ (counting multiplicities). The proposition then follows from Theorem \ref{thm:Igusa:real}.

\end{proof}

\subsection{Representation-theoretic description}

Recall that $\mathcal{S}_\Lambda(X_P^\circ(F))$ consists of functions in $C^\infty(X_P^\circ(F))$ satisfying the equivalent conditions in Proposition \ref{prop:globalchar:real}. By Lemma \ref{lem:inclusmooth}, $\mathcal{S}(X_P^\circ(F))$ is a closed subspace of $\mathcal{S}_{\Lambda}(X_P^\circ(F)).$ Let $\ell\in H$ and $r\in \mathrm{Supp}(L_\ell)$. For $n\in\zz_{>0},$ consider smooth $G(F)$-modules $I_{r,n,\ell}$ defined by
\begin{align*}
        \left\{f\in \mathcal{S}_\Lambda(X_P^\circ(F)) :
    f(x)\approx \sum_{j=1}^{n}|x|^{r}\log^{j-1}(|x|)c_{r,j,\ell}(m(|x|^{\frac{-1}{[F:\rr]}})x) \textrm{ as } |x|\to 0\right\}\Big/\mathcal{S}(X_P^\circ(F)).
\end{align*}
For ease of notation, we will write interchangeably $f\in I_{r,n,\ell}$ as the vector $(c_{r,j,\ell})_{1\le j\le n}$. For convenience, we set $I_{r,0,\ell}:=0$. The proof of Lemma \ref{lem:easy} carries over to the archimedean case.

\begin{Lem}
For $n\in\zz_{>0}$, we have a canonical $G(F)$-equivariant isomorphism.
\begin{align*}
        \varphi_{r,n,\ell}: I_{r,n,\ell}/I_{r,n-1,\ell}\tilde{\longrightarrow} I_P((\mu^{-\ell})_{-r-s_k-1}).
    \end{align*}
by sending $f$ to the unique function in $I_P((\mu^{-\ell})_{-r-s_k-1})$ that equals $c_{r,n,\ell}$ on $X_P^1$. \qed
\end{Lem}

We equip a degenerate principal series with its natural topology so that it is a nuclear Frech\'et space by \cite[Lemma 10.1.1]{Wallach:RGII} and \cite[Corollary 5.6]{BK:globalization}. We endow successively $I_{r,n,\ell}$ with the topology so that $\varphi_{r,n,\ell}$ is a homeomorphism. For each $\ell,r$ we have a natural surjection
\begin{align*}
    \mathcal{S}_\Lambda(X_P^\circ(F))/\mathcal{S}(X_P^\circ(F))\longrightarrow I_{r,m_r(L_\ell),\ell}.
\end{align*}
It is straightforward to check the map is continuous by the closed graph theorem. Therefore, we have a continuous injection 
\begin{align}\label{eq:germ:arch}
    \mathcal{S}_\Lambda(X_P^\circ(F))/\mathcal{S}(X_P^\circ(F))\longrightarrow \prod_{\ell\in H}\prod_{r\in \mathrm{Supp}(L_\ell)} I_{r,m_r(L_\ell),\ell}.
\end{align}

\begin{Lem}\label{lem:nuclear}
    The natural map \eqref{eq:germ:arch} is a homeomorphism. In particular, both $\mathcal{S}_{\Lambda}(X_P^\circ(F))$ and $\mathcal{S}(X_P(F))$ are nuclear spaces.
\end{Lem}

\begin{proof}
    Since a countable product of Fr\'echet spaces is Fr\'echet, to prove the map \eqref{eq:germ:arch} is a homeomorphism, by the open mapping theorem it suffices to show surjectivity. By the Iwasawa decomposition we have a homeomorphism
    \begin{align*}
        X_P^\circ(F)\cong \rr^{+}\times X_P^1.
    \end{align*}
    Arguing analogously as in the proof Borel's theorem \cite[Theorem I.1.3]{smoothapprox} by using a smooth cutoff function at the origin, for a given 
    \begin{align*}
        (c_{r,j,\ell})\in \prod_{\ell\in H}\prod_{r\in \mathrm{Supp}(L_\ell)} I_{r,m_r(L_\ell),\ell}=\prod_{r\in \cup_{\ell\in H}\mathrm{Supp}(L_\ell)} \prod_{\ell :r\in\mathrm{Supp} (L_\ell)}I_{r,m_r(L_\ell),\ell}
    \end{align*}
    one can find $f=(f_\ell)\in \mathcal{S}_{\Lambda}(X_P^\circ(F))$ supported on $\{x\in X_P(F), |x|< 1\}$  such that the germ of $f_\ell$ at the origin is $(c_{r,j,\ell})$. 

    Finally, since both $\mathcal{S}(X_P^\circ(F))$ and $\prod_{\ell,r} I_{r,m_r(L_\ell),\ell}$ are nuclear Fr\'echet spaces, by \cite[Proposition 3.8]{threespace} $\mathcal{S}_\Lambda(X_P^\circ(F))$ and hence $\mathcal{S}(X_P(F))$ are nuclear.
    
\end{proof}

Finally we state the analogue of Theorem \ref{thm:exact} in the Archimedean case, which can be proved similarly.

\begin{Thm}\label{thm:exactarch}
Suppose $F$ is an archimedean local field and $\mathcal{S}(X_P^\circ(F))\subseteq \mathcal{S}(X_P(F))$. We have an exact sequence of smooth Fr\'echet $G(F)$-modules
\begin{align*}
    0\longrightarrow\mathcal{S}(X_P^\circ(F))\longrightarrow \mathcal{S}(X_P(F))\longrightarrow \prod_{\ell\in H}\prod_{r\in \mathrm{Supp}(L_\ell)} A_{r,\ell} \longrightarrow 0,
\end{align*}
where each $A_{r,\ell}$ admits a natural filtration of (Fr\'echet) $G(F)$-submodules
\begin{align*}
    0=A_{r,0,\ell}< A_{r,1,\ell}< \cdots < A_{r,m_r(L_\ell),\chi}=A_{r,\ell}
\end{align*}
together with canonical $G(F)$-equivariant (continuous) injections
\begin{align*}
 0 \neq A_{r,m_r(L_\ell),\ell}/A_{r,m_r(L_\ell)-1,\ell} \hookrightarrow \cdots  \hookrightarrow A_{r,2,\ell}/A_{r,1,\ell} \hookrightarrow A_{r,1,\ell},
\end{align*}
and
$A_{r,1,\ell}\cong \mathrm{Ker}(\mathcal{R}_{P|P^{\mathrm{op}}})< I_P((\mu^{-\ell})_{-r-s_k-1})$ is a proper submodule.\qed
\end{Thm}

\noindent We remark that an analogue of Conjecture \ref{conj:multiplicity} can be made and Proposition \ref{prop:condconj} remains valid.
\subsection{Examples over $\rr$ }\label{ssec:arch:ex} We compute $A_{X_P(F)}=\mathcal{S}(X_P(F))/\mathcal{S}(X_P^\circ(F))$ when $F=\rr$ for examples considered in \S \ref{ssec:nonarch:ex}, and follow the notation therein. We use freely the Weyl algebra on $X_P$ introduced in \S \ref{ssec:Weyl} and its applications in \S \ref{ssec:application:Weyl}. We refer one to $\S \ref{sec:Weyl}$ for any unexplained notation below. Also note that the following examples confirm an analogue of Conjecture \ref{conj:multiplicity}.

\subsubsection{Line bundles over Grassmannians}
The case for $\min(\ell,n-\ell)=1$ is well known. Suppose $\min(\ell,n-\ell)\ge 2$. Then 
\begin{align*}
    L_{0}&:=\sum_{i=0}^{\min(\ell,n-\ell)-1} \left(-i+2\zz_{\ge 0} \right),&L_{1}:=\sum_{i=0}^{\min(\ell,n-\ell)-1} \left(-i+1+2\zz_{\ge 0} \right).
\end{align*}
 It follows from \cite[Theorems 3.4.2, 3.4.3 and 3.4.4]{HL:GL} that for integers $i\le \min(\ell,n-\ell)-1$, 
\begin{align*}
    \textrm{length of } I(1_{i-n/2})=\textrm{length of } I(\mu_{i-1-n/2})=\begin{cases}
        \lfloor \frac{\min(\ell,n-\ell)+1}{2}\rfloor-\max(i/2,0)+1 & \textrm{ if } i \textrm{ even},\\
         \lfloor \frac{\min(\ell,n-\ell)}{2}\rfloor-\max(\frac{i+1}{2},0)+1 & \textrm{ if } i \textrm{ odd}.
    \end{cases} 
\end{align*}
Furthermore, by \cite[Theorem 1.2]{G:A} each representation admits a unique composition series. The Gelfand-Kirillov dimensions of composition factors are in strictly increasing order.

Using \cite[Theorem A]{AGS}, one can mimic the argument in \cite[\S 3]{SW:minimal} to conclude for each $0\le i\le \min(\ell,n-\ell)-1,$ the unique irreducible subrepresentation $I(1_{i-n/2})$ can be realized as some function space $\mathcal{S}(\Omega_i(\rr))\subseteq C^\infty(\Omega_i^\circ(\rr))$. Therefore, we see that $\prod_{r,\ell}A_{r,1,\ell}$ contains all $\mathcal{S}(\Omega_i(\rr))\cc[[X_P]]$. By counting length, we have
\begin{align*}
     \textrm{length of } I(1_{i-n/2})-\textrm{length of } A_{-i,1,0}=1=\textrm{length of } I(\mu_{i-1-n/2})-\textrm{length of } A_{-i+1,1,1},
\end{align*}
and 
\begin{align*}
    \prod_{r,\ell}A_{r,1,\ell}\cong \sum_{i=0}^{\min(\ell,n-\ell)-1} \mathcal{S}(\Omega_i(\rr))\cc[[X_P]]=  \cc[[X_P]]+\sum_{i=1}^{\min(\ell,n-\ell)-1} \mathcal{S}(\Omega_i(\rr))\cc[[X_P]].
\end{align*}
Arguing as the discussion for cones below, we have for $\lfloor\frac{\min(\ell,n-\ell)+1}{2}\rfloor\ge  j\ge 2$
\begin{align*}
     \prod_{r,\ell}A_{r,j,\ell}/A_{r,j-1,\ell}\cong  \bigg(\sum_{i=0}^{\min(\ell,n-\ell)-2(j-1)-1} \mathcal{S}(\Omega_i(\rr))\cc[[X_P]]\bigg)\log^{j-1}|x|.
\end{align*}
Here $\log^{j-1}|x|$ is only symbolic that keeps track of the index $j$. We do the same for other examples below. Suppose $2\ell\le n$. We conclude that $A_{X_P(F)}$ has a quotient module
\begin{align*}
    \begin{cases}
     \bigoplus_{i=0}^{\ell/2-1} \bigg(\mathcal{S}(\Omega_{2i}(\rr))\oplus \mathcal{S}(\Omega_{2i+1}(\rr))\bigg)\log^{\frac{\ell}{2}-1-i} |x|   &  \textrm{ if } \ell \textrm{ even},\\
      \cc\log^{\frac{\ell-1}{2}}|x|\oplus \bigoplus_{i=1}^{(\ell-1)/2} \bigg(\mathcal{S}(\Omega_{2i}(\rr))\oplus \mathcal{S}(\Omega_{2i-1}(\rr))\bigg)\log^{\frac{\ell-1}{2}-i} |x|   &  \textrm{ if } \ell \textrm{ odd}.
    \end{cases}
\end{align*}

\subsubsection{Odd-dimensional cones}\label{sssec:cones}
    For $n\ge 3,$ we have 
    \begin{align*}
        L_0&=(-(n-2)+2\zz_{\ge0})+(2\zz_{\ge 0}), &L_1=(1-(n-2)+2\zz_{\ge0})+(1+2\zz_{\ge 0}).
    \end{align*}
As explained in Example \ref{ex:cone}, $\mathcal{S}(X_n(\rr))$ is the minimal representation of $\mathrm{SO}(n+1,n+1)$ contained in $I_{P_{n+1}}^{\mathrm{SO}(n+1,n+1)}(1_{-1})$. For $n$ odd, both $L_0$ and $L_1$ are genuine sets, and each $A_{r,\ell}=A_{r,1,\ell}$ is irreducible by \cite[\S 2 Case OO]{HT:cone}. We conclude that
\begin{align*}
    \mathcal{S}(X_n(\rr))/\mathcal{S}(X_n^\circ(\rr))\cong \cc[[X_n]]\oplus \mathcal{S}(X_{n-1}(\rr))\cc[[X_n]].
\end{align*}
Here $\mathcal{S}(X_2(\rr))$ is the minimal representation of $\mathrm{SL}_4(\rr)$ contained in $I_{P}^{\mathrm{SL}_{4}}(1_{-1})$, where $P$ is a maximal parabolic stabilizing a plane.

Suppose $n$ is even. We have $A_{r,1,\ell}$ is irreducible if $r<0$ and has length at most $2$ for $r\ge 0$ by \cite[\S 2 Case EE]{HT:cone}, and thus $A_{r,1,\ell}$ must have length $2$ for $r\ge 0$ and
    \begin{align*}
         \prod_{r,\ell} A_{r,1,\ell}\cong \cc[[X_n]]+ \mathcal{S}(X_{n-1}(\rr))\cc[[X_n]].
    \end{align*}
We claim $A_{r,2,\ell}/A_{r,1,\ell}\neq A_{r,1,\ell}$ for $r\ge 0,$ so
    \begin{align*}
        \mathcal{S}(X_n(\rr))/\mathcal{S}(X_n^\circ(\rr))\cong \cc[[X_n]]+ \bigg(\mathcal{S}(X_{n-1}(\rr))\cc[[X_n]]+ \cc[[X_n]]\log |x|\bigg),
\end{align*}
and it has a quotient module 
\begin{align*}
    \cc\log|x|\oplus \mathcal{S}(X_{n-1}(\rr)).
\end{align*}
    
Suppose the claim is false. Let $\widehat{I}_{m}$ be the closed ideal in $\cc[[X_n]]$ generated by homogeneous polynomials of degree $m$. Then $$\prod_{r\ge 0,\ell}A_{r,2,\ell}/A_{r,1,\ell}=\bigg(\cc[[X_n]]+\mathcal{S}(X_{n-1}(\rr))\widehat{I}_{m}\bigg)\log |x|$$
is a $W_{X_n}(\cc)$-module for some $m\ge n-2> 0.$ \sloppy
Applying differential operators $\partial_i$ to $\mathcal{S}(X_{n-1}(\rr))\widehat{I}_{m}\log |x|$, by the product rule one has $\mathcal{S}(X_{n-1}(\rr))\widehat{I}_{m-1}\log |x|$ is contained in $\prod_{r\ge 0,\ell}A_{r,2,\ell}/A_{r,1,\ell},$ a contradiction.

\subsubsection{Lagrangian Grassmannian}
We have
\begin{align*}
    L_{0}&:=(2\zz_{\ge 0})+\sum_{i=1}^{\lfloor n/2\rfloor} \left(-i+\zz_{\ge 0} \right),&L_{1}&:=(1+2\zz_{\ge 0})+\sum_{i=1}^{\lfloor n/2\rfloor} \left(-i+\zz_{\ge 0} \right).
\end{align*}
Suppose $n=2m$ is even. By \cite[Theorem 5.2]{Lee:degeneratepssp} for integers $i\le m$
\begin{align*}
    \textrm{length of }  I(1_{i-\frac{n+1}{2}})=\begin{cases}
        \frac{(2(m+1)-\min(m-i+1,m))(\min(m-i+1,m)+1)}{2}& \textrm{ if } 2\nmid i,\\
        2m+1& \textrm{ if } 2|i, i=m,\\
        m+\frac{(2(m+1)-\min(m-i,m))(\min(m-i,m)+1)}{2} & \textrm{ if } 2| i, i<m.
    \end{cases} 
\end{align*}
Furthermore, by the description of socles
\begin{align*}
    \textrm{length of } A_{-i,1,0}\le \begin{cases}
        \frac{(2m+1-\min(m-i+1,m))\min(m-i+1,m)}{2}& \textrm{ if } 2\nmid i,\\
        m+1& \textrm{ if } 2|i, i=m,\\
        \frac{(2(m+1)-\min(m-i,m))(\min(m-i,m)+1)}{2} & \textrm{ if } 2| i, i<m.
    \end{cases} 
\end{align*}
Similarly, 
\begin{align*}
     \textrm{length of } A_{-i,1,1}\le \begin{cases}
        \frac{(2m+3-\min(m-i+1,m))\min(m-i+1,m)}{2}+\delta_{i<0}& \textrm{ if } 2\nmid i,\\
        m& \textrm{ if } 2|i, i=m,\\
        \frac{(2m-\min(m-i,m))(\min(m-i,m)+1)}{2} & \textrm{ if } 2| i, i<m.
    \end{cases}
\end{align*}

If follows from \cite{KR:Sp2nR} or \cite[Proposition 2.8(iii)]{KR:Siegel-Weil} that 
\begin{align*}
    \bigoplus_{i=0}^{n/2}\bigoplus_{V} \mathcal{S}(V^n(\rr))_{O(V)(\rr)}\leq \bigoplus_{i=0}^{n/2} A_{-i,1,0}\oplus A_{-i,1,1}
\end{align*}
as representations of $\mathrm{Sp}_{2n}(\rr).$ Here $V$ ranges over all equivalence classes of nondegenerate $2i$-dimensional quadratic spaces over $\rr$, and each coinvariant space is irreducible. By counting length, we conclude that 
\begin{align*}
    \prod_{r,\ell} A_{r,1,\ell}= \sum_{V} \mathcal{S}(V^n(\rr))_{O(V)(\rr)}\cc[[X_P]],
\end{align*}
where $V$ ranges over all equivalence classes of nondegenerate even-dimensional quadratic spaces over $\rr$ of dimension at most $n$. The same expression holds for $n$ odd using \cite[Theorems 5.4 and 5.6]{Lee:degeneratepssp}.

In any case, we can use the definition of Schwartz spaces on smooth stacks in \cite{SakStack} to write
\begin{align*}
    \prod_{r,\ell}A_{r,1,\ell}=\sum_{i=0}^{\lfloor n/2\rfloor} \mathcal{S}(\mathcal{X}_i(\rr))\cc[[X_P]].
\end{align*}
Arguing as previous two examples, we have for $2\le j\le \lfloor n/2\rfloor+1$
\begin{align*}
    \prod_{r,\ell}A_{r,j,\ell}/ A_{r,j-1,\ell}\cong \sum_{i=0}^{\lfloor n/2\rfloor+1-j} \mathcal{S}(\mathcal{X}_i(\rr))\cc[[X_P]]\log^{j-1}|x|,
\end{align*}
and $A_{X_P(F)}$ has a quotient module
\begin{align*}
    \bigoplus_{i=0}^{\lfloor n/2\rfloor}  \mathcal{S}(\mathcal{X}_i(\rr))\log^{\lfloor n/2\rfloor-i}|x|.
\end{align*}

\subsection{Examples over $\cc$}\label{ssec:ex:C}
In this subsection we illustrate how one can compute $A_{X_{P}(F)}$ for $F=\cc$ when only $A_{r,j,0}$ are known. Consider the odd-dimensional cones $X_n$ for $n\ge 3$. In this case for $\ell\in \zz$
\begin{align*}
   L_\ell= \left(\frac{|\ell|}{2}+\zz_{\ge 0}\right)+\left(-(n-2)+\frac{|\ell|}{2}+\zz_{\ge 0}\right).
\end{align*}
As explained in Example \ref{ex:cone} $\mathcal{S}(X_n(\cc))$ is the minimal representation of $\mathrm{SO}_{2n+2}(\cc)$ contained in $I_{P_{n+1}}^{\mathrm{SO}_{2n+2}}(1_{-1})$. Therefore, $\cc\oplus \mathcal{S}(X_{n-1}(\cc))$ is contained in $A_1:=\prod_{r,\ell} A_{r,1,\ell}$. Here $\mathcal{S}(X_2(\cc))$ is the minimal representation of $\mathrm{SL}_4(\cc)$ contained in $I_{P}^{\mathrm{SL}_{4}}(1_{-1})$, where $P$ is a maximal parabolic stabilizing a plane.

Since $A_1$ is a $W_{X_n(\cc)}$-module, $U:=(\cc+\mathcal{S}(X_{n-1}(\cc)))\cc[[\mathrm{Res}_{\cc/\rr}X_{n}]]$ is a $P_{n}^{\mathrm{ab}}(\cc)\times\mathrm{SO}_{2n}(\cc)$-submodule of $A_1$. We claim $U=A_{1}$. Then by an argument analogous to Example \ref{sssec:cones} when $F=\rr$ and $\ell$ is even, we have
\begin{align*}
    A_{X_P(\cc)}\cong \bigg(\cc+\mathcal{S}(X_{n-1}(\cc))+\cc\log |x|\bigg)\cc[[\mathrm{Res}_{\cc/\rr}X_n]],
\end{align*}
and it has a quotient module 
\begin{align*}
    \cc\log|x|\oplus \mathcal{S}(X_{n-1}(\cc)).
\end{align*}

We first show that $U$ is stable under $W_{X_{n}(\cc)}$. It suffices to show for $f\in \mathcal{S}(X_{n-1}(\cc))$ and any monomial $p\in \cc[V_{n}],$ we have $\partial_i pf\in  U$ for any $i$. We proceed the proof by induction on the total degree of $p$. When $p$ is constant, since $\mathcal{S}(X_{P_{n-1}}(\cc))\le A_{2-n,0}$ and $\partial_iA_{2-n,0}=0,$ we have $\partial_i f=0$. Suppose $\deg p>0$. By \cite[Lemma 2.4.8]{Kobayashi:Mano} (or direct computation) for any $j,$ $[\partial_i,x_j]$ is an element in $\cc+\mathrm{Lie}(P_n^{\mathrm{ab}}(\cc)\times \mathrm{SO}_{2n}(\cc)).$ Therefore, if we write $p=x_jp'$ for some $j$ and monomial $p',$ then
\begin{align*}
    \partial_i pf=[\partial_i, x_j]p'f+x_i\partial_jp'f.
\end{align*}
By the induction hypothesis $x_i\partial_jp'f\in U$. On the other hand, $[\partial_i, x_j]p'f\in U$ as $U$ is a $P_{n}^{\mathrm{ab}}(\cc)\times\mathrm{SO}_{2n}(\cc)$-module. This justifies the claim. 

By \cite[Theorem 3C]{Sahi} we have 
\begin{align*}
    \prod_{r}A_{r,1,0}=\sum_{\ell \ge 0} \left(\cc+\mathcal{S}(X_{P_{n-1}}(\cc))\right)\mathrm{Sym}^\ell V_n^\lor(\cc)\mathrm{Sym}^\ell\overline{V_n^\lor}(\cc).
\end{align*}
Here we have identified $\mathrm{Sym}^\ell V_n^\lor(\cc)$ and $\mathrm{Sym}^\ell\overline{V_n^\lor}(\cc)$ with their restrictions to $X_n(\cc).$ Now suppose on the contrary that $A_1$ properly contains $U$. Then there is $0\neq v\in A_{r,1,\ell}-U$ for some $r,\ell$. By taking the complex conjugate, we may assume $\ell \ge 0$. For any $p\in \mathrm{Sym}^{\ell}V_n^\lor(\cc),$ we have $pv\in \prod_{r}A_{r,1,0}$. Since $U$ is a $W_{X_n(\cc)}$-module, the set
\begin{align*}
    \{ D\in W_{X_n}(\cc): Dv\in U\} 
\end{align*}
is a left ideal containing $b(\mathbf{x})$ and $b^{\mathrm{op}}(\partial)$ for all homogeneous polynomials $b,b^{\mathrm{op}}\in \cc[X_n]$ of degree $m$ for some large $m\ge \ell$. It follows from Theorem \ref{thm:Weylprop}(2) that the left ideal is $W_{X_n}(\cc)$ so $v\in U$, a contradiction.

\section{Poles of intertwining operators}\label{sec:casebycase}

In this section we prove the set-theoretic inclusion $\mathcal{S}(X_P^\circ(F))\subseteq\mathcal{S}(X_P(F))$ by studying more generally poles of intertwining operators following the approach of \cite[\S 1]{Ikeda:poles:triple}. We first recall some general results on intertwining operators discussed in \S 1.2 in loc.~cit. Our main references are \cite{Shahidi:certainL, Shahidi:Eisen}.

Let $B=TN$ be the standard Borel subgroup with respect to our fixed pinning in \S\ref{sec:prelim}, let $N^{\mathrm{op}}$ be the opposite of $N$, and let $W=W_G$ be the Weyl group of $(G,T)$. Let $\Phi^\lor=\Phi^\lor_G$ be the set of coroots of $(G,T),$ and let $(\Phi^\lor)^+$ (resp. $(\Phi^\lor)^-$) be the set of positive (resp. negative) coroots.

For a quasi-character $\tilde{\chi}$ of $T(F)$, let $I_B(\tilde{\chi}):=\mathrm{Ind}_{B(F)}^{G(F)}(\tilde{\chi})$ be the normalized induced representation in the category of smooth representations. Here we have identified $\tilde{\chi}$ as a quasi-character of $B(F)$. For each $w\in W$, we choose a representative in $K$, which we continue to denote by $w$. Let $\tilde{\chi}^w(t):=\tilde{\chi}(w^{-1} t w)$ and $N_w:=N\cap wN^{\mathrm{op}}w^{-1}$.
The (unnormalized) intertwining operator $M_w$ is defined as
\begin{align*}
    M_w(\tilde{\chi}): I_B(\tilde{\chi})&\longrightarrow I_{B}(\tilde{\chi}^w)\\
            f&\mapsto  \left( g\mapsto\int_{N_w(F)} f(w^{-1}ug)du\right).
\end{align*}
Here the measure $du$ on $N_w$ is defined as in \S \ref{ssec:measure}.
The integral defining $M_w(\tilde{\chi})f(g)$ converges absolutely for all $g\in G(F)$ and $\tilde{\chi}$ in some open cone, and extends meromorphically to all $\tilde{\chi}$.

 For $w\in W$, let $\ell(w)$ be the length of $w$. When considering the quotient $W/W_M$, we always choose representatives of minimal length in each left coset of $W_M$. 
Let $w_0\in W/W_M$ be (the representative of) the long Weyl element. For $w\in W/W_M$, $\chi\in\widehat{F}^\times$ and $s\in \cc$, define
\begin{align*}
     \Phi^{\lor}_w&:= \big\{\beta^\lor \in (\Phi^{\lor})^{+} : w\beta^{\lor}\in (\Phi^{\lor})^- \big\}\subseteq (\Phi^\lor)^{+}-(\Phi^{\lor}_M)^+,\\
     m_w(h,\lambda)&:=\bigg|\left\{\beta^\lor \in \Phi_w^\lor : \sum_{\alpha\in\Delta} \langle \omega_\alpha, \beta^\lor \rangle=h, \langle \omega_P,\beta^\lor\rangle=\lambda\right \}\bigg| \textrm{ for } h, \lambda \in \zz,\\
     c_w(\chi_s)&:=\prod_{\beta^\lor \in \Phi_w^{\lor}} \frac{L\left(- \sum_{\alpha\in \Delta}\langle \omega_\alpha,\beta^\lor\rangle ,\chi_{s+s_k+1}^{\langle \omega_P, \beta^\lor \rangle}\right)}{L\left(1- \sum_{\alpha\in \Delta}\langle \omega_\alpha,\beta^\lor\rangle ,\chi_{s+s_k+1}^{\langle \omega_P, \beta^\lor \rangle}\right)}\\
     &=\prod_{\lambda=1}^{\infty}\prod_{h=1}^{\infty} \frac{ L\left(-h,\chi_{s+s_k+1}^{\lambda}\right)^{\max(0,m_w(h,\lambda)-m_w(h+1,\lambda))}}{ L\left(1-h,\chi_{s+s_k+1}^{\lambda}\right)^{\max(0,m_w(h,\lambda)-m_w(h-1,\lambda))}},\\
     a_w(\chi_s)&:=\prod_{\lambda=1}^{\infty}\prod_{h=1}^{\infty}  L\left(-h,\chi_{s+s_k+1}^{\lambda}\right)^{\max(0,m_w(h,\lambda)-m_w(h+1,\lambda))}.
\end{align*}

Note that $I_P(\chi_s)\le I_B(\tilde{\chi}_s)$, where 
\begin{align}\label{eq:normalizedchi}
    \tilde{\chi}_s:=\delta_P^{1/2}\delta_B^{-1/2}\cdot \chi_s\circ \omega_P =\left(\prod_{\alpha\in \Delta}| \omega_\alpha|^{-1}\right)\cdot \chi_{s+s_k+1}\circ \omega_P, 
\end{align}
and the restriction of $M_w(\tilde{\chi}_s)$ to $I_P(\chi_s)$, denoted by $M_{w}(\chi_s)$, is well defined for $s$ outside a discrete subset of $\cc$. 

Our main result is

\begin{Thm}\label{thm:poles}
Suppose $G$ is either classical or $G_2$. The intertwining operator 
\begin{align}\label{eq:prime}
    M'_w(\chi_s):=a_w(\chi_s)^{-1} M_{w}(\chi_s)
\end{align}
is holomorphic, i.e., if $f^{(s)}$ is a holomorphic section of $I_P(\chi_s)$, then $M'_w(\chi_s)f^{(s)}$ is holomorphic. Moreover for all $h,\lambda\in \zz_{>0}$,
\begin{align}\label{eq:comb-}
     \max(0,m_w(h,\lambda)-m_w(h-1,\lambda))\le \max(0,m_{s_\alpha w}(h,\lambda)-m_{s_{\alpha}w}(h-1,\lambda))
\end{align}
for every $w\in W/W_M$ and $\alpha\in \Delta$ such that $s_\alpha w$ is a reduced expression in $W/W_M$. Consequently, 
\begin{align*}
        d(\chi_s)&:=\prod_{\lambda=1}^{\infty}\prod_{h=1}^{\infty}  L\left(1-h,\chi_{s+s_k+1}^{\lambda}\right)^{\max(0,m_{w_0}(h,\lambda)-m_{w_0}(h-1,\lambda))}
\end{align*}
is the least common denominator of $c_w(\chi_s)$ for all $w\in W/W_M$. 
\end{Thm}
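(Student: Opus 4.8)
The plan is to run an induction on the length $\ell(w)$ of the minimal coset representative, feeding the rank-one ($\SL_2$) reduction \eqref{eq:sl2} and Theorem \ref{thm:sl2} into the combinatorial structure of the multiplicity functions $m_w(h,\lambda)$. Both the holomorphy of $M'_w(\chi_s)$ and the inequality \eqref{eq:comb-} will emerge from the inductive step, and the final assertion about $d(\chi_s)$ is then a formal consequence of \eqref{eq:comb-}. The base case $w=\mathrm{Id}$ is immediate: $M_{\mathrm{Id}}=\mathrm{Id}$, $\Phi^\lor_{\mathrm{Id}}=\emptyset$, so $a_{\mathrm{Id}}(\chi_s)=1$. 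All the real content is pushed into statements about the multiset $(\Phi^\lor)^+-(\Phi^\lor_M)^+$ and the inversion sets $\Phi^\lor_w$, and checking those is where the work lies.

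For the inductive step, fix $w\in W/W_M$ and a simple root $\alpha$ with $\ell(s_\alpha w)=\ell(w)+1$ and $w^{-1}\alpha\notin\Delta_M$ (the remaining $\alpha$ do not change the relevant data). The cocycle relation gives $M_{s_\alpha w}(\tilde\chi_s)=M_{s_\alpha}(\tilde\chi^w_s)\circ M_w(\tilde\chi_s)$, and one checks directly that $\Phi^\lor_{s_\alpha w}=\Phi^\lor_w\sqcup\{\gamma^\lor\}$ with $\gamma^\lor:=w^{-1}\alpha^\lor\in(\Phi^\lor)^+-(\Phi^\lor_M)^+$; writing $(h_0,\lambda_0)$ for the level of $\gamma^\lor$, this means $m_{s_\alpha w}$ equals $m_w$ except that it increases by $1$ at $(h_0,\lambda_0)$. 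Then I would compute the $\SL_2$-character $\iota_\alpha^\ast(\tilde\chi^w_s)=\tilde\chi_s\circ\gamma^\lor$ from \eqref{eq:normalizedchi}: a direct calculation identifies it as an unramified twist of $\chi^{\lambda_0}$ whose $L$-factor is exactly one of the factors $L(-h,\chi^{\lambda}_{s+s_k+1})$ appearing in $a_{s_\alpha w}(\chi_s)$, namely the one indexed by the level of $\gamma^\lor$. By \eqref{eq:sl2} and Theorem \ref{thm:sl2}(i), $M_{s_\alpha}(\tilde\chi^w_s)$ has poles no worse than that single $L$-factor, hence $M_{s_\alpha w}(\chi_s)$ has poles no worse than $a_w(\chi_s)$ times it. Comparing this bound with $a_{s_\alpha w}(\chi_s)$ factor by factor, every factor matches except those at levels $(h_0,\lambda_0)$ and $(h_0\pm 1,\lambda_0)$, and the surviving discrepancy there is governed precisely by whether $h_0$ is, roughly speaking, a peak of $h\mapsto m_w(h,\lambda_0)$: the condition $m_w(h_0+1,\lambda_0)\le m_w(h_0,\lambda_0)$ is, after unwinding the $\max(0,\cdot)$'s, exactly \eqref{eq:comb-}, the companion condition $m_w(h_0-1,\lambda_0)\le m_w(h_0,\lambda_0)$ is \eqref{eq:comb+}, and the residual pole cancellation in the composition $M_{s_\alpha}\circ M_w$ is captured by Lemma \ref{lem:comb1} (using parts (ii)--(iv) of Theorem \ref{thm:sl2}). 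Granting these, the induction closes and produces both the holomorphy of $M'_w$ and \eqref{eq:comb-}.

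The main obstacle is precisely these three combinatorial inputs, \eqref{eq:comb-}, Lemma \ref{lem:comb1}, and \eqref{eq:comb+}. They are intrinsic statements about the root datum of $(G,T)$ and the node defining $P$, and I do not expect a uniform type-free proof; instead one verifies them by inspecting the explicit list of positive coroots outside $M$ and the inversion sets $\Phi^\lor_w$, case by case — $G_2$ in \S\ref{ssec:proof} and each classical family in \S\ref{ssec:An}--\S\ref{ssec:Dn}. This is where the hypothesis that $G$ be classical or of type $G_2$ enters, and it is why the $E$- and $F$-type cases are omitted.

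Finally, the statement about $d(\chi_s)$ is a clean consequence of \eqref{eq:comb-}. Put $g_w(h,\lambda):=\max\!\big(0,m_w(h,\lambda)-m_w(h-1,\lambda)\big)$, so that \eqref{eq:comb-} says exactly $g_w(h,\lambda)\le g_{s_\alpha w}(h,\lambda)$ whenever $\ell(s_\alpha w)=\ell(w)+1$. Since $w_0$ is the unique maximal element of $W/W_M$, a saturated chain in the weak order from any $w$ up to $w_0$ gives $g_w(h,\lambda)\le g_{w_0}(h,\lambda)$ for all $w$ and all $h,\lambda\in\zz_{>0}$. On the other hand, telescoping the product defining $c_w(\chi_s)$ shows that its poles are precisely those of $a_w(\chi_s)$ and that, as a ratio of products of $L$-functions in lowest terms, its denominator is $D_w(\chi_s):=\prod_{\lambda,h}L(1-h,\chi^{\lambda}_{s+s_k+1})^{g_w(h,\lambda)}$; in particular $d(\chi_s)=D_{w_0}(\chi_s)$. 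Because $g_w\le g_{w_0}$, the quotient $d(\chi_s)/D_w(\chi_s)$ is again a product of $L$-functions, so $d(\chi_s)c_w(\chi_s)=a_w(\chi_s)\cdot\big(d(\chi_s)/D_w(\chi_s)\big)$ is a product of $L$-functions for every $w$; and since $d(\chi_s)=D_{w_0}(\chi_s)$ is the reduced denominator of $c_{w_0}(\chi_s)$, any common denominator is divisible by it, so $d(\chi_s)$ is the smallest. This also recovers the holomorphy of $\big(d(\chi_s)c_w(\chi_s)\big)^{-1}M_w(\chi_s)$ asserted in Theorem \ref{thm:main:poles} from the holomorphy of $M'_w(\chi_s)$, since $a_w(\chi_s)/\big(d(\chi_s)c_w(\chi_s)\big)$ is a polynomial in $q^{\pm s}$.
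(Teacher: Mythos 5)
Your overall skeleton (induction on $\ell(w)$, rank-one reduction via \eqref{eq:sl2} and Theorem \ref{thm:sl2}\ref{itm:1}, case-by-case combinatorics on coroots, and the formal deduction of the statement about $d(\chi_s)$ from \eqref{eq:comb-}) is the same as the paper's, and your last paragraph on $d(\chi_s)$ is essentially the paper's ``consequently''. But the inductive step has a genuine gap exactly at the hard case. When the new coroot sits at level $(h_0,\lambda_0)$, only the exponents at $(h_0,\lambda_0)$ and $(h_0-1,\lambda_0)$ in $a_{s_\alpha w}$ can differ from those in $a_w$, and the problematic situation is $m_w(h_0-1,\lambda_0)>m_w(h_0,\lambda_0)$ (failure of condition \ref{itm:I} of Lemma \ref{lem:holo}): then the exponent of $L(-(h_0-1),\chi^{\lambda_0}_{s+s_k+1})$ \emph{drops} in passing from $a_w$ to $a_{s_\alpha w}$, so the naive bound ``poles of $M_{s_\alpha w}$ are at most those of $a_w(\chi_s)L(-h_0,\chi^{\lambda_0}_{s+s_k+1})$'' exceeds $a_{s_\alpha w}(\chi_s)$ by one power of that $L$-factor, and holomorphy of $M'_{s_\alpha w}$ does \emph{not} follow from any factor-by-factor comparison. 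One must produce an actual extra zero of $M_{s_\alpha}(\chi_s^w)M'_w(\chi_s)f^{(s)}$ at the point $\lambda_0(s+s_k+1)=h_0-1$. You attribute this ``residual pole cancellation'' to Lemma \ref{lem:comb1} plus Theorem \ref{thm:sl2}(ii)--(iv), but Lemma \ref{lem:comb1} is a purely combinatorial statement about the location of exponent jumps, used only later for the corollary on Schwartz functions; it cannot supply the vanishing. The paper's mechanism is condition \ref{itm:II} of Lemma \ref{lem:holo}: one shows that $M'_{w}(\chi_s)f^{(s)}$ restricted to $\lambda_0(s+s_k+1)=h_0-1$ is left $\iota_\alpha(\SL_2)$-invariant, so that Theorem \ref{thm:sl2}\ref{itm:2} (the kernel of $M_{\tilde{w_0}}(1_{-1})$ is the trivial representation) forces the required vanishing. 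Establishing this invariance whenever \ref{itm:I} fails is itself a delicate induction interleaved with the choice of reduced expressions and the braid-move analysis \ref{itm:a'}--\ref{itm:b'}, and it constitutes the bulk of \S\ref{ssec:proof}--\S\ref{ssec:Dn}; it is entirely absent from your proposal, so as written your induction does not close.

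Two smaller points. First, you misidentify the labels: \eqref{eq:comb-} is the inequality comparing $\max(0,m_w(h,\lambda)-m_w(h-1,\lambda))$ with the same quantity for $s_\alpha w$ (this is what feeds the $d(\chi_s)$ argument), while $m_w(h_0+1,\lambda_0)\le m_w(h_0,\lambda_0)$ is \eqref{eq:comb+} and $m_w(h_0-1,\lambda_0)\le m_w(h_0,\lambda_0)$ is condition \ref{itm:I} of Lemma \ref{lem:holo}; neither of the latter two is \eqref{eq:comb-}. Second, in your factor comparison only the levels $(h_0,\lambda_0)$ and $(h_0-1,\lambda_0)$ can change, not $(h_0+1,\lambda_0)$. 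The case-by-case verification of \eqref{eq:comb-}, \eqref{eq:comb+} and Lemma \ref{lem:comb1} that you defer is indeed how the paper proceeds, but the additional, and harder, case-by-case content is the invariance property \ref{itm:II}, which your plan would need to add.
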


For a simple root $\alpha\in \Delta$, let $\iota_\alpha:\SL_2\to G$ be the homomorphism determined by the fixed pinning. Let $\tilde{w}_0$ be the nonidentity element in $W_{\SL_2}$. We may assume $\iota_\alpha(\tilde{w}_0)=s_\alpha$ is the simple reflection defined by $\alpha$. Then for $f\in I_B(\tilde{\chi})$ we have
\begin{align}\label{eq:sl2}
    \iota_{\alpha}^\ast(M_{s_\alpha}(\tilde{\chi})f)=M_{\tilde{w}_0}(\iota_\alpha^\ast\tilde{\chi})(\iota_\alpha^\ast f).
\end{align}
 If $w_1,w_2\in W$ satisfies $\ell(w_1w_2)=\ell(w_1)+\ell(w_2)$, then $M_{w_1w_2}(\tilde{\chi})=M_{w_1}(\tilde{\chi}^{w_2})\circ M_{w_2}(\tilde{\chi})$. Note that by definition $M_w$ commutes with right translations, and thus to study poles of $M_w$, it suffices to have a good understanding for the case $G=\SL_2$. Our proof of Theorem \ref{thm:poles} requires case-by-case discussions; we will prove the theorem for classical groups and $G=G_2$ in \S \ref{ssec:proof}- \S \ref{ssec:Dn} when $F$ is nonarchimedean, and complete the discussion for the archimedean case in \S \ref{ssec:modarch}.

 Theorem \ref{thm:poles} together with the following lemma has several consequences. 

\begin{Lem}\label{lem:direct}
We have
\begin{align*}
    a_{w_0}(\chi_s) &=\prod_{i=1}^k L(-s_i,\chi_s^{\lambda_i}), \quad\quad d(\chi_s) =\prod_{i=1}^k L(1+s_i,\chi_s^{\lambda_i}).
\end{align*}
\end{Lem}
\begin{proof}
 Both identities can be easily verified which we leave to reader. For a list of positive (co)roots see e.g., \cite{Bour46} for exceptional groups, and see \S\ref{ssec:An}-\S\ref{ssec:Dn} for classical groups; for a list of multiset $\Lambda=\{(s_i,\lambda_i)\}$ see \cite[Appendix A]{Getz:Hsu:Leslie}. We alert the reader that the result in loc.~cit. is stated in the dual side. 
\end{proof}


First, we can rephrase the definition of ($K$-finite) good sections in \cite[\S 5.2]{Getz:Hsu:Leslie} in the sense of \cite[Definition 3.1]{Yamana}. A ($K$-finite) meromorphic section $f^{(s)}\in I(\chi_s)$ is a \textbf{good section} in the sense of \cite{Ikeda:poles:triple, Getz:Hsu:Leslie} if both
\begin{align*}
    \frac{f^{(s)}}{d(\chi_s)} \quad \textrm{ and }\quad\frac{M_{w_0}(\chi_s)f^{(s)}}{a_{w_0}(\chi_s)}
\end{align*}
are holomorphic.

\begin{Cor}\label{cor:Yamana}
Let $\chi$ be a unitary character of $F^\times$. A meromorphic section $f^{(s)}\in I(\chi_s)$ is good if and only if  $f^{(s)}$ has no poles for $\mathrm{Re}(s)>-\frac{1}{6}$ and $d(\chi_s^{-1})a_{w_0}(\chi_s)^{-1}M_{w_0}(\chi_s)f^{(s)}$ has no poles for $\mathrm{Re}(s)<0$.
\end{Cor}

\begin{proof}
Sufficiency follows from the fact that $\lambda_i\le 6$ for all $i$. For the converse, by the assumption of $f^{(s)}$ and Theorem \ref{thm:poles}, the meromorphic section
\begin{align*}
 \frac{M_{w_0}(\chi_s)f^{(s)}}{a_{w_0}(\chi_s)}
\end{align*}
has no poles for $\mathrm{Re}(s)>-\frac{1}{6}$ or $\mathrm{Re}(s)<0$. Thus it is holomorphic. On the other hand, by \eqref{eq:sl2},  Theorem \ref{thm:sl2}\ref{itm:5} and Remark \ref{rem:gammaL} below, we can write
\begin{align}\label{eq:Fouriercomp}
    f^{(s)}= h(s)\frac{d(\chi_s)d(\chi_s^{-1})}{a_{w_0}(\chi_s^{-1})a_{w_0}(\chi_s)}M_{w_0}(\chi_s^{-1})M_{w_0}(\chi_s)f^{(s)}
\end{align}
for some nowhere vanishing holomorphic function $h(s)$, which lies in $\cc[q^{-s},q^s]$ if $F$ is nonarchimedean. Then by the assumption of $f^{(s)}$ and Theorem \ref{thm:poles}
\begin{align*}
    \frac{f^{(s)}}{d(\chi_s)}=h(s) \frac{M_{w_0}(\chi_s^{-1})}{a_{w_0}(\chi_s^{-1})}\frac{d(\chi_s^{-1})}{a_{w_0}(\chi_s)}M_{w_0}(\chi_s)f^{(s)}
\end{align*}
has no poles for $\mathrm{Re}(s)<0$. Thus  $d(\chi_s)^{-1}f^{(s)}$ is holomorphic, and $f^{(s)}$ is a good section.
\end{proof}

Recall that $\mathcal{S}_{\mathrm{BK}}(X_P(F))=\mathcal{S}(X_P^\circ(F))+\mathcal{F}_{P^{\mathrm{op}}|P}\left(\mathcal{S}(X_{P^{\mathrm{op}}}^\circ(F))\right).$
\begin{Cor} \label{cor:cont}
We have $\mathcal{S}(X_P^\circ(F))<\mathcal{S}(X_P(F))$ and  $\mathcal{S}(X_P(F))=\mathcal{S}_{\mathrm{BK}}(X_P(F))$.
\end{Cor}

\begin{proof} 
Observe that $M_{w_0}$ is, up to a conjugation by an element of $K$, essentially equal to $\mathcal{R}_{P|P^{\mathrm{op}}}$. By the Mellin inversion, a function whose Mellin transform along each character is holomorphic if and only if it lies in $\mathcal{S}(X_P^\circ(F))$. The inclusion
$\mathcal{S}(X_P^\circ(F))<\mathcal{S}(X_P(F))$ follows from Theorem \ref{thm:poles} and Lemma \ref{lem:direct}. By Corollary \ref{cor:Yamana}, the identity $\mathcal{S}(X_P(F))=\mathcal{S}_{\mathrm{BK}}(X_P(F))$ is a consequence of Theorem \ref{Thm: Fourier formula}, \cite[Proposition 3.1(4)]{Yamana} and continuity.
\end{proof}

    The following corollary generalizes \cite[Lemma 1.2]{Ikeda:poles:triple}.
    
\begin{Cor}\label{cor:more}
For $f\in \mathcal{S}(X_P(F))$, the section
\begin{align}\label{eq:normalizedMw}
    \left(d(\chi_s)c_w(\chi_s)\right)^{-1}M_w(\chi_s)f_{\chi_s}
\end{align}
is holomorphic for any $w\in W/W_M$ and $\chi\in \widehat{\calo}^\times$.
\end{Cor}

\begin{proof}
Replace $\chi_s$ with $\chi_s^{-1}$. By Theorems \ref{Thm: Fourier formula} and \ref{thm:poles}  and Corollary \ref{cor:cont}, it suffices to consider sections of $I(\chi_s^{-1})$ of the form $$\frac{d(\chi_s^{-1})}{a_{w_0}(\chi_s)}M_{w_0}(\chi_s)f_{\chi_s} \textrm{ where } f\in \mathcal{S}(X_{w_0Pw_0^{-1}}^\circ (F)).$$ 
Write $w_0=ww'$ such that $\ell(w_0)=\ell(w)+\ell(w')$. Then we can rewrite \eqref{eq:normalizedMw} as
\begin{align*}
    &\frac{1}{c_w(\chi_s^{-1})a_{w_0}(\chi_s)}M_w(\chi_s^{-1})M_{w_0}(\chi_s)f_{\chi_s}\\
    &=\frac{1}{d(\chi_s)c_w(\chi_s^{-1})c_{w_0}(\chi_s)}M_w(\chi_s^{-1})M_{w}(\chi_s^{w'})M_{w'}(\chi_s)f_{\chi_s}
\end{align*}
By \eqref{eq:sl2} and Theorem \ref{thm:sl2}\ref{itm:5}, this section has the same poles (counting multiplicities) as 
\begin{align*}
    \frac{1}{d(\chi_s)c_{w'}(\chi_s)}M_{w'}(\chi_s)f_{\chi_s},
\end{align*}
which is holomorphic by Theorem \ref{thm:poles}.
\end{proof}

\begin{Cor}\label{cor:nontrivial}
    \begin{enumerate}
        \item  Suppose $F$ is nonarchimedean.  The module $A_{r,m_r(L_d),\chi}/A_{r,m_r(L_d)-1,\chi}$ is nonzero for all $\mathrm{ord}(\chi)=d$ and $r\in \mathrm{Supp}(L_d)$.
        \item Suppose $F$ is archimedean. For $\ell\in H,$ the module $A_{r,m_r(L_\ell),\ell}/A_{r,m_r(L_\ell)-1,\ell}$ is nonzero for all $r\in \mathrm{Supp}(L_\ell)$.
    \end{enumerate}
\end{Cor}

\begin{proof} 
By Corollary \ref{cor:Yamana} this follows from \cite[Proposition 3.1 (3)]{Yamana}. 
\end{proof}

\begin{Cor}\label{cor:trivcont}
    We have $\cc\le A_{0,1,\mathrm{triv}}$. 
\end{Cor}

\begin{proof}
     Assume first $F$ is nonarchimedean. Since $\mathcal{S}(X_P(F))$ is independent of $\psi$, we may assume the conductor of $\psi$ is $\calo$. Let $f$ be the unique right $G(\calo)$-invariant function in $C^\infty(X_P^\circ(F))$ such that $(f)_{1_s}|_{G(\calo)}=d(1_s)$. By Gindikin-Karpelevi\v{c} formula \cite[Proposition 4.6]{Lai} we have $f\in \mathcal{S}(X_P(F))$. Since $d(1_0)\neq 0$, the image of $f$ in $A_{0,1,1}$ under the exact sequence in Theorem \ref{thm:exact} is nonzero. As $f$ is right $G(\calo)$-invariant, the image is a nonzero constant. The archimedean case is similar using \cite{Knapp:GKformula}.
\end{proof}

\begin{Cor}\label{cor:localnonarch}
    Suppose $F$ is nonarchimedean. The space $\mathcal{S}(X_P(F))$ is local and  $\mathcal{S}_{\mathrm{ES}}(X_P(F))\le  \mathcal{S}(X_P(F))$.
\end{Cor}

\begin{proof}
     Recall that germs $\mathcal{S}_{\mathrm{ES}}(X_P(F))/\mathcal{S}(X_P^\circ(F))=\cc$ are constant functions at the origin. Therefore, $\mathcal{S}_{\mathrm{ES}}(X_P(F))\le \mathcal{S}(X_P(F))$ by Corollary \ref{cor:trivcont}. Since functions in $\mathcal{S}(X_P(F))$ are compactly supported in $X_P(F),$ we have
$C^\infty(X_P(F))\cdot \mathcal{S}(X_P(F))=\mathcal{S}_{\mathrm{ES}}(X_P(F))\cdot \mathcal{S}(X_P(F)).$ As $\mathcal{S}(X_P^\circ(F))\cdot \mathcal{S}(X_P(F))=\mathcal{S}(X_P^\circ(F))$, the assertion that $\mathcal{S}(X_P(F))$ is local is equivalent to the module $\mathcal{S}(X_P(F))/\mathcal{S}(X_P^\circ(F))$ being a $\cc$-vector space. 
\end{proof}

\subsection{Proof outline of Theorem \ref{thm:poles}}\label{ssec:proof}

In this subsection, we discuss in detail the proof of Theorem \ref{thm:poles} for the case $G=G_2$ and explain how to adapt it for classical groups. As mentioned, we need decent understanding for the case of $\SL_2$:

\begin{Thm}\label{thm:sl2} Let $G=\SL_2$ and $w_0=\begin{psmatrix}
 & 1\\
-1 & 
\end{psmatrix}$. Then the intertwining operator $M_{w_0}(\chi_s):I_B(\chi_s)\to I_{B}(\chi_{s}^{-1})$ has following properties.
\begin{enumerate}[label={(\roman*)}]
    \item The function $L(0,\chi_{s})^{-1}M_{w_0}(\chi_s)$ is holomorphic. \label{itm:1}
    \item  The kernel of $M_{w_0}(1_{-1})$ is the trivial representation. \label{itm:2}
\item The map $\mathrm{Res}_{s=0}M_{w_0}(1_s)$ is a nonzero scalar multiplication.  \label{itm:3}
\item The image of $M_{w_0}(1_{1})$ is the trivial representation. \label{itm:4}
\item The operator $\gamma(s,\tilde{\chi},\psi)\gamma(-s,\chi^{-1},\psi)M_{w_0}(\chi_s^{-1})M_{w_0}(\chi_s)$ is the identity for all $s$. \label{itm:5}
\item  Suppose $F$ is nonarchimedean and $\psi$ has conductor $\mathcal{O}$. Let $\phi_s$ be the unique right $\SL_2(\calo)$-invariant section in $I_B(1_s)$ such that $\phi_s|_{\SL_2(\calo)}=1$. Then $$M_{w_0}(1_s)\phi_s=\frac{L(0,1_s)}{L(1,1_s)}\phi_{-s}.$$
\label{itm:6}
\end{enumerate}
\end{Thm}

\begin{proof}
See \cite[\S 1.2]{Ikeda:poles:triple}. Note that there is a typo in (1.2.3) in loc.~cit. We refer one to \cite[Lemma 3.11]{Getz:Hsu} for a corrected statement.
\end{proof}

\noindent  We will be constantly referring to this theorem in the rest of the section, so for simplicity instead of writing Theorem \ref{thm:sl2}\ref{itm:1}, we will just write \ref{itm:1} and do the same for \ref{itm:2}-\ref{itm:6}.

\begin{Rem}\label{rem:gammaL}
Since $\gamma(s,\chi,\psi)$ equals  $L(1,\chi_s^{-1})/L(0,\chi_s)$ up to a nowhere vanishing function, \ref{itm:5} implies
\begin{align*}
   \frac{L(1,\chi_s^{-1})L(1,\chi_s)}{L(0,\chi_s)L(0,\chi_s^{-1})}M_{w_0}(\chi_s^{-1})M_{w_0}(\chi_s)
\end{align*}
is an $\SL_2(F)$-equivariant isomorphism at all $s$.
\end{Rem}

We record the following lemma and notations for later use (see e.g., \cite{Bour46}).

\begin{Lem}\label{lem:positive}
 Let $w\in W/W_M$ and $w=w_m\cdots w_2w_1$ be a reduced expression. For each $i$, let $\alpha_{(i)}\in \Delta$ be the simple root such that $w_i$ is the corresponding reflection. Then
\begin{align*}
   \Phi^{\lor}_w=\{\tilde{\alpha}_{(i)}^{\lor} := w_1 \cdots w_{i-2}w_{i-1}\alpha_{(i)}^\lor : 1\le i\le m\}.
\end{align*}
\qed
\end{Lem}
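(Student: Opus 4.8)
The plan is to prove Lemma~\ref{lem:positive} by induction on the length $m=\ell(w)$, using the standard description of the inversion set of a product and the compatibility of coroots with Weyl-group reflections. I would set $w'=w_{m-1}\cdots w_2 w_1$, so that $w=w_m w'$ with $\ell(w)=\ell(w')+1$; this means the reflection $w_m=s_{\alpha_{(m)}}$ sends exactly one new positive coroot to a negative one when composed on the left. Concretely, $\Phi^\lor_w=\Phi^\lor_{w'}\sqcup\{\beta^\lor\}$ where $\beta^\lor$ is the unique positive coroot with $w'\beta^\lor=\alpha_{(m)}^\lor$ (so that $w\beta^\lor=w_m\alpha_{(m)}^\lor=-\alpha_{(m)}^\lor$), hence $\beta^\lor=(w')^{-1}\alpha_{(m)}^\lor=w_1^{-1}\cdots w_{m-1}^{-1}\alpha_{(m)}^\lor$. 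Since each $w_i$ is an involution, $w_i^{-1}=w_i$, and since reflections preserve the set of coroots, $\beta^\lor=w_1\cdots w_{m-1}\alpha_{(m)}^\lor=\tilde\alpha_{(m)}^\lor$ in the notation of the lemma.

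The inductive step then reduces to two points. First, $\Phi^\lor_w=w'^{-1}\left(\Phi^\lor_{w'^{-1}w}\right)\cup\dots$; more cleanly, one uses the classical identity that for $\ell(w_m w')=\ell(w_m)+\ell(w')$ one has $N(w_m w')=N(w')\cup w'^{-1}N(w_m)$ at the level of root subgroups, which on coroots reads $\Phi^\lor_w=\Phi^\lor_{w'}\cup w'^{-1}\{\alpha_{(m)}^\lor\}$, the union being disjoint precisely because the length is additive. Second, the inductive hypothesis applied to $w'=w_{m-1}\cdots w_1$ (with its inherited reduced expression) gives $\Phi^\lor_{w'}=\{w_1\cdots w_{i-1}\alpha_{(i)}^\lor:1\le i\le m-1\}$, and combining with the new coroot $w_1\cdots w_{m-1}\alpha_{(m)}^\lor$ yields exactly the claimed description for $w$.

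The base case $m=0$ (or $m=1$) is immediate: $\Phi^\lor_{\mathrm{Id}}=\varnothing$, and for $w=s_\alpha$ one has $\Phi^\lor_{s_\alpha}=\{\alpha^\lor\}$. I would also remark at the start that since $w$ is taken of minimal length in its coset $wW_M$, the inclusion $\Phi^\lor_w\subset(\Phi^\lor)^+-(\Phi^\lor_M)^+$ holds: any $\beta^\lor\in\Phi^\lor_w$ with $\beta\in\Phi_M$ would force $\ell(s_\beta w)<\ell(w)$ with $s_\beta w\in wW_M$, contradicting minimality. This is the only place the coset-minimality hypothesis enters.

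The step I expect to require the most care is the disjointness of the union $\Phi^\lor_{w'}\sqcup\{\tilde\alpha_{(m)}^\lor\}$ and the verification that $\tilde\alpha_{(m)}^\lor$ is genuinely positive: one must check that $w_1\cdots w_{m-1}\alpha_{(m)}^\lor\in(\Phi^\lor)^+$, which is equivalent to $\ell(w_1\cdots w_{m-1}s_{\alpha_{(m)}})>\ell(w_1\cdots w_{m-1})$, i.e.\ to $w_m\cdots w_1$ (the inverse of $w$ read backwards) having the asserted reduced expression — a standard but slightly fiddly bookkeeping point about reduced words and the exchange condition. Everything else is a direct consequence of the fact that $W$ acts on $\Phi^\lor$ by permutations and that length counts inversions; since the statement is flagged in the excerpt with ``see e.g.\ \cite{Bour46}'', I would simply cite Bourbaki for the inversion-set identity and present the coroot bookkeeping explicitly.
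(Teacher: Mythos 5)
Your induction is correct and is exactly the standard inversion-set argument: the paper gives no proof of Lemma~\ref{lem:positive} at all, stating it with a pointer to \cite{Bour46}, and your decomposition $\Phi^\lor_{w_mw'}=\Phi^\lor_{w'}\sqcup\{(w')^{-1}\alpha_{(m)}^\lor\}$ when $\ell(w_mw')=\ell(w')+1$, with positivity of $(w')^{-1}\alpha_{(m)}^\lor$ coming from length additivity, is precisely the argument in that reference. One peripheral slip: in your remark on coset-minimality (which is not needed for the lemma --- the identity holds for any $w\in W$ and any reduced word, and the containment in $(\Phi^\lor)^+-(\Phi_M^\lor)^+$ is part of the paper's definition of minimal-length representatives), a root $\beta\in\Phi_M^+$ with $w\beta<0$ yields $\ell(ws_\beta)<\ell(w)$ and $ws_\beta\in wW_M$; your $s_\beta w$ need not lie in $wW_M$, so the reflection must be multiplied on the right, not the left.
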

\noindent We say the coroot $\tilde{\alpha}_{(i)}^\lor$ corresponds to the simple reflection $w_i$ (under the reduced expression).

Note that for every $w\in W/W_M$ there is a reduced expression $w_m\cdots w_2w_1$ of $w_0$ such that $w=w_r\cdots w_2w_1$ for some $r\le m$, and a reduced expression can be transformed to another expression by performing a sequence of defining relations of the Weyl group $W$. Explicitly, for two distinct simple (co)roots $\alpha$ and $\alpha'$, let $s_\alpha, s_{\alpha'}$ be the corresponding simple reflections and let $n_{\alpha\alpha'}$ be the number of edges between the corresponding nodes in the Dynkin diagram. 
\begin{enumerate}[label=(\alph*)]
    \item If $n_{\alpha\alpha'}=0$, then replace $s_{\alpha}s_{\alpha'}$ with $s_{\alpha'}s_{\alpha}$; \label{itm:a}
    \item if $n_{\alpha\alpha'}=1$, then replace  $s_{\alpha}s_{\alpha'}s_{\alpha}$ with $s_{\alpha'}s_{\alpha}s_{\alpha'}$;  \label{itm:b}
    \item if $n_{\alpha\alpha'}=2$, then replace  $(s_{\alpha}s_{\alpha'})^2$ with $(s_{\alpha'}s_{\alpha})^2$;  \label{itm:c}
    \item if $n_{\alpha\alpha'}=3$, then
    replace $(s_{\alpha}s_{\alpha'})^3$ with $(s_{\alpha'}s_{\alpha})^3$.  \label{itm:d}
\end{enumerate}

 Assume $F$ is nonarchimedean until the end of \S \ref{ssec:Dn}. We prove Theorem \ref{thm:poles} for the nonarchimedean case first and explain how the proof can be modified for the archimedean case in \S \ref{ssec:modarch}.  Recall the operator $M'_w(\chi_s)$ defined in \eqref{eq:prime} for $w\in W/W_M$.

\begin{Lem}\label{lem:holo}
Suppose $F$ is nonarchimedean. Let $w\in W/W_M$. Let $\alpha$ be a simple root such that $s_\alpha w$ is a reduced expression in $W/W_M$, and $(h,\lambda)$ be the unique pair such that $m_{s_\alpha w}(h,\lambda)-m_w(h,\lambda)=1$.
 Suppose $M'_{w}(\chi_s)$ is holomorphic.

Assume
\begin{align}\label{eq:comb+}
    m_w(h,\lambda)\ge m_w(h+1,\lambda).
\end{align}
Then $M'_{s_\alpha w}(\chi_s)$ is holomorphic if either $\chi^\lambda\neq 1$ or $\chi^\lambda=1$ and one of the following holds.
 \begin{enumerate}[label={(\Roman*)}]
     \item $m_w(h,\lambda)\ge m_w(h-1,\lambda)$. \label{itm:I}
     \item $M'_w(\chi_s)f^{(s)}\big|_{\lambda(s+s_k+1)=h-1}$ is left $\iota_{\alpha}(\mathrm{SL}_2)$-invariant for all holomorphic sections $f^{(s)}$ of $I_P(\chi_s)$. \label{itm:II}
 \end{enumerate}
\end{Lem}

\begin{proof}
By \eqref{eq:normalizedchi}, \eqref{eq:sl2} and \ref{itm:1},
\begin{align*}
&L\left(-\sum_{\beta \in \Delta} \langle w\omega_\beta ,\alpha^{\lor}\rangle,\chi_{s+s_k+1}^{\langle w\omega_P,\alpha^\lor \rangle}\right)^{-1}M_{s_{\alpha}}(\chi_s^w)\left(M'_{w}(\chi_s)f^{(s)}\right)
\end{align*}
is holomorphic. 
By Lemma \ref{lem:positive}
\begin{align*}
    L\left(-\sum_{\beta\in \Delta} \langle w\omega_\beta ,\alpha^{\lor}\rangle,\chi_{s+s_k+1}^{\langle w\omega_P,\alpha^\lor \rangle}\right)=L\left(-\sum_{\beta\in \Delta} \langle \omega_\beta ,w^{-1}\alpha^{\lor}\rangle,\chi_{s+s_k+1}^{\langle \omega_P,w^{-1}\alpha^\lor \rangle}\right)=L(-h,\chi_{s+s_k+1}^{\lambda}).
\end{align*}
Thus by the definition of $a_{s_\alpha w}(\chi_s)$ we can assume $\chi^\lambda=1$ and \ref{itm:I} fails, so that by assumption \eqref{eq:comb+}
\begin{align*}
    a_{s_\alpha w}(\chi_s)=\frac{\zeta\left(\lambda(s+s_k+1)-(h-1)\right)}{\zeta\left(\lambda(s+s_k+1)-h\right)}a_{w}(\chi_s).
\end{align*}
By \ref{itm:II} and \ref{itm:2},
\begin{align*}
    M_{s_\alpha}(\chi_s^w)M'_w(\chi_s)f^{(s)}\big|_{\lambda(s+s_k+1)=h-1}=0,
\end{align*}
and thus $M'_{s_\alpha w}(\chi_s)f^{(s)}$ is holomorphic.
\end{proof}

\noindent Similar to Theorem \ref{thm:sl2}, we write \ref{itm:I} and \ref{itm:II} when referring to conditions in Lemma \ref{lem:holo}.

 The strategy to prove Theorem \ref{thm:poles} is as follows. We will choose a reduced expression $w_m\cdots w_1$ of $w_0$ and list corresponding coroots $\tilde{\alpha}_{(i)}^\lor$ (see Lemma \ref{lem:positive}). For each case, we will first verify the combinatorics inequalities $\eqref{eq:comb-}$ and $\eqref{eq:comb+}$ for any $w\in W/W_M$ by studying the effects of operations above on the order of $\tilde{\alpha}_{(i)}^{\lor}$. Then we apply Lemma \ref{lem:holo} repeatedly to show, inductively on the length of $w$, that the invariance property \ref{itm:II} holds whenever \ref{itm:I} fails. 
 
 In the rest of the section, we will use the Bourbaki numbering of the Dynkin diagram and (co)roots. Let $P=P_\ell$ be the maximal parabolic subgroup associated to the $\ell$th node of the Dynkin diagram of $(G,B,T)$. Let $\alpha_i$ denote the simple root attached to the $i$th node and let $s_{i}$ be the corresponding simple reflection. However, note that we also write $s_k$ for the highest data. This should lead to little or no confusion as the subscript $k$ is only used as the size of the multiset $\Lambda$ throughout the paper.

\begin{proof}[Proof of Theorem \ref{thm:poles} for $G=G_2$] 
Observe that $M^{\mathrm{der}}\cong \SL_2$ for both cases and $|W/W_{M}|=6=\ell(w_0)+1$, so that $w_0\in W/W_{M}$ has a unique reduced expression.  Inequalities \eqref{eq:comb-} and \eqref{eq:comb+} will be clear from the order of coroots below.

For $\ell=1$, the reduced expression of $w_0$ is 
\begin{align*}
   s_1s_2s_1s_2s_1;
\end{align*}
corresponding coroots $\tilde{\alpha}_{(i)}^\lor$ are
\begin{align*}
    \alpha_1^\lor, \alpha_1^\lor+\alpha_2^{\lor},  2\alpha_1^\lor+3\alpha_2^{\lor}, \alpha_1^\lor+2\alpha_2^{\lor}, \alpha_1^\lor+3\alpha_2^\lor.
\end{align*}
Since \ref{itm:I} holds for $w=\textrm{Id}, s_2s_1$, we may assume $\chi=1$, and we are left to verify \ref{itm:II} holds for $w=s_1,s_1s_2s_1, s_2s_1s_2s_1$.

A holomorphic section $f^{(s)}$ of $I_{P_1}(\chi_s)$ is left $\iota_{\alpha_2}(\SL_2)$-invariant, and so is 
\begin{align*}
    M'_{s_1}(1_s)f^{(s)}\big|_{s+s_k+1=1} 
\end{align*}
by \ref{itm:3},  which justifies the case $w=s_1$. For $w=s_1s_2s_1$, by \ref{itm:4} 
\begin{align*}
    M'_{s_1}(\chi_s)f^{(s)}\big|_{s+s_k+1=2}
\end{align*}
is left $\iota_{\alpha_1}(\SL_2)$-invariant, and so is
\begin{align*}
     M'_{s_2s_1}(\chi_s)f^{(s)}\big|_{s+s_k+1=2}
\end{align*}
by \ref{itm:3}. As \ref{itm:I} holds for $s_2s_1$, by \ref{itm:2}
\begin{align*}
    M'_{s_1s_2s_1}(1_s)f^{(s)}\big|_{s+s_k+1=2}=0.
\end{align*}
This proves the case $w=s_1s_2s_1$. Finally for $w=s_2s_1s_2s_1$, by \ref{itm:4}
\begin{align*}
    M'_{s_1s_2s_1}(1_s)f^{(s)}\big|_{s+s_k+1=3} 
\end{align*}
is left $\iota_{\alpha_1}(\SL_2)$-invariant, and thus so is
\begin{align*}
    M'_{s_2s_1s_2s_1}(1_s)f^{(s)}\big|_{s+s_k+1=3} 
\end{align*}
by \ref{itm:3}.

For $\ell=2$, the reduced expression of $w_0$ is
\begin{align*}
    s_2s_1s_2s_1s_2;
\end{align*}
corresponding coroots $\tilde{\alpha}_{(i)}^\lor$ are
\begin{align*}
    \alpha_2^\lor, \alpha_1^\lor+3\alpha_2^{\lor},  \alpha_1^\lor+2\alpha_2^{\lor}, 2\alpha_1^\lor+3\alpha_2^{\lor}, \alpha_1^\lor+\alpha_2^\lor.
\end{align*}
 Since \ref{itm:I} holds for $w=\textrm{Id},s_2,s_1s_2$, we may assume $\chi^3=1$, and we are left to verify \ref{itm:II} holds for $w=s_2s_1s_2, s_1s_2s_1s_2$. 

A holomorphic section $f^{(s)}$ of $I_{P_2}(\chi_s)$ is left $\iota_{\alpha_1}(\SL_2)$-invariant. Since 
\begin{align*}
    L(-4,\chi_{s+s_k+1}^3)^{-1}M_{s_1}(\chi_s^{s_2})\big|_{3(s+s_k+1)=4}
\end{align*}
is a nonzero scalar multiplication by \ref{itm:3} and
$s_2s_2=\mathrm{Id}$, we have
\begin{align*}
    M'_{s_2s_1s_2}(\chi_s)f^{(s)}\big|_{3(s+s_k+1)=4}
\end{align*}
is left $\iota_{\alpha_1}(\SL_2)$-invariant, which justifies the case $w=s_2s_1s_2$. For $w=s_1s_2s_1s_2$, it suffices to consider $\chi=1$. By \ref{itm:3}
\begin{align*}
    M'_{s_2}(\chi_s)f^{(s)}\big|_{s+s_k+1=1}
\end{align*}
is left $\iota_{\alpha_1}(\SL_2)$-invariant. As \ref{itm:I} holds for $s_2$ and $s_1s_2$,  by \ref{itm:2}
\begin{align*}
    M'_{s_1s_2}(\chi_s)f^{(s)}\big|_{s+s_k+1=1}=0,
\end{align*}
and hence
\begin{align*}
M'_{s_1s_2s_1s_2}(\chi_s)f^{(s)}\big|_{s+s_k+1=1}=0.
\end{align*}
\end{proof}

Arguments are more complicated for other groups as in general there are more than one reduced expression of $w_0$. We close this subsection by studying the effects of \ref{itm:a}, \ref{itm:b} and \ref{itm:c} on the order of coroots $\tilde{\alpha}_{(i)}^\lor$. Let $w_0=w_m\cdots w_1$ be a reduced expression.
\begin{enumerate}[label=(\alph*$^\prime$)]
    \item If $n_{\alpha_{(i+1)}\alpha_{(i)}}=0$ and $w_{i+1}w_{i}$ is replaced with $w_iw_{i+1}$, then 
    \begin{align*}
        \textrm{ the position of the coroots } \tilde{\alpha}_{(i)}^\lor \textrm{ and } \tilde{\alpha}_{(i+1)}^\lor \textrm{ are swapped. } 
    \end{align*}\label{itm:a'}
    \item If $n_{\alpha_{(i+1)}\alpha_{(i)}}=1$, $\alpha_{(i)}=\alpha_{(i+2)}$, and $w_{i+2}w_{i+1}w_i$ is replaced with $w_{i+1}w_{i}w_{i+1}$, then 
        \begin{align*}
        \textrm{ the position of the coroots } \tilde{\alpha}_{(i)}^\lor \textrm{ and } \tilde{\alpha}_{(i+2)}^\lor \textrm{ are swapped. } 
    \end{align*}
    Moreover, $ \alpha_{(i+1)}^\lor=\alpha_{(i+2)}^\lor+\alpha_{(i)}^\lor$, so
    $$\langle \omega_P, \alpha_{(i+1)}^\lor\rangle\ge 2$$ 
    and thus such operations will not occur if $G$ is of type $A$.\label{itm:b'}
    \item  If $n_{\alpha_{(i+1)}\alpha_{(i)}}=2$, $\alpha_{(i)}=\alpha_{(i+2)}$ and $\alpha_{(i+1)}=\alpha_{(i+3)}$, then
    \begin{align*}
        \Phi^\lor_{w_{i+3}\cdots w_1}-\Phi^\lor_{w_{i-1}\cdots w_1}=
    \{\beta^\lor, \beta^{\lor}+\gamma^\lor,\beta^\lor+2\gamma^\lor,\gamma^\lor\}
    \end{align*}
    for some coroots $\beta^\lor,\gamma^\lor\in\Phi^+-\Phi^+_M$. In particular, we have $\langle \omega_P, \beta^\lor+2\gamma^\lor\rangle \ge 3$, which is impossible for classical groups. \label{itm:c'}
\end{enumerate}

\subsection{Type $A_n$ }\label{ssec:An}
A reduced expression of $w_0$ is
\begin{align}\label{eq:specific}
     (s_{n-\ell+1}\cdots s_{n-1}s_n)\cdots (s_{r-\ell+1}\cdots s_{r-1} s_{r})\cdots (s_1\cdots s_{\ell-1}s_\ell)
\end{align}
Here the parentheses are only present in this expression so that the reader can follow the pattern. The corresponding order of coroots $\tilde{\alpha}^{\lor}_{(i)}$ is
\begin{center}
    \begin{tabular}{ccccc}
$\alpha_\ell^\lor,$                & $\cdots$ & $\sum_{j=t}^\ell \alpha_{j}^\lor,$ & $\cdots$ & $\sum_{j=1}^\ell \alpha_{j}^\lor,$ \\
                $\vdots$                       &           & $\vdots$                          &           &     $\vdots$                                   \\
$\sum_{j=\ell}^r \alpha_{j}^\lor,$ & $\cdots$ & $\sum_{j=t}^r \alpha_{j}^\lor,$    & $\cdots$ & $\sum_{j=1}^{r} \alpha_{j}^\lor,$  \\
                    $\vdots$                   &           & $\vdots$                          &           &                 $\vdots$                       \\
$\sum_{j=\ell}^n \alpha_{j}^\lor,$ & $\cdots$ & $\sum_{j=t}^n \alpha_{j}^\lor,$    & $\cdots$ & $\sum_{j=1}^n \alpha_{j}^\lor.$   
\end{tabular}
\end{center}
The $i$th row of coroots (read from the top left) corresponds to the $i$th parenthesis of \eqref{eq:specific} (read from the right). Each positive coroot in $\Phi_{w_0}^{\lor}$ appears exactly once and there are no other coroots in $\Phi^\lor-\Phi_{w_0}^{\lor}$ in the above list, so \eqref{eq:specific} is indeed a reduced expression of $w_0$. The same justification of reduced expressions will be used in the rest of the section without mention.

We denote
\begin{align*}
    \beta^\lor \longleftrightarrow \beta^{\prime\lor}
\end{align*}
if the order of two coroots $\beta^\lor$ and $\beta^{\prime\lor}$ can be reversed under a series of operations \ref{itm:a'} and \ref{itm:b'}. Denote coroots
\begin{align*}
\sum_{j=t}^r \alpha^\lor_j \quad   \textrm{ for } 1\le t \le \ell\le r\le n
\end{align*}
by $(r,t)$. Since operations \ref{itm:b'} cannot be applied if $G$ is of type $A$, via operations \ref{itm:a'} we have
\begin{align}\label{eq:R1}
\textrm{ for } r\le r', (r,t) \longleftrightarrow (r',t') \textrm{ if and only if } r<r' \textrm{ and } t<t'.\tag{R1}
\end{align}
 Note that coroots share the same $h$-value if they lie on the same $45^\circ$ line in the table above. Consequently for $w\in W/W_M$, $m_w(h,1)\ge c$ only if $ m_w(h-1,1)\ge c$ unless $$m_w(h,1)=m_{w_0}(h,1)=m_{w_0}(h-1,1)+1=m_{w}(h-1,1)+1.$$
This verifies inequalities \eqref{eq:comb-} and \eqref{eq:comb+}.

 We prove the holomorphy of $M'_w(\chi_s)$ by induction on both $n$ and length of $w$. It suffices to prove the holomorphy when $\chi=1$. For $\ell=1$, the reduced expression \eqref{eq:specific} is unique. As \ref{itm:I} only holds for $\mathrm{Id}$, we need to check
 \begin{align*}
     M'_{s_{r}\cdots s_1}(1_s)f^{(s)}\big|_{s+s_k+1=r} \textrm{ is left } \iota_{\alpha_{r+1}}(\SL_2)\textrm{-invariant}.
 \end{align*}
 for $1\le r<n$. Since the coroot $(r,1)$ has $h$-value $r$ and $f^{(s)}$ is left $\iota_{\alpha_{r+1}}(\SL_2)$-invariant, the assertion follows from \ref{itm:3}. 
 
 For the general case, by symmetry we may assume $n\ge 3$ and $\lceil n/2 \rceil \ge \ell\ge 2$. By induction hypothesis, we may assume  $(r,1)\in \Phi^\lor_w$ for some $r$.  Choose $r$ to be maximal. By \eqref{eq:R1} we can write $w=s_{r-\ell+1}w_1$ such that $s_{r-\ell+1}$ corresponds to the coroot $(r,1)$. Assume first $r>\ell$. We need to show 
\begin{align}\label{eq:Abeta1}
    M'_{w_1}(1_s)f^{(s)}\big|_{s+s_k+1=r-1} \textrm{ is left } \iota_{\alpha_{r-\ell+1}}(\SL_2)\textrm{-invariant}.
\end{align} 
Write $w_1=s_{r-\ell}w'$ such that $s_{r-\ell}$ corresponds to the coroot $(r-1,1)$. We have 
\begin{align*}
    w^{\prime -1}\alpha_{r-\ell+1}=w_1^{-1}\alpha_{r-\ell+1}-w^{\prime -1}\alpha_{r-\ell}=\sum_{j=1}^r \alpha_j-\sum_{j=1}^{r-1} \alpha_j=\alpha_r.
\end{align*}
Since $f^{(s)}$ is left $\iota_{\alpha_r}(\SL_2)$-invariant, $M_{w'}'(1_s)f^{(s)}$ is left $\iota_{\alpha_{r-\ell+1}}(\SL_2)$-invariant. As the coroot $(r-1,1)$ has $h$-value $r-1$,  \eqref{eq:Abeta1} follows by \ref{itm:3}.

Suppose $r=\ell$. By the induction hypothesis, we may assume $(n,t)\in  \Phi_w^\lor$ for some $t$. Choose $t$ to be minimal. A similar argument as above justifies the holomorphy of $M'_w(1_s)$ if $t<\ell$, and thus we can assume $t=\ell$. By \eqref{eq:R1} we can also write $w=s_nw_1$ such that $s_n$ corresponds to the coroot $(n,\ell)$, which has $h$-value $n-\ell+1$. Therefore by the induction hypothesis and \ref{itm:1}, if $n-\ell\neq \ell-1$ then $M'_{w}(1_s)f^{(s)}=M'_{s_nw_1}(1_s)f^{(s)}$ is holomorphic at $s+s_k+1=\ell-1$ and hence is holomorphic at all $s$. For the same reason, to justify $M'_{w}(1_s)f^{(s)}$ is holomorphic at $s+s_k+1=n-\ell=\ell-1$, it suffices to consider the case where no coroot in $\Phi_w^\lor$ has $h$-value greater than $n-\ell+1=\ell$. 

Note that when $n=2\ell-1$, \ref{itm:I} fails except when $m_w(\ell,1)=\ell$. Assume $m_w(\ell,1)<\ell$ and thus there exists an integer $0\le i<\ell-2$ such that $(n-i,\ell-i)\in \Phi_w^{\lor}$ but $(n-i-1,\ell-i-1)\not\in \Phi_w^{\lor}.$ By \eqref{eq:R1} we can rewrite $w=s_{n-2i}s_{n-2i-1}w'$ such that $s_{n-2i-1}$ and $s_{n-2i}$ correspond to coroots $(n-i-1,\ell-i)$ and $(n-i,\ell-i)$ respectively. We need to show 
\begin{align*}
    M'_{s_{n-2i-1}w'}(1_s)f^{(s)}\big|_{s+s_k+1=n-\ell} \textrm{ is left } \iota_{\alpha_{n-2i}}(\SL_2)\textrm{-invariant}.
\end{align*}
We have \begin{align*}
    w^{\prime -1}\alpha_{n-2i}=(s_{n-2i-1}w')^{-1}\alpha_{n-2i}-w^{\prime-1}\alpha_{n-2i-1}=\sum_{j=\ell-i}^{n-i} \alpha_j-\sum_{j=\ell-i}^{n-i-1} \alpha_j=\alpha_{n-i}.
\end{align*}
Since $f^{(s)}$ is left $\iota_{\alpha_{n-i}}(\SL_2)$-invariant and the coroot $(n-i-1,\ell-i)$ has $h$-value $n-\ell$, the assertion follows by \ref{itm:3}.

 \qed

\subsection{Type $B_n$ ($n\ge 2$)}\label{ssec:Bn} 

We break down the discussion into two cases: $n=\ell$ and $n>\ell$. Consider first $1\le \ell<n$. A reduced expression of $w_0$ is
\begin{small}
\begin{align}\label{eq:Bnw0}
\begin{split}
    &\big((s_\ell \cdots s_{n-1})(s_n)\big)\cdots \big((s_{\ell-r+1}\cdots s_{n-r})(s_{n-r+1}\cdots s_n)\big)\cdots \big((s_1\cdots s_{n-\ell})(s_{n-\ell+1}\cdots s_n)\big)\cdot\\
    &(s_{n-\ell}\cdots s_{n-2}s_{n-1})\cdots (s_{r-\ell+1} \cdots s_{r-1} s_{r})\cdots(s_1\cdots s_{\ell-1} s_\ell);
\end{split}
\end{align}
\end{small}
corresponding coroots $\tilde{\alpha}_{(i)}^\lor$ are
\begin{align}
    \begin{tabular}{ccccc}
$\alpha_\ell^\lor,$                & $\cdots$ & $\sum_{j=t}^\ell \alpha_{j}^\lor,$ & $\cdots$ & $\sum_{j=1}^\ell \alpha_{j}^\lor,$ \\
                $\vdots$                       &           & $\vdots$                          &           &     $\vdots$                                   \\
$\sum_{j=\ell}^r \alpha_{j}^\lor,$ & $\cdots$ & $\sum_{j=t}^r \alpha_{j}^\lor,$    & $\cdots$ & $\sum_{j=1}^{r} \alpha_{j}^\lor,$  \\
                    $\vdots$                   &           & $\vdots$                          &           &                 $\vdots$                       \\
$\sum_{j=\ell}^{n-1} \alpha_{j}^\lor,$ & $\cdots$ & $\sum_{j=t}^{n-1} \alpha_{j}^\lor,$    & $\cdots$ & $\sum_{j=1}^{n-1} \alpha_{j}^\lor,$   
\end{tabular}\tag{$\ast$}\label{tab:upper1}
\end{align}
\begin{center}
\begin{tabular}{>{\tiny}c>{\tiny}c>{\tiny}c>{\tiny}c>{\tiny}c}
$\alpha_n^\lor+2\sum_{j=\ell}^{n-1}\alpha^\lor_j,$  & $\cdots$ & $\alpha_n^\lor+2\sum_{j=\ell}^{n-1}\alpha^\lor_j+\sum_{j=t}^{\ell-1}\alpha^\lor_j,$ & $\cdots$ & $\alpha_n^\lor+2\sum_{j=\ell}^{n-1}\alpha^\lor_j+\sum_{j=1}^{\ell-1}\alpha^\lor_j,$    \\
$\alpha_n^\lor+\sum_{j=\ell}^{n-1} \alpha_j^\lor, $ & $\cdots$ & $\alpha_n^\lor+2\sum_{j=m}^{n-1} \alpha_j^\lor+\sum_{j=\ell}^{m-1} \alpha_j^\lor, $ & $\cdots$ & $\alpha_n^\lor+2\sum_{j=\ell+1}^{n-1} \alpha_j^\lor+ \alpha_\ell^\lor, $               \\
$\vdots$                                            &    $\ddots$      & $\vdots$                                                                            &   $\ddots$       & $\vdots$                                                                               \\
    &  & $\alpha_n^\lor+2\sum_{j=r}^{n-1}\alpha^\lor_j+\sum_{j=t}^{r-1}\alpha^\lor_j,$     & $\cdots$    & $\alpha_n^\lor+2\sum_{j=r}^{n-1}\alpha^\lor_j+\sum_{j=1}^{r-1}\alpha^\lor_j,$          \\
$\alpha_n^\lor+\sum_{j=r}^{n-1} \alpha_j^\lor, $    & $\cdots$ & $\alpha_n^\lor+2\sum_{j=m}^{n-1} \alpha_j^\lor+\sum_{j=r}^{m-1} \alpha_j^\lor, $    & $\cdots$ & $\alpha_n^\lor+2\sum_{j=\ell+1}^{n-1} \alpha_j^\lor+\sum_{j=r}^{\ell} \alpha_j^\lor, $ \\
$\vdots$                                            &          & $\vdots$                                                                            &     $\ddots$     & $\vdots$                                                                               \\
    &          &                                                                                     &          &                           $\alpha_n^\lor+2\sum_{j=1}^{n-1}\alpha^\lor_j,$                                                          \\
$\alpha_n^\lor+\sum_{j=1}^{n-1} \alpha_j^\lor, $    & $\cdots$ & $\alpha_n^\lor+2\sum_{j=m}^{n-1} \alpha_j^\lor+\sum_{j=1}^{m-1} \alpha_j^\lor, $    & $\cdots$ & $\alpha_n^\lor+2\sum_{j=\ell+1}^{n-1} \alpha_j^\lor+\sum_{j=1}^{\ell} \alpha_j^\lor.$  
\end{tabular}
\end{center}
\smallskip
\noindent Here \eqref{tab:upper1} corresponds to the second row of \eqref{eq:Bnw0} (see \S \ref{ssec:An}). For rows of coroots below \eqref{tab:upper1}, the $i$th odd (resp. even) row corresponds to the former (resp. latter) parenthesis in the $i$th big parenthesis (reading from the right).

We retain the terminology in \S \ref{ssec:An}. Denote coroots
\begin{align*}
    &\alpha_n^\lor+2\sum_{j=r}^{n-1}\alpha_j^\lor+\sum_{j=t}^{r-1} \alpha_j^\lor \quad\,\, \textrm{ for } 1\le t\le r\le \ell, \textrm{ and } \\
    &\alpha_n^\lor+2\sum_{j=m}^{n-1} \alpha_j^\lor+\sum_{j=r}^{m-1} \alpha_j^\lor \quad \textrm{ for } 1\le r\le \ell<m\le n
\end{align*}
by $(2,r,t)$ and $(1,m,r)$ respectively. Note that neither the sum nor the difference of two coroots of the same type is a coroot in $\Phi_{w_0}^{\lor}$. Therefore, \ref{itm:b'} has no effects on the order of two coroots of the same type, so by \ref{itm:a'} additional to \eqref{eq:R1} we have
\begin{align}
    &\textrm{ for } r\ge r', (2,r,t)\longleftrightarrow (2,r',t') \textrm{ if and only if } r>r' \textrm{ and } t'>t,  \tag{R2}\label{eq:R2}\\
    &\textrm{ for } r\ge r', (1,m,r)\longleftrightarrow (1,m',r') \textrm{ if and only if } r>r' \textrm{ and } m'>m.\tag{R3}\label{eq:R3}
\end{align}
Arguing similarly as in \S \ref{ssec:An}, we have \eqref{eq:R2} implies \eqref{eq:comb-} and \eqref{eq:comb+} for $\lambda=2$. We also remark that since \ref{itm:c'} is not applicable, the coroot  $(2,r,r)$ always corresponds to the $(\ell-r+1)$th $s_n$ in a reduced expression of $w_0$.

Observe that $(2,r,t)$ can be written as the sum
\begin{align*}
    (1,m',r')+(r'',t'') 
\end{align*}
if and only if either $r'=r$ and $t''=t$ or $r'=t$ and $t''=r$. We claim the operation \ref{itm:b'} can only be applied to triples $(2,r,t), (1,m,r), (m-1,t)$ for $t<r$ due to the occurrence of $s_n$. Indeed, if $r=t=r'$, then $(2,r,r)$ corresponds to $s_n$; if $r>t=r'$, then by \eqref{eq:R2} the coroot $(2,t,t)$, which corresponds to $s_n$, must come after $(2,r,t)$, while the original position of $(1,m,t)$ is behind $(2,t,t)$.

Consequently, together with operations \ref{itm:a'} we have
\begin{align}
    &(2,r,t)\longleftrightarrow (r',t') \textrm{ if and only if } t'<r, \tag{B1}\label{eq:B1}\\
    &(1,m,r)\longleftrightarrow (r',t') \textrm{ if and only if } t'<r,\quad \tag{B2}\label{eq:B2}\\
    &(2,r,t)\longleftrightarrow (1,m',r') \textrm{ if and only if }  r< r' \textrm{ or } t<r=r' . \tag{B3}\label{eq:B3}
\end{align}
Inequalities \eqref{eq:comb-} and \eqref{eq:comb+} for $\lambda=1$ follow from \eqref{eq:R1}, \eqref{eq:R3}, and \eqref{eq:B2} by an argument similar to that in \S \ref{ssec:An}. To see this, flip the above list of coroots of type $(1,m,r)$ along the diagonal and attach it below \eqref{tab:upper1}, so one obtains a table of coroots (with $\lambda$-value equal to $1$) of size $2(n-\ell)$ by $\ell$. Then rules \eqref{eq:R1}, \eqref{eq:R3}, and \eqref{eq:B2} combined are essentially the same as the rule \eqref{eq:R1} for $G$ of type $A_{2n-\ell-1}$ with node $\ell$. 

 We prove the holomorphy of $M'_w(\chi_s)$ by induction on both $n$ and length of $w$. Only holomorphy of $M'_w(\chi_s)$ for $\chi^2=1$ requires a proof. Suppose $\ell=1$ and $n\ge 2$. Then the reduced expression \eqref{eq:Bnw0} of $w_0$ is unique. Note that \ref{itm:I} only holds for $\mathrm{Id}$ and $s_{n-1}s_{n-2}\cdots s_1$. As type $A$ is justified,  we may assume $w=s_{m-1}\cdots s_{n-1}s_ns_{n-1}\cdots s_1$ for some $2\le m\le n$. We need to show \begin{align*}
    M'_{s_{m}\cdots s_{n-1}s_n\cdots s_2s_1}(1_s)f^{(s)}\big|_{s+s_k+1=2n-m-1} \textrm{ is left } \iota_{\alpha_{m-1}}(\SL_2)\textrm{-invariant}.
\end{align*}
When $m<n$, ince $M'_{s_{m+1}\cdots s_ns_{n-1}\cdots s_1}f^{(s)}$ is left $\iota_{\alpha_{m-1}}(\SL_2)$-invariant and the coroot $(1,m+1,1)$ has $h$-value $2n-m-1$, this follows from \ref{itm:3}. For $m=n$, note that $f^{(s)}$ is left $\iota_{\alpha_n}(\SL_2)$-invariant. Since the coroot $(n-1,1)$ has $h$-value $n-1$, by \ref{itm:3} $ M'_{s_{n-1}\cdots s_2s_1}(1_s)f^{(s)}\big|_{s+s_k+1=n-1}$ is both left $s_{n-1}\iota_{\alpha_n}(\SL_2)$-invariant and left $\iota_{\alpha_n}(\SL_2)$-invariant. As
\begin{align*}
    s_n\alpha_{n-1}=(\alpha_n+\alpha_{n-1})+\alpha_n=s_{n-1}\alpha_n+\alpha_n,
\end{align*}
we have $s_n\iota_{\alpha_{n-1}}(\SL_2)$ is contained in the group generated by $s_{n-1}\iota_{\alpha_n}(\SL_2)$ and $\iota_{\alpha_n}(\SL_2)$, and thus
\begin{align*}
    M'_{s_ns_{n-1}\cdots s_2s_1}(1_s)f^{(s)}\big|_{s+s_k+1=n-1} \textrm{ is left } \iota_{\alpha_{n-1}}(\SL_2)\textrm{-invariant}.
\end{align*}

Now consider general $n>\ell\ge 2$. We will use rules \eqref{eq:R1}-\eqref{eq:R3} and \eqref{eq:B1}-\eqref{eq:B3} without further mention below. As type A is justified, by the induction hypothesis it suffices to justify the assertion for $w$ such that  \begin{align*}\Phi^\lor_{w}\cap \{(r,1), (1,m,1), (2,r',1) :\ell\le r\le n-1, \ell<m\le n, 1\le r' \le \ell\}
\end{align*}
is nonempty and $\Phi^\lor_w$ does not only consist of coroots of type $(r,t)$.  Write $w=s_\alpha w_1$ and let $\beta^\lor$ be the coroot in $\Phi^\lor_{w}-\Phi^\lor_{w_1}$.

\begin{Lem}\label{lem:ind}
There exists a reduced expression $w=s_\alpha w_1$ such that $\beta^\lor$ is either $(r,1), (1,m,1),$ or $(2,r',1)$. 
\end{Lem}

\begin{proof}
Observe that the coroot $(1,m,1)$ is not a part of any triple of coroots on which an operation \ref{itm:b'} can be applied. Therefore, if $(1,m,1)\in \Phi_{w}^\lor$ for some $m$, then by performing a series of \ref{itm:a'}, one can take $\beta^\lor$ to be $(1,m',1)\in \Phi_w^\lor$ with $m'$ minimal. 

Thus we assume no coroots $(1,m,1)$ are contained in $\Phi_w^\lor$. Consider first the case that $(r,1)\in \Phi_w^\lor$ for some $r$. Choose $r$ to be maximal. Let $c_1$ (resp. $c_2$) be the number of coroots of the form $(2,r',1)$ (resp. $(1,r+1,r')$) contained in $\Phi_w^\lor$. Note that coroots $(2,r',1), (1,r+1,r'), (r,1)$ form a triple on which the operation \ref{itm:b'} can be applied. Therefore, if $c_1\le c_2$ (resp. $c_1>c_2$), then $\beta^\lor$ can be taken as $(r,1)$ (resp. $(2,\ell-c_2+1,1)$. The case $(2,r',1)\in \Phi_w^\lor$ for some $r'$ can be argued similarly.

\end{proof}

Suppose $\beta^\lor=(2,r,1)$. Note that \ref{itm:I} fails for $w_1$ unless 
\begin{align}\label{eq:exceptionB}
    \ell=r \textrm{ is odd and } (2,\ell-c,1+c)\in \Phi_w^\lor \textrm{ for all } 0\le c\le (\ell-1)/2.
\end{align}
 We need to show
\begin{align}\label{eq:Bchi}
    M'_{w_1}(\chi_s)f^{(s)}\big|_{2(s+s_k+1)=2(n-1)+1-r} \textrm{ is left } \iota_{\alpha}(\SL_2)\textrm{-invariant}
\end{align}
except when \eqref{eq:exceptionB} occurs. By \ref{itm:1} and \eqref{eq:R2} we can assume coroots in $\Phi_w^\lor$ with $\lambda$-value $2$ have $h$-value at most $2(n-1)+2-r$.

\textbf{Case $r=1$:} We have $\alpha=\alpha_n$ and we can write $w_1=w''s_{n-1}s_nw'$ where $s_n$ and $s_{n-1}$ correspond to coroots $(2,2,2)$ and $(2,2,1)$ respectively, and $\Phi_{w_1}^\lor-\Phi_{s_{n-1}s_nw'}^\lor$ consists of coroots of the form $(1,m,2)$. Since coroots $(2,2,2)$ and $(2,2,1)$ have $(h,\lambda)$-value $(2(n-2)+1,2)$ and $(2(n-1),2)$ respectively, we have 
\begin{align*}
    M'_{s_{n-1}s_nw'}(\chi_s)f^{(s)}\big|_{2(s+s_k+1)=2(n-1)} \textrm{ is left } \iota_{\alpha_n}(\SL_2)\textrm{-invariant}
\end{align*}
by \ref{itm:3} and \ref{itm:4}. As coroots in $\Phi_{w_1}^\lor-\Phi_{s_{n-1}s_nw'}^\lor$ correspond to reflections $s_{\alpha_i}$ where $i<n-1$, to prove \eqref{eq:Bchi} by \ref{itm:1} it suffices to show $$\frac{a_{s_{n-1}s_nw'}(\chi_s)}{a_{w_1}(\chi_s)}M_{w''}(\chi_s^{s_{n-1}s_nw'})$$ is holomorphic at $s+s_k+1=n-1$. Observe that the coroot $(1,n-1,2)$ has $h$-value $n$, and the coroot $(1,n,2)$ must come before $(1,n-1,2)$. Thus the assertion follows from the proof of type $A$ with node $1$.

\textbf{Case $r$ is even, $\ell>r$:} Since we are to check holomorphy of $M'_w(\chi_s)$ at $2(s+s_k+1)=2(n-1)+1-r,$ which is an odd integer, by \ref{itm:1} and \ref{itm:5} we may add to or remove from $\Phi_w^\lor$ coroots with $\lambda$-value $1$ as long as rules \eqref{eq:R1}-\eqref{eq:R3} and \eqref{eq:B1}-\eqref{eq:B3} are obeyed. In particular, we can take $w$ such that $\alpha=\alpha_{n-r+1}$ and $w_1=s_{n-r}w'$ where $s_{n-r}$ corresponds to the coroot $(2,r+1,1)$. Then we have
\begin{align*}
    w^{\prime -1}\alpha_{n-r+1}&=w_1^{-1}\alpha_{n-r+1}-w^{\prime-1}\alpha_{n-r}\\
    &=\left(2\sum_{j=r}^{n}\alpha_j+\sum_{j=1}^{r-1}\alpha_j\right)-\left(2\sum_{j=r+1}^{n}\alpha_j+\sum_{j=1}^{r}\alpha_j\right)=\alpha_{r}.
\end{align*}
As the coroot $(2,r+1,1)$ has $h$-value $2(n-1)+1-r$ and $f^{(s)}$ is left $\iota_{\alpha_{r}}(\SL_2)$-invariant, we deduce \eqref{eq:Bchi} from \ref{itm:3}.

\textbf{Case $r=\ell$ is even:}  Let $0\le c\le \ell/2-1$ be the largest integer such that $(2,\ell-c,1+c)\in \Phi_w^\lor$. As in the previous case, we can assume $w$ can be rewritten as $w=s_{n-\ell+1+2c}s_{n-\ell+2+2c}w'$, where $s_{n-\ell+2+2c}$ and $s_{n-\ell+1+2c}$ correspond to coroots $(2,\ell-c,c+2)$ and $(2,\ell-c,c+1)$ respectively. We claim
\begin{align*}
     M'_{s_{n-\ell+2+2c}w'}(\chi_s)f^{(s)}\big|_{2(s+s_k+1)=2(n-1)+1-\ell} \textrm{ is left } \iota_{\alpha_{n-\ell+1+2c}}(\SL_2)\textrm{-invariant}.
\end{align*}
We have
\begin{align*}
    w^{\prime-1}\alpha_{n-\ell+1+2c}&=(s_{n-\ell+2+2c}w')^{-1}\alpha_{n-\ell+1+2c}-(1+\delta_{c,\frac{\ell}{2}-1}) w^{\prime-1}\alpha_{n-\ell+2+2c}\\
    &=\bigg(2\sum_{j=\ell-c}^{n}\alpha_j+\sum_{j=c+1}^{\ell-c-1}\alpha_j\bigg)-\bigg(2\sum_{j=\ell-c}^{n}\alpha_j+\sum_{j=c+2}^{\ell-c-1}\alpha_j\bigg)=\alpha_{c+1}.
\end{align*}
Since $(2,\ell-c,c+2)$ has $h$-value $2(n-1)+1-\ell$ and $f^{(s)}$ is left $\iota_{\alpha_{c+1}}(\SL_2)$-invariant, our claim follows from \ref{itm:3}.

\textbf{Case $r$ is odd, $\ell> r>1$:} Say $\alpha=\alpha_{n-r+1-i}$ where $n-\ell\ge i\ge 0$ is the number of coroots $(m,1)$ not in $\Phi_w^\lor$. By our assumption on coroots, we have $(2,t,t)\not\in \Phi_w^\lor$ for $2t<r+1$ and thus $(1,m,t)\not\in \Phi_w^\lor$ for $t< (r+1)/2$. Write $w_1=w''s_{n-r-i}w'$ where $s_{n-r-i}$ corresponds to the coroot $(2,r+1,1)$, and $\Phi_{w_1}^\lor-\Phi_{s_{n-r-i}w'}^\lor$ consists of coroots of the form $(1,m,t)$ where $t\ge (r+1)/2 $. Choose a reduced expression such that reflections corresponding to coroots in $\Phi_{w_1}^\lor-\Phi_{s_{n-r-i}w'}^\lor$ fixes $\alpha_{n-r+1-i}$. Then we have
\begin{align*}
    w^{\prime -1}\alpha_{n-r+1-i}&=
    (s_{n-r-i}w')^{-1}\alpha_{n-r+1-i}-w^{\prime-1}\alpha_{n-r-i}\\
    &=w_1^{-1}\alpha_{n-r+1-i}-w^{\prime-1}\alpha_{n-r-i}\\
    &=\left(2\sum_{j=r}^{n}\alpha_j+\sum_{j=1}^{r-1}\alpha_j\right)-\left(2\sum_{j=r+1}^{n}\alpha_j+\sum_{j=1}^{r}\alpha_j\right)=\alpha_{r}.
\end{align*}
As the coroot $(2,r+1,1)$ has $h$-value $2(n-1)+1-r$ and $f^{(s)}$ is left $\iota_{\alpha_{r}}(\SL_2)$-invariant, we conclude by \ref{itm:3}
\begin{align*}
    M'_{s_{n-r-i}w'}(\chi_s)f^{(s)}\big|_{2(s+s_k+1)=2(n-1)+1-r} \textrm{ is left } \iota_{\alpha_{n-r+1-i}}(\SL_2)\textrm{-invariant}.
\end{align*} 
Therefore \eqref{eq:Bchi} follows if $$\frac{a_{s_{n-r-i}w'}(\chi_s)}{a_{w_1}(\chi_s)}M_{w''}(\chi_s^{s_{n-r-i}w'})$$ is holomorphic at $2(s+s_k+1)=2(n-1)+1-r$. By the proof of type $A$, this is true if $\chi\neq 1$ or $\chi= 1$ and the number of coroots of the form $(1,m,t)$ in $\Phi_w^\lor$ with $h$-value $\frac{2(n-1)+1-r}{2}+1$ is at most that with $h$-value $\frac{2(n-1)+1-r}{2}$.
    
We assume this is not the case, so $\chi=1$, $2(n-\ell)\ge  \frac{2(n-1)+1-r}{2}+1,$ and $(1,n-c,\frac{r+1}{2}+c)\in\Phi_w^\lor$ for all $c$. Since  $(2,\frac{r+1}{2},\frac{r-1}{2})\not\in\Phi_w^\lor$ by assumption and $(1,n,\frac{r+1}{2})\in\Phi_w^\lor$, we have $(n-1,\frac{r-1}{2})\not\in\Phi_w^\lor$ but $(n-1,\frac{r+1}{2})\in \Phi_w^\lor$. Rewrite $w=s_{n-1}v's_{n-\ell+\frac{(r-1)}{2}}v$ such that $s_{n-1}$ and $s_{n-\ell+\frac{(r-1)}{2}}$ correspond to coroots $(1,n,\frac{r+1}{2})$ and $(n-1,\frac{r+1}{2})$ respectively, and coroots of type $(m,t)$ in $\Phi_w^\lor$ all lie in $\Phi_{v}^\lor$. We need to show
\begin{align}\label{eq:Bodd}
    M'_{v's_{n-\ell+\frac{(r-1)}{2}}v}(1_s)f^{(s)}\big|_{s+s_k+1=\frac{2(n-1)+1-r}{2}} \textrm{ is left } \iota_{\alpha_{n-1}}(\SL_2)\textrm{-invariant.}
\end{align}

\begin{Lem}\label{lem:Bind}
We have $$M'_{v's_{n-\ell+\frac{(r-1)}{2}}v}(1_s)f^{(s)}\big|_{s+s_k+1=\frac{2(n-1)+1-r}{2}}$$ is left $v'v\iota_{\alpha_n}(\SL_2)$-invariant. 
\end{Lem} 

\begin{proof}
Since the coroot $(n-1,\frac{r+1}{2})$ has $h$-value $\frac{2(n-1)+1-r}{2}$ and $f^{(s)}$ is left $\iota_{\alpha_n}(\SL_2)$-invariant, by \ref{itm:3} to justify the lemma it suffices to show
$$\frac{a_{s_{n-\ell+\frac{(r-1)}{2}}v}(1_s)}{a_{v's_{n-\ell+\frac{(r-1)}{2}}v}(1_s)}M_{v'}(1_s^{s_{n-\ell+\frac{(r-1)}{2}}v})$$ is holomorphic at $s+s_k+1=\frac{2(n-1)+1-r}{2}$.
 By \eqref{eq:R2} and \eqref{eq:R3}, the number of coroots in $\Phi_{v's_{n-\ell+\frac{(r-1)}{2}}v}^\lor-\Phi_{s_{n-\ell+\frac{(r-1)}{2}}v}^\lor$ with $(h,\lambda)$-value $(2(n-1)+2-r,2)$ (resp. $(\frac{2(n-1)+1-r}{2}+1,1)$) is at most that with $(h,\lambda)$-value $(2(n-1)+1-r,2)$ (resp. $(\frac{2(n-1)+1-r}{2},1)$). Therefore, the holomorphy follows by the induction on length (and proofs).
\end{proof}

Clearly, $M'_{v's_{n-\ell+\frac{(r-1)}{2}}v}(1_s)f^{(s)}$ is left $v's_{n-\ell+\frac{(r-1)}{2}}v\iota_{\alpha_n}(\SL_2)$-invariant. We have
\begin{align*}
    v\alpha_n=\alpha_n+\alpha_{n-1}+\ldots+\alpha_{n-\ell+\frac{(r
    +1)}{2}},
\end{align*}
and
\begin{align*}
    v'v\alpha_n=s_n\alpha_n,\quad  
    v's_{n-\ell+\frac{(r-1)}{2}}v\alpha_n=s_n(\alpha_{n-1}+\alpha_n).
\end{align*}
Since $s_n\alpha_n+s_n(\alpha_{n-1}+\alpha_n)=s_ns_n\alpha_{n-1}=\alpha_{n-1},$ we deduce \eqref{eq:Bodd}.

\textbf{Case $\ell=r$ is odd:}  Since \eqref{eq:exceptionB} does not hold, there exists smallest $0\le c\le (\ell-1)/2-1$ such that $(2,\ell-c-1,2+c)\not\in\Phi_w^\lor$. We can rewrite $w=s_{n-\ell+1+2c-i}s_{n-\ell+2+2c-i}w'$ for some $i\ge 0$, where $s_{n-\ell+2+2c-i}$ and $s_{n-\ell+1+2c-i}$ correspond to coroots $(2,\ell-c,c+2)$ and $(2,\ell-c,c+1)$ respectively. A similar argument as in the case $r=\ell$ even justifies the holomorphy. 

\bigskip

For the rest of the proof we may assume $\chi=1$. Consider $\beta^\lor=(1,m,1)$ and thus $\alpha=\alpha_{m-1}$. By \eqref{eq:B2} $\Phi_w^\lor$ contains all coroots of type $(r,t)$. Therefore, we can choose a reduced expression of $w$ with no operations \ref{itm:b'} carried out.  Note that \ref{itm:I} fails for $w_1$ in this case. Suppose $\ell<m<n$.  We can write $w_1=s_{m}w'$ where $s_{m}$ corresponds to the coroot $(1,m+1,1)$. We have
\begin{align*}
    &w^{\prime-1}\alpha_{m-1}=w_1^{-1}\alpha_{m-1}-w^{\prime-1}\alpha_{m}\\
    &=\left(2\sum_{j=m}^{n}\alpha_j+\sum_{j=1}^{m-1}\alpha_j\right)-\left(2\sum_{j=m+1}^{n}\alpha_j+\sum_{j=1}^{m}\alpha_j\right)=\alpha_m.
\end{align*}
Since $f^{(s)}$ is left $\iota_{\alpha_m}(\SL_2)$-invariant and the coroot $(1,m+1,1)$ has $h$-value $2n-m-1$, by \ref{itm:3}
\begin{align*}
    M'_{w_1}(1_s)f^{(s)}\big|_{s+s_k+1=2n-m-1} \textrm{ is left } \iota_{\alpha_{m-1}}(\SL_2)\textrm{-invariant}.
\end{align*}

Suppose $m=n$. We break down the discussion into two cases.

\textbf{Case $n<2\ell$:} Let $0\le c< n-\ell$ be the largest integer such that $(1,n-c,1+c)\in \Phi_w^\lor$. Rewrite $w=s_{n-1-2c}s_{n-2-2c}w'$ where $s_{n-2-2c}$ and $s_{n-1-2c}$ correspond to coroots $(1,n-c,2+c)$ and $(1,n-c,1+c)$ respectively. We claim
\begin{align*}
     M'_{s_{n-2-2c}w'}(1_s)f^{(s)}\big|_{s+s_k+1=n-1} \textrm{ is left } \iota_{\alpha_{n-1-2c}}(\SL_2)\textrm{-invariant}
\end{align*}
We have
\begin{align*}
    w^{\prime-1}\alpha_{n-1-2c}&=(s_{n-2-2c}w')^{-1}\alpha_{n-1-2c}-w^{\prime-1}\alpha_{n-2-2c}\\
    &=\left(2\sum_{j=n-c}^{n}\alpha_j+\sum_{j=1+c}^{n-c-1}\alpha_j\right)-\left(2\sum_{j=n-c}^{n}\alpha_j+\sum_{j=2+c}^{n-c-1}\alpha_j\right)=\alpha_{c+1}
\end{align*}
Since the coroot $(1,n-c,2+c)$ has $h$-value $n-1$ and $f^{(s)}$ is left $\iota_{\alpha_{c+1}}(\SL_2)$-invariant, our claim follows from \ref{itm:3}.

\textbf{Case $n\ge 2\ell$:}  Suppose there exists $0\le c<\ell$ such that $(1,n-c-1,2+c)\not\in\Phi_w^\lor$. Choose $c$ to be minimal, and rewrite  $w=s_{n-1-2c}s_{n-2-2c}w'$ where $s_{n-2-2c}$ and $s_{n-1-2c}$ correspond to coroots $(1,n-c,2+c)$ and $(1,n-c,1+c)$ respectively. The holomorphy of $M'_w(1_s)$ can be justified similarly as the previous case. Therefore, we assume $(1,n-c,1+c)\in\Phi_w^\lor$ for all $c$. We claim
\begin{align*}
     M'_{w_1}(1_s)f^{(s)}\big|_{s+s_k+1=n-1} \textrm{ is left } \iota_{\alpha_{n-1}}(\SL_2)\textrm{-invariant}.
\end{align*}
\begin{Lem}
$M'_{w_1}(1_s)f^{(s)}\big|_{s+s_k+1=n-1}$ is left $w''w'\iota_{\alpha_n}(\SL_2)$-invariant. 
\end{Lem}

\begin{proof}
 Write $w_1=w''s_{n-\ell}w'$ where $s_{n-\ell}$ corresponds to the coroot $(n-1,1)$ and $\Phi_{s_{n-\ell}w'}^\lor$ consists of all coroots of type $(r,t)$. Since the coroot $(n-1,1)$ has $h$-value $n-1$, by \ref{itm:3}
\begin{align*}
    M'_{s_{n-\ell}w'}(1_s)f^{(s)}\big|_{s+s_k+1=n-1} \textrm{ is left } w'\iota_{\alpha_n}(\SL_2)\textrm{-invariant}.
\end{align*}
Write $w''=v'v$ where $\Phi_{vs_{n-\ell}w'}^\lor-\Phi_{s_{n-\ell}w'}^\lor$ consists of all coroots of type $(2,r,t)$. Since $(2,1,1)$ is the only coroot of type $(2,r,t)$ with $h$-value $2(n-1)+1$ and coroots $(2,2,2)$ and $(2,2,1)$ are in $\Phi_{vs_{n-\ell}w'}^\lor-\Phi_{s_{n-\ell}w'}^\lor$, it follows from the proof of the case $\beta^\lor=(2,1,1)$ that $$\frac{a_{s_{n-\ell}w'}(1_s)}{a_{vs_{n-\ell}w'}(1_s)}M_{v}(1_s^{s_{n-\ell}w'})$$ is holomorphic at $s+s_k+1=n-1$. Therefore, the lemma follows once we show $$\frac{a_{vs_{n-\ell}w'}(1_s)}{a_{w_1}(1_s)}M_{v'}(1_s^{vs_{n-\ell}w'})$$ is holomorphic at $s+s_k+1=n-1$. Note that $\Phi_{w_1}^\lor-\Phi_{vs_{n-\ell}w'}^\lor$ consists of coroots of type $(1,r,t)$. By \eqref{eq:R3} the number of coroots in $\Phi_{w_1}^\lor-\Phi_{vs_{n-\ell}w'}^\lor$ with $h$-value $n$ is at most that with $h$-value $n-1$. Therefore, the holomorphy follows from the proof of type $A$.
\end{proof}
Clearly, $M'_{w_1}(1_s)f^{(s)}$ is left $w_1\iota_{\alpha_n}(\SL_2)$-invariant.
Since $(1,n-c,1+c)\in \Phi_w^\lor$ for all $c$, we have $(1,n,r)\in \Phi_w^\lor$ for all $r$, and thus a direct computation gives  \begin{align*}
    w_1\alpha_n=\alpha_n+\alpha_{n-1},\quad w''w'\alpha_n=-\alpha_n.
\end{align*}
As $(\alpha_n+\alpha_{n-1})+(-\alpha_n)=\alpha_{n-1},$ our claim follows from the above lemma.

\bigskip

Consider now $\beta^\lor=(r,1)$. By \ref{itm:1}, \eqref{eq:R1}, \eqref{eq:R3} and \eqref{eq:B2}, we may assume every coroot with $\lambda$-value $1$ in $\Phi_w^\lor$ has $h$-value at most $r$. 

\textbf{Case $n-1\ge r>\ell$:} Note that \ref{itm:I} fails for $w_1$ in this case. If $(1,r+1,\ell)\not\in \Phi_w^\lor$, then $\alpha=\alpha_{r-\ell+1}$ and we can write $w_1=s_{r-\ell}w'$ where $s_{r-\ell}$ corresponds to the coroot $(r-1,1)$. We have
\begin{align*}
    w^{\prime -1}\alpha_{r-\ell+1}=w_1^{-1}\alpha_{r-\ell+1}-w^{\prime -1}\alpha_{r-\ell}=\sum_{j=1}^{r} \alpha_j-\sum_{j=1}^{r-1} \alpha_j=\alpha_r.
\end{align*}
Since the coroot $(r-1,1)$ has $h$-value $r-1$ and $f^{(s)}$ is left $\iota_{\alpha_{r}}(\SL_2)$-invariant, by \ref{itm:3}
\begin{align*}
    M'_{w_1}(1_s)f^{(s)}\big|_{s+s_k+1=r-1} \textrm{ is left } \iota_{\alpha_{r}}(\SL_2)\textrm{-invariant}.
\end{align*}
Therefore, we assume $(1,r+1,\ell)\in \Phi_w^\lor$. Let $\ell>c\ge 1$ be the number of coroots $(1,r+1,t)$ in $\Phi_w^\lor$. Suppose the coroot $(1,r+1,\ell+1-c)$ has $h$-value less than $r$. If $(1,r,\ell+1-c)\not\in \Phi_w^\lor,$ then we can rewrite $w=s'w'$ such that $s'$ corresponds to the coroot $(1,r+1,\ell+1-c)$. The holomorphy of $M'_{w}(1_s)=M'_{s'w'}(1_s)$ follows by the induction hypothesis and \ref{itm:1}. If $(1,r,\ell+1-c)\in \Phi_w^\lor,$ let $m\le r$ be the smallest integer such that $(1,m,\ell+1-c)\in \Phi_w^\lor$. Rewrite $w=s_is_{i+1}w'$ ($i<n-1$) such that $s_{i+1}$ and $s_i$ correspond to coroots $(1,m+1,\ell+1-c)$ and $(1,m,\ell+1-c)$ respectively. The holomorphy of $M'_w(1_s)$ in this case follows by a similar argument as in the case $\beta^\lor=(1,m,1)$.

Consequently, we assume the coroot $(1,r+1,\ell+1-c)$ has $h$-value $r$, i.e., $2n-r-\ell-1+c=r$.
If $c\ge 2$ and $(1,r,\ell+2-c)\not\in \Phi_w^\lor$, then we rewrite $w=s_{i-1}s_{i}w'$ ($i<n$) where $s_i$ and $s_{i-1}$ corresponds to coroots $(1,r+1,\ell+2-c)$ and $(1,r+1,\ell+1-c)$ respectively. We have
\begin{align*}
    w^{\prime-1}\alpha_{i-1}&=(s_iw_1)^{-1}\alpha_{i-1}-w^{\prime-1}\alpha_i\\
    &=\left(2\sum_{j=r+1}^{n} \alpha_j+\sum_{j=\ell+1-c}^{r} \alpha_j\right)-\left(2\sum_{j=r+1}^{n} \alpha_j+\sum_{j=\ell+2-c}^{r} \alpha_j\right)=\alpha_{\ell+1-c}.
\end{align*}
Since the coroot $(1,r+1,\ell+1-c)$ has $h$-value $r-1$ and $f^{(s)}$ is left $\iota_{\alpha_{\ell+1-c}}(\SL_2)$-invariant, we have
\begin{align*}
    M'_{s_iw'}(1_s)f^{(s)}\big|_{s+s_k+1=r-1} \textrm{ is left } \iota_{\alpha_{i-1}}(\SL_2)\textrm{-invariant}.
\end{align*}
Suppose either $c=1$ or $c\ge 2$ and $(1,r,\ell+2-c)\in \Phi_w^\lor$. Let $s_i$ be a reflection such that $\ell(s_iw)=1+\ell(w)$ and the coroot corresponding to $s_i$ is $(1,r,\ell+1-c)$. Since the coroot $(1,r,\ell+1-c)$ has $h$-value $r+1$, by \ref{itm:1} and \ref{itm:5} $M'_{w}(1_s)$ is holomorphic at $s+s_k+1=r-1$ iff $M'_{s_iw}(1_s)$ is. To see $M'_{s_iw}(1_s)$ is holomorphic at $s+s_k+1=r-1$, rewrite $s_iw=s_{r-\ell+c+1}s_{r-\ell+c}w'$ where $s_{r-\ell+c}$ and $s_{r-\ell+c+1}$ correspond to coroots $(r,1)$ and $(r-1,1)$ respectively. The holomorphy follows from a similar argument as the case $(1,r+1,\ell)\not\in \Phi_w^\lor.$
 
\textbf{Case $r=\ell, 2(n-\ell)<\ell$:} Note that in this case \ref{itm:I} fails for $w_1$.  For the case that $(1,\ell+1+t,2n-2\ell-t)\in\Phi_w^\lor$ for some $t$, the holomorphy in this case follows from an argument analogous to that of the previous case when $(1,r+1,\ell+1-c)\in \Phi_w^\lor$ with $c\ge 2$ but $(1,r,\ell+2-c)\not\in\Phi_w^\lor$. Therefore, suppose $(1,\ell+1+t,2n-2\ell-t)\not\in\Phi_w^\lor$ for any $t$. If there is no coroot of type $(1,m,t)$ in $\Phi_w^\lor$, then we can write $w=w''w'$, where $\Phi_{w'}^\lor$ consists of all coroots of type $(m,t)$ in $\Phi_w^\lor$. The holomorphy then follows from the proof of the case $\beta^\lor=(2,m,1)$. If $\Phi_w^\lor$ contains some coroot of type $(1,m,t),$ we can rewrite $w=s_iw'$ where $s_i$ corresponds to a coroot with $(h,\lambda)$-value $(c,1)$ where $c<\ell$. Then by the induction hypothesis and \ref{itm:1} $M'_w(1_s)=M'_{s_iw'}(1_s)$ is holomorphic at $s+s_k+1=\ell-1$.

\textbf{Case $r=\ell, 2(n-\ell)\ge \ell$:} By the same argument as the previous case, it suffices to consider when $n-\ell<\ell$ and  $(1,2(n-\ell)+1+t,\ell-t)\in\Phi_w^\lor$ for all $t$. Since $2(n-\ell)\ge \ell$, for \ref{itm:I} to fail for $w_1$, there exists smallest $n-\ell> c\ge 0$ such that $(\ell+c+1,2+c)\not\in \Phi_w^\lor$ and $(\ell+c,1+c)\in \Phi_w^\lor$. Since $n<2\ell$, we can rewrite $w=s_is_{i+1}w'$ $(i<n-1)$ where $s_{i+1}$ and $s_{i}$ correspond to coroots $(\ell+c,2+c)$ and $(\ell+c,1+c)$. We have
\begin{align*}
    w^{\prime -1}\alpha_i=(s_{i+1}w')^{-1}\alpha_i- w^{\prime -1}\alpha_{i+1}=\sum_{j=1+c}^{\ell+c}\alpha_j-\sum_{j=2+c}^{\ell+c}\alpha_j=\alpha_{c+1}.
\end{align*}
Since the coroot $(\ell+c,2+c)$ has $h$-value $\ell-1$ and $f^{(s)}$ is left $\iota_{\alpha_{c+1}}(\SL_2)$-invariant, by \ref{itm:3}
\begin{align*}
    M'_{s_{i+1}w'}(1_s)f^{(s)}\big|_{s+s_k+1=\ell-1} \textrm{ is left } \iota_{\alpha_{i}}(\SL_2)\textrm{-invariant}.
\end{align*}
This completes the proof for the case $\ell<n.$ 

\bigskip

For $\ell=n\ge 2$, a reduced expression of $w_0$ is 
\begin{align}\label{eq:Bnw0n}
    (s_n)\cdots (s_{n-r+1}\cdots s_{n})\cdots (s_1\cdots s_n);
\end{align}
corresponding coroots $\tilde{\alpha}_{(i)}^\lor$ are
\begin{center}
\begin{tabular}{>{\tiny}c>{\tiny}c>{\tiny}c>{\tiny}c>{\tiny}c}
$\alpha_n^\lor+2\sum_{j=n}^{n-1}\alpha^\lor_j,$  & $\cdots$ & $\alpha_n^\lor+2\sum_{j=n}^{n-1}\alpha^\lor_j+\sum_{j=t}^{n-1}\alpha^\lor_j,$ &  $\cdots$ & $\alpha_n^\lor+2\sum_{j=n}^{n-1}\alpha^\lor_j+\sum_{j=1}^{n-1}\alpha^\lor_j,$   \\
                                 &  $\ddots$        & $\vdots$   &     $\ddots$     & $\vdots$                                                                               \\
   &    & $\alpha_n^\lor+2\sum_{j=r}^{n-1}\alpha^\lor_j+\sum_{j=t}^{r-1}\alpha^\lor_j,$     & $\cdots$   & $\alpha_n^\lor+2\sum_{j=r}^{n-1}\alpha^\lor_j+\sum_{j=1}^{r-1}\alpha^\lor_j,$\\
           &          &               &     $\ddots$     & $\vdots$                                                                               \\
   &          &                                                                                     &          &                      $\alpha_n^\lor+\sum_{j=2}^{n-1}\alpha^\lor_j+\alpha_1^\lor.$                                    
\end{tabular}
\end{center}
\smallskip

The inequalities \eqref{eq:comb-} and \eqref{eq:comb+} and the holomorphy of $M'_{w}(\chi_s)$ (especially $\chi=1$) follow from the same (and actually simpler) argument as the previous case for type $(2,r,t)$ coroots. 
\qed

\subsection{Type $C_n$ ($n\ge 2$)}\label{ssec:Cn}

For $1\le \ell<n$, coroots $\tilde{\alpha}_{(i)}^\lor$ corresponding to the reduced expression \eqref{eq:Bnw0} are

\begin{align*}
    \begin{tabular}{ccccc}
$\alpha_\ell^\lor,$                & $\cdots$ & $\sum_{j=t}^\ell \alpha_{j}^\lor,$ & $\cdots$ & $\sum_{j=1}^\ell \alpha_{j}^\lor,$ \\
                $\vdots$                       &           & $\vdots$                          &           &     $\vdots$                                   \\
$\sum_{j=\ell}^r \alpha_{j}^\lor,$ & $\cdots$ & $\sum_{j=t}^r \alpha_{j}^\lor,$    & $\cdots$ & $\sum_{j=1}^{r} \alpha_{j}^\lor,$  \\
                    $\vdots$                   &           & $\vdots$                          &           &                 $\vdots$                       \\
$\sum_{j=\ell}^{n-1} \alpha_{j}^\lor,$ & $\cdots$ & $\sum_{j=t}^{n-1} \alpha_{j}^\lor,$    & $\cdots$ & $\sum_{j=1}^{n-1} \alpha_{j}^\lor,$   
\end{tabular}
\end{align*}
\begin{center}
\begin{tabular}{>{\tiny}c>{\tiny}c>{\tiny}c>{\tiny}c>{\tiny}c>{\tiny}c}
$\sum_{j=\ell}^n \alpha^\lor_j$&$2\sum_{j=\ell}^{n}\alpha^\lor_j+\alpha_{\ell-1}^\lor,$  & $\cdots$ & $2\sum_{j=\ell}^{n}\alpha^\lor_j+\sum_{j=t}^{\ell-1}\alpha^\lor_j,$ & $\cdots$ & $2\sum_{j=\ell}^{n}\alpha^\lor_j+\sum_{j=1}^{\ell-1}\alpha^\lor_j,$    \\
&$2\alpha_n^\lor+\sum_{j=\ell}^{n-1} \alpha_j^\lor, $ & $\cdots$ & $2\sum_{j=m}^{n} \alpha_j^\lor+\sum_{j=\ell}^{m-1} \alpha_j^\lor, $ & $\cdots$ & $2\sum_{j=\ell+1}^{n} \alpha_j^\lor+ \alpha_\ell^\lor, $               \\
$\vdots$ & $\vdots$                                            &    $\ddots$      & $\vdots$                                                                            &    $\ddots$      & $\vdots$                                                                               \\
$\sum_{j=r}^n \alpha^\lor_j$&    &  & $2\sum_{j=r}^{n}\alpha^\lor_j+\sum_{j=t}^{r-1}\alpha^\lor_j,$     & $\cdots$ & $2\sum_{j=r}^{n}\alpha^\lor_j+\sum_{j=1}^{r-1}\alpha^\lor_j,$          \\
&$2\alpha_n^\lor+\sum_{j=r}^{n-1} \alpha_j^\lor, $    & $\cdots$ & $2\sum_{j=m}^{n} \alpha_j^\lor+\sum_{j=r}^{m-1} \alpha_j^\lor, $    & $\cdots$ & $2\sum_{j=\ell+1}^{n} \alpha_j^\lor+\sum_{j=r}^{\ell} \alpha_j^\lor, $ \\
$\vdots$ & $\vdots$                                            &          & $\vdots$                                                                            &    $\ddots$      & $\vdots$                                                                               \\
$\sum_{j=2}^n \alpha^\lor_j$&   &          &                                                                                     &          &      $2\sum_{j=2}^{n}\alpha^\lor_j+\alpha_1^\lor,$                                                                                    \\
&$2\alpha_n^\lor+\sum_{j=2}^{n-1} \alpha_j^\lor, $    & $\cdots$ & $2\sum_{j=m}^{n} \alpha_j^\lor+\sum_{j=2}^{m-1} \alpha_j^\lor, $    & $\cdots$ & $2\sum_{j=\ell+1}^{n} \alpha_j^\lor+\sum_{j=2}^{\ell} \alpha_j^\lor, $   \\
$\sum_{j=1}^n \alpha^\lor_j$ & & & & & \\
&$2\alpha_n^\lor+\sum_{j=1}^{n-1} \alpha_j^\lor, $    & $\cdots$ & $2\sum_{j=m}^{n} \alpha_j^\lor+\sum_{j=1}^{m-1} \alpha_j^\lor, $    & $\cdots$ & $2\sum_{j=\ell+1}^{n} \alpha_j^\lor+\sum_{j=1}^{\ell} \alpha_j^\lor. $ 
\end{tabular}
\end{center}
\medskip

By duality, the proof for type $B_n$ carries over with minor modification. The major difference is that $i$th $s_n$ in any reduced expression of $w_0$ corresponds to the coroot $\sum_{j=\ell-i+1}^n \alpha_j^\lor$, which has $\lambda$-value $1$ instead of $2$. In an analogous setup as Lemma \ref{lem:ind}, if the last coroot is $\sum_{j=1}^n \alpha_j^\lor$, one modifies the argument of the case $\beta^\lor=(1,n,1)$ in \S\ref{ssec:Bn}. Some explanation is given in the case $\ell=n$ below. If the last coroot is $2\alpha_{n}^\lor+\sum_{j=1}^{n-1} \alpha_j^\lor$ one applies the argument of the case $\beta^\lor=(1,r,1)$ with $r<n$ in \S\ref{ssec:Bn}. We leave the details of the other cases to the reader.

\medskip 

For $\ell=n$, coroots $\tilde{\alpha}_{(i)}^\lor$ corresponding to the reduced expression \eqref{eq:Bnw0n} are
\begin{center}
\begin{tabular}{>{\tiny}c>{\tiny}c>{\tiny}c>{\tiny}c>{\tiny}c>{\tiny}c}
$\alpha_n^\lor,$ & $2\alpha^\lor_n +\alpha_{n-1}^\lor,$  & $\cdots$ & $2\sum_{j=n}^{n}\alpha^\lor_j+\sum_{j=t}^{n-1}\alpha^\lor_j,$ & $\cdots$ & $2\sum_{j=n}^{n}\alpha^\lor_j+\sum_{j=1}^{n-1}\alpha^\lor_j,$   \\
$\vdots$ &                                          &    $\ddots$      & $\vdots$                                                               &  $\ddots$         & $\vdots$                                                                               \\
$\sum_{j=r}^n \alpha_j^\lor,$ & & &  $2\sum_{j=r}^{n}\alpha^\lor_j+\sum_{j=t}^{r-1}\alpha^\lor_j,$  &   $\cdots$& $2\sum_{j=r}^{n}\alpha^\lor_j+\sum_{j=1}^{r-1}\alpha^\lor_j,$   \\
$\vdots$ &                                          &          &                                                                       &   $\ddots$     & $\vdots$                                                                               \\
$\sum_{j=2}^n \alpha_j^\lor,$ &   & & & &$2\sum_{j=2}^n\alpha^\lor_j +\alpha_{1}^\lor,$\\
 $\sum_{j=1}^n \alpha_j^\lor$.&     &          &                                                                                     &          &                   
 \end{tabular}
\end{center}
\smallskip

As in the case $\ell<n$, except for the case where the last coroot is $\sum_{j=1}^n \alpha_j^\lor$, a similar inductive proof as in \S\ref{ssec:Bn} for type $(2,r,t)$ coroots justifies the holomorphy in this case. Therefore, we only explain how to prove the holomorphy of $M'_{w_0}(1_s)$ assuming $M'_w(\chi_s)$ is holomorphic for any $w\neq w_0$ and  $\chi$.

Rewrite $w_0=s_ns_{n-1}s_{n-2}s_nw'$ by switching the order of coroots $\sum_{j=2}^n \alpha_j^\lor$ and $2\sum_{j=3}^n \alpha_j^\lor+\sum_{j=1}^2 \alpha_j^\lor.$ We need to show
\begin{align*}
    M'_{s_{n-1}s_{n-2}s_nw'}(1_s)f^{(s)}\big|_{s+s_k+1=n-1} \textrm{ is left } \iota_{\alpha_n}(\SL_2)\textrm{-invariant}.
\end{align*}
We have $w'\alpha_{n-1}=s_{n-1}\alpha_{n-2}$ and
\begin{align*}
    s_{n}s_{n-1}\alpha_{n-2}+s_{n-1}\alpha_{n-2}=\alpha_{n}+2\alpha_{n}+2\alpha_{n-2}=s_{n-2}s_{n-1}\alpha_n.
\end{align*}
Since $\sum_{j=2}^n \alpha_j^\lor$ has $h$-value $n-1$ and $f^{(s)}$ is left $\iota_{\alpha_{n-1}}(\SL_2)$-invariant, by \ref{itm:3}
\begin{align*}
    M'_{s_nw'}(1_s)f^{(s)}\big|_{s+s_k+1=n-1} \textrm{ is left } s_{n-2}s_{n-1}\iota_{\alpha_n}(\SL_2)\textrm{-invariant}.
\end{align*}
Therefore, to justify the holomorphy it suffices to show
\begin{align*}
    \frac{a_{s_{n}w'}(1_s)}{a_{s_{n-1}s_{n-2}s_{n}w'}(1_s)}M_{s_{n-1}s_{n-2}}(1_s^{s_nw'})
\end{align*}
is holomorphic at $s+s_k+1=n-1$. Since coroots corresponding to $s_{n-2}$ and $s_{n-1}$ have $(h,\lambda)$-value $(2(n-1),2)$ and $(2(n-1)+1,2)$ respectively, the holomorphy follows again from \ref{itm:3}.
\qed

\subsection{Type $D_n$ ($n\ge 4$)}\label{ssec:Dn}

For $1\le \ell<n-1$, a reduced expression of $w_0$ is
\begin{align}\label{eq:Dnw0}
\begin{split}
    &(s_\ell\cdots s_{n-3}s_{n-2}s_{n-(\ell+1 \mod 2)})\cdot \\
    &\big((s_{\ell-1}\cdots s_{n-2})(s_{n-(\ell \mod 2)})\big)\cdot\\
    &\vdots\\
    &\big((s_4\cdots s_{n-\ell+3})(s_{n -\ell+4}\cdots s_{n-3}s_{n-2}s_{n-1})\big)\cdot\\
    &\big((s_3\cdots s_{n-\ell+2})(s_{n -\ell+3}\cdots s_{n-3} s_{n-2}s_n)\big)\cdot\\
    &\big((s_2\cdots s_{n-\ell+1})(s_{n-\ell+2}\cdots s_{n-3}s_{n-2}s_{n-1})\big)\cdot\\
    &\big((s_1\cdots s_{n-\ell})(s_{n-\ell+1}\cdots s_{n-3} s_{n-2}s_n)\big)\cdot\\
    &(s_{n-\ell}\cdots s_{n-1})\cdots (s_2\cdots s_{\ell}s_{\ell+1})(s_1\cdots s_{\ell-1}s_\ell);
\end{split}
\end{align}
corresponding coroots $\tilde{\alpha}_{(i)}^\lor$ are
\begin{align}
    \begin{tabular}{ccccc}
$\alpha_\ell^\lor,$                & $\cdots$ & $\sum_{j=t}^\ell \alpha_{j}^\lor,$ & $\cdots$ & $\sum_{j=1}^\ell \alpha_{j}^\lor,$ \\
                $\vdots$            &           & $\vdots$  &           &     $\vdots$                                   \\
$\sum_{j=\ell}^r \alpha_{j}^\lor,$ & $\cdots$ & $\sum_{j=t}^r \alpha_{j}^\lor,$    & $\cdots$ & $\sum_{j=1}^{r} \alpha_{j}^\lor,$  \\
    $\vdots$    &           & $\vdots$    &           &                 $\vdots$                       \\
$\sum_{j=\ell}^{n-1} \alpha_{j}^\lor,$ & $\cdots$ & $\sum_{j=t}^{n-1} \alpha_{j}^\lor,$    & $\cdots$ & $\sum_{j=1}^{n-1} \alpha_{j}^\lor,$  
\end{tabular}\tag{$\ast'$}\label{tab:upper1'}
\end{align}
\begin{center}
\begin{tabular}{>{\tiny}c>{\tiny}c>{\tiny}c>{\tiny}c>{\tiny}c}
$\sum_{j=\ell}^{n-2} \alpha_j^\lor+\sum_{j=\ell-1}^n\alpha_{j}^\lor,$ & $\cdots$ & $\sum_{j=\ell}^{n-2} \alpha_j^\lor+\sum_{j=t}^{n}\alpha_{j}^\lor,$ & $\cdots$ & $\sum_{j=\ell}^{n-2} \alpha_j^\lor+\sum_{j=1}^{n}\alpha_{j}^\lor,$     \\
  $\alpha_n^\lor+ \sum_{j=\ell}^{n-2}\alpha_{j}^\lor,$  & $\cdots$ & $\sum_{j=m}^n \alpha_j^\lor+ \sum_{j=\ell}^{n-2}\alpha_{j}^\lor,$   & $\cdots$ & $\sum_{j=\ell+1}^n \alpha_j^\lor+ \sum_{j=\ell}^{n-2}\alpha_{j}^\lor,$\\
$\vdots$     &   $\ddots$       & $\vdots$     & $\ddots$         & $\vdots$              \\
     &     & $\sum_{j=r}^{n-2} \alpha_j^\lor+\sum_{j=t}^{n}\alpha_{j}^\lor,$    & $\cdots$ & $\sum_{j=r}^{n-2} \alpha_j^\lor+\sum_{j=1}^{n}\alpha_{j}^\lor,$           \\
$\alpha_n^\lor+ \sum_{j=r}^{n-2}\alpha_{j}^\lor,$  & $\cdots$ & $\sum_{j=m}^n \alpha_j^\lor+ \sum_{j=r}^{n-2}\alpha_{j}^\lor,$   & $\cdots$ & $\sum_{j=\ell+1}^n \alpha_j^\lor+ \sum_{j=r}^{n-2}\alpha_{j}^\lor,$ \\
$\vdots$  &          & $\vdots$&   $\ddots$       & $\vdots$       \\
      &  &                  &          &   $\sum_{j=2}^{n-2} \alpha_j^\lor+\sum_{j=1}^n\alpha_{j}^\lor,$               \\
$\alpha_n^\lor+ \sum_{j=2}^{n-2}\alpha_{j}^\lor,$  & $\cdots$ & $\sum_{j=m}^n \alpha_j^\lor+ \sum_{j=2}^{n-2}\alpha_{j}^\lor,$   & $\cdots$ & $\sum_{j=\ell+1}^n \alpha_j^\lor+ \sum_{j=2}^{n-2}\alpha_{j}^\lor,$               \\
  &          &               &          &                           \\
$\alpha_n^\lor+ \sum_{j=1}^{n-2}\alpha_{j}^\lor,$  & $\cdots$ & $\sum_{j=m}^n \alpha_j^\lor+ \sum_{j=1}^{n-2}\alpha_{j}^\lor,$   & $\cdots$ & $\sum_{j=\ell+1}^n \alpha_j^\lor+ \sum_{j=1}^{n-2}\alpha_{j}^\lor,$                  
\end{tabular}
\end{center}
\medskip
\smallskip
\noindent Here \eqref{tab:upper1'} corresponds to the last row of \eqref{eq:Dnw0}. For rows of coroots below \eqref{tab:upper1'}, the $i$th odd (resp. even) rows correspond to the former (resp. latter) parenthesis in the $(i+1)$th big parenthesis (counting from bottom to top and from right to left).

We retain the terminology in \S \ref{ssec:An} and follow the idea in \S \ref{ssec:Bn}. Denote coroots
\begin{align*}
    \sum_{j=r}^{n-2} \alpha_j^\lor+\sum_{j=t}^{n}\alpha_{j}^\lor=\alpha_n^\lor+\alpha_{n-1}^\lor+2\sum_{j=r}^{n-2}\alpha_j^\lor+\sum_{j=t}^{r-1} \alpha_j^\lor& \quad \textrm{ for } 1\le t< r\le \ell, \textrm{ and } \\
    \sum_{j=m}^n\alpha_j^\lor+\sum_{j=r}^{n-2} \alpha_j^\lor& \quad \textrm{ for } 1\le r\le \ell<m\le n
\end{align*}
by $(2,r,t)$ and $(1,m,r)$ respectively. As the sum of two coroots of the same type is not a coroot, so we still have rules \eqref{eq:R1}, \eqref{eq:R2}, \eqref{eq:R3}. The rule \eqref{eq:R2} implies \eqref{eq:comb-} and \eqref{eq:comb+} for $\lambda=2$ similarly.

\begin{Rem}\label{rem:Rprime1}
Rules of coroots are stable under the symmetry of the Dynkin diagram $D_n$. For instance, \eqref{eq:R1} implies the  coroot $(1,n,t)$ must come after the coroot $(n-2,t)$ for any $1\le t\le \ell$.
\end{Rem}

Observe that the coroot $(2,r,t)$ can be written as a sum of coroots
\begin{align*}
    (1,m',r')+(r'',t'') 
\end{align*}
if and only if either $r'=r$ and $t''=t$ or $r'=t$ and $t''=r$. In both cases, $m'=r''+1$.

\begin{Lem}
The operation \ref{itm:b'} cannot be applied to the triple of coroots $(2,r,t), (1,m,t), (m-1,r)$ if (and only if) $m<n$.
\end{Lem}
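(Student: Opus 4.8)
The plan is to mimic the argument of \S\ref{ssec:Bn} that handles the analogous triples in type $B_n$, keeping careful track of which simple reflection is assigned to each coroot in a given reduced expression. Recall that operation \ref{itm:b'}, applied to a braid move, acts on three consecutive coroots of the form $\mu^\lor,\ \mu^\lor+\nu^\lor,\ \nu^\lor$: it interchanges the two outer ones $\mu^\lor,\nu^\lor$, which must carry the \emph{same} simple reflection $s_\alpha$, while the middle coroot $\mu^\lor+\nu^\lor$ carries a simple reflection $s_\beta$ with $n_{\alpha\beta}=1$. Since $(2,r,t)=(1,m,t)+(m-1,r)$, the coroot $(2,r,t)$ must be the middle one; so applying \ref{itm:b'} would require a reduced expression of $w_0$ in which $(1,m,t),(2,r,t),(m-1,r)$ occur consecutively, with $(1,m,t)$ and $(m-1,r)$ carrying one and the same simple reflection $s_\alpha$ and $(2,r,t)$ carrying an $s_\beta$ joined to $\alpha$ by a single edge of the Dynkin diagram of $D_n$ (recall that in $D_n$ only the edges $\{\alpha_j,\alpha_{j+1}\}$ with $j\le n-2$ and $\{\alpha_{n-2},\alpha_n\}$ are single, and $\{\alpha_{n-1},\alpha_n\}$ is absent).

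The first step is to record, via the $D_n$-analogues of the reflection-assignment formulas of \S\ref{ssec:Bn} (which differ from the $B_n$ ones only near the fork), the set of simple reflections each of the three coroots can carry over all reduced expressions, together with the ordering rules \eqref{eq:R1}--\eqref{eq:R3} and the $D_n$-analogues of \eqref{eq:B1}--\eqref{eq:B3} that describe which pairs among them can be interchanged by \ref{itm:a'} and \ref{itm:b'} (only these two moves occur, $D_n$ having no multiple edges). As in type $B_n$, a coroot $(m-1,r)=\sum_{j=r}^{m-1}\alpha_j^\lor$ carries $\alpha_{(m-1)+r-\ell+c_1}$, with $c_1$ in an interval $[0,\ell)$ controlled by the number of coroots $(1,m,\cdot)$ preceding it; a coroot $(1,m,t)$ carries a reflection whose node lies in a window about $\alpha_m$ of width governed by a parameter $c_2$ (counting the $(2,r,\cdot)$ preceding it), subject to a modification when $m\in\{n-1,n\}$; and $(2,r,t)$ carries a reflection near node $n$, shifted in a controlled range $c_3$.

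The second step is to impose the two matching conditions at once. Equating the node of $(1,m,t)$ with that of $(m-1,r)$ forces a linear relation of the shape $c_1+c_2=\ell-r+1$, and the single-edge condition then forces the node of $(2,r,t)$ to be adjacent to this common node; here the decisive point is the missing fork edge $\{\alpha_{n-1},\alpha_n\}$. Feeding in the admissible ranges of $c_1,c_2,c_3$, the constraints $1\le t<r\le\ell<m\le n$, and the ordering rules, one finds this system solvable only when $c_1,c_2,c_3$ all sit at the extremes of their ranges --- which happens precisely when $m=n$, so that $(1,n,t)=\alpha_n^\lor+\sum_{j=t}^{n-2}\alpha_j^\lor$ is a fork coroot whose reflection can be pushed onto node $n-1$, \emph{and} $r=t+1$, so that $(2,t+1,t)$ and $(n-1,t+1)$ are minimal of their shapes and the three coroots can actually be brought consecutive. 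In every other case at least one matching condition fails, which is the assertion.

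The main obstacle I expect is the bookkeeping in this second step. Unlike type $B_n$, the coroot $(1,m,t)$ behaves differently according as $m=n$, $m=n-1$, or $m<n-1$ --- it then contains $\alpha_n^\lor$, $\alpha_{n-1}^\lor+\alpha_n^\lor$, or $\alpha_{n-1}^\lor+\alpha_n^\lor+\cdots$ respectively --- and each sub-case changes both the reflections it can carry and whether a single Dynkin edge to the reflection of $(2,r,t)$ exists at all. Isolating these sub-cases and verifying that the exceptional configuration genuinely requires \emph{both} $m=n$ and $r=t+1$ (not just one) is the delicate part; as in \S\ref{ssec:Bn}, it will be carried out by induction on length while tracking the order of the coroots $\tilde\alpha_{(i)}^\lor$.
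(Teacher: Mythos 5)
Your reading of the mechanism is correct: since $(2,r,t)=(1,m,t)+(m-1,r)$, an application of \ref{itm:b'} would require a reduced expression in which the three coroots occur consecutively, $(2,r,t)$ in the middle, with $(1,m,t)$ and $(m-1,r)$ carrying one and the same simple reflection and $(2,r,t)$ carrying a reflection joined to it by a single edge. But from that point on the proposal is a plan rather than a proof. The decisive assertion---that the $D_n$ reflection-assignment formulas together with the matching conditions are solvable only when $m=n$ and $r=t+1$---is exactly the content of the lemma, and you neither derive those formulas for $D_n$ (they do change at the fork, as you note) nor carry out the constraint analysis; the relation ``$c_1+c_2=\ell-r+1$'' and the claim that solvability forces all parameters to the extremes of their ranges are stated without argument, and you explicitly defer this as the delicate part. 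There is also a circularity in your first step: the ``$D_n$-analogues of \eqref{eq:B1}--\eqref{eq:B3}'' are \eqref{eq:D1}--\eqref{eq:D3}, which in the paper are deduced \emph{from} this lemma (they record precisely which mixed-type order reversals the braid moves allow), so they cannot be taken as recorded inputs; at this stage only the chart of coroots, \eqref{eq:R1}--\eqref{eq:R3}, and the operations \ref{itm:a'}, \ref{itm:b'} themselves are available.

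The paper's proof is organized differently and more economically: by \eqref{eq:R2} and \eqref{eq:R3} it reduces to the two minimal violating cases $(r,m)=(t+1,n-1)$ and $(r,m)=(t+2,n)$. In the first case it observes, using \eqref{eq:R1}, that before the triple can be made consecutive the order of $(n-1,t)$, $(n-1,t+1)$, $(n-2,t)$ relative to $(2,t+1,t)$ must already have been reversed by braid moves, which forces $(2,t+1,t)$, or some coroot strictly between $(n-2,t+1)$ and $(2,t+1,t)$, to carry the reflection $s_{n-2}$; since an application of \ref{itm:b'} to the triple requires $(n-2,t+1)$ itself to carry $s_{n-2}$, the coroot $(2,t+1,t)$ cannot immediately follow $(n-2,t+1)$, a contradiction. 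The case $(r,m)=(t+2,n)$ is handled analogously via $(1,n,t+1)$ and $(1,n,t+2)$. To repair your proposal you would need either to supply this kind of reduction-plus-contradiction, or to genuinely derive the $D_n$ assignment formulas from the chart and \eqref{eq:R1}--\eqref{eq:R3} alone and then perform the full case analysis over the subcases $m=n$, $m=n-1$, $m<n-1$ and $r>t+1$ that you postponed.
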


\begin{proof}
By \eqref{eq:R2} and \eqref{eq:R3}, it suffices to show for $r=t+1, m=n-1$. By \eqref{eq:R1}, an operation \ref{itm:b'} that reverses the order of coroots $(n-1,t+1)$ and $(2,t+1,t)$ needs to be carried out first. However, in this case the coroot $(1,n,t+1)$ comes before $(2,t+1,t)$, but comes after $(n-2,t+1)$ by Remark \ref{rem:Rprime1}.
\end{proof}

\noindent Consequently, together with operations \ref{itm:a'}, we have
\begin{align}
    &(2,r,t)\longleftrightarrow (r',t') \textrm{ if and only if } t'<r \textrm{ or } r'=n-1, \tag{D1}\label{eq:D1}\\
    &(1,m,r)\longleftrightarrow (r',t') \textrm{ if and only if } t'<r \textrm{ or } m=r'+1=n, \tag{D2}\label{eq:D2}\\
    &(2,r,t)\longleftrightarrow (1,m',r') \textrm{ if and only if }  r<r' \textrm{ or } t<r=r' \textrm{ or }m'=n. \tag{D3}\label{eq:D3}
\end{align}
Inequalities \eqref{eq:comb-} and \eqref{eq:comb+} for $\lambda=1$ follow similarly from \eqref{eq:R1}, \eqref{eq:R3}, and \eqref{eq:D2}. 

As mentioned in Remark \ref{rem:Rprime1}, the relaxation of rules, compared to rules of type $B$, arises from the symmetry of $D_n$. More precisely, the Dynkin diagram of $D_{n}$ folds into that of $B_{n-1}$. This is the reason why we have named our coroots the same way as in \S\ref{ssec:Bn}. We explain how one can modify the inductive proof of holomorphy of $M'_w(\chi_s)$ of type $B$ to that of type $D$.

By the induction hypothesis we have an analogue of Lemma \ref{lem:ind} that asserts that $w$ can be written as $s_{\alpha}w_1$ where the corresponding coroot $\beta^\lor$ of $s_{\alpha}$ is either $(r,1), (1,m,1),$ or $(2,r',1)$. For the case $\beta^\lor=(r,1)$ for $r\neq n-1$, $\beta^\lor=(1,m,1)$ for $m\neq n,$ or $\beta^\lor=(2,r',1)$ for $1<r'$, a similar proof for the same type of roots considered in \S \ref{ssec:Bn} proves the holomorphy. Therefore, by the symmetry of $D_n$, it suffices to consider the case where $\beta^\lor=(n-1,1)$ and $\chi=1$. We can assume any coroot in $\Phi_w^\lor$ with $\lambda$-value $1$ has $h$-value at most $n-1$. Note that \ref{itm:I} fails if $\ell\ge 2$ or $\ell=1$ and $(1,n,1)\not\in \Phi_w^\lor$. If $(1,n,1)\not\in \Phi_w^\lor$, then the holomorphy follows from the argument in \S\ref{ssec:Bn} of the case $\beta^\lor=(n-1,1)$. Therefore, we assume $(1,n,1)\in \Phi_w^\lor$. Since \ref{itm:I} holds for $w_1$ if $\ell=1$, it suffices to consider $\ell\ge 2$. In this case, a similar argument as in \S\ref{ssec:Bn} of the case $\beta^\lor=(1,n,1)$ justifies the holomorphy.

\medskip

For $\ell=n, n-1$, by symmetry it suffices to deal with either case. Let $\ell=n$. A reduced expression of $w_0$ is 
\begin{align*}
    &(s_{n-(\ell+1 \mod 2)})\cdot \\
    &(s_{n-2}s_{n-(\ell \mod 2)})\cdot \\
    &\vdots\\
    &(s_4\cdots s_{n-3}s_{n-2}s_{n-1})\cdot\\
    &(s_3\cdots s_{n-3} s_{n-2}s_n)\cdot\\
    &(s_2\cdots s_{n-3}s_{n-2}s_{n-1})\cdot\\
    &(s_1\cdots s_{n-3} s_{n-2}s_n);
\end{align*}
corresponding coroots $\tilde{\alpha}_{(i)}^\lor$ are
\begin{center}
    \begin{tabular}{>{\tiny}c>{\tiny}c>{\tiny}c>{\tiny}c>{\tiny}c>{\tiny}c}
$\alpha_n^\lor$  &   $\cdots$                                                                   & $\cdots$ & $\alpha_n^\lor+\sum_{j=t}^{n-2} \alpha_j^\lor,$                                                         & $\cdots$ & $\alpha_n^\lor+\sum_{j=1}^{n-2} \alpha_j^\lor$                                                          \\
&$\sum_{j=n-2}^n\alpha_{j}^\lor,$                                                    & $\cdots$ & $\sum_{j=t}^{n} \alpha_j^\lor,$                                       & $\cdots$ & $\sum_{j=1}^{n} \alpha_j^\lor,$                \\                                                         &   &    $\ddots$      & $\vdots$ &      $\ddots$    & $\vdots$                          \\
& & & $\sum_{j=r}^{n-2}\alpha_{j}^\lor+\sum_{j=t}^{n}\alpha_j^\lor,$ & $\cdots$ & $\sum_{j=r}^{n-2}\alpha_{j}^\lor+\sum_{j=1}^{n}\alpha_j^\lor,$ \\
&    & &     &     $\ddots$   & $\vdots$                   \\
 & &          &                  &          &                 $\sum_{j=2}^{n-2}\alpha_{j}^\lor+\sum_{j=1}^n\alpha_{j}^\lor.$                                    
\end{tabular}
\end{center}
\smallskip

As mentioned above, the proof of holomorphy for coroots of type $(2,r,t)$ with $r>t$ in \S \ref{ssec:Bn} can be modified to justify the holomorphy. We leave the details to the reader. \qed

\subsection{Modification for archimedean cases}\label{ssec:modarch}

Let $F$ be an archimedean local field.

\begin{Lem}\label{lem:vanish}
Let $G=\SL_2$ and $w_0=\begin{psmatrix}
 & 1\\
-1 & 
\end{psmatrix}$. Let $\partial, \overline{\partial}$ be partial derivations in the first variable of $F^2$. Let $J\subseteq \mathcal{S}(F^2-\{0\})$ be an $\SL_2(F)$-submodule closed under $\partial,\overline{\partial}$ and $$M_{w_0}(1_s)f_{1_s}\bigg|_{s=-1}=0$$ for all $f\in J$. Then for $\chi=\mu^\ell,$
$$M_{w_0}(\chi_s)f_{\chi_s}\bigg|_{s=-1-\frac{2j+|\ell|}{[F:\rr]}}=0$$ for all $f\in J,\ell\in H$ and $j\in \zz_{\ge 0}$.
\end{Lem}

\begin{proof}
Since $J$ is an $\SL_2(F)$-module, it suffices to check the lemma at $\mathrm{Id}=\begin{psmatrix}
    1 & \\
     & 1
\end{psmatrix}.$ Let $f\in \mathcal{S}(F^2-\{0\})$. Using the identity
\begin{align*}
   \begin{psmatrix}
        & 1\\
      -1 & 
    \end{psmatrix}\begin{psmatrix}
       1 & x\\
       & 1
    \end{psmatrix}= \begin{psmatrix}
    -x^{-1}& 1\\
     & -x
    \end{psmatrix}\begin{psmatrix}
    1& \\
    x^{-1} & 1
    \end{psmatrix} \textrm{ for } x\in F^\times,
\end{align*} 
we can write for $\mathrm{Re}(s)>0$
\begin{align*}
    M_{w_0}(\chi_s)f_{\chi_s}(\mathrm{Id})&=\int_{F} f_{\chi_s}\left(\begin{psmatrix}
        & 1\\
      -1 & 
    \end{psmatrix}\begin{psmatrix}
       1 & x\\
       & 1
    \end{psmatrix}\right) dx\\
    &=\chi(-1)\int_{F^\times}\bar{\chi}(x)|x|^{-s-1} f_{\chi_s}\begin{psmatrix}
    1& \\
    x^{-1} & 1
    \end{psmatrix}dx\\
    &=\chi(-1)\int_{F^\times}\chi(x)|x|^{s-1} f_{\chi_s}\begin{psmatrix}
    1& \\
    x & 1
    \end{psmatrix}dx\\
    &=\chi(-1)\int_{F^\times}
    \int_{F^\times}\chi(ax)|x|^{s-1}|a|^{s+1} f(ax,a)d^\times a dx
\end{align*}
Note that the integral is absolutely convergent and thus we can change variables $x\mapsto a^{-1}x$ to obtain
\begin{align*}
     M_{w_0}(\chi_s)f_{\chi_s}(\mathrm{Id})=\chi(-1)\int_{F^\times}
    \int_{F^\times}\chi(x)|x|^{s-1}|a| f(x,a)d^\times adx.
\end{align*}

Consider first $F=\rr$. For $\chi=1$ applying integration by parts twice in the $x$ variable, one has
\begin{align*}
    &\int_{F^\times}
    \int_{F^\times} |x|^{s-1}|a| f(x,a)d^\times adx=\frac{1}{s(s+1)}\int_{F^\times}
    \int_{F^\times} |x|^{s+1}|a| \partial^2f(x,a)d^\times adx.
\end{align*}
 Therefore, for any $f\in J$ 
\begin{align*}
    \frac{1}{(s+1)^2}\int_{F}
    \int_{F^\times} |x|^{s+1}|a| \partial^2f(x,a)d^\times adx
\end{align*}
is holomorphic at $s=-1$.
Similarly, by integration by parts the assertion  $$M_{w_0}(1_s)f_{1_s}(\mathrm{Id})\bigg|_{s=-1-2j} =0 \quad \textrm{ and } \quad M_{w_0}(\mu_s)f_{\mu_s}(\mathrm{Id})\bigg|_{s=-2-2j}=0$$
for all $j\in \zz_{\ge 0}$ is equivalent to
\begin{align*}
    \frac{1}{(s+1)^2}\int_{F}
    \int_{F^\times} |x|^{s+1}|a| \partial^{j+2}f(x,a)d^\times adx.
\end{align*}
is holomorphic at $s=-1$ for all $j\in \zz_{\ge 0}$. Since $J$ is closed under taking $\partial$, the assertion follows.

Now consider $F=\cc$. Applying integration by parts, we have
\begin{align*}
   \int_{F^\times}
    \int_{F^\times}|x|^{s-1}|a| f(x,a)d^\times adx= \frac{1}{s^2(s+1)^2}
    \int_{F^\times}\int_{F^\times}|x|^{s+1}|a|\partial^2\overline{\partial}^2f(x,a)d^\times adx.
\end{align*}
Therefore, for any $f\in J$ 
\begin{align*}
    \frac{1}{(s+1)^3} \int_{F^\times}\int_{F^\times} |x|^{s+1}|a|^{2} \partial^2\overline{\partial}^2 f(x,a)d^\times adx 
\end{align*}
is holomorphic at $s=-1$. 
The assertion of the lemma is equivalent to 
\begin{align*}
    \frac{1}{(s+1)^3}\int_{F^\times}\int_{F^\times} |x|^{s+1}|a|^{2} \partial^{2+j_1}\overline{\partial}^{2+j_2} f(x,a)d^\times adx
\end{align*}
is holomorphic at $s=-1$ for any $j_1,j_2\in \zz_{\ge 0}$, which follows from the assumption that $J$ is closed under taking $\partial,\overline{\partial}$.
\end{proof}

\begin{Prop}
Suppose $G$ is either classical or $G_2$. Then $M'_w(\chi_s)$ is holomorphic for all $w\in W/W_M$ and character $\chi$.
\end{Prop}

\begin{proof}

Observe that combinatoric inequalities  \eqref{eq:comb-} and \eqref{eq:comb+} depend only on $G$ and $P$ but not the field $F$. In particular, these inequalities hold if $G$ is either classical or $G_2$.

We proceed the proof by induction on the length of $w$. Suppose $M'_w(\chi_s)$ is holomorphic for all $\ell(w)\le n$ and character $\chi$. Let $w$ be of length $n+1$ and write $w=s_\alpha w'$ for some $\alpha$ and $w'\in W/W_M$ such that $\ell(w')=n$. As argued in Lemma \ref{lem:holo}, it suffices to treat the case when $m_{w'}(h,\lambda)<m_{w'}(h-1,\lambda).$ Let
\begin{align*}
    \widetilde{J}:=\{ \tilde{f}: \textrm{ there is } f\in \mathcal{S}(X_P^\circ(F)) \textrm{ such that } \tilde{f}_{(\chi_s)^{w'}}=M_{w'}'(\chi_s)(f_{\chi_s}) \textrm{ for all } \chi\}.
\end{align*}
By the induction hypothesis and the Mellin inversion $\widetilde{J}$ is a $G(F)$-submodule of $\mathcal{S}(X_{w'Pw^{\prime -1}}^\circ(F))$. Let 
\begin{align*}
    J:=\left\{ \iota_\alpha^\ast\tilde{f}:  \tilde{f}\in \widetilde{J}\right\}.
\end{align*}
Then $J$ is an $\SL_2(F)$-submodule of $\mathcal{S}(F^2-\{0\}),$ and by the Leibniz integral rule it is closed under taking $\partial,\overline{\partial}$. By the argument in the nonarchimedean case and an analogue of Lemma \ref{lem:holo} for the trivial character, we have $M'_{ w}(\chi_s)$ is holomorphic at $\lambda(s+s_k+1)=h-1$ provided $\chi^\lambda=1$. Thus $J$ satisfies the assumption in Lemma \ref{lem:vanish}, so $M'_{w}(\chi_s)$ is holomorphic for all $\chi$.
\end{proof}

\begin{Rem}
It follows from the argument above that if for $G$ of type $E$ or $F$ Theorem \ref{thm:poles} can be proved analogously in the nonarchimedean case, then it also holds for the archimedean case.
\end{Rem}

\section{Locality and Weyl algebras}\label{sec:Weyl}

Throughout this section $F$ is an archimedean local field unless otherwise specified. We prove the Schwartz space $\mathcal{S}(X_P(F))$ is local and study the ring of differential operators on $X_P(F)$.

\subsection{Locality of Archimedean Schwartz spaces}\label{ssec:localarch}
 Since $G$ is split, we can choose $v_P$ and a basis of weight vectors $\{v_i\}$ of $V_P(\qq)$ such that $v_1=v_{P}$. Let $\{v_i^{\lor}\}$ be the dual basis in $V_P^\lor(\qq)$, so $v_1^\lor=v_{P^{\mathrm{op}}}^\ast.$ Define functions
\begin{align*}
    x_i(x)&:=\langle x, v_i^\lor \rangle, \quad x\in X_P^\circ(F),\\
    x_i^{\mathrm{op}}(x')&:=\langle v_i,x' \rangle, \quad x'\in X_{P^{\mathrm{op}}}^\circ(F).
\end{align*}
Note that $\bar{x}_i(x)=\langle \bar{x},v_i^\lor\rangle$ and $\{x_i,\bar{x}_i\}_{1\le i\le \dim V_P}$ generates the $\cc$-algebra $F[X_P]\otimes_\rr \cc.$ 

We will often view $p\in \cc[\mathrm{Res}_{F/\rr} X_P]=F[X_P]\otimes_\rr \cc$ as a linear operator via function multiplication
\begin{align}\label{eq:multp}
\begin{split}
    C^\infty(X_P^\circ(F))&\to C^\infty(X_P^\circ(F))\\
     f&\mapsto p\cdot f,
\end{split}
\end{align}
which we continue to denote as $p$.

\begin{Lem}\label{lem:mult:general}
    The operator \eqref{eq:multp} descends to a bounded linear endomorphism of $\mathcal{S}_{\Lambda}(X_P^\circ(F))$.
\end{Lem}

\begin{proof}
     Since $\{x_i,\bar{x}_i\}_{1\le i \le \dim V_P}$ generates $F[X_P]\otimes_\rr \cc$ and $V_P^\lor$ is an irreducible $G$-representation, by an analogue of Lemma \ref{lem:cc} it suffices to show for every $f\in \mathcal{S}_{\Lambda}(X_P^\circ(F))$ we have $x_1f\in \mathcal{S}_{\Lambda}(X_P^\circ(F)),$ and the operator $f\to x_1 f$ is continuous. Let $h:=x_1f.$ For a character $\chi,$ note that
\begin{align*}
    h_{\chi_s}(x)= \langle x,v_{P^{\mathrm{op}}}^\ast\rangle f_{\mu\chi_{s+\frac{1}{[F:\rr]}}}(x),\quad x\in X_P^\circ(F).
\end{align*}
By definition functions
\begin{align*}
    \frac{f_{\mu\chi_{s+\frac{1}{[F:\rr]}}}}{d\left(\mu\chi_{s+\frac{1}{[F:\rr]}}\right)}\quad  \textrm{ and }\quad \frac{d\left(\mu\chi_{s+\frac{1}{[F:\rr]}}\right)}{d(\chi_s)}
\end{align*}
are holomorphic, and so is $d(\chi_s)^{-1}h_{\chi_s}$. Furthermore, given real numbers $\sigma_1<\sigma_2$, a polynomial $Q(s)\in \cc[s]$ such that $Q(s)d(\chi_s)$ is holomorphic on $V_{\sigma_1,\sigma_2}$ for all $\chi\in\widehat{K}_{\GG_m},$ and a compact subset $\Omega\subset X_P^\circ(F),$ 
\begin{align*}
    |h|_{\sigma_1,\sigma_2,Q,\mathrm{Id},\Omega}\le \bigg(\sup_{x\in \Omega}|x_1(x)|\bigg )|f|_{\sigma_1+\frac{1}{[F:\rr]},\sigma_2+\frac{1}{[F:\rr]},Q',\mathrm{Id},\Omega},
\end{align*}
where $Q'(s):=Q(s-\frac{1}{[F:\rr]}).$  For general $D\in U(\mathfrak{m}^{\mathrm{ab}}\oplus \mathfrak{g}),$ $|h|_{\sigma_1,\sigma_2,Q,D,\Omega}$ is dominated by a finite sum of the form $\bigg(\sup_{x\in \Omega}|x_i(x)|\bigg)|f|_{A,B,Q',D',\Omega}$. This completes the proof.
\end{proof}

\begin{Prop}\label{prop:archlocal}
The space $\mathcal{S}(X_P(F))$ is stable under the multiplication by $F[X_P]\otimes_\rr \cc$.
\end{Prop}

\begin{proof}
By Proposition \ref{prop:holomorphy}, Lemma \ref{lem:mult:general} and the Iwasawa decomposition, it suffices to show for $f\in \mathcal{S}(X_P(F))$ we have
\begin{align}\label{eq:goalk}
    \frac{\mathcal{R}_{P|P^{\mathrm{op}}} (x_1f)_{\chi_s}^{\mathrm{op}}}{a_{w_0}(\chi_s)}(k)
\end{align}
is an entire function for all $k\in K$ and characters $\chi$. By definition
\begin{align*}
   & \mathcal{R}_{P|P^{\mathrm{op}}}(x_1f)_{\chi_s}^{\mathrm{op}}(k)\\
   &=\int_{M^{\mathrm{ab}}(F)} \delta_{P^{\mathrm{op}}}^{1/2}(m)\chi_{s}(\omega_P(m))\int_{N_P^{\mathrm{op}}(F)} \langle  v_Pm^{-1}uk,v_{P^{\mathrm{op}}}^\ast \rangle f(m^{-1}uk)dudm\\
   &=\int_{M^{\mathrm{ab}}(F)} \delta_{P^{\mathrm{op}}}^{1/2}(m)\mu\chi_{s+\frac{1}{[F:\rr]}}(\omega_P(m))\int_{N_P^{\mathrm{op}}(F)} \langle  v_Puk,v_{P^{\mathrm{op}}}^\ast \rangle f(m^{-1}uk)dudm. 
\end{align*}
In the case $k=\mathrm{Id}$, we have
$\langle  v_Puk,v_{P^{\mathrm{op}}}^\ast \rangle= \langle  v_P,v_{P^{\mathrm{op}}}^\ast \rangle=1$ and thus 
\begin{align*}
    \mathcal{R}_{P|P^{\mathrm{op}}}(x_1f)_{\chi_s}^{\mathrm{op}}(\mathrm{Id})=\mathcal{R}_{P|P^{\mathrm{op}}}(f)_{\mu\chi_{s+\frac{1}{[F:\rr]}}}^{\mathrm{op}}(\mathrm{Id}).
\end{align*}
 Since $f\in \mathcal{S}(X_P(F)),$ by definition 
\begin{align*}
    \frac{ \mathcal{R}_{P|P^{\mathrm{op}}}(f)_{\mu\chi_{s+\frac{1}{[F:\rr]}}}^{\mathrm{op}}(\mathrm{Id})}{a_{w_0}\left(\mu\chi_{s+\frac{1}{[F:\rr]}}\right)}\quad  \textrm{ and }\quad \frac{a_{w_0}\left(\mu\chi_{s+\frac{1}{[F:\rr]}}\right)}{a_{w_0}(\chi_s)}
\end{align*}
are holomorphic, so we have $\eqref{eq:goalk}$ is holomorphic for $k=\mathrm{Id}$.

To treat general $k\in K$, note that there is a nonzero entire function $z(s)$ depending only on $\chi$ such that the function
\begin{align*}
     z(s)\frac{\mathcal{R}_{P|P^{\mathrm{op}}} (x_1 f)_{\chi_s}^{\mathrm{op}}}{a_{w_0}(\chi_s)}(k)
\end{align*}
is entire for all $f$ and $k$. Assume first $f$ is $K$-finite. For each $s\in \cc$, let $V_s$ be the finite-dimensional complex $K$-representation generated by
\begin{align*}
    z(s)\frac{\mathcal{R}_{P|P^{\mathrm{op}}} (x_1 f)_{\chi_s}^{\mathrm{op}}}{a_{w_0}(\chi_s)}.
\end{align*}
Then $V_s$ is an analytic representation of  $\widetilde{K}_\cc$, the universal cover of the complexification of $K$. We denote the action by $k_c.v$ for $k_c\in \widetilde{K}_\cc$ and $v\in V_s$. Therefore, the function defined on $\cc\times \widetilde{K}_\cc$ given by
\begin{align*}
    J(s,k_c):=   \frac{1}{z(s)}k_c.z(s)\frac{\mathcal{R}_{P|P^{\mathrm{op}}} (x_1 f)_{\chi_s}^{\mathrm{op}}}{a_{w_0}(\chi_s)}(\mathrm{Id})
\end{align*}
is meromorphic. If $\Omega\subset \cc$ is the discrete set of zeros of $z(s)$, then all poles of $J$ lie in $\Omega\times \widetilde{K}_\cc$. As \eqref{eq:goalk} is entire for $k=\mathrm{Id},$ $J$ is holomorphic on $\cc\times \{\mathrm{Id}\}$ and thus poles of $J$ must be a closed analytic subvariety of $\cc\times \widetilde{K}_\cc$ of codimension at least $2$. It follows by Hartog's extension theorem \cite[Chapter 2 Theorem 5B]{Whitney} that $J$ is entire. Consequently, \eqref{eq:goalk} is entire.

Finally, consider the continuous map
\begin{align*}
    \mathcal{I}:\mathcal{S}(X_P(F))\times K\longrightarrow M(\cc,\delta)
\end{align*}
given by sending $(f,k)$ to the function \eqref{eq:goalk}, where $\delta:\Omega\to \zz_{>0}$ is given by $\delta(s_0):=\mathrm{ord}_{s=s_0} z(s)^{-1}$. Let $\mathcal{S}(X_P(F))_{\mathrm{fin}}$ be the space of $K$-finite functions in $\mathcal{S}(X_P(F))$. We have shown that the set $\mathcal{S}(X_P(F))_{\mathrm{fin}}\times K$ is contained in the closed subspace $\mathcal{I}^{-1}(H(\cc))$. Since $\mathcal{S}(X_P(F))_{\mathrm{fin}}$ is dense in  $\mathcal{S}(X_P(F))$, it follows that $\mathcal{I}^{-1}(H(\cc))=\mathcal{S}(X_P(F))\times K$. This justifies the assertion.
\end{proof}

In the rest of the section, we assume $G$ is either classical or $G_2$. Recall that by Lemma \ref{lem:germpoint} we have the following exact sequence of Fr\'echet spaces
\begin{align}\label{eq:ESexact}
    0\longrightarrow \mathcal{S}(X_P^\circ(F))\longrightarrow \mathcal{S}_{\mathrm{ES}}(X_P(F))\longrightarrow F[[X_P]]\otimes_\rr \cc\longrightarrow 0.
\end{align}
Here $F[[X_P]]\otimes_\rr \cc:=\widehat{F[X_P]}_{0}\otimes_\rr \cc$ admits a natural expression: Write 
$$F[X_P]\otimes_\rr \cc=\bigoplus_{n=0}^\infty V_{-n\omega_P}^\lor\otimes_\rr F$$
where $V_{-n\omega_P}^\lor$ is the dual of the complex irreducible representation of $G$ of highest weight $-n\omega_P$ equipped with the usual topology of finite-dimensional $\cc$-vector spaces. Then 
$F[[X_P]]\otimes_\rr \cc=\prod_{n=0}^\infty V_{-n\omega_P}^\lor\otimes_\rr F$ is equipped with the product topology.

\begin{Cor}
The Schwartz space $\mathcal{S}(X_P(F))$ is a continuous $\mathcal{S}_{\mathrm{ES}}(X_P(F))$-module. In particular,  $\mathcal{S}_{\mathrm{ES}}(X_P(F))$ is a closed subspace of $\mathcal{S}(X_P(F))$, and $\mathcal{S}(X_P(F))$ is local.
\end{Cor}

\begin{proof}
 Since $\mathcal{S}(X_P^\circ(F))$ is a topological algebra under function multiplication, by \eqref{eq:ESexact} and Theorem \ref{thm:exactarch} it suffices to check the assertion for germs at the origin. By Proposition \ref{prop:archlocal}, $A_{X_P(F)}=\mathcal{S}(X_P(F))/\mathcal{S}(X_P^\circ(F))$ is a continuous $V_{-n\omega_P}^\lor\otimes_\rr F$-module under function multiplication for all $n$. By the definition of the product topology, $A_{X_P(F)}$ is a continuous $F[[X_P]]\otimes_\rr \cc$-module.

By Corollary \ref{cor:trivcont} the trivial representation $\cc$ is a submodule of $A_{X_P(F)},$ and thus $F[[X_P]]\otimes_\rr \cc$ is a closed subspace of $A_{X_P}(F),$ and $\mathcal{S}_{\mathrm{ES}}(X_P(F))$ is a closed subspace of $\mathcal{S}(X_P(F))$. Since $\mathcal{S}(X_P^\circ(F))$ is local and $\mathcal{S}(X_P(F))$ is a continuous $\mathcal{S}_{\mathrm{ES}}(X_P(F))$-module,  $\mathcal{S}(X_P(F))$ is local.
\end{proof}

\subsection{Weyl algebra}\label{ssec:Weyl}

By Proposition \ref{prop:archlocal} we can view $x_i$ as a bounded linear endomorphism on $\mathcal{S}(X_P(F))$ by function multiplication. Furthermore, the operator
\begin{align*}
    \partial_i:= \mathcal{F}_{P^{\mathrm{op}}|P} \circ  x_{i}^{\mathrm{op}} \circ \mathcal{F}_{P|P^{\mathrm{op}}} 
\end{align*}
is a continuous linear endomorphism of $\mathcal{S}(X_P(F))$, which satisfies
\begin{align}\label{eq:partialf}
    \mathcal{F}_{P|P^{\mathrm{op}}} \circ \partial_i=x_{i}^{\mathrm{op}} \circ \mathcal{F}_{P|P^{\mathrm{op}}} \quad \textrm{ and } \quad \partial_i \circ \mathcal{F}_{P^{\mathrm{op}}|P}= \mathcal{F}_{P^{\mathrm{op}}|P} \circ  x_{i}^{\mathrm{op}}.
\end{align}
We define $\overline{\partial}_i:= \mathcal{F}_{P^{\mathrm{op}}|P} \circ  \bar{x}_{i}^{\mathrm{op}} \circ \mathcal{F}_{P|P^{\mathrm{op}}}$ and operators $\partial_i^{\mathrm{op}}$ and $\overline{\partial}_i^{\mathrm{op}}$ on $\mathcal{S}(X_{P^{\mathrm{op}}}(F))$ analogously. 

\begin{Def}
Let $F$ be an archimedean local field. Define \textbf{the Weyl algebra of $X_P(F)$} to be the $\cc$-subalgebra $W_{X_P(F)}$ of $\mathrm{End}_{\cc}(\mathcal{S}(X_P(F)))$ generated by $x_i,\partial_i,\overline{x}_i,\overline{\partial}_i$.
\end{Def}

We note that there is a canonical linear isomorphism $W_{X_P(F)}\to W_{X_{P^{\mathrm{op}}}(F)}$ induced by the Fourier transform. Furthemore, since $F[X_P]\otimes_\rr \cc$ is a $G(F)$-module, $W_{X_P(F)}$ admits a natural $G(F)$-module structure and thus we have a group $G(F)\ltimes W_{X_P(F)}$.

Recall that for a character $\chi,$ $\mu_{\Lambda}(\chi)=\prod_{i=1}^k \gamma(-s_i,\chi^{\lambda_i},\psi)$. 
\begin{Lem}\label{lem:shift}
Let $f\in \mathcal{S}(X_P(F))$. Then for any character $\chi$
\begin{align*}
    (\partial_{1}f)_{\chi_s}(\mathrm{Id})=\frac{\mu_{\Lambda}(\chi_s^{-1})}{\mu_{\Lambda}\left(\mu\chi^{-1}_{s-\frac{1}{[F:\rr]}}\right)}f_{\mu^{-1}\chi_{s-\frac{1}{[F:\rr]}}}(\mathrm{Id}).
\end{align*}
\end{Lem}

\begin{proof}
We have by Theorem \ref{Thm: Fourier formula}
 \begin{align*}
   (\partial_{1}f)_{\chi_s}(\mathrm{Id})&=
   (\mathcal{F}_{P^{\mathrm{op}}|P}\circ  x_1^{\mathrm{op}}\circ \mathcal{F}_{P|P^{\mathrm{op}}}(f))_{\chi_s}(\mathrm{Id})\\
    &=\mu_{\Lambda}(\chi_s^{-1})(\mathcal{R}_{P^{\mathrm{op}}|P} \circ x_1^{\mathrm{op}} \circ \mathcal{F}_{P|P^{\mathrm{op}}}(f))_{\chi_s}(\mathrm{Id})\\
    &=\mu_{\Lambda}(\chi_s^{-1})\left(\mathcal{R}_{P^{\mathrm{op}}|P} (\mathcal{F}_{P|P^{\mathrm{op}}}(f))\right)_{\mu^{-1}\chi_{s-\frac{1}{[F:\rr]}}}(\mathrm{Id})\\
    &=\frac{\mu_{\Lambda}(\chi_s^{-1})}{\mu_{\Lambda}\left(\mu\chi_{s-\frac{1}{[F:\rr]}}^{-1}\right)}f_{\mu^{-1}\chi_{s-\frac{1}{[F:\rr]}}}(\mathrm{Id}).
\end{align*}
\end{proof}

\begin{Lem}\label{lem:subweyl}
The module $\mathcal{S}(X_P^\circ(F))$ is a $G(F)\ltimes W_{X_P(F)}$-submodule of $\mathcal{S}(X_P(F))$.
\end{Lem}

\begin{proof}
It suffices to show $\partial_1\big(\mathcal{S}(X_P^\circ(F))\big)\subseteq \mathcal{S}(X_P^\circ(F)),$ i.e., $(\partial_1 f)_{\chi_s}$ is holomorphic for $f\in \mathcal{S}(X_P^\circ(F))$. The proof is similar to that of Proposition \ref{prop:archlocal}, in which case $(\partial_1 f)_{\chi_s}(\mathrm{Id})$ is easily checked to be entire using Lemma \ref{lem:shift}.
\end{proof}

Lemma \ref{lem:subweyl} implies we have an induced $W_{X_P(F)}$-action on $A_{X_P(F)}=\mathcal{S}(X_P(F))/\mathcal{S}(X_P^\circ(F))$.

\begin{Lem}\label{lem:end}
The module $F[X_P]\otimes_\rr \cc$ is stable under the natural $W_{X_P(F)}$-action.
\end{Lem}

\begin{proof}
Each irreducible complex subrepresentation $V$ of $F[X_P]\otimes_\rr \cc$ is contained in the induced representation $\mathrm{Ind}_P^G{((\mu^\ell)_{-s-s_k-1}))}$ for some unique pair $(\ell,s)\in H\times \frac{1}{[F:\rr]}\zz_{\ge 0}$. Moreover by Lemma \ref{lem:shift}
\begin{align*}
    \sum_{i=1}^{\dim V_P}\partial_iV\subset \mathrm{Ind}_P^G{((\mu^{\ell+1})_{-s+\frac{1}{[F:\rr]}-s_k-1})}
\end{align*}
is a finite-dimensional $G(F)$-representation. If it is zero, then we are done. Otherwise, by \cite[Theorem 1]{finitemultfree} $\sum_{i=1}^{\dim V_P}\partial_iV$ is the unique irreducible subrepresentation of both $\mathrm{Ind}_P^G{((\mu^{\ell+1})_{-s+\frac{1}{[F:\rr]}-s_k-1})}$ and $F[X_P]\otimes_\rr \cc$. The case for $\overline{\partial}_i$ can be deduced by taking the complex conjugate.
\end{proof}

Let $j\in \zz_{>0}$. Choose a function $f\in \mathcal{S}_{\mathrm{ES}}(X_P(F))$ such that for $|x|\le 1, f(x)=x_1^j(x)$. Since $x_1^j$ is the lowest weight vector in $V_{-j\omega_P}^\lor$, the germ of $\partial_1f\in \mathcal{S}_{\mathrm{ES}}(X_P(F))$ at the origin is $p_jx_1^{j-1}$ for some $p_j\in \cc$. Let us compute $p_j$. Note that $f_{(\mu^{-\ell})_s}=0$ if $\ell \neq j$ in $H$, and for $\ell=j$ in $H$ we have
\begin{align*}
    f_{(\mu^{-j})_s}(\mathrm{Id})=\int_{|a|\le 1} |a|^{s+s_k+1+\frac{j}{[F:\rr]}}d^\times a +h(s)=C\frac{1}{s+s_k+1+\frac{j}{[F:\rr]}}+h(s)
\end{align*}
for some nonzero absolute constant $C$ and entire function $h$. Thus by Lemma \ref{lem:shift} $x_1^{j}$ is sent to $p_jx_1^{j-1}$ where
\begin{align*}
    p_j&=\frac{\mu_{\Lambda}(\chi_s^{-1})}{\mu_{\Lambda}\left(\mu\chi^{-1}_{s-\frac{1}{[F:\rr]}}\right)}\Bigg|_{\chi_s=(\mu^{-(j-1)})_{-s_k-1-\frac{j-1}{[F:\rr]}}} =\prod_{i=1}^k \frac{\gamma\left(-s_i,(\mu^{j-1})^{\lambda_i}_{s-\frac{1}{[F:\rr]}},\psi\right)}{\gamma\left(-s_i,(\mu^{j})^{\lambda_i}_{s},\psi\right)}\Bigg|_{s=s_k+1+\frac{j}{[F:\rr]}}.
\end{align*}
Assume $\psi(x)=e^{2\pi i \mathrm{tr}_{F/\rr}(x)}.$ Then
\begin{align*}
    \gamma(s,\mu^\ell,\psi)^{-1}=\begin{cases}
        2^{1-s}\pi^{-s}\cos(\frac{\pi s}{2})\Gamma(s) & \textrm{ if } F=\rr, \ell=0,\\
        -i2^{1-s}\pi^{-s}\sin(\frac{\pi s}{2})\Gamma(s) & \textrm{ if } F=\rr, \ell=1,\\
        (-i)^{|\ell|}(2\pi)^{1-2s}\frac{\Gamma(s+|\ell|/2)}{\Gamma(1-s+|\ell|/2)}& \textrm{ if } F=\cc, \ell\in \zz.
    \end{cases}
\end{align*}

Assume $F=\rr$. For $i$ such that $\lambda_i$ is even, each term in the product above is
\begin{align*}
&\frac{2^{1-\lambda_is+s_i}\pi^{-\lambda_is+s_i} \cos\left(\frac{\pi(\lambda_is-s_i)}{2}\right)\Gamma(\lambda_is-s_i)}{2^{1-\lambda_i(s-1)+s_i}\pi^{-\lambda_i(s-1)+s_i} \cos\left(\frac{\pi(\lambda_is-\lambda_i-s_i)}{2}\right)\Gamma(\lambda_i (s-1)-s_i)}\Bigg|_{s=s_k+1+j}\\
&=(2\pi)^{-\lambda_i}(-1)^{\lambda_i/2}\prod_{\ell=0}^{\lambda_i-1} (\lambda_i(s_k+j)-s_i+\ell)\\
&=\left(\frac{\lambda_i}{2\pi\sqrt{-1}}\right)^{\lambda_i}\prod_{\ell=0}^{\lambda_i-1} \left(s_k+j-\frac{s_i}{\lambda_i}+\frac{\ell}{\lambda_i}\right).
\end{align*}
For odd $\lambda_i$ and odd $j$, each term is
\begin{align*}
    &\frac{-\sqrt{-1}2^{1-\lambda_is+s_i}\pi^{-\lambda_i s+s_i} \sin\left(\frac{\pi(\lambda_i s-s_i)}{2}\right)\Gamma(\lambda_i s-s_i)}{2^{1-\lambda_i(s-1)+s_i}\pi^{-\lambda_i(s-1)+s_i} \cos\left(\frac{\pi(\lambda_is-\lambda_i-s_i)}{2}\right)\Gamma(\lambda_i (s-1)-s_i)}\Bigg|_{s=s_k+1+j}\\
    &=\left(\frac{\lambda_i}{2\pi\sqrt{-1}}\right)^{\lambda_i}\prod_{\ell=0}^{\lambda_i-1} \left(s_k+j-\frac{s_i}{\lambda_i}+\frac{\ell}{\lambda_i}\right).
\end{align*}
For odd $\lambda_i$ and even $j$, each term is
\begin{align*}
    &\frac{2^{1-\lambda_is+s_i}\pi^{-\lambda_i s+s_i} \cos\left(\frac{\pi(\lambda_i s-s_i)}{2}\right)\Gamma(\lambda_i s-s_i)}{-\sqrt{-1}2^{1-\lambda_i(s-1)+s_i}\pi^{-\lambda_i(s-1)+s_i} \sin\left(\frac{\pi(\lambda_is-\lambda_i-s_i)}{2}\right)\Gamma(\lambda_i (s-1)-s_i)}\Bigg|_{s=s_k+1+j}\\
    &=\left(\frac{\lambda_i}{2\pi\sqrt{-1}}\right)^{\lambda_i}\prod_{\ell=0}^{\lambda_i-1} \left(s_k+j-\frac{s_i}{\lambda_i}+\frac{\ell}{\lambda_i}\right).
\end{align*}
Now consider $F=\cc$. Then each term is
\begin{align*}
    &\frac{(-i)^{\lambda_i j}(2\pi)^{1-2(\lambda_is-s_i)}\frac{\Gamma(\lambda_is-s_i+\lambda_ij/2)}{\Gamma(1-\lambda_is+s_i+\lambda_ij/2)}}{(-i)^{\lambda_i(j-1)}(2\pi)^{1-2(\lambda_is-\lambda_i/2-s_i)}\frac{\Gamma(\lambda_is-\lambda_i/2-s_i+\lambda_i(j-1)/2)}{\Gamma(1-\lambda_is+\lambda_i/2+s_i+\lambda_i(j-1)/2)}}\Bigg|_{s=s_k+1+\frac{j}{2}}\\
    &=\left(\frac{\lambda_i}{2\pi\sqrt{-1}}\right)^{\lambda_i}\prod_{\ell=0}^{\lambda_i-1} \left(s_k+j-\frac{s_i}{\lambda_i}+\frac{\ell}{\lambda_i}\right).
\end{align*}

Therefore, in any case $p_j= c_\psi J(j)$ where $J$ is the polynomial of degree $$d_P:=\sum_{i=1}^k \lambda_i$$ given by
\begin{align*}
    J(s):=\prod_{i}^k  \lambda_i^{\lambda_i} \prod_{\ell=0}^{\lambda_i-1} \left(s+s_k-\frac{s_i}{\lambda_i}+\frac{\ell}{\lambda_i}\right),\quad c_\psi:=(2\pi\sqrt{-1})^{-d_P}.
\end{align*}
Note that the polynomial $J\in s\zz[s]$ has all but one roots in $\qq_{<0}$ by Proposition \ref{prop:lambda=1}.


For a vector $\underline{j}=(j_i)\in \zz_{\ge 0}^{\dim V_P},$ let $|\underline{j}|$ be the sum of its entries. We use the standard notation $\mathbf{x}^{\underline{j}}:=\prod_{i=1}^{\dim V_P} x_i^{j_i}.$

\begin{Thm}\label{thm:Weylrealize}
Each element in $W_{X_P(F)}$ has a representative in the standard Weyl algebra $W_{V_P}(F\otimes_\rr \cc)$. Explicitly, one can write
\begin{align}\label{eq:formula}
    c_\psi^{-1}\partial_1=c_1\frac{\partial}{\partial x_1}+\sum_{\ell=2}^{d_P} \bigg( c_\ell x_1^{\ell-1}\frac{\partial^\ell}{\partial x_1^\ell}+\sum_{\substack{\underline{j}\neq (\ell,0,\ldots,0)\\|\underline{j}|=\ell} } p_{\underline{j}} \frac{\partial^\ell}{\partial \mathbf{x}^{\underline{j}}}\bigg)
\end{align}
where $p_{\underline{j}}\in \qq[X_P]$, if nonzero, is a homogeneous polynomial of degree $|\underline{j}|-1$, and 
\begin{align*}
    c_\ell=\frac{1}{\ell!}\sum_{j=0}^{\ell} (-1)^{\ell-j}\binom{\ell}{j}J(j)\in \qq_{>0}.
\end{align*}
\end{Thm}

\begin{proof}
 We may assume $F=\rr$. By Lemma \ref{lem:end} $c_\psi^{-1}\partial_1$ restricts to an endomorphism of the graded $\cc$-module $\cc[X_P]$ of degree $-1$, which can be lifted to an endomorphism $\widetilde{\partial}$ of the graded $\cc$-module $\cc[V_P]$ of degree $-1$. By induction on the total degree, one can find homogeneous polynomials $p_{\underline{j}}$ of degree $|\underline{j}|-1$ in $\cc[V_P]$ successively such that
 \begin{align*}
     \widetilde{\partial}(\mathbf{x}^{\underline{n}})= p_{\underline{n}}\frac{\partial^{|\underline{n}|} \mathbf{x}^{\underline{n}}}{\partial \mathbf{x}^{\underline{n}}}+\sum_{\ell=1}^{|n|-1} \sum_{|\underline{j}|=\ell}p_{\underline{j}} \frac{\partial^\ell \mathbf{x}^{\underline{n}}}{\partial \mathbf{x}^{\underline{j}}}
 \end{align*}
 for all $\mathbf{x}^{\underline{n}}.$ Therefore, we can formally write
\begin{align}\label{eq:formalexp}
    \widetilde{\partial}=\sum_{\ell=1}^\infty \sum_{\substack{|\underline{j}|=\ell}} p_{\underline{j}} \frac{\partial^\ell}{\partial \mathbf{x}^{\underline{j}}}.
\end{align}
Now restricting further to $\cc[X_P]$ we have formally
\begin{align}\label{eq:formalexpX}
    c_\psi^{-1}\partial_1\bigg|_{\cc[X_P]}=\sum_{\ell=1}^\infty \sum_{\substack{|\underline{j}|=\ell}} p_{\underline{j}} \frac{\partial^\ell}{\partial \mathbf{x}^{\underline{j}}},
\end{align}
and the image of each $p_{\underline{j}}$ in $\cc[X_P]$ is unique. 

Assume for the moment that the sum over $\ell$ in the expression \eqref{eq:formalexp} is finite. By Nachbin's theorem \cite[Theorem 1.2.1]{L:approxdifffunctions} $\cc[X_P]$ is dense in $C^\infty(X_P^\circ(\rr))$ (with the topology of compact convergence for all derivatives). Now as the identity \eqref{eq:formalexpX} holds for germs at the origin and $G(\rr)$ acts transitively on $X_P^\circ(\rr),$ \eqref{eq:formalexpX} can be extended to $C^\infty(X_P^\circ(\rr))$. In particular, it holds on $\mathcal{S}(X_P(\rr)).$ 

Therefore, to prove the assertion, it remains to show that \eqref{eq:formalexp} is a finite sum with claimed coefficients as stated in the theorem. Say $x_i$ has weight $r_i$, so $r_1$ has the lowest weight. Note that $\cc[V_P]=\bigoplus_{\ell=0}^\infty \mathrm{Sym}^\ell V_{P}^\lor(\cc)$, and while $\mathrm{Sym}^\ell V_{P}^\lor(\cc)$ is in general not irreducible, $\cc^\times x_1^\ell$ are lowest weight vectors in $\mathrm{Sym}^\ell V_{P}^\lor(\cc)$ with weight $\ell r_1$. Let $\ell\ge 1$. Since $\partial_1(x_1^\ell)$ has weight $(\ell-1)r_1$, it must be $\partial_1(x_1^\ell)\in \cc x_1^{\ell-1}$. Therefore, by induction $p_{(\ell,0,0,\ldots,0)}=c_\ell x_1^{\ell-1}$ for some $c_\ell\in \cc$. Similarly, the coefficients of $\frac{\partial^\ell}{\partial x_1^\ell}$ is zero in the formal differential operator expression of  $\partial_i$ for every $i\neq 1$.

Let us compute $c_\ell$ and show that $c_\ell$ is zero if $\ell> d_P$. We have already shown that
\begin{align*}
    J(\ell)x_1^{\ell-1}=c_\psi^{-1}\partial_{1}(x^\ell_1)=\sum_{j=1}^{\ell} c_j\frac{\ell !}{(\ell-j)!}x_1^{\ell-1}.
\end{align*}
Therefore, for any $\ell\ge 1$
\begin{align*}
    J(\ell)=\sum_{j=1}^\ell j!c_j\binom{\ell}{j}.
\end{align*}
Since $J(s)\in s\zz[s]$ is of degree $d_P$, it follows that we have 
\begin{align*}
    J(s)=\sum_{j=1}^{d_P} j!c_j\binom{s}{j}.
\end{align*}
Consequently, $c_\ell=0$ for $\ell>d_P,$ and for $\ell\le d_P$ using binomial coefficients
\begin{align*}
   \ell!c_{\ell}=\sum_{j=1}^{\ell} (-1)^{\ell-j}\binom{\ell}{j}J(j).
\end{align*}
This is the $\ell$th forward difference of $J$ with step $1$. Since $J$ has all but one roots in $\qq_{<0},$ $c_\ell$ are positive by the mean value theorem.

We now show that $p_{\underline{j}}=0$ for all $|\underline{j}|>d_P$. For $\ell\ge 1$, consider the map
\begin{align}
\begin{split}\label{eq:themap}
    \mathrm{Span}\left\{ \partial_i\right\}\times V_{-\ell \omega_P}^\lor(\cc) &\to  V_{-(\ell-1) \omega_P}^\lor(\cc)\\
    (D,f)&\mapsto D(f).
\end{split}
\end{align}
It can be identified as a unique morphism $\varphi$ in
\begin{align}\label{eq:morphismid}
\begin{split}
    &\mathrm{Hom}_{G(\rr)}(V_P(\cc)\otimes V^\lor_{-\ell\omega_P}(\cc),V^\lor_{-(\ell-1)\omega_P}(\cc))\\
    &=\mathrm{Hom}_{G(\rr)}(V_P(\cc)\otimes V^\lor_{-\ell\omega_P}(\cc),\mathrm{Sym}^{\ell-1} V_P^\lor(\cc)/ I_{\ell-1}),
\end{split}
\end{align}
where $I_{\ell-1}< \mathrm{Sym}^{\ell-1} V_P^\lor(\cc)$ is the $G(\rr)$-module spanned by homogeneous polynomials of degree $\ell$ vanishing on $X_P$. On the other hand, for each $\ell$ the expression $\sum_{|\underline{j}|=\ell} p_{\underline{j}}\frac{\partial^\ell}{\partial \mathbf{x}^{\underline{j}}}$ defines a unique morphism in
\begin{align}\label{eq:morphismcoeff}
    \mathrm{Hom}_{G(\rr)}(V_P(\cc), V^\lor_{-(\ell-1)\omega_P}(\cc)\otimes \mathrm{Sym}^\ell V_P(\cc))
\end{align}
that maps $\partial_1$ to $\sum_{|\underline{j}|=\ell} p_{\underline{j}}\otimes\frac{\partial^\ell}{\partial \mathbf{x}^{\underline{j}}}.$ This morphism is injective if $c_\ell\neq 0$. Consequently, the expression $\sum_{|\underline{j}|<\ell} p_{\underline{j}}\frac{\partial^{|\underline{j}|}}{\partial \mathbf{x}^{\underline{j}}}$
also defines a unique morphism $\varphi'$ in \eqref{eq:morphismid} by considering the usual action of differential operators on functions as in \eqref{eq:themap}. If $c_\ell=0,$ then $(\varphi-\varphi')(V_P(\cc)\otimes x_1^\ell)=0$. Since $V^\lor_{-\ell \omega_P}$ is irreducible, we have $\varphi=\varphi'.$ This implies $p_{\underline{j}}=0$ in $\cc[X_P]$ for $|\underline{j}|=\ell>d_P$. 

Finally, since $c_\ell\in \qq$ and all representations can be made defined over $\qq,$ by our choices of $x_i$ we conclude that $p_{\underline{j}}\in \qq[X_P]$ by \eqref{eq:morphismcoeff}. Note that  \eqref{eq:morphismcoeff} also implies $p_{\underline{j}}=0$ if $|\underline{j}|=1$ and $\underline{j}\neq (1,0,\ldots,0).$
\end{proof}

Theorem \ref{thm:Weylrealize} implies  the Weyl algebra $W_{X_P(F)}$ can be defined over $\qq$. We denote it by $W_{X_P}$. Thus $W_{X_P}(F\otimes_\rr \cc)=W_{X_P(F)}$. The following is a direct consequence of Theorem \ref{thm:Weylrealize}.

\begin{Cor}
Let $F$ be a field of characteristic zero. Then $F[X_P]$ is a faithful simple $W_{X_P}(F)$-module. In particular, the Weyl algebra $W_{X_P}(F)$ is a primitive ring.\qed
\end{Cor}

We remark that our proof of Theorem \ref{thm:Weylrealize} is constructive, so one can explicitly compute $\partial_i$ case by case. We end this subsection with an illustration.

\begin{Ex}\label{ex:cone}
Consider the cone $X_n\subset V_n= \mathbb{A}^{2n}$ cut out by 
\begin{align*}
    x_1x_{2n}+\ldots+x_{n}x_{n+1}=0.
\end{align*}
In this case, a highest weight vector is $e_{2n}$ and $J(s)=s(s+n-2).$ Therefore, by Theorem \ref{thm:Weylrealize}
\begin{align*}
    \frac{1}{-4\pi^2}\partial_1=(n-1)\frac{\partial }{\partial x_1}+\sum_{|\underline{j}|=2} p_{\underline{j}} \frac{\partial^2}{\partial \mathbf{x}^{\underline{j}}}
\end{align*}
It follows from \eqref{eq:morphismcoeff}, coefficients $p_{\underline{j}}$ correspond to a $G$-equivariant map
\begin{align*}
    V_n\otimes V_{n}&\to \mathrm{Sym}^2 (V_n).
\end{align*}
Since $\mathrm{Sym}^2(V_n)$ decomposes into the trivial representation and the representation of highest weight $-2\omega_{P_n}$, we have 
\begin{align*}
    \sum_{|\underline{j}|=2} p_{\underline{j}} \frac{\partial^2}{\partial \mathbf{x}^{\underline{j}}}=\sum_{j=1}^{2n} x_j\frac{\partial^2}{\partial x_1\partial x_j}+Cx_{2n}\sum_{j=1}^n \frac{\partial^2}{\partial x_j\partial x_{2n+1-j}}.
\end{align*}
for some $C\in \qq$. Now using the condition $\partial_1(x_1x_{2n}+\ldots+ x_{n}x_{n+1})=0$, one can conclude $C=-1$.

Let us compare this with the operators defined in \cite{Kobayashi:Mano}. The defining equation of their space is
\begin{align*}
    y_1^2-y_{2n}^2+\ldots +y_{n}^2-y_{n+1}^2.
\end{align*}
We perform the change of variables $x_i=y_{i}+y_{2n+1-i},x_{2n-i}=y_{i}-y_{2n+1-i}, 1\le i\le n.$ Then
\begin{align*}
    \frac{\partial}{\partial x_i}&=\frac{1}{2}\frac{\partial}{\partial y_i}+\frac{1}{2}\frac{\partial}{\partial y_{2n+1-i}},& \frac{\partial}{\partial x_{2n+1-i}}&=\frac{1}{2}\frac{\partial}{\partial y_i}-\frac{1}{2}\frac{\partial}{\partial y_{2n+1-i}}.
\end{align*}
Therefore,
\begin{align*}
    \sum_{j=1}^{2n} x_j\frac{\partial^2}{\partial x_1\partial x_j}&=\frac{1}{4}\sum_{j=1}^{n} (y_j+y_{2n+1-j})\left(\frac{\partial^2}{\partial y_1\partial y_j}+\frac{\partial^2}{\partial y_1\partial y_{2n+1-j}}+\frac{\partial^2}{\partial y_{2n}\partial y_j}+\frac{\partial^2}{\partial y_{2n}\partial y_{2n+1-j}}\right)\\
    &+\frac{1}{4}\sum_{j=1}^{n} (y_j-y_{2n+1-j})\left(\frac{\partial^2}{\partial y_1\partial y_j}-\frac{\partial^2}{\partial y_1\partial y_{2n+1-j}}+\frac{\partial^2}{\partial y_{2n}\partial y_j}-\frac{\partial^2}{\partial y_{2n}\partial y_{2n+1-j}}\right)\\
    &=\frac{1}{2}\sum_{j=1}^{2n} y_j\left(\frac{\partial^2}{\partial y_1\partial y_j}+\frac{\partial^2}{\partial y_{2n}\partial y_j}\right).
\end{align*}
Thus $\frac{1}{-4\pi^2}\partial_1$ in this coordinate is
\begin{align*}
    \frac{n-1}{2}\left(\frac{\partial}{\partial y_1}+\frac{\partial}{\partial y_{2n}}\right)+\frac{1}{2}\sum_{j=1}^{2n} y_j\left(\frac{\partial^2}{\partial y_1\partial y_j}+\frac{\partial^2}{\partial y_{2n}\partial y_j}\right)+\frac{-1}{4}(y_1-y_{2n})\left(\sum_{j=1}^n \frac{\partial^2}{\partial y_j^2}-\frac{\partial^2}{\partial y_{n+j}^2}\right).
\end{align*}
In the new coordinate, using the notation in \cite{Kobayashi:Mano}, we have
\begin{align*}
    P_1+P_{2n}=-4c_\psi^{-1}\partial_1.
\end{align*}
Therefore, as a consequence of Theorem \ref{thm:uniqueF} below or \cite[Theorem 2.5.4]{Kobayashi:Mano}, $\mathcal{S}(X_n(\rr))$ is the unitarizable minimal representation of $\mathrm{SO}(n+1,n+1)$ considered in loc. cit, which by \cite[Theorem 3.4]{minrep:real} is a subrepresentation of $I_{P_{n+1}}^{\mathrm{SO}(n+1,n+1)}(1_{-1})$.

We claim this also implies $\mathcal{S}(X_{n}(\cc))$ is the unitarizable minimal representation of $\mathrm{SO}_{2n+2}(\cc)$ contained in $I_{P_{n+1}}^{\mathrm{SO}_{2n+2}}(1_{-1})$. As explained in \cite{minrep:real}, $\mathrm{SO}(n+1,n+1)$ can be realized as (up to a finite cover) the conformal group of the real Jordan algebra $\rr^{n,n}.$ Then the computation above implies the Bessel operators defined in loc. cit. agrees with $\partial_i$ up to a change of coordinates. On the other hand, $\mathrm{SO}_{2n+2}(\cc)$ is (up to a finite cover) the conformal group of the complex Jordan algebra $\cc^{2n},$ which is the complexification of $\rr^{n,n}$. The Bessel operators defined in loc. cit. for $\cc^{2n}$ are just complexifications of those of $\rr^{n,n}$. Since $W_{X_n(\cc)}=W_{X_n(\rr)}\oplus \overline{W}_{X_{n}(\rr)},$ we conclude that those Bessel operators are $\partial_i,\overline{\partial}_i.$ Now the claim follows from Theorem \ref{thm:uniqueF} or \cite[Proposition 3.17]{minrep:real}. 
\qed
\end{Ex}

\subsection{Applications of Weyl algebras on Schwartz spaces}\label{ssec:application:Weyl}

It is shown in \cite{Hsu:Weyl} that $W_{X_P}$ share several key properties with the usual rings of differential operators on smooth affine varieties. In this subsection, we use properties of the Weyl algebra $W_{X_P}$ established in loc. cit. to improve our understanding of $\mathcal{S}(X_P(F))$. 

\begin{Thm}\label{thm:Weylprop} \cite{Hsu:Weyl}
    The Weyl algebra $W_{X_P}$ (over any field $F$ of characteristic zero) enjoys the following properties.
    \begin{enumerate}
        \item It is a simple Noetherian domain with no nontrivial finite-dimensional representation.
        \item For any $m\in \zz_{\ge 0},$ the left ideal in $W_{X_P}(F)$ generated by 
        \begin{align*}
            \{ b(\mathbf{x}), b^{\mathrm{op}}(\mathbf{\partial}): b\in F[X_P], b^{\mathrm{op}}\in F[X_{P^{\mathrm{op}}}] \textrm{ homogeneous of degree } m\}
        \end{align*}
        is $W_{X_P}(F)$.
        \item It contains $\mathfrak{m}^{\mathrm{ab}}\oplus \mathfrak{g}.$
         \item It equals the ring of differential operators on $X_P$ (in the sense of Grothendieck).
    \end{enumerate}\qed
\end{Thm}

In below let $F$ be an archimedean local field. We introduce the following notion.
\begin{Def}
 A \textbf{graded Fr\'echet} $G(F)\ltimes W_{X_P(F)}$-\textbf{module} is a $\cc$-vector space $U=\prod_{n\in \zz_{\ge0}} U_n$ such that each $U_n$ is a Fr\'echet $G(F)$-module, and for all $i,n$, $x_iU_n\subseteq U_{n+1}$ and $\partial_iU_{n}\subseteq U_{n-1}$. If $F$ is complex, then additionally $\overline{x}_iU_n\subseteq U_{n+1}$ and $\overline{\partial}_iU_{n}\subseteq U_{n-1}$.
\end{Def} 

Recall that a smooth representation $V$ of $G(F)$ is quasisimple if the center $\mathfrak{Z}(\mathfrak{g})$ of the universal enveloping algebra of $\mathfrak{g}$ acts on $V$ via scalars.

\begin{Prop}\label{prop:finlength}
      Let $U=\prod_{n\in \zz_{\ge0}} U_n$ be a graded Fr\'echet $G(F)\ltimes W_{X_P(F)}$-module satisfying the following assumptions:
     \begin{enumerate}
       \item Each $U_n$ is a smooth (Fr\'echet) $G(F)$-representation of finite length. Moreover, if $F=\rr$
       \begin{align*}
          \sup_{n\ge 0} \mathrm{length}(U_n)<\infty,
       \end{align*}
       and if $F=\cc$
       \begin{align*}
          \sup_{n\ge 0} {\frac{\mathrm{length}(U_n)}{n+1}}<\infty.
       \end{align*}
       \item For $n\neq n',$ $U_n$ and $U_{n'}$ do not share a common composition factor. When $F=\cc$, each $U_n$ is a (finite) direct sum of quasisimple representations.
       \end{enumerate}
    Then $U$ is a graded Fr\'echet $G(F)\ltimes  W_{X_P(F)}$-module of length at most
    \begin{align*}
        C(U):=\begin{cases}
            {\displaystyle\liminf_{n\to\infty }} \,\mathrm{length}(U_n) & \textrm{ if } F=\rr,\\
            \bigg\lfloor{\displaystyle\liminf_{n\to\infty}} \, {\frac{\mathrm{length}(U_n)}{n+1}} \bigg\rfloor& \textrm{ if } F=\cc.
        \end{cases}
    \end{align*}
\end{Prop} 

\begin{proof}
    We work in the category of smooth Fr\'echet $G(F)$-modules of finite length. Assumptions imply every (nonzero) graded Fr\'echet $G(F)\ltimes W_{X_P(F)}$-submodule $N$ of $U$ is of the form $\prod_{n\ge 0} N_n,$ where each $N_n$ is a $G(F)$-submodule of $U_n$. For a graded Fr\'echet submodule $N,$ it is clear that both $N$ and $U/N$ satisfy both assumptions (1)(2). We claim
    \begin{itemize}
        \item [$(\ast)$]    For any $n\ge 0$ and each nonzero $u\in U_{n}$, there is at least one $i$ such that $x_iu\neq 0$. If $F=\cc$, there is also some $j$ such that $\overline{x}_ju\neq 0$.
    \end{itemize}
    It suffices to prove for $F=\rr$. Suppose on the contrary that there is a nonzero $u$ such that $x_iu=0$ for all $i$. Since $u\in U_{n},$ $b^{\mathrm{op}}(\partial)u=0$ for all homogeneous polynomials $b^{\mathrm{op}}$ of degree $n+1$ in $\cc[X_{P^{\mathrm{op}}}]$. Consequently, the left ideal
    \begin{align*}
        I=\{ D\in W_{X_P}(\cc): Du=0\}
    \end{align*}
    contains $b(\mathbf{x})$ and $b^{\mathrm{op}}(\mathbf{\partial})$ for all homogeneous polynomials $b\in \cc[X_P]$ and $b^{\mathrm{op}}\in \cc[X_{P^{\mathrm{op}}}]$ of degree $n+1$. It follows by Theorem \ref{thm:Weylprop}(2) that $1\in I$, which implies $u=0,$ a contradiction.

    As a consequence of $(\ast),$ if $N\neq 0$ then $C(N)\ge 1$. This is clear when $F=\rr,$ and  when $F=\cc$ it follows from assumption (2) as each irreducible $G(\cc)$-subrepresentation of $\cc[\mathrm{Res}_{\cc/\rr}X_P]$ has distinct infinitesimal character.

    Clearly, $C(U/N)+C(N)\le C(U)$ for any graded Fr\'echet submodule $N$. Furthermore, if  $C(N)=C(U)$, then $N=U$. Indeed, if $U/N\neq 0$ then one has $1\le C(U/N)\le C(U)-C(N)=0$, a contradiction. Therefore by induction on $C(U)$, it suffices to prove the assertion when $C(U)=1$, which is clear.
\end{proof}

We can now prove Theorem \ref{thm:main:arch}.

\begin{Cor}\label{cor:arch:finite}
The graded Fr\'echet $G(F)\ltimes  W_{X_P(F)}$-module $A_{X_P(F)}$ is of finite length.
\end{Cor}

\begin{proof}
   One observes that for all $i,r,n,\ell$, 
   \begin{align*}
       x_iA_{r,n,\ell}\subseteq A_{r+\frac{1}{[F:\rr]},n,\ell+1},\quad   \bar{x}_iA_{r,n,\ell}\subseteq A_{r+\frac{1}{[F:\rr]},n,\ell-1},\\
       \partial_{i}A_{r,n,\ell}\subseteq A_{r-\frac{1}{[F:\rr]},n,\ell-1},\quad \bar{\partial}_{i}A_{r,n,\ell}\subseteq A_{r-\frac{1}{[F:\rr]},n,\ell+1}.
   \end{align*}
   Let $A_n:=\oplus_{m\le n}\prod_{r,\ell} A_{r,m,\ell}.$ By Theorem \ref{thm:Weylrealize} and the definition of $A_{r,m,\ell}$, $A_n$ is a $G(F)\ltimes  W_{X_P(F)}$-submodule of $A_{X_P(F)}$. The $G(F)$-equivariant map $A_{n}/A_{n-1}\hookrightarrow A_{1}$ in Theorem \ref{thm:exactarch} is also $ W_{X_P(F)}$-equivariant by Theorem \ref{thm:Weylrealize} and the product rule. Therefore to prove the assertion, it suffices to show $A_1$ is a graded Fr\'echet $G(F)\ltimes  W_{X_P(F)}$-module $A_{X_P(F)}$ of finite length.

   Choose a finite set $S\subset \cup_{\ell\in H}\mathrm{Supp}(L_\ell)$ such that
   \begin{align*}
       \bigcup_{\ell\in H}\mathrm{Supp}(L_\ell)=\bigsqcup_{r\in S} r+\frac{1}{[F:\rr]}\zz_{\ge 0}.
   \end{align*}
    For a given $r$, we let
   \begin{align*}
       U(r):=\prod_{n\in \zz_{\ge 0 }}\bigoplus_{\ell:r+\frac{n}{[F:\rr]}\in\mathrm{Supp}(L_\ell)}A_{r+\frac{n}{[F:\rr]},1,\ell}.
   \end{align*}
   Then we have $A_1=\oplus_{r\in S} U(r),$ and each $U(r)$ is graded Fr\'echet. We check that $U(r)$ satisfies assumptions in Proposition \ref{prop:finlength}. For $\mathrm{Re}(\chi)<0,$ $I_P(\chi)$ are quasisimple of finite length with distinct infinitesimal characters \cite[Lemma 4.1.8]{V:realrepbook}, so $U(r)$ satisfies assumption (2). That $U(r)$ satisfies assumption (1) is a consequence of \eqref{eq:normalizedchi}, \cite[Theorem 4.1.4]{V:realrepbook} and \cite[\S 18]{AvLTV}.
\end{proof}

To study the structure of $A_{X_P(F)}$ further, let us recall the notion of \textbf{Gelfand-Kirillov dimension} of a finitely generated $U(\mathfrak{g})$-module $V$ \cite{Vogan:size, noncomm}. Let $(U^{\le n}(\mathfrak{g}))_{n\ge 0}$ be the PBW filtration of $U(\mathfrak{g})$. Choose a finite-dimensional complex vector subspace $V_0\subseteq V$ such that $U(\mathfrak{g})V_0=V$. Define $V_n:=U^{\le n}(\mathfrak{g})V_0$ for $n\ge 0$. Then there is a nonnegative integer $\mathrm{GK}(V)$ and $c>0$ such that 
\begin{align*}
    \dim V_{n}= cn^{\mathrm{GK}(V)}+O(n^{\mathrm{GK}(V)-1}).
\end{align*}
The integer $\mathrm{GK}(V)$ is the Gelfand-Kirillov dimension of $V.$ It is independent of the choice of $V_0$. Let us apply this to composition factors of $A_{X_P(F)}$. 

Let $U=\prod_{n\in \zz_{\ge 0}} U_n$ be a composition factor of $A_{X_P(F)}$ as graded Fr\'echet $G(F)\ltimes W_{X_P(F)}$-modules. Let $U_{\mathrm{fin}}:=\oplus_{n\in \zz_{\ge 0}} (U_n)_{\mathrm{fin}}$ where $(U_n)_{\mathrm{fin}}$ is the subspace of $K$-finite vectors of $U_n$. By \cite[Theorem 3.4.12]{WallachRG1} each $(U_n)_{\mathrm{fin}}$ is an admissible $(\fg,K)$-modules of finite length and hence a finitely generated $U(\mathfrak{g})$-module.

\begin{Prop}
    Let $W$ be a nonzero Fr\'echet $G(F)$-subrepresentation of $U_j$ for some $j\ge 0$. Then $\mathrm{GK}(W_{\mathrm{fin}})$ is independent of $j$ and $W.$
\end{Prop}

\begin{proof}
    It suffices to show when $W$ is irreducible, for any $n\ge 0$
    \begin{align*}
        \mathrm{GK}((U_n)_{\mathrm{fin}})\le  \mathrm{GK}(W_{\mathrm{fin}}).
    \end{align*}
    By Theorem \ref{thm:Weylprop}(3) $W_{X_P(F)}W_{\mathrm{fin}}$ is a $(\mathfrak{g},K)$-module. Since $U$ is an irreducible $G(F)\ltimes W_{X_P(F)}$-module, we have $W_{X_P(F)}W_{\mathrm{fin}}=U_{\mathrm{fin}}$. Choose a finite subset $S\subset W_{X_P(F)}$ such that $SW_{\mathrm{fin}}\subseteq (U_n)_{\mathrm{fin}}$ generates $(U_n)_{\mathrm{fin}}$ as an $U(\fg)$-module. By enlarging $S$ if necessary, we may assume $\mathrm{Span}(S)$ is stable under $[X,\cdot]$ for $X\in \mathfrak{g}$. Then
    \begin{align*}
        (U_n)_{\mathrm{fin}}=U(\fg)\mathrm{Span}(S)W_{\mathrm{fin}}=\mathrm{Span}(S)U(\fg)W_{\mathrm{fin}}=\mathrm{Span}(S)W_{\mathrm{fin}}.
    \end{align*}
    Since $S$ is finite, we have $\mathrm{GK}((U_n)_{\mathrm{fin}})\le \mathrm{GK}(W_{\mathrm{fin}})$ by \cite[Lemma 2.2]{Vogan:size}.
\end{proof}


Finally, we show that $\mathcal{S}(X_P(F))$ coincides with the function space defined in \cite[\S 6.1]{BKnormalized}: 
\begin{align*}
    \mathcal{S}_{\mathcal{D}}(X_P(F)):=\bigg\{f\in L^2(X_P(F)): Df\in L^2(X_P(F))  \textrm{ for every } D\in W_{X_P(F)}\bigg\}.
\end{align*}
It has a natural Fr\'echet topology given by the seminorms $ \norm{Df}_2$.
We remark that 
\begin{align}\label{eq:preserve}
    \mathcal{F}_{P|P^{\mathrm{op}}}(\mathcal{S}_{\mathcal{D}}(X_P(F)))=\mathcal{S}_{\mathcal{D}}(X_{P^{\mathrm{op}}}(F)).
\end{align}
This justifies \cite[Conjecture 6.2]{BKnormalized} by Theorem \ref{thm:Weylprop}(4).

\begin{Thm}
We have $\mathcal{S}(X_P(F))=\mathcal{S}_{\mathcal{D}}(X_P(F))$ as Fr\'echet spaces.
\end{Thm}

\begin{proof}
    The inclusion $\mathcal{S}(X_P(F))\hookrightarrow \mathcal{S}_{\mathcal{D}}(X_P(F))$ is continuous by Lemma \ref{lem:mult:general} and Proposition \ref{prop:archlocal}. Conversely, suppose $f\in \mathcal{S}_{\mathcal{D}}(X_P(F))$. Then $f\in C^\infty(X_P^\circ(F))$ rapidly decays at infinity and $|f(x)|\ll_\epsilon |x|^{-s_k-1-\epsilon}$ for all $x\in X_P^\circ(F)$ and $\epsilon>0$.
   
    Let $\varphi\in F[X_P]$ be a homogeneous function of degree $n>0$. Then for any $f\in \mathcal{S}_{\mathcal{D}}(X_P(F))$ and $\ell\in H,$ by the Iwasawa decomposition the local zeta function
    \begin{align*}
       Z_\varphi(f,(\mu^\ell)_s):=\int_{X_P(F)} f(x)(\mu^\ell)_s(\varphi(x)) dx
    \end{align*}
    converges absolutely for $\mathrm{Re}(s)>-\frac{s_k+1}{n}$. Furthermore, using the theory of Bernstein polynomials on $X_P$ \cite[\S 4.6]{Hsu:Weyl}, one can argue as in \cite[\S 5]{Igusa:zeta} to conclude that there is a discrete subset $S$ of $\cc$ depending only on $\varphi$ such that $Z_\varphi(f,(\mu^\ell)_s)$ extends to a meromorphic function on $\cc$ with possible poles in $S$, and for any real numbers $\sigma_2>\sigma_1$ and $\epsilon>0$
    \begin{align}\label{eq:zetazero}
       \sup_{\substack{\sigma_1<\mathrm{Re}(s)<\sigma_2\\|s-z|>\epsilon,z\in S}} |Z_\varphi(f,(\mu^\ell)_s)|<\infty.
    \end{align}

    Consider first $f$ such that the $K$-representation generated by $f$ is irreducible, and the (left) action of $K_{\GG_m}$ acts by $\mu^\ell$ for some $\ell\in H$. By the Peter-Weyl theorem and the Iwasawa decomposition, there is $\varphi$ homogeneous of degree $n>0$ such that 
    \begin{align}\label{eq:zeta:pinduc}
        Z_\varphi(f,(\mu^\ell)_s)=\int_{M^\mathrm{ab}(F)}  \delta_P(m)(\mu^\ell)_{ns}(\omega_P(m))f(m^{-1})dm=f_{\delta_P^{1/2}(\mu^\ell)_{ns}}(1).
    \end{align}
    We conclude by linearity and continuity that for $f\in \mathcal{S}_{\mathcal{D}}(X_P(F))$ the sections $f_{\chi_s}$ are meromorphic for each character $\chi,$ and possibles poles of $f_{\chi_s}$ are in the left half-plane $\mathrm{Re}(s)\le 0,$ and so are poles of $\mathcal{F}_{P|P^{\mathrm{op}}}(f)_{\chi_s}$ by \eqref{eq:preserve}.
    
    Suppose $f_{\chi_s}/d(\chi_s)$ has a pole at some $s_0$ such that $\mathrm{Re}(s_0)<0$. Then we have
    \begin{align*}
        \frac{\mathcal{F}_{P|P^{\mathrm{op}}}(f_{\chi_s})}{d(\chi_s)}=\frac{\mathcal{F}_{P|P^{\mathrm{op}}}(f)_{\chi_s^{-1}}}{d(\chi_s)}
    \end{align*}
    has a pole at $s_0$. This would imply $\mathcal{F}_{P|P^{\mathrm{op}}}(f)_{(\chi^{-1})_s}$ has a pole at $-s_0$, a contradiction. Therefore, possible poles of $f_{\chi_s}/d(\chi_s)$ are on the imaginary axis. Then using \eqref{eq:zetazero} and \eqref{eq:zeta:pinduc}, one can use the asymptotics in \S \ref{sec:Sch:arch} to conclude $\mathcal{S}_{\mathcal{D}}(X_P(F))/\mathcal{S}(X_P(F))$ if nonzero is a Fr\'echet $G(F)$-representation of finite length with each irreducible subquotients contained in $I(\mu)$ for some (unitary) character $\mu$. Since $\mathcal{S}(X_P(F))$ is closed under the action of $W_{X_P(F)}$, for any given $f\in \mathcal{S}_{\mathcal{D}}(X_P(F)),$ $x_if,\partial_if, \bar{x}_if,\bar{\partial}_if$ all lie in $\mathcal{S}(X_P(F))$ for all $i$. Hence the left ideal 
    \begin{align*}
        I=\{ D\in W_{X_P(F)}: Df\in \mathcal{S}(X_P(F))\}
    \end{align*}
    contains generators $x_i,\overline{x}_i,\partial_i,\overline{\partial}_i$ of $W_{X_P(F)}$. By Theorem \ref{thm:Weylprop}(2) we have $1\in I$ and thus $f\in \mathcal{S}(X_P(F))$.

\end{proof}

\subsection{Harmonic analysis on $X_P$}\label{ssec:harm}

\begin{Lem}\label{lem:rigidity}
 If $\mathcal{I}:\mathcal{S}(X_P^\circ(F))\longrightarrow \mathcal{S}(X_P(F))$ is a continuous $G(F)\ltimes (F[X_P]\otimes_\rr \cc)$-equivariant linear operator, then $\mathcal{I}$ is up to a scalar the natural inclusion.
\end{Lem}

\begin{proof}
We may assume $\mathcal{I}$ is nonzero. Let $f\in C^\infty_c(X_P^\circ(F)).$ For any $x\in X_P^\circ(F)$ and $h\in C^\infty_c(X_P^\circ(F))$ such that $h(x)=1$, by Nachbin's theorem \cite[Theorem 1.2.1]{L:approxdifffunctions} there is a sequence $\{p_i\}\subset F[X_P]\otimes_\rr\cc$ converging to $h$ in $C^\infty(X_P^\circ(F))$. Then for any $h'\in C^\infty_c(X_P^\circ(F))$ $$\lim_{i\to \infty} p_ih'=hh'$$ in $\mathcal{S}(X_P^\circ(F)).$ Since $\mathcal{I}$ is $(F[X_P]\otimes_\rr \cc)$-equivaraint, we have
\begin{align*}
    \mathcal{I}(f)(x)=h^2\mathcal{I}(f)(x)=\lim_{i\to \infty} p_ih\mathcal{I}(f)(x)=\lim_{i\to \infty} h\mathcal{I}(p_if)(x)=I(hf)(x).
\end{align*}
Therefore $f(x)$ uniquely determines $\mathcal{I}(f)(x).$ Since $G(F)$ acts transitively on $X_P^\circ(F),$ $C^\infty_c(X_P^\circ(F))$ is dense in $\mathcal{S}(X_P^\circ(F)),$ and $\mathcal{I}$ is nonzero and $G(F)$-equivariant, there is a constant $c\in \cc^\times $ such that for any $f\in C^\infty_c(X_P^\circ(F))$ and $x\in X_P^\circ(F),$
\begin{align*}
    \mathcal{I}(f)(x)=cf(x).
\end{align*}
The identity extends to all $\mathcal{S}(X_P^\circ(F))$ by continuity.
\end{proof}

\begin{Thm}\label{thm:uniqueF}
Let  $\mathcal{G}:\mathcal{S}(X_P(F))\to \mathcal{S}(X_{P^{\mathrm{op}}}(F))$ be a continuous  linear operator satisfying $\partial_i^{\mathrm{op}}\circ \mathcal{G}  =\mathcal{G}\circ x_{i}$ and $\bar{\partial}_i^{\mathrm{op}}\circ \mathcal{G}  =\mathcal{G}\circ \bar{x}_{i}$.
Suppose one of the following  holds:
\begin{enumerate}
    \item The operator $\mathcal{G}$ satisfies additionally $ \mathcal{G} \circ \partial_i=x_{i}^{\mathrm{op}} \circ \mathcal{G}$ and $\mathcal{G} \circ \bar{\partial}_i=\bar{x}_{i}^{\mathrm{op}} \circ \mathcal{G}$.
    \item The operator $\mathcal{G}$ is $G(F)$-equivariant and extends to a bounded linear operator $L^2(X_P(F))\to L^2(X_{P^{\mathrm{op}}}(F))$.
\end{enumerate}
 Then $\mathcal{G}$ is a constant multiple of $ \mathcal{F}_{P|P^{\mathrm{op}}}$.
\end{Thm}

\begin{proof}
 Consider the operator 
\begin{align*}
    \mathcal{I}:= \mathcal{F}_{P^{\mathrm{op}}|P}\circ \mathcal{G}.
\end{align*}
We need to show $\mathcal{I}$ is a constant multiple of the identity. Assumption (1) implies $\mathcal{I}$ is $W_{X_P(F)}$-equivariant. Since $\mathfrak{g}\subset W_{X_P(F)}$ by Theorem \ref{thm:Weylprop}, both assumptions (1) and (2) imply $\mathcal{I}$ is a continuous $G(F)\ltimes (F[X_P]\otimes_\rr \cc)$-equivariant endomorphism of $\mathcal{S}(X_P(F))$. Therefore by Lemma \ref{lem:rigidity} there is some $c\in \cc$ such that $\mathcal{I}(f)=cf$ for all $f\in \mathcal{S}(X_P^\circ(F))$. By replacing $\mathcal{G}$ with $\mathcal{G}-c\mathcal{F}_{P|P^{\mathrm{op}}}$, we may assume $\mathcal{S}(X_P^\circ(F))\le \mathrm{ker}\, \mathcal{I}$. Since $\mathcal{S}(X_P^\circ(F))$ is dense in $L^2(X_P(F)),$ if (2) holds we have $\mathcal{I}=0$ by continuity. Now suppose (1) is satisfied. Then we have an induced continuous  $W_{X_P(F)}$-equivariant map $$\mathcal{I}':A_{X_P(F)}\longrightarrow \mathcal{S}(X_P(F)).$$
Since $\mathfrak{m}^{\mathrm{ab}}\subset W_{X_P(F)}$ by Theorem \ref{thm:Weylprop}, the map $\mathcal{I}'$ is $M^{\mathrm{ab}}(F)$-equivariant. This implies $\mathcal{I}'$=0.
\end{proof}

 Let $s\in \zz_{\ge 0}.$ In the monomial $\cc$-basis of $F[V_P^\lor]\otimes_\rr \cc$, let $\mathcal{B}_s(F)$ be the subset consisting of monomials of degree at most $s$. Define a Sobolev space on $X_P(F)$
\begin{align*}
    H^{s}(X_P(F)):=\bigg\{ f\in L^2(X_P(F)): p\mathcal{F}_{P|P^{\mathrm{op}}}(f)\in L^2(X_{P^{\mathrm{op}}}(F)) \textrm{ for any } p\in \mathcal{B}_s(F)\bigg\}
\end{align*}
equipped with the inner product
\begin{align*}
   \langle f,h\rangle_{H^s(X_P(F))}:=\sum_{p\in \mathcal{B}_s(F)} \langle p\mathcal{F}_{P|P^{\mathrm{op}}}(f), p\mathcal{F}_{P|P^{\mathrm{op}}}(h)\rangle _{L^2}.
\end{align*}
One can define similarly $H^{s}(X_{P^\mathrm{op}}(F))$. 

\begin{Prop}
For $s\in \zz_{\ge 0},$ $H^s(X_P(F))$ is a separable Hilbert  space and $C^\infty_c(X_P^\circ(F))$ is dense in $H^s(X_P(F))$.\qed
\end{Prop}

\begin{proof}
    Completeness of the inner product space $H^s(X_P(F))$ follows from a standard argument (see e.g., \cite[Lemma 5.2]{T:Sobolev}). Consider the map
    \begin{align*}
        H^s(X_P(F))&\longrightarrow L^2(X_{P^{\mathrm{op}}}(F))^{|\mathcal{B}_s(F)|}\\
        f &\longmapsto (p\mathcal{F}_{P|P^{\mathrm{op}}}(f))_{p\in \mathcal{B}_s(F)}.
    \end{align*}
It is an isometry. The space $L^2(X_{P^{\mathrm{op}}}(F))$ is separable, and so is  $H^s(X_P(F))$. 

For the last assertion, it suffices to show given $h\in L^2(X_{P^{\mathrm{op}}}(F))$ such that $ph\in L^2(X_{P^{\mathrm{op}}}(F))$ for all $p\in \mathcal{B}_s(F)$, there is a sequence of functions $f_i\in C^\infty_c(X_P^\circ(F))$ such that for every $p\in \mathcal{B}_s(F),$ $pf_i$ converges in $L^2$ to $ph,$ which is well known. 
\end{proof}


The following is straightforward by using smooth cut-off functions and the realization of $\partial_i$ as differential operators.

\begin{Prop}\label{prop:Sobolev}
For any $f\in \mathcal{S}(X_P(F))$ and $s\in \zz_{\ge 0}$, there is a sequence of functions $f_i\in C^\infty_c(X_P^\circ(F))$ such that
\begin{align*}
    \lim_{i\to \infty} \norm{f_i- f}_{H^s(X_P(F))}= \lim_{i\to \infty} \norm{\mathcal{F}_{P|P^{\mathrm{op}}}(f_i-f)}_{H^s(X_{P^\mathrm{op}}(F))}=0.
\end{align*}
\qed
\end{Prop}

\begin{Rem}
 Proposition \ref{prop:Sobolev} implies that the assertion in \cite[Lemma 10.2]{Getz:Hsu:Leslie} holds for archimedean $F,$ and so does Theorem 10.1 in loc.~ cit. This is the initial motivation for us to study the Weyl algebra on $X_P$.
\end{Rem}

\section{Poisson summation formulae}\label{sec:Poisson}

Let $E$ be a global field and $\mathbb{A}_E$ be its ring of adele. In this section, we establish the Poisson summation formula on $X_P$. It has already been discussed generally in \cite{Choie:Getz} under a mild assumption. Suppose $G$ is either classical or $G_2.$ We verify their assumption and state the explicit form of the summation formula.

Fix a nontrivial adelic character $\psi=\otimes_v \psi_v:E\backslash \mathbb{A}_E\to \cc^\times$. For each place $v$ of $E$, fix a maximal compact subgroup $K_v$ of $G(E_v)$ such that $G(E_v)=B(E_v)K_v$. If $v$ is finite, choose $K_v=G(\calo_v)$. Let
\begin{align*}
    \mathcal{S}(X_P(E_\infty))&:=\widehat{\bigotimes_{v|\infty}}\, \mathcal{S}(X_P(E_v))
\end{align*}
be the completed projective topological tensor product. Define
\begin{align*}
    \mathcal{S}(X_P(\mathbb{A}_E^\infty))&:={\bigotimes_{v\nmid \infty}}^\prime \mathcal{S}(X_P(E_v))
\end{align*}where the restricted tensor product is taken with respect to the basic function $b_v$. Here $b_v$ is the right $K_v$-invariant function in $\mathcal{S}(X_P(E_v))$ whose Mellin transform (along the trivial character) is $d(1_{vs})\phi_{vs}$, where $\phi_{vs}\in I_P(1_{vs})$ is the unique right $K_v$-invariant section such that $\phi_{vs}\big|_{K_v}=1$. Define the adelic Schwartz space
\begin{align*}
    \mathcal{S}(X_P(\mathbb{A}_E))&:=\mathcal{S}(X_P(E_\infty))\otimes \mathcal{S}(X_P(\mathbb{A}_E^\infty)).
\end{align*}
By Theorem \ref{thm:exact}, for $v\nmid\infty $ we can equip $\mathcal{S}(X_P(E_v))$ with the natural algebraic topology in the sense of \cite[Definition 4.1]{Li:Schwartz}. Then we endow $\mathcal{S}(X_P(\mathbb{A}_E^\infty))$ and $\mathcal{S}(X_P(\mathbb{A}_E))$ with the inductive tensor product topology. 

\begin{Lem}
The adelic Schwartz space $\mathcal{S}(X_P(\mathbb{A}_E))$ is nuclear, barrelled and separable, and the adelic Fourier transform $\mathcal{F}_{P|P^{\mathrm{op}}}$ is continuous.
\end{Lem}

\begin{proof}
Topological properties of  $\mathcal{S}(X_P(\mathbb{A}_E))$ follow from Lemma \ref{lem:nuclear}, \cite[Lemma 4.1.3]{Li:Schwartz} and its proof. Continuity of $\mathcal{F}_{P|P^{\mathrm{op}}}$ follows from Theorem \ref{Thm: Fourier formula} and the nature of algebraic topological spaces.
\end{proof}

For $f\in \mathcal{S}(X_P(\mathbb{A}_E))$ and an idelic (unitary) character $\omega=\otimes_v \omega_v:\A_E^\times\to \cc^\times$, the degenerate Eisenstein series
\begin{align*}
    \textrm{Eis}(g;f_{\omega_s}):=\sum_{\gamma\in P\backslash G(E)} f_{\omega_s}(\gamma g)
\end{align*}
converges absolutely for $g\in G(\mathbb{A}_E)$ and $\mathrm{Re}(s)> \beta$ for some constant $\beta\in \rr_{>0}$ independent of $f$ and $\omega$. For ease of notation, we let $\textrm{Eis}(f_{\omega_s}):=\textrm{Eis}(\mathrm{Id};f_{\omega_s})$. 

Recall notations introduced in \S \ref{sec:casebycase}. For $w\in W/W_M,$ let
\begin{align*}
    d(\omega_s):=\prod_v d(\omega_{vs}),\quad c_w(\omega_s):=\prod_v c_{w}(\omega_{vs}).
\end{align*}
By Theorem \ref{thm:poles} $d(\omega_s)$ and $d(\omega_s)c_{w}(\omega_s)$ are finite products of $L$-functions and thus define meromorphic functions on the complex plane. Define
\begin{align*}
    e(\omega_{s_0}):=\max_{w\in W/W_M}\mathrm{ord}_{s=s_0} d(\omega_{s})c_w(\omega_{s}).
\end{align*}
Consider the following finite set of idelic quasi-characters
 $$J(G,P,E):=\bigg\{\omega_{s_0}: e(\omega_{s_0})>0 \bigg\}.$$
We first verify Conjecture 1.4 in \cite{Choie:Getz}. More precisely, we show that for $K$-finite $f,$ possible poles of the Eisenstein series $\mathrm{Eis}(f_{\omega_s})$ are at $\omega_{s_0}\in J(G,P,E)$ with orders at most $e(\omega_{s_0})$.

    According to \cite{Langlands:eisen}, $\textrm{Eis}(f_{\omega_s})$ has the same poles (counting multiplicities) as its constant term \begin{align}\label{eq:constpole}
    \sum_{w\in W/W_M} M_w(\omega_s) f_{\omega_s}.
\end{align}
By Gindikin-Karpelevi\v{c} formula \cite[Proposition 4.6]{Lai} for finite places $v$ such that the conductor of $\psi_v$ is $\mathcal{O}_v$
\begin{align*}
    \left(d(1_s)c_w(1_s)\right)^{-1}M_w(1_s) (b_v)_{1_s}|_{G(\calo_v)}=1.
\end{align*} 
Therefore 
\begin{align*}
    \left(d(\omega_s)c_w(\omega_s)\right)^{-1}M_w(\omega_s) f_{\omega_s}
\end{align*}
is entire for all $w\in W/W_M$ by Corollary \ref{cor:more}, so $\mathrm{ord}_{s=s_0}\textrm{Eis}(f_{\omega_s})\le e(\omega_{s_0})$. Now we can apply \cite[Theorem 4.4]{Choie:Getz} to conclude

\begin{Thm}\label{thm:poissonweak}
        Suppose $G$ is either classical or $G_2$. Let $f\in \mathcal{S}(X_P(\mathbb{A}_E))$. Then $\mathrm{Eis}(f_{\omega_s})$ admits a meromorphic continuation and satisfies a functional equation
\begin{align*}
    \mathrm{Eis}(f_{\omega_s})=\mathrm{Eis}(\mathcal{F}_{P|P^{\mathrm{op}}}(f)_{\omega_{s}}^{\mathrm{op}})=\mathrm{Eis}(\mathcal{F}_{P|P^{\mathrm{op}}}(f)_{\omega_{s}^{-1}}).
\end{align*}
Possible poles of $\mathrm{Eis}(f_{\omega_s})$ are contained in $J(G,P,E)$ and their orders are at most $e(\omega_{s})$. Furthermore,
\begin{align*}
        &\sum_{x\in X_P^\circ(E)} f(x)-\frac{1}{\kappa_E}\sum_{\substack{\omega_{s_0}\in J(G,P,E) \\ \mathrm{Re}(s_0)\le 0}} \left(1-\frac{1}{2}\delta_{\mathrm{Re}(s_0)}\right)\mathrm{Res}_{s=s_0} \mathrm{Eis}(f_{\omega_s})\\
        &=\sum_{x\in X_{P^{\mathrm{op}}}^\circ(E)} \mathcal{F}_{P|P^{\mathrm{op}}}(f)(x)-\frac{1}{\kappa_E}\sum_{\substack{\omega_{s_0}\in J(G,P,E)\\ \mathrm{Re}(s_0)\le 0}} \left(1-\frac{1}{2}\delta_{\mathrm{Re}(s_0)}\right)\mathrm{Res}_{s=s_0}   \mathrm{Eis}(\mathcal{F}_{P|P^{\mathrm{op}}}(f)_{\omega_{s}}),
    \end{align*}
and each term above is continuous in $f$.
Here \begin{align*}
    \kappa_E:=\mathrm{Res}_{s=1}\, \zeta_E(s),
\end{align*}
and $\delta$ is the Kronecker delta.\qed
\end{Thm}

We remark that there may be some $\omega_{s_0}\in J(G,P,E)$ such that $\mathrm{Eis}(f_{\omega_s})$ is holomorphic at $s=s_0$ for all $f$. This is due to the cancellation occurred in the sum \eqref{eq:constpole}, which is not taken into account of in the proof above. A more precise version of the summation formula is predicted by the following.

\begin{Conj}\label{conj:vanish}
    Let $v_1,v_2$ be two distinct places of $E$. Let $f=\otimes f_v \in \mathcal{S}(X_P(\mathbb{A}_E))$ such that $f_{v_1}\in \mathcal{S}(X_P^\circ(E_{v_1}))$ and $\mathcal{F}_{P|P^{\mathrm{op}}}(f_{v_2})\in \mathcal{S}(X_{P^{\mathrm{op}}}^\circ(E_{v_2})).$ Then
    \begin{align*}
        \sum_{\gamma\in X_P^\circ(E)}f(\gamma)=\sum_{\gamma\in X_{P^{\mathrm{op}}}^\circ(E)} \mathcal{F}_{P|P^{\mathrm{op}}}(f)(\gamma).
    \end{align*}
\end{Conj}

   Conjecture \ref{conj:vanish} was already claimed in \cite[Theorem 6.4]{BKnormalized} whose proof only made use of functional equations of $\mathrm{Eis}(f_{\omega_s}).$ On the other hand, by a standard argument Conjecture \ref{conj:vanish} is equivalent to claiming $\mathrm{Eis}(f_{\omega_s})$ is entire for all $\omega$, which does not follow from functional equations of degenerate Eisenstein series. Therefore, it seems to us more explanations are required to prove  Conjecture \ref{conj:vanish}.

Consider the (finite) set of idelic characters
\begin{align*}
     R(G,P,E)&:=\big\{\omega_{s_0}: \mathrm{ord}_{s=s_0} d(\omega_{vs})>0 \textrm{ for all places } v \big\}\\
     &=\bigg\{\omega_{s_0}: \sum_{i=1}^k\mathrm{ord}_{s=s_0} L_v(1+s_i,\omega_{s}^{\lambda_i})>0 \textrm{ for all places } v \bigg\}.
\end{align*}

\begin{Prop}\label{Prop:Poisson}
    Conjecture \ref{conj:vanish} is equivalent to the following statement: Let $f\in \mathcal{S}(X_P(\mathbb{A}_E))$.
 The only possible poles of $\mathrm{Eis}(f_{\omega_s})$ on the left half-plane $\mathrm{Re}(\omega_{s_0})\le 0$ are contained in $R(G,P,E)$ and their orders are at most 
\begin{align*}
\min_v \ord_{s=s_0} d(\omega_{vs})=\min_v\sum_{i=1}^k\mathrm{ord}_{s=s_0} L_v(1+s_i,\omega_{s}^{\lambda_i}).
\end{align*}
\end{Prop}

\begin{proof}
    Let $v_1,v_2$ be two distinct places of $E$. By linearity and continuity, we may assume $f=\otimes_v f_v$ is $K$-finite. Let $f'=f_{v_1}'f_{v_2}'f^{v_1v_2}\in \mathcal{S}(X_P(\mathbb{A}_E))$ be the function such that for all characters $\chi$
\begin{align*}
    (f'_{v_1})_{\chi_s}&=\frac{(f_{v_1})_{\chi_s}}{d(\chi_s)},\quad \mathcal{F}_{P|P^{\mathrm{op}}}(f'_{v_2})_{\chi_s}=\frac{\mathcal{F}_{P|P^{\mathrm{op}}}(f_{v_2})_{\chi_s}}{d(\chi_s)}.
\end{align*}
Then $f'_{v_1}\in \mathcal{S}(X_P^\circ(E_{v_1}))$ and $\mathcal{F}_{P|P^{\mathrm{op}}}(f'_{v_2})\in \mathcal{S}(X_{P^{\mathrm{op}}}^\circ(E_{v_2}))$. Then Conjecture \ref{conj:vanish} is equivalent to the assertion that
\begin{align*}
    \frac{\mathrm{Eis}(f_{\omega_s})}{d(\omega_{v_1s})d(\omega_{v_2s}^{-1})}
\end{align*}
is entire for all places $v_1,v_2$. 
\end{proof}

The rest of the section is devoted to proving
\begin{Thm}\label{thm:poisson}
    Conjecture \ref{conj:vanish} holds if $G$ is either classical or $G_2$. In particular,
    \begin{align*}
        &\sum_{x\in X_P^\circ(E)} f(x)-\frac{1}{\kappa_E}\sum_{\omega_{s_0}\in R(G,P,E)} \mathrm{Res}_{s=s_0}\mathrm{Eis}(f_{\omega_s})\\
        &=\sum_{x\in X_{P^{\mathrm{op}}}^\circ(E)} \mathcal{F}_{P|P^{\mathrm{op}}}(f)(x)-\frac{1}{\kappa_E}\sum_{\omega_{s_0}\in R(G,P,E)}  \mathrm{Res}_{s=s_0}\mathrm{Eis}(\mathcal{F}_{P|P^{\mathrm{op}}}(f)_{\omega_s}).
    \end{align*}
\end{Thm}

This Poisson summation formula follows from Theorem \ref{thm:poissonweak} and Proposition \ref{Prop:Poisson}. Our proof of Conjecture \ref{conj:vanish} is modified from the proof of \cite[Proposition 1.6]{Ikeda:poles:triple}. While the proof works for all global fields, in below we assume $E$ is a number field and leave the function field case to the reader.

The case $G=G_2$ is proved in Appendix \ref{appendix:B}. In the rest of the section $G$ is classical, and we continue to use the Bourbaki numbering for Dynkin diagrams. Consider first the case $P=P_1$. The assertion is clear when $G$ is of type $A$ or $C$. For $G$ of type $D$, it is proved in \cite{GK:auto} when $E$ is a function field (see also \cite{Getz:Quad} for a geometric form of the Poisson summation formulae over a number field). We defer the proof for $G$ of type $B$ to Appendix \ref{appendix:B}. One can prove the assertion for type $D$ in a similar fashion,  which is analogous to the proof of type $B,$ so we leave it to the reader.

\subsection{Proof of Conjecture \ref{conj:vanish} for classical types}

Suppose $G$ is classical and $P=P_\ell$ where $\ell>1$. We may assume $f=\otimes f_v\in \mathcal{S}(X_P(\A_E))$ is $K$-finite. Let $S$ be a finite set of places of $E$ such that if $v\not\in S$ then $v$ is finite, $\psi_v$ has conductor $\mathcal{O}_v$ and $f_v=b_v$. For an idelic character $\omega=\otimes \omega_v$, by the functional equation of the degenerate Eisenstein series and Proposition \ref{Prop:Poisson}, it suffices to study poles of $\mathrm{Eis}(f_{\omega_s})$ on $\mathrm{Re}(s)\ge 0$, and since $G$ is classical we may further assume $\omega^2=1$. As $d(\omega_{vs})$ is holomorphic on $\mathrm{Re}(s)\ge 0$ for any place $v$, we may assume $f_v\in \mathcal{S}(X_P^\circ(E_v))$ for $v\in S$.

Let $P_1$ be the standard maximal parabolic subgroup of $G$ associated to the first node and let $P':=P_1\cap P$. We have a natural map $X_{P'}^\circ\to X_P^\circ$. Let $v$ be a place of $E$ and $h\in C^\infty(X_{P'}^\circ(E_v))$. For $t,s\in \cc,$ $\chi$ a character and $g\in G(E_v)$, we let
\begin{align*}
    h_{t,\chi_s}(g):=\int_{P'^{\mathrm{ab}}(E_v)} \delta_{P'}^{1/2}(m)|\omega_1(m)|^t \chi_s(\omega_P(m))h(m^{-1}g) dm,
\end{align*}
whenever the integral converges absolutely in an open region of $\cc^2$ and extends meromorphically.

For $v\in S$ choose a $K$-finite function $\widetilde{f}_v\in \mathcal{S}(X_{P'}^\circ(E_v))$ such that for $m\in P'(E_v)$ and $k\in K$
\begin{align*}
    (\widetilde{f}_{v})_{t,\chi_s}(mk)=\delta_{P'}^{1/2}(m)|\omega_1(m)|^t\chi_s(\omega_P(m))(f_{v})_{\chi_{s+c/2}}(k),
\end{align*}
where 
\begin{align*}
    c:=\begin{cases}
        2 & \textrm{if } G=\mathrm{Spin}_{2n} \textrm{ or } \mathrm{Spin}_{2n+1}, \textrm{ and } \ell=n,\\
        1 & \mathrm{otherwise}.
    \end{cases}.
\end{align*}
Note that $(\widetilde{f}_{v})_{t,\chi_s}$ is a holomorphic section of $\mathrm{Ind}_{P'}^G(|\omega_1|^t\chi_s(\omega_P))$.
 For $v\not\in S$, let $\widetilde{b}_v$ be the basic function on $X_{P'}(E_v)$ (denoted as $c_{P',0}$ in \cite{BKnormalized}). Let $\widetilde{f}:=\otimes_{v\in S} \widetilde{f}_v \otimes (\otimes_{v\not\in S} \widetilde{b}_v)$.
Consider the Eisenstein series
\begin{align*}
    \mathrm{Eis}(\widetilde{f}_{t,\omega_{s}}):=\sum_{\gamma\in P\backslash G(E)} \widetilde{f}_{t,\omega_{s}}(\gamma).
\end{align*}
It converges absolutely and defines a holomorphic function on 
\begin{align*}
    \Omega_1:=\{(t,s):\mathrm{Re}(s),\mathrm{Re}(t) \gg 1\},
\end{align*}
and it extends to a meromorphic function on $\cc^2$.

We can write
\begin{align}\label{eq:innerM}
     \mathrm{Eis}(\widetilde{f}_{t,\omega_{s}})=\sum_{\gamma'\in P\backslash G(E)}\sum_{\gamma\in P'\backslash P(E)} \widetilde{f}_{t,\omega_{s}}(\gamma \gamma').
\end{align}
The inner sum is a degenerate Eisenstein series of $\mathrm{GL}_{\ell}$ attached to the first node. It can also be written as
\begin{align}\label{eq:innerM1}
     \mathrm{Eis}(\widetilde{f}_{t,\omega_{s}})=\sum_{\gamma'\in P_1\backslash G(E)}\sum_{\gamma\in P'\backslash P_1(E)} \widetilde{f}_{t,\omega_{s}}(\gamma \gamma').
\end{align}
Let $M_1$ be the Levi subgroup of $P_1$ such that $T\le M_1$. Then the inner sum above is a degenerate Eisenstein series of $M_1$ attached to the $(\ell-1)$th node. Observe that $M_1$ and $G$ are of the same type and this is where we use the induction hypothesis.

In below we prove the case for $G$ of type $A$ or $B$. The rest of the cases are similar, so we only include necessary discussions and leave the details to the reader.

\subsubsection{Type $A_n$} We may assume $\omega=1$ and $2\le \ell\le (n+1)/2$ by symmetry. We need to show $\mathrm{Eis}(f_{1_s})$ only has possible simple poles at $s=\frac{n+1}{2},\ldots,\frac{n-2\ell+3}{2}$ for $\mathrm{Re}(s)\ge 0$. In this case
\begin{align*}
    (\widetilde{b}_v)_{t,1_s}(\mathrm{Id})=\zeta_v\left(t+\frac{\ell}{2}\right)\zeta_v\left(t+s+\frac{n-\ell}{2}+1\right)\prod_{j=0}^{\ell-2} \zeta_v\left(s+\frac{n}{2}-j\right).
\end{align*}
By the functional equation of the inner Eisenstein series in \eqref{eq:innerM}, $\mathrm{Eis}(\widetilde{f}_{t,1_s})$ is also holomorphic on 
\begin{align*}
    \Omega_2:=\bigg\{(t,s) : \mathrm{Re}(t)\ll -1,\, \mathrm{Re}(t+s)\gg 1\bigg\}
\end{align*}
and extends meromorphically to the convex closure of $\Omega_1\cup \Omega_2$ with singularities along  $t=\pm \tfrac{\ell}{2}$. Note that the expression \eqref{eq:innerM} implies $\mathrm{Res}_{t=\frac{\ell}{2}}\mathrm{Eis}(\widetilde{f}_{t,1_s})$ is $\mathrm{Eis}(f_{1_{s+\frac{1}{2}}})$ up to a nonzero constant.

We can further rewrite the Eisenstein series above (after applying the functional equation) as a double sum, whose inner sum is a degenerate Eisenstein series of $\mathrm{GL}_{n-\ell+2}$ attached to the first node. Thus by the functional equation of this inner Eisenstein series, we see that $\mathrm{Eis}(\widetilde{f}_{t,1_s})$ is also holomorphic on 
\begin{align*}
    \Omega_3:=\bigg\{(t,s) : \mathrm{Re}(t)\ll -1, \mathrm{Re}(t+s)\ll -1,\mathrm{Re}(s)\gg 1 \bigg\}
\end{align*}
and extends meromorphically to the convex closure of $\Omega_2\cup \Omega_3$ with singularities along $t+s=\pm\frac{n-\ell+2}{2}$. 

On the other hand, the inner sum in \eqref{eq:innerM1} is
\begin{align*}
    \zeta^S\left(t+\frac{\ell}{2}\right)\zeta^S\left(t+s+\frac{n-\ell}{2}+1\right)
\end{align*}
times a degenerate Eisenstein series on $\mathrm{GL}_{n}$ associated to a good section of $I_{\mathrm{GL}_n\cap P_\ell}^{\mathrm{GL}_n}(1_s)$. By the induction hypothesis, the pole of this Eisenstein series has at most simple poles at
\begin{align}\label{eq:poleA}
    s= \frac{n}{2}, \ldots, \frac{n-2\ell+4}{2}.
\end{align}
By the functional equation of this Eisenstein series, $\mathrm{Eis}(\widetilde{f}_{t,1_s})$ is also holomorphic on  
\begin{align*}
    \Omega_4:=\{(t,s) : \mathrm{Re}(t+s)\gg 1, \mathrm{Re}(s)\ll -1\}
\end{align*}
and extends meromorphically to the convex closure of $\Omega_1\cup \Omega_4$ with possible singularities \eqref{eq:poleA}.

 We can conclude that singularities of $\mathrm{Eis}(\widetilde{f}_{t,1_s})$ in the convex closure $\Omega$ of $\cup_{i=1}^4 \Omega_i$ are \eqref{eq:poleA}, $t=\pm \frac{\ell}{2}$ and $t+s=\pm\frac{n-\ell+2}{2}.$ Note that $\Omega$ contains the line  $t=\frac{\ell}{2}$. It follows that on $\mathrm{Re}(s)\ge -\frac{1}{2}$, $\mathrm{Eis}(f_{1_{s+\frac{1}{2}}})$ has possible simple poles at $s\in \{\frac{n}{2},\ldots ,\frac{n-2\ell+2}{2}\}$. This completes the proof.\qed

\subsubsection{Type $B_n$ $(n\ge 3)$}

Suppose $1<\ell<n$ and $\omega^2=1$. In this case we can assume $G=\mathrm{SO}_{2n+1}$, and we have
\begin{align*}
     (\widetilde{b}_v)_{t,\omega_{vs}}(\mathrm{Id})&=\zeta_v(2t+2s+1)\zeta_v\left(t+\frac{\ell}{2}\right)\zeta_v\left(t+2s+\frac{\ell}{2}\right)L_v\left(t+n-\ell+\frac{1}{2},\omega_s\right)\\
    &\times\prod_{i=0}^{\min(\ell-2,2n-2\ell-1)}L_v\left(\frac{2n-\ell-1}{2}-i,\omega_s\right)\prod_{i=0}^{\lfloor \ell/2\rfloor-1}\zeta_v(2s+\ell-1-2i).
\end{align*}
By the functional equation of the inner Eisenstein series in \eqref{eq:innerM}, $\mathrm{Eis}(\widetilde{f}_{t,\omega_s})$ is also holomorphic on 
\begin{align*}
    \Omega_2:=\bigg\{(t,s) : \mathrm{Re}(t)\ll -1,\, \mathrm{Re}(t+s)\gg 1,\mathrm{Re}(s)\gg 1\bigg\}
\end{align*}
and extends meromorphically to the convex closure of $\Omega_1\cup \Omega_2$ with possible singularities $t=\pm \frac{\ell}{2}.$ Then $\mathrm{Res}_{t=\frac{\ell}{2}}\mathrm{Eis}(\widetilde{f}_{t,1_s})$ is up to a nonzero constant 
\begin{align}\label{eq:Bres}
\begin{split}
    \zeta^S(2s+\ell)\begin{cases}
         \mathrm{Eis}(f_{\omega_{s+\frac{1}{2}}}) & \textrm{if }\ell-2< 2n-2\ell-1, 2\nmid \ell,\\
        L^S(\frac{-2n+3\ell+1}{2},\omega_s)\mathrm{Eis}(f_{\omega_{s+\frac{1}{2}}}) & \textrm{if }\ell-2\ge  2n-2\ell-1, 2\nmid \ell,\\
        \zeta^S(2s+1)\mathrm{Eis}(f_{\omega_{s+\frac{1}{2}}}) & \textrm{if }\ell-2< 2n-2\ell-1, 2|\ell,\\
       \zeta^S(2s+1)L^S(\frac{-2n+3\ell+1}{2},\omega_s)\mathrm{Eis}(f_{\omega_{s+\frac{1}{2}}}) & \textrm{if }\ell-2\ge  2n-2\ell-1, 2|\ell.
    \end{cases}
\end{split}
\end{align}

We can further rewrite the Eisenstein series above (after applying the functional equation) as a double sum, whose inner sum is a degenerate Eisenstein series of $\mathrm{SO}_{2n-2\ell+3}$ attached to the first node. Thus we see that $\mathrm{Eis}(\widetilde{f}_{t,\omega_s})$ is holomorphic on 
\begin{align*}
    \Omega_3:=\bigg\{(t,s) : \mathrm{Re}(t+2s)\gg 1,  \mathrm{Re}(t+s)\ll -1\bigg\}
\end{align*}
and extends meromorphically to the convex closure of $\Omega_2\cup \Omega_3$ with possible singularities $t+s=\pm(n-\ell+\frac{1}{2})$ if $\omega=1$, and $t+s=\pm \frac{1}{2}$. Rewriting further the Eisenstein series as another double sum, whose inner sum is a degenerate Eisenstein series of $\mathrm{GL}_{\ell}$ attached to the first node, we see that $\mathrm{Eis}(\widetilde{f}_{t,\omega_s})$ is holomorphic on 
\begin{align*}
    \Omega_4:=\bigg\{(t,s) : \mathrm{Re}(t+2s)\ll -1, \mathrm{Re}(s)\gg 1\bigg\}
\end{align*}
and extends meromorphically to the convex closure of $\Omega_3\cup \Omega_4$ with possible singularities at $t+2s=\pm \frac{\ell}{2}$. 

On the other hand, the inner sum of \eqref{eq:innerM1} is
\begin{align*}
    \zeta^S(2t+2s+1)\zeta^S\left(t+\frac{\ell}{2}\right)\zeta^S\left(t+2s+\frac{\ell}{2}\right)L^S\left(t+n-\ell+\frac{1}{2},\omega_s\right)
\end{align*}
times a degenerate Eisenstein series of $\mathrm{SO}_{2n-1}$ associated to a good section of $I^{\mathrm{SO}_{2n-1}}_{P\cap \mathrm{SO}_{2n-1}}(\omega_s)$. By the induction hypothesis, the possible poles of this Eisenstein series are (counting multiplicities)
\begin{align}\label{eq:Bpole}
\begin{split}
    &\left\{ \frac{2n-\ell-1}{2} ,\ldots,  \frac{2n-\ell-1-2\min(\ell-2,2n-2\ell-1)}{2} \right\}\\
    &+ \left\{ \frac{\ell-1}{2}, \ldots,  \frac{\ell+1-2\lfloor \ell/2\rfloor}{2}\right\}\quad \textrm{ if } \omega=1, \textrm{ and }\\
    &\left\{ \frac{\ell-1}{2}, \ldots,  \frac{\ell+1-2\lfloor \ell/2\rfloor}{2}\right\} \quad \textrm{ if } \omega\neq 1.
\end{split}
\end{align}
By the functional equation of the inner Eisenstein series, $\mathrm{Eis}(\widetilde{f}_{t,\omega_s})$ is also holomorphic on  
\begin{align*}
    \Omega_5:=\bigg\{(t,s) :  \mathrm{Re}(t+2s)\gg 1, \mathrm{Re}(s)\ll -1\bigg\}
\end{align*}
and extends meromorphically to the convex closure of $\Omega_1\cup \Omega_5$ with possible singularities \eqref{eq:Bpole}.

 We conclude that possible singularities of $\mathrm{Eis}(\widetilde{f}_{t,\omega_s})$ in the convex closure $\Omega$ of $\cup_{i=1}^5 \Omega_i$ are \eqref{eq:Bpole}, $t=\pm \frac{\ell}{2}, t+s=\pm \frac{1}{2}, t+2s=\pm\frac{\ell}{2},$ and $t+s=\pm \left(n-\ell+\frac{1}{2}\right)$ if $\omega=1$. Note that $\Omega$ contains the line  $t=\frac{\ell}{2}$. It follows that on $\mathrm{Re}(s)\ge -1/2$, the possible poles of $\mathrm{Res}_{t=\frac{\ell}{2}}\mathrm{Eis}(\widetilde{f}_{t,\omega_s})$ are the sum of the multisets \eqref{eq:Bpole}, $\{0\}$,
 and 
 \begin{align*}
  \left\{n-\tfrac{3\ell}{2}+\tfrac{1}{2}\right\}, \quad\textrm{ if }\omega=1, n-\tfrac{3\ell}{2}\ge -1.
 \end{align*}

To prove the assertion, we need to show the following.
\begin{enumerate}    
     \item Suppose $\omega=1$ and $\ell-2\ge 2n-2\ell-1$. Then $\mathrm{Eis}(f_{1_s})$ is holomorphic at $s=n-\frac{3\ell}{2}+1$.

     \item When $\ell$ is even $\mathrm{Eis}(f_{\omega_s})$ is holomorphic at $s=\frac{1}{2}$.
\end{enumerate}

For (1) observe that $\ell>2$ and $n-\frac{3\ell}{2}+1$ is either $0$ or $\frac{1}{2}$
depending on whether $\ell$ is even or odd. Since $L^S(\frac{-2n+3\ell+1}{2},1_s)$ has a simple pole at $s=n-\frac{3\ell-1}{2},$ and when $\ell$ is even $\zeta^S(2s+1)$ is holomorphic at $s=-\frac{1}{2},$ assertion (1) follows from \eqref{eq:Bres}. Since $\zeta^S(2s+1)$ has a simple pole at $s=0$, (2) follows from the proof of (1) and \eqref{eq:Bres}.

Now suppose $\ell=n$. We may assume $\omega=1$. Then
\begin{align*}
     (\widetilde{b}_v)_{t,1_{s}}&=\zeta_v\left(t+\frac{n}{2}\right)\zeta_v(2t+s+1)\zeta_v\left(t+s+\frac{n}{2}\right)\prod_{i=0}^{\lfloor n/2\rfloor -1} \zeta_v(s+n-1-2i)
\end{align*}
By the functional equation of the inner Eisenstein series in \eqref{eq:innerM}, $\mathrm{Eis}(\widetilde{f}_{t,1_s})$ is also holomorphic on 
\begin{align*}
    \Omega_2:=\bigg\{(t,s) : \mathrm{Re}(t)\ll -1,\, \mathrm{Re}(2t+s)\gg 1\bigg\}
\end{align*}
and extends meromorphically to the convex closure of $\Omega_1\cup \Omega_2$ with possible singularities $t=\pm \tfrac{n}{2}.$ Then $\mathrm{Res}_{t=\frac{n}{2}}\mathrm{Eis}(\widetilde{f}_{t,1_s})$ is up to a nonzero constant
\begin{align}\label{eq:Bresn}
    \zeta^S(s+n)\begin{cases}
         \mathrm{Eis}(f_{1_{s+1}}) &  \textrm{ if } 2\nmid n,\\
        \zeta^S(s+1)\mathrm{Eis}(f_{1_{s+1}}) &  \textrm{ if }2 \mid n.
    \end{cases}
\end{align}
We can further rewrite the Eisenstein series above (after applying the functional equation) as a double sum, whose inner sum is a degenerate Eisenstein series of $\mathrm{GSpin}_{3}=\mathrm{GL}_2$ attached to the unique node. Thus we see that $\mathrm{Eis}(\widetilde{f}_{t,1_s})$ is holomorphic on 
\begin{align*}
    \Omega_3:=\bigg\{(t,s):\mathrm{Re}(2t+s)\ll -1, \mathrm{Re}(t+s)\gg 1,\mathrm{Re}(s)\gg 1\bigg\}
\end{align*}
and extends meromorphically to the convex closure of $\Omega_2\cup \Omega_3$ with possible singularities $2t+s=\pm 1$. Rewriting further the Eisenstein series as another double sum, whose inner sum is a degenerate Eisenstein series of $\mathrm{GL}_{n}$ attached to the first node, we see that $\mathrm{Eis}(\widetilde{f}_{t,1_s})$ is holomorphic on 
\begin{align*}
    \Omega_4:=\bigg\{(t,s):\mathrm{Re}(t+s)\ll -1,\mathrm{Re}(s)\gg 1\bigg\},
\end{align*}
and extends meromorphically to the convex closure of $\Omega_3\cup \Omega_4$ with possible singularities $t+s=\pm \frac{n}{2}$.

On the other hand, the inner sum of \eqref{eq:innerM1} is
\begin{align*}
\zeta^S\left(t+\frac{n}{2}\right)\zeta^S(2t+s+1)\zeta^S\left(t+s+\frac{n}{2}\right)
\end{align*}
times a degenerate Eisenstein series of $\mathrm{GSpin}_{2n-1}$ associated to a good section of $I^{\mathrm{GSpin}_{2n-1}}_{P\cap \mathrm{GSpin}_{2n-1}}(1_s)$. By the induction hypothesis, possible poles of this Eisenstein series are
\begin{align}\label{eq:Bpolen}
 \left\{ n-1, \ldots,  n+1-2\lfloor n/2 \rfloor\right\}.
\end{align}
By the functional equation of the inner Eisenstein series, $\mathrm{Eis}(\widetilde{f}_{t,1_s})$ is also holomorphic on  
\begin{align*}
    \Omega_5:=\bigg\{(t,s) :  \mathrm{Re}(t+s)\gg 1, \mathrm{Re}(s)\ll -1\bigg\}
\end{align*}
and extends meromorphically to the convex closure of $\Omega_1\cup \Omega_5$ with possible singularities \eqref{eq:Bpolen}.

 We conclude that possible singularities of $\mathrm{Eis}(\widetilde{f}_{t,1_s})$ in the convex closure $\Omega$ of $\cup_{i=1}^5 \Omega_i$ are \eqref{eq:Bpolen}, $t=\pm \frac{n}{2}, 2t+s=\pm 1$ and $t+s=\pm\frac{n}{2}$. Note that $\Omega$ contains the line  $t=\frac{n}{2}$. It follows that on $\mathrm{Re}(s)\ge -1$, possible poles of $\mathrm{Res}_{t=\frac{n}{2}}\mathrm{Eis}(\widetilde{f}_{t,1_s})$ are the sum of the sets \eqref{eq:Bpolen} and $\{0\}$. We need to show when $n$ is even, $\mathrm{Eis}(f_{1_s})$ is holomorphic at $s=1$. Since $\zeta^S(s+1)$ has a simple pole at $s=0,$ the assertion follows from \eqref{eq:Bresn}.\qed

\subsubsection{Type $C_n$} Let $\ell>1$ and $\omega^2=1$. Then
\begin{align*}
(\widetilde{b}_v)_{t,\omega_{vs}}&=\zeta_v\left(t+\frac{\ell}{2}\right)\zeta_v\left(t+2s+\frac{\ell}{2}\right)L_v(t+n-\ell+1,\omega_s)\\
    &\times\prod_{i=0}^{\min(\ell-2,2n-2\ell)}L_v\left(\frac{2n-\ell}{2}-i,\omega_s\right)\prod_{i=1}^{\lfloor (\ell-1)/2\rfloor}\zeta_v(2s+\ell-2i).
\end{align*}
The Eisenstein series $\mathrm{Eis}(\widetilde{f}_{t,\omega_s})$ is holomorphic on the regions
\begin{align*}
    \Omega_2&:=\bigg\{(t,s) : \mathrm{Re}(t)\ll -1,\, \mathrm{Re}(t+s)\gg 1,\mathrm{Re}(s)\gg 1\bigg\},\\
    \Omega_3&:=\bigg\{(t,s) : \mathrm{Re}(t+2s)\gg 1,  \mathrm{Re}(t+s)\ll 1\bigg\},\\
    \Omega_4&:=\bigg\{(t,s) : \mathrm{Re}(t+2s)\ll -1,\mathrm{Re}(s)\gg 1\bigg\},\\
    \Omega_5&:=\bigg\{(t,s) :  \mathrm{Re}(t+2s)\gg 1, \mathrm{Re}(s)\ll -1\bigg\}.
\end{align*}
In the convex closure of $\cup_{i=1}^5 \Omega_i$, its possible singularities are divisors in the $s$-variable
\begin{align*}
      &\left\{ \frac{2n-\ell}{2} ,\ldots,  \frac{2n-\ell-2\min(\ell-2,2n-2\ell)}{2} \right\}\\
    &+ \left\{ \frac{\ell-2}{2}, \ldots,  \frac{\ell-2\lfloor (\ell-1)/2\rfloor}{2}\right\}\quad \textrm{ if } \omega=1, \textrm{ and }\\
    & \left\{ \frac{\ell-2}{2}, \ldots,  \frac{\ell-2\lfloor (\ell-1)/2\rfloor}{2}\right\} \quad \textrm{ if } \omega\neq 1
\end{align*}
together with $t=\pm \frac{\ell}{2},t+2s=\pm\frac{\ell}{2},$ and $t+s=n-\ell+1$ if $\omega=1$. The residue $\mathrm{Res}_{t=\frac{\ell}{2}}\mathrm{Eis}(\widetilde{f}_{t,\omega_s})$ is up to a nonzero constant 
\begin{align*}
   \begin{cases}
         \mathrm{Eis}(f_{\omega_{s+1/2}}) & \textrm{if } \ell-2< 2n-2\ell, 2\mid \ell,\\
        L^S(-n+\frac{3\ell}{2},\omega_s)\mathrm{Eis}(f_{\omega_{s+1/2}}) & \textrm{if }\ell-2\ge  2n-2\ell, 2\mid \ell,\\
        \zeta^S(2s+1)\mathrm{Eis}(f_{\omega_{s+1/2}}) & \textrm{if }\ell-2< 2n-2\ell, 2\nmid\ell,\\
       \zeta^S(2s+1)L^S(-n+\frac{3\ell}{2},\omega_s)\mathrm{Eis}(f_{\omega_{s+1/2}}) & \textrm{if }\ell-2\ge  2n-2\ell, 2\nmid\ell.
    \end{cases}
\end{align*}
\qed

\subsubsection{Type $D_n$ $(n\ge 4)$} 

Suppose $n-1>\ell>1$ and $\omega^2=1$. In this case we can assume $G=\mathrm{SO}_{2n},$ and we have
\begin{align*}
     (\widetilde{b}_v)_{t,\omega_{vs}}&=\zeta_v\left(t+\frac{\ell}{2}\right)\zeta_v\left(t+2s+\frac{\ell}{2}\right)L_v\left(t+n-\ell,\omega_s\right)L_v\left(t+1,\omega_s\right)\\
    &\times L_v\left(\frac{\ell}{2},\omega_s\right)\prod_{i=0}^{\min(\ell-2,2n-2\ell-2)}L_v\left(\frac{2n-\ell-2}{2}-i,\omega_s\right)\prod_{i=0}^{\lfloor (\ell-3)/2\rfloor}\zeta_v(2s+\ell-2i-2).
\end{align*}
The Eisenstein series $\mathrm{Eis}(\widetilde{f}_{t,\omega_s})$ is holomorphic on the regions
\begin{align*}
    \Omega_2&:=\bigg\{(t,s) : \mathrm{Re}(t)\ll-1,\mathrm{Re}(t+s)\gg 1,\mathrm{Re}(s)\gg 1\bigg\},\\
    \Omega_3&:=\bigg\{(t,s) : \mathrm{Re}(t+2s)\gg 1,  \mathrm{Re}(t+s)\ll -1\bigg\},\\
    \Omega_4&:=\bigg\{(t,s) : \mathrm{Re}(t+2s)\ll -1,\mathrm{Re}(s)\gg 1\bigg\},\\
    \Omega_5&:=\bigg\{(t,s) :  \mathrm{Re}(t+2s)\gg 1, \mathrm{Re}(s)\ll -1\bigg\}.
\end{align*}
In the convex closure of $\cup_{i=1}^5 \Omega_i$, its possible singularities are divisors in the $s$-variable
\begin{align*}
      &\left\{ \frac{2n-\ell-2}{2} ,\ldots,  \frac{2n-\ell-2-2\min(\ell-2,2n-2\ell-2)}{2} \right\}+\left\{\frac{\ell}{2}\right\}\\
    &+ \left\{ \frac{\ell-2}{2}, \ldots,  \frac{\ell-2\lfloor (\ell-1)/2\rfloor}{2}\right\}\quad \textrm{ if } \omega=1, \textrm{ and }\\
    & \left\{ \frac{\ell-2}{2}, \ldots,  \frac{\ell-2\lfloor (\ell-1)/2\rfloor}{2}\right\} \quad \textrm{ if } \omega\neq 1
\end{align*}
together with $t=\pm \frac{\ell}{2},t+2s=\pm\frac{\ell}{2},$ and $t+s=1,n-\ell$ if $\omega=1$. 
The residue $\mathrm{Res}_{t=\frac{\ell}{2}}\mathrm{Eis}(\widetilde{f}_{t,\omega_s})$ is up to a nonzero constant 
\begin{align*}
    L^S\left(\frac{\ell}{2},\omega_{s}\right)\begin{cases}
         \mathrm{Eis}(f_{\omega_{s+\frac{1}{2}}}) & \textrm{if } \ell-2< 2n-2\ell-2, 2\mid \ell,\\
        L^S(\frac{-2n+3\ell+2}{2},\omega_s)\mathrm{Eis}(f_{\omega_{s+\frac{1}{2}}}) & \textrm{if } \ell-2\ge  2n-2\ell-2, 2\mid \ell,\\
        \zeta^S(2s+1)\mathrm{Eis}(f_{\omega_{s+\frac{1}{2}}}) & \textrm{if } \ell-2< 2n-2\ell-2, 2\nmid \ell,\\
       \zeta^S(2s+1)L^S(\frac{-2n+3\ell+2}{2},\omega_s)\mathrm{Eis}(f_{\omega_{s+\frac{1}{2}}}) & \textrm{if } \ell-2\ge  2n-2\ell-2, 2\nmid \ell.
    \end{cases}
\end{align*}

By symmetry the remaining case is $\ell=n$ with $n\ge 5$, in which case we can assume $\omega=1$. We have
\begin{align*}
     (\widetilde{b}_v)_{t,1_{s}}&=\zeta_v\left(t+\frac{n}{2}\right)\zeta_v\left(t+s+\frac{n}{2}\right)\prod_{i=0}^{\lfloor \frac{n-3}{2}\rfloor} \zeta_v(s+n-2-2i).
\end{align*}
The Eisenstein series $\mathrm{Eis}(\widetilde{f}_{t,1_s})$ is holomorphic on the regions
\begin{align*}
    \Omega_2&:=\bigg\{(t,s) : \mathrm{Re}(t)\ll -1, \mathrm{Re}(t+s)\gg 1,\mathrm{Re}(s)\gg 1\bigg\},\\
    \Omega_3&:=\bigg\{(t,s) : \mathrm{Re}(t+s)\ll -1,  \mathrm{Re}(s)\gg 1\bigg\},\\
    \Omega_4&:=\bigg\{(t,s) :  \mathrm{Re}(t+s)\gg 1, \mathrm{Re}(s)\ll -1\bigg\}.
\end{align*}
In the convex closure of $\cup_{i=1}^4 \Omega_i$, its possible singularities are divisors in the $s$-variable
\begin{align*}
      &\left\{ n-2 ,\ldots,  n-2-2\lfloor \frac{n-3}{2}\rfloor \right\}
\end{align*}
together with $t=\pm \frac{n}{2},t+2s=\pm\frac{n}{2}$. The residue $\mathrm{Res}_{t=\frac{n}{2}}\mathrm{Eis}(\widetilde{f}_{t,1_s})$ is up to a nonzero constant 
\begin{align*}
   \begin{cases}
         \mathrm{Eis}(f_{1_{s+1}}) & \textrm{if } 2\mid n,\\
        \zeta^S(s+1)\mathrm{Eis}(f_{1_{s+1}}) &  \textrm{if }2\nmid n.
    \end{cases}
\end{align*}
\qed 

\newpage

\appendix

\section{Asymptotics Data}\label{appendix}

Let $G$ be a split, simply connected, almost simple algebraic group. Let $P_\ell$ be the maximal parabolic associated to the $\ell$th node of the Dynkin diagram of $G,$ using the Bourbaki numbering. In below we list $s_k$ and the set 
$\left\{\frac{s_i+1}{\lambda_i}-s_k-1:\lambda_i=\lambda\right\}$ for each $\lambda$ obtained from the multiset $\Lambda=\{(s_i,\lambda_i)\}$. 

\begin{center}

    \begin{tabular}{ccc}
\multicolumn{3}{c}{$A_n$} \\ \hline
Node         & $\lambda=1$            & $s_k$ \\  \hline
$\ell$ &
  \begin{tabular}[c]{@{}c@{}}$\left\{-i: 0\le i \le \min(\ell-1,n-\ell) \right\}$\end{tabular}& $\tfrac{n-1}{2}$ \\ \hline
    \end{tabular}
\end{center}

\medskip

\begin{center}
    \begin{tabular}{@{}cccc@{}}
\multicolumn{4}{c}{$B_n\; (n\ge 2)$}                           \\ \midrule
Node & $\lambda=1$                                      & $\lambda=2$  & $s_k$   \\ \midrule
$1\le \ell< n$ & $\{-i :0\le i\le \min(\ell-1,2n-2\ell-1)\}$ & $\{\ell-n-i : 0\le i\le \lfloor (\ell-1)/2\rfloor\}$ &  $\tfrac{2n-\ell-2}{2}$ \\ \midrule
$n$  & $\{-2i : 0\le i\le \lfloor(n-1)/2\rfloor\}$ &  & $n-1$          \\ \bottomrule
\end{tabular}
\end{center}

\medskip

\begin{center}
    \begin{tabular}{@{}cccc@{}}
\multicolumn{4}{c}{$C_n\; (n\ge 2)$} \\ \midrule
Node      & $\lambda=1$      & $\lambda=2$ & $s_k$    \\ \midrule
$1$       & $\{0\}$     &      & $n-1$      \\ \midrule
$1<\ell $ & $\{-i :0\le i \le \min(\ell-1,2n-2\ell)\}$ & $\{\ell-n-i: 1\le i\le \lfloor \ell/2 \rfloor\}$  & $\tfrac{2n-\ell-1}{2}$\\ \bottomrule
\end{tabular}
\end{center}

\medskip 

\begin{center}
\scalebox{0.85}{
    \begin{tabular}{@{}cccc@{}}
\multicolumn{4}{c}{$D_n\; (n\ge 3)$}                            \\ \midrule
Node     & $\lambda=1$                                      & $\lambda=2$ & $s_k$ \\ \midrule
$1$      & $\{0, 2-n\}$                                &      & $n-2$  \\ \midrule
$1<\ell< n-1$ & $\{-i, \ell-n+1: 0\le i\le \min(\ell-1,2n-2\ell-2)\}$ & $\{\ell-n-i : 0\le i\le \lfloor (\ell-2)/2\rfloor\}$ & $\tfrac{2n-\ell-3}{2}$ \\ \midrule
$n-1, n$ & $\{ -2i : 0\le i\le \lfloor{(n-2)/2}\rfloor \}$ &    & $n-2$    \\ \bottomrule
\end{tabular}
}
\end{center}

\medskip 

\begin{center}
    \begin{tabular}{@{}ccccc@{}}
\multicolumn{5}{c}{$E_6$}                                          \\ \midrule
Node  & $\lambda=1$              & $\lambda=2$                    & $\lambda=3$   & $s_k$\\ \midrule
$1,6$ & $\{0,-3\}$          &                           &     & $5$     \\ \midrule
$2$   & $\{0,-2,-3\}$       & $\{-5\}$                  &     & $\frac{9}{2}$     \\ \midrule
$3,5$ & $\{0,-1,-2,-3\}$    & $\{-3\}$                  &     & $\frac{7}{2}$     \\ \midrule
$4$   & $\{0,-1,-1,-2,-2\}$ & $\{-2,-\tfrac{5}{2},-3\}$ & $\{-3\}$  & $\frac{5}{2}$\\ \bottomrule
\end{tabular}
\end{center}

\medskip

\begin{center}
    \begin{tabular}{@{}cccccc@{}}

\multicolumn{6}{c}{$E_7$}                                       \\ \midrule
Node & $\lambda=1$              & $\lambda=2$         & $\lambda=3$   & $\lambda=4$ & $s_k$\\ \midrule
$1$  & $\{0,-3,-5\}$       & $\{-8\}$       &          &     &  $\frac{15}{2}$ \\ \midrule
$2$  & $\{0,-2,-3,-4,-6\}$ & $\{-5\}$       &          &      & $6$ \\ \midrule
$3$  & $\{0,-1,-2,-3,-4\}$ & $\{-3,-4,-5\}$ & $\{-5\}$ &     &  $\frac{9}{2}$ \\ \midrule
$4$ & $\{0,-1,-1,-2,-2,-3\}$ & $\{-2,-\tfrac{5}{2},-3,-3\}$ &  $\{-3,-\tfrac{10}{3}\}$ & $\{-\tfrac{7}{2}\}$  & $3$\\ \midrule
$5$ & $\{0,-1,-2,-2,-3,-4\}$ & $\{-3,-\tfrac{7}{2},-4\}$    & $\{-4\}$                &              &    $4$   \\ \midrule
$6$  & $\{0,-1,-3,-4\}$    & $\{-4,-6\}$    &          &    &   $\frac{11}{2}$ \\ \midrule
$7$  & $\{0, -4, -8\}$     &                &          &     &   $8$\\ \bottomrule
\end{tabular}
\end{center}

\medskip

\begin{center}
\scalebox{0.88}{
\begin{tabular}{@{}cccccccc@{}}
\multicolumn{8}{c}{$E_8$}                                                                                                                                   \\ \midrule
Node & $\lambda=1$                                                         & $\lambda=2$                       & $\lambda=3$      & $\lambda=4$               & $\lambda=5$ & $\lambda=6$ & $s_k$\\ \midrule
$1$  & $\left\{\begin{array}{l}0,-3,-5,\\-6,-9\end{array}\right\}$    & $\{-8,-11\}$                 &             &                      &        &      & $\frac{21}{2}$  \\ \midrule
$2$  & $\left\{\begin{array}{l}0,-2,-3,\\-4,-5,-6\end{array}\right\}$ & $\{-5,-6,-7,-8\}$            & $\{-7\}$    &                      &        &    &$\frac{15}{2}$    \\ \midrule
$3$ &
  $\left\{\begin{array}{l}0,-1,-2,\\-3,-4,-5\end{array}\right\}$ &
  $\left\{\begin{array}{l}-3,-4,-\tfrac{9}{2},\\-5,-6\end{array}\right\}$ &
  $\{-5,-\tfrac{16}{3}\}$ &
  $\{-\tfrac{11}{2}\}$ &
   & & $\frac{11}{2}$
   \\ \midrule
$4$ &
  $\left\{\begin{array}{l}0,-1,-1,\\-2,-2,-3\end{array}\right\}$ &
  $\left\{\begin{array}{l}-2,-\tfrac{5}{2},-3,\\-3,-\tfrac{7}{2},-4\end{array}\right\}$ &
  $\left\{\begin{array}{l}-3,-\tfrac{10}{3},\\-\tfrac{11}{3},-4\end{array}\right\}$ &
  $\{-\tfrac{7}{2},-\tfrac{15}{4},-4\}$ &
  $\{-4,-\tfrac{21}{5}\}$ &
  $\{-4\}$ & $\frac{7}{2}$\\  \midrule
$5$ &
  $\left\{\begin{array}{l}0,-1,-2,-2,\\-3,-3,-4\end{array}\right\}$ &
  $\left\{\begin{array}{l}-3,-\tfrac{7}{2},-4,\\-4,-\tfrac{9}{2},-5\end{array}\right\}$ &
  $\left\{\begin{array}{l}-4,-\tfrac{13}{3},\\-\tfrac{14}{3},-5\end{array}\right\}$ &
  $\{-\tfrac{9}{2},-5\}$ &
  $\{-5\}$ &
   & $\frac{9}{2}$\\ \midrule
$6$  & $\left\{\begin{array}{l}0,-1,-2,\\-3,-4,-5\end{array}\right\}$ & $\{-4,-\tfrac{9}{2},-5,-6\}$ & $\{-5,-6\}$ & $\{-\tfrac{13}{2}\}$ &        &      & $6$  \\ \midrule
$7$  & $\left\{\begin{array}{l}0,-1,-4,\\-5,-8\end{array}\right\}$    & $\{-5,-7,-9\}$               & $\{-9\}$    &                      &        &     &$\frac{17}{2}$   \\ \midrule
$8$  & $\left\{\begin{array}{l}0,-5,-9\end{array}\right\}$            & $\{-14\}$                    &             &                      &        &       & $\frac{27}{2}$ \\ \bottomrule
\end{tabular}
}
\end{center}

\medskip

\begin{center}
    \begin{tabular}{@{}cccccc@{}}
\multicolumn{6}{c}{$F_4$}                                                \\ \midrule
Node & $\lambda=1$        & $\lambda=2$                      & $\lambda=3$   & $\lambda=4$ & $s_k$   \\ \midrule
$1$  & $\{0,-3\}$    & $\{-2\}$                    &          &   & $3$        \\ \midrule
$2$  & $\{0,-1\}$    & $\{-1, -\tfrac{3}{2}, -2\}$ & $\{-2\}$ & $\{-2\}$ & $\frac{3}{2}$ \\ \midrule
$3$  & $\{0,-1,-2\}$ & $\{-2, -3\}$                & $\{-3\}$ &   &    $\frac{5}{2}$   \\ \midrule
$4$  & $\{0,-3\}$    & $\{-5\}$                    &          &    &    $\frac{9}{2}$  \\ \bottomrule
\end{tabular}
\end{center}

\medskip

\begin{center}
    \begin{tabular}{@{}ccccc@{}}
\multicolumn{5}{c}{$G_2$}        \\ \midrule
Node & $\lambda=1$  & $\lambda=2$ & $\lambda=3$ & $s_k$ \\ \midrule
$1$  & $\{0\}$ & $\{-2\}$ &       & $\frac{3}{2}$ \\ \midrule
$2$  & $\{0\}$ & $\{-1\}$ & $\{-1\}$ & $\frac{1}{2}$\\ \bottomrule
\end{tabular}
\end{center}
\newpage 

\section{Remaining Cases of Theorem \ref{thm:poisson}}\label{appendix:B}
 Let $E$ be a global field. We prove Theorem \ref{thm:poisson} for $G=G_2$ and $G=B_n$ with $P=P_1$. We mimic the proof of the base case for induction in \cite[Proposition 1.6]{Ikeda:poles:triple}, which looks into the cancellation in the constant terms of the degenerate Eisenstein series
 \begin{align*}
     \sum_{w\in W/W_M} M_wf_{\omega_s}.
 \end{align*}
 The key input is the following statement \cite[Lemma 1.5] {Ikeda:poles:triple}.
\begin{Lem}\label{lem:sl2global}
    Let $G=\mathrm{SL}_2$ and $w_0=\begin{psmatrix}
         & 1\\
         -1 &
    \end{psmatrix}$. Then the global intertwining operator $M_{w_0}:I_B(1_s)\to I_{B}(1_{-s})$ is holomorphic at $s=0$, and is equal to the scalar multiplication by $-1$ at $s=0$.\qed
\end{Lem}

We assume $E$ is a number field and leave the  function field case to the reader.

\subsection{Type $B$ and $\ell=1$} 

 In this case $w_0$ has a unique reduced expression
\begin{align*}
    w_0=s_1s_2\cdots s_{n-1}s_ns_{n-1}\cdots s_2s_1.
\end{align*}
For ease of notation let $w_j:=s_j\cdots s_2s_1$ and $w_j':=s_j\cdots s_{n-1}$ for $1\le j\le n-1$. Then
\begin{align*}
    d(\omega_s)c_{\mathrm{Id}}(\omega_s)&=L\left(\tfrac{2n-1}{2},\omega_s\right)L(1,\omega^2_s),\\
    d(\omega_s)c_{w_j}(\omega_s)&=L\left(\tfrac{2n-1}{2}-j,\omega_s\right)L(1,\omega^2_s),\\
    d(\omega_s)c_{s_nw_{n-1}}(\omega_s)&=L\left(\tfrac{1}{2},\omega_s\right)L(0,\omega^2_s),\\
    d(\omega_s)c_{w_j's_nw_{n-1}}(\omega_s)&=L\left(\tfrac{1}{2}-(n-j),\omega_s\right)L(0,\omega^2_s).
\end{align*}
Therefore to prove the assertion we can assume $\omega^2=1,$ and it follows that possible poles of the constant term $\sum_{w\in W/W_M} M_{w}f_{\omega_s}$ are simple except for $s=\pm 1/2$ when $\omega=1$.

We first show the constant term is holomorphic at $s=0$. By Lemma \ref{lem:sl2global}
\begin{align*}
    s(M_{s_n}+1)M_{w_{n-1}}f_{\omega_s}\bigg|_{s=0}=0,
\end{align*}
and thus
\begin{align*}
    s(M_{w_j's_nw_{n-1}}+M_{w_{j-1}})f_{\omega_s}\bigg|_{s=0}&=0,\quad 2\le j\le n-1,\\
    s(M_{w_0}+1)f_{\omega_s}\bigg|_{s=0}&=0.
\end{align*}
This proves that $s=0$ is not a pole.

By the functional equation, we are left to show when $\omega=1$ the constant term is holomorphic at $s=-\frac{2n-1}{2}+j$ for $1\le j\le n-2$ and has at most a simple pole at $s=-\frac{1}{2}$. If $j=1,$ then among terms $M_{w}f_{1_s}$ all but $f_{1_s},M_{s_1}f_{1_s}$ are holomorphic at $s=-\frac{2n-3}{2},$ and by Lemma \ref{lem:sl2global}
\begin{align*}
    \left(s+\tfrac{2n-3}{2}\right)(M_{s_1}+1)f_{1_s}\bigg|_{s=-\frac{2n-3}{2}}=0.
\end{align*}
For $2\le j\le n-2,$ all but $M_{w_{j-1}}f_{1_s},M_{w_j}f_{1_s}$ are holomorphic at $s=-\frac{2n-1-2j}{2},$ and by Lemma \ref{lem:sl2global}
\begin{align*}
    \left(s+\tfrac{2n-1-2j}{2}\right)(M_{s_j}+1)M_{w_{j-1}}f_{1_s}\bigg|_{s=-\frac{2n-1-2j}{2}}=0.
\end{align*}
Finally, all but $M_{w_{n-2}}f_{1_s},M_{w_{n-1}}f_{1_s}$ have at most a simple pole at $s=-\frac{1}{2}$, and by Lemma \ref{lem:sl2global}
\begin{align*}
    \left(s+\tfrac{1}{2}\right)^2(M_{s_{n-1}}+1)M_{w_{n-2}}f_{1_s}\bigg|_{s=-\frac{1}{2}}=0.
\end{align*}
This completes the justification.\qed

\subsubsection{Type $G_2$}

Suppose $\ell=1$. We have 
\begin{align*}
    d(\omega_s)c_{\mathrm{Id}}(\omega_s)=L(\tfrac{5}{2},\omega_s)L(1,\omega^2_s),\\
    d(\omega_s)c_{s_1}(\omega_s)=L(\tfrac{3}{2},\omega_s)L(1,\omega^2_s),\\
    d(\omega_s)c_{s_2s_1}(\omega_s)=L(\tfrac{1}{2},\omega_s)L(1,\omega^2_s),\\
    d(\omega_s)c_{s_1s_2s_1}(\omega_s)=L(\tfrac{1}{2},\omega_s)L(0,\omega^2_s),\\
    d(\omega_s)c_{s_2s_1s_2s_1}(\omega_s)=L(-\tfrac{1}{2},\omega_s)L(0,\omega^2_s),\\
    d(\omega_s)c_{w_0}(\omega_s)=L(-\tfrac{3}{2},\omega_s)L(0,\omega^2_s).
\end{align*}
Therefore, we may assume $\omega^2=1$. It follows that possible poles of $\mathrm{Eis}(f_{\omega_s})$ on $\mathrm{Re}(s)\le 0$ are the multiset
\begin{align*}
    \left\{-\tfrac{5}{2},-\tfrac{3}{2},-\tfrac{1}{2},-\tfrac{1}{2},0\right\}
\end{align*}
if $\omega=1$ and $\{-\tfrac{1}{2},0\}$ if $\omega\neq 1$. We need to show $\mathrm{Eis}(f_{\omega_s})$ is holomorphic at $s=-\tfrac{3}{2},0,$ and has at most a simple pole at $s=-\tfrac{1}{2}$ when $\omega=1$.

Assume first $\omega=1$. Among terms $M_{w}f_{1_s}$ all but $f_{1_s},M_{s_1}f_{1_s}$ are holomorphic at $s=-\tfrac{3}{2}$ and by Lemma \ref{lem:sl2global}
\begin{align*}
   (s+\tfrac{3}{2})(M_{s_1}+1)f_{1_s}\bigg|_{s=-\frac{3}{2}}=0.
\end{align*}
All but $M_{s_1}f_{1_s},M_{s_2s_1}f_{1_s}$ have at most a simple pole at $s=-\tfrac{1}{2}$, and by Lemma \ref{lem:sl2global}
\begin{align*}
    (s+\tfrac{1}{2})^2(M_{s_2}+1)M_{s_1}f_{1_s}\bigg|_{s=-\frac{1}{2}}=0.
\end{align*}
Suppose $\omega^2=1$. We have by Lemma \ref{lem:sl2global}
\begin{align*}
    s(M_{s_1}+1)M_{s_2s_1}f_{\omega_s}\bigg|_{s=0}=s(M_{s_2s_1s_2}+1)M_{s_1}f_{\omega_s}\bigg|_{s=0}=s(M_{w_0}+1)f_{\omega_s}\bigg|_{s=0}=0.
\end{align*}
This completes the justification for $\ell=1$.

Suppose $\ell=2$. We have 
\begin{align*}
    d(\omega_s)c_{\mathrm{Id}}(\omega_s)=L(\tfrac{3}{2},\omega_s)L(1,\omega^2_s)L(\tfrac{3}{2},\omega_s^3),\\
    d(\omega_s)c_{s_2}(\omega_s)=L(\tfrac{1}{2},\omega_s)L(1,\omega^2_s)L(\tfrac{3}{2},\omega_s^3),\\
    d(\omega_s)c_{s_1s_2}(\omega_s)=L(\tfrac{1}{2},\omega_s)L(1,\omega^2_s)L(\tfrac{1}{2},\omega_s^3),\\
    d(\omega_s)c_{s_2s_1s_2}(\omega_s)=L(\tfrac{1}{2},\omega_s)L(0,\omega^2_s)L(\tfrac{1}{2},\omega_s^3),\\
    d(\omega_s)c_{s_1s_2s_1s_2}(\omega_s)=L(\tfrac{1}{2},\omega_s)L(0,\omega^2_s)L(-\tfrac{1}{2},\omega_s^3),\\
    d(\omega_s)c_{w_0}(\omega_s)=L(-\tfrac{1}{2},\omega_s)L(0,\omega^2_s)L(-\tfrac{1}{2},\omega_s^3).
\end{align*}
It follows that we can assume $\omega$ has order at most $3,$ and possible poles of $\mathrm{Eis}(f_{\omega_s})$ on $\mathrm{Re}(s)\le 0$ are the multiset
\begin{align*}
    \left\{-\tfrac{3}{2},-\tfrac{1}{2},-\tfrac{1}{2},0,-\tfrac{1}{2},-\tfrac{1}{6}\right\}
\end{align*}
if $\omega=1,$ $\{-\frac{1}{2},0\}$ if $\omega$ is of order $2$, and
$\{-\frac{1}{2},-\frac{1}{6}\}$ if $\omega$ is of order $3.$
We need to show $\mathrm{Eis}(f_{\omega_s})$ is holomorphic at $s=-\frac{1}{6},0$ and has at most a pole of order $2$ at $s=-\frac{1}{2}$ when $\omega=1$.

Suppose $\omega=1$. All but $f_{1_s},M_{s_2}f_{1_s}$ have at most a pole of order $2$ at $s=-\frac{1}{2}$, and by Lemma \ref{lem:sl2global}
\begin{align*}
    (s+\tfrac{1}{2})^3(M_{s_2}+1)f_{1_s}\bigg|_{s=-\frac{1}{2}}=0.
\end{align*}
Suppose $\omega^3=1.$ Then $M_{s_1s_2s_1s_2}f_{\omega_s}$ and $M_{w_0}f_{\omega_s}$ are holomorphic at $s=-1/6$, and by Lemma \ref{lem:sl2global}
\begin{align*}
    (s+\tfrac{1}{6})(M_{s_1}+1)M_{s_2}f_{\omega_s}\bigg|_{s=-\frac{1}{6}}&=0,\\
    (s+\tfrac{1}{6})(M_{s_2s_1s_2}+1)f_{\omega_s}\bigg|_{s=-\frac{1}{6}}&=0.
\end{align*}
For $\omega^2=1$ and $s=0$, the proof is similar to that in the case $\ell=1$ above. This completes the verification.\qed

\bibliographystyle{alpha}

\bibliography{bibs}
\end{document}